\DeclareMathAlphabet{\mathantt}{OT1}{antt}{li}{it}
\DeclareMathAlphabet{\mathpzc}{OT1}{pzc}{m}{it}
\newcommand\mk{\mathcal{K}}
\newcommand\sas{s_a}
\newcommand\sbs{s_b}
\newcommand\ml{\mathcal{L}}
\newcommand\mg{\mathcal{G}}
\newcommand\mf{\mathcal{F}}
\newcommand{\mc}[1]{\mathcal{#1}}
\newcommand{\leto}[1]{\stackrel{#1}{\rightarrow}}
\newcommand{\ov}[1]{\overline{#1}}
\newcommand{\op}[1]{\operatorname{#1}}
\newcommand{\ovop}[1]{\ov{\op{#1}}}
\newcommand\xrminusone{x_{1}}
\newcommand\xzero{s}
\newcommand\xone{a}
\newcommand\xtwo{b}
\newcommand\Cee{\vartheta}
\newcommand{\ovmc}[1]{\ov{\mc{#1}}}
\newcommand\tensor{\otimes}
\newcommand{\sL}{\mathfrak{sl}}
\newcommand\me{\mathcal{E}}
\newtheorem{theorem}{Theorem}[section]
\newtheorem*{theorem*}{Theorem}
\newtheorem{corollary*}{Corollary}
\newtheorem{corollary}[theorem]{Corollary}
\newtheorem{remark}[theorem]{Remark}
\newtheorem{question}[theorem]{Question}
\newtheorem{bigquestion}{Question}
\newtheorem{project*}{Project}
\newtheorem{construction}{Construction}
\newtheorem{mainconjecture*}[theorem]{Main Conjecture}
\newtheorem*{lemma*}{Lemma}
\newtheorem{lemma}[theorem]{Lemma}
\newtheorem{claim}[theorem]{Claim}
\newtheorem*{claim*}{Claim}
\newtheorem*{approach*}{Approach}
\newtheorem*{example*}{Example}
\newtheorem{Def/Prop}{Definition/Proposition}
\newtheorem{remark/definition}{Remark/Definition}
\newtheorem*{proof*}{proof}
\newtheorem*{proofsketch*}{Sketch of proof:}
\newtheorem*{prop*}{Proposition}
\newtheorem{proposition}[theorem]{Proposition}
\newtheorem{defn/thm}[theorem]{Definition/Theorem}
\newtheorem{definition/lemma}[theorem]{Definition/Lemma}
\newtheorem{definition}[theorem]{Definition}
\newtheorem{example/lemma}[theorem]{Example/Lemma}
\newcommand{\git}{\ensuremath{\operatorname{/\!\!/}}}
\newcommand\rk{\operatorname{rk}}
\begin{document}

\pagenumbering{arabic}

\title[P. Belkale, and A. Gibney]{On finite generation of the section ring of the determinant of cohomology line bundle}
\author{P. Belkale, and A. Gibney}
\date{\today}
\maketitle

\begin{abstract}
For $C$ a stable curve of arithmetic genus $g\ge 2$,   and $\mc{D}$ the determinant of cohomology
 line bundle on $\op{Bun}_{\op{SL}(r)}(C)$, we show the section ring for the pair $(\op{Bun}_{\op{SL}(r)}(C), \mc{D})$ is finitely generated.  Three applications are given.
  \end{abstract}

\section{Introduction}
For $\op{G}$ a simple, simply connected complex linear algebraic group, and $C$ a stable curve of arithmetic genus $g\ge 2$, let ${\op{Bun}}_{{\op{G}}}(C)$ be the stack parameterizing principal ${\op{G}}$-bundles on $C$.  To any representation $\op{G}\to \op{Gl}(V)$, there corresponds a distinguished line bundle $\mc{D}(V)$ on $\op{Bun}_{\op{G}}(C)$,  the determinant of cohomology
 line bundle, described in Def \ref{DCLB}.

 The main result of this work is the following
 \begin{theorem}\label{qmain} For $\op{G}=\op{SL}(r)$, for the standard representation $\op{SL}(r)\to \op{Gl}(V)$, setting $\mc{D}=\mc{D}(V)$,
 $$\mathscr{A}^{C}_{\bullet}=\bigoplus_{m\in \mathbb{Z}_{\ge 0}} \op{H}^0(\op{Bun}_{\op{SL}(r)}(C), \mc{D}^{\tensor m})$$
is finitely generated.
\end{theorem}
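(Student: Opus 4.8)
The plan is to descend along the normalization of $C$, realizing $\mathscr{A}^{C}_{\bullet}$ as an invariant subalgebra of a conformal blocks algebra of a smooth marked curve, and then to prove finite generation of that algebra by degenerating to a totally degenerate curve, where it becomes an explicit affine semigroup algebra.

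\textbf{Step 1: descent to the normalization.} Let $\pi\colon\widetilde C\to C$ normalize the $\delta$ nodes of $C$, with $\{x_i,y_i\}$ the two preimages of the $i$th node, and let $q\colon\op{Bun}_{\op{SL}(r)}(C)\to\op{Bun}_{\op{SL}(r)}(\widetilde C)$ be the induced morphism. First I would observe that $q$ is representable and affine: over a fixed bundle on $\widetilde C$ the possible gluings at each node form a torsor under the affine group $\op{SL}(r)$. Next, the normalization sequence together with multiplicativity of the determinant of cohomology identifies $\mc{D}$ with $q^{*}\widetilde{\mc{D}}$ up to a twist by determinants of the fibers at the nodes, which is canonically trivial for $\op{SL}(r)$-bundles; so by the projection formula and a relative Peter--Weyl decomposition one gets
$$q_{*}\mc{D}^{\tensor m}\ \cong\ \widetilde{\mc{D}}^{\tensor m}\tensor\Big(\bigoplus_{\vec\lambda}\mathcal V_{\vec\lambda}\tensor\mathcal V_{\vec\lambda^{*}}\Big),$$
with $\mathcal V_{\mu}$ the bundle on $\op{Bun}_{\op{SL}(r)}(\widetilde C)$ associated to the irreducible of highest weight $\mu$ by evaluation at the $x_i,y_i$. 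Passing to global sections, invoking the identification of non-abelian theta functions with conformal blocks for the smooth curve $(\widetilde C,\{x_i,y_i\})$, and dropping the vanishing summands in which some $\lambda_i$ is not of level $m$, I would obtain a multiplication-preserving isomorphism
$$\mathscr{A}^{C}_{\bullet}\ \cong\ \bigoplus_{m\ge 0}\ \bigoplus_{\vec\lambda\in(P_m)^{\delta}}\ V_{\widetilde C}\big(x_1,y_1,\dots;\lambda_1,\lambda_1^{*},\dots;m\big),$$
$P_m$ denoting the level-$m$ dominant weights. The right-hand side is the part, supported on an explicit subgroup $H$ of the grading group $G$ (level, together with the weights at the $2\delta$ marked points), of the full conformal blocks algebra $\mathbb V_{\widetilde C}$ of $(\widetilde C,\{x_i,y_i\})$; equivalently it is the invariant ring $\mathbb V_{\widetilde C}^{\,T}$ for the linearly reductive group $T=\op{Hom}(G/H,\mathbb C^{\times})$ acting through the grading. (If $\widetilde C$ is disconnected, $\mathbb V_{\widetilde C}$ is itself the level-diagonal — hence invariant — subalgebra of the tensor product of the conformal blocks algebras of its components.) By Hilbert's finiteness theorem it then suffices to prove that $\mathbb V_{\widetilde C}$ is finitely generated; we may assume $\widetilde C$ connected.

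\textbf{Step 2: finite generation of the conformal blocks algebra.} Here I would argue by degeneration. Choose a trivalent graph $\Gamma$ for $\widetilde C$, i.e.\ a degeneration of $\widetilde C$ to the totally degenerate curve $C_{\Gamma}$ with dual graph $\Gamma$. Over the one-parameter smoothing family the bundle of conformal blocks is locally free, compatible with base change, and the sewing (factorization) isomorphisms are compatible with the product; expanding conformal blocks in the sewing parameters at the edges of $\Gamma$ thus produces an exhaustive multiplicative filtration on $\mathbb V_{\widetilde C}$ whose associated graded is, by iterated factorization, the conformal blocks algebra $\mathbb V_{C_{\Gamma}}$ of the graph curve (the matching of graded dimensions being the Verlinde formula). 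Since an algebra whose associated graded is finitely generated is finitely generated, it remains to handle $\mathbb V_{C_{\Gamma}}$. Factorization at each node of $C_{\Gamma}$ exhibits it as an invariant subalgebra of the tensor product over the vertices of $\Gamma$ of the genus-zero three-pointed conformal blocks algebras $\mathbb V_{\mathbb P^{1}}=\bigoplus_{m,(\lambda,\mu,\nu)\in(P_m)^{3}}V_{\mathbb P^{1}}(\lambda,\mu,\nu;m)$, and for $\op{SL}(r)$ each such $\mathbb V_{\mathbb P^{1}}$ admits a flat degeneration to the affine semigroup algebra of the lattice points of a rational polyhedral cone — for $r=2$ cut out directly by the quantum Clebsch--Gordan inequalities, and for general $r$ obtained by additionally degenerating the classical multiplicity spaces $(V_{\lambda}\tensor V_{\mu}\tensor V_{\nu})^{\op{SL}(r)}$ by a Gel'fand--Tsetlin-type toric degeneration compatible with the level truncation. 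Such semigroup algebras are finitely generated by Gordan's lemma, and unwinding the degenerations yields finite generation of $\mathbb V_{C_{\Gamma}}$, hence of $\mathbb V_{\widetilde C}$, hence — via Step 1 — of $\mathscr{A}^{C}_{\bullet}$; much of this step can be imported from the existing literature on conformal blocks algebras.

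\textbf{Where the difficulty lies.} The main obstacle is Step 2: constructing the sewing filtration on $\mathbb V_{\widetilde C}$ and identifying its associated graded with $\mathbb V_{C_{\Gamma}}$ — this rests on local freeness and base-change compatibility of the bundle of conformal blocks over $\M_{g,n}$ and on compatibility of the sewing data with multiplication — together with the level-compatible toric degeneration of the $\op{SL}(r)$ vertex algebras. Step 1, notably the passage from theta functions on $\op{Bun}_{\op{SL}(r)}(C)$ for the singular curve $C$ to conformal blocks (where the affineness of $q$ and the normalization sequence for $\mc{D}$ are the essential inputs), needs care but is essentially formal; and the smooth case $\delta=0$ is classical, $\mathscr{A}^{C}_{\bullet}$ being there the section ring of the ample theta bundle on the projective moduli space $\op{SU}_{C}(r)$.
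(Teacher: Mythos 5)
Your Step 1 already contains a misstatement, and your Step 2 rests on a claim that is not available in the literature and is in fact the hard open problem. In Step 1, the $\vec\lambda$-decomposition of $\mathscr{A}^{C}_{\bullet}$ coming from factorization is \emph{not} an algebra grading, so the asserted ``multiplication-preserving isomorphism'' onto the weight-diagonal subalgebra of the conformal blocks algebra of $(\widetilde C,\{x_i,y_i\})$ is false as stated: under $q_*\mc{O}\cong\bigoplus_\lambda \mathscr{E}^{x}_{\lambda}\tensor\mathscr{E}^{y}_{\lambda^*}$ the product of sections is the product of matrix coefficients on the gluing torsor, so the product of the $\lambda$- and $\mu$-components spreads over all $\nu$ with $V_\nu\subset V_\lambda\tensor V_\mu$, whereas in the conformal blocks algebra the weights add (Cartan product). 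The correct relation is a filtration whose associated graded is the weight-diagonal (torus-invariant) subalgebra --- this is Manon's flat-degeneration picture --- and since finite generation of an associated graded implies finite generation of the filtered algebra, this part is repairable, but it must be argued, not asserted.

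The genuine gap is Step 2. You reduce the theorem to finite generation of the \emph{full} parabolic conformal blocks algebra of $(\widetilde C,\{x_i,y_i\})$ for $\op{SL}(r)$ (all levels and all weights at the marked points), and then, after degenerating to a graph curve, to a level-compatible toric degeneration of the three-pointed genus-zero algebras $\bigoplus_{\ell,\lambda,\mu,\nu}V_{\mathbb{P}^1}(\lambda,\mu,\nu;\ell)$. This last step cannot be ``imported from the existing literature'': it is known for $\op{SL}(2)$ and, with substantial effort, for $\op{SL}(3)$ (Manon, the references \cite{Man2,Man1} in the paper), but for $r\ge 4$ no Gel'fand--Tsetlin-type degeneration compatible with the level truncation (equivalently, with the fusion rules rather than the classical Littlewood--Richardson cone) is known, and finite generation of these algebras --- indeed of the parabolic analogue of the present theorem --- is posed as an open question in this very paper. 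So your argument reduces Theorem \ref{qmain} to a strictly stronger unproven statement. The paper avoids this entirely: it never needs parabolic weights or vertex algebras, but instead embeds each $\lambda$-component of $\op{H}^0(\op{Bun}_{\op{SL}(r)}(C),\mc{D}^{\tensor m})$ into the section ring of a GIT quotient $\mc{X}(\vec a)$ of one master Quot-scheme $\mc{M}_{X_0}$ under a suitable linearization $T(\mc{A})$ (chosen via the ``covering pair'' pole estimates of Proposition \ref{SomethingIntro}), and then gets finite generation from the multigraded section ring of finitely many such linearizations (Zariski/Hu--Keel plus Hilbert invariants, Gordan's lemma, and the Veronese Lemma \ref{verona}).
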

Theorem \ref{qmain} may seem surprising because the stack $\op{Bun}_{\op{SL}(r)}(C)$ is not proper, and hence there is no expectation that $\op{H}^0(\op{Bun}_{\op{SL}(r)}(C), \mc{L}^{\tensor m})$ is finite dimensional for any  line bundle $\mc{L}$.  Even if the summands were finite dimensional, as in various  examples of line bundles on projective varieties, finite generation of the section ring would not necessarily follow.

In fact, Theorem \ref{qmain} is well known for smooth curves $C$, and in this case:
$$\op{H}^0(\op{Bun}_{\op{SL}(r)}(C), \mc{D}^{\tensor m}) \cong  \op{H}^0(\op{SU}_{C}(r), \theta^{\tensor m}),$$
where $\op{SU}_{C}(r)$ is a moduli space parameterizing semistable vector bundles of rank $r$ with trivial determinant on $C$, and $\theta$ is an ample line bundle on it \cite{BeauvilleLaszlo,Faltings}.
These form a flat family over  $\mc{M}_g$, and it is  natural to ask if one can extend it to  a family over Deligne and Mumford's compactification $\ovmc{M}_{g}$ (This problem is discussed further in Section \ref{historie}).

As a first application of Theorem \ref{qmain}, we show

\begin{theorem}\label{globaleJJs} There is a flat family $p: \mathcal{X} \to \ovmc{M}_g$, with $\mathcal{X}$ relatively projective over $\ovmc{M}_g$,
such that
\begin{enumerate}
\item $\mathcal{X}_{C}\cong \op{Proj}(\mathscr{A}^{C}_{\bullet})$ is  integral, normal, and irreducible, for   $[C] \in \ovmc{M}_g$; and
\item $\mathcal{X}_{C} \cong \op{SU}_{C}(r)$, for $[C] \in \mc{M}_g$.
\end{enumerate}
\end{theorem}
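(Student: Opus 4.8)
The plan is to realize $\mathcal{X}$ as a relative $\op{Proj}$ over $\ovmc{M}_g$ of a sheaf of graded algebras interpolating the rings $\mathscr{A}^{C}_{\bullet}$, and then to read off all of the asserted properties fiber by fiber. Let $\pi\colon\mathcal{C}\to\ovmc{M}_g$ be the universal curve, let $f\colon\mathcal{B}\to\ovmc{M}_g$ denote the relative stack $\op{Bun}_{\op{SL}(r)}(\mathcal{C}/\ovmc{M}_g)$, and let $\mc{D}$ be the relative determinant of cohomology line bundle on $\mathcal{B}$, so that the fiber of $(\mathcal{B},\mc{D})$ over $[C]$ is $(\op{Bun}_{\op{SL}(r)}(C),\mc{D})$. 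The first point is that, although $\mathcal{B}$ is not proper over $\ovmc{M}_g$, for every $m\ge 0$ the sheaf $\mathscr{A}_m:=f_*\mc{D}^{\tensor m}$ is coherent on $\ovmc{M}_g$, in fact locally free of rank equal to the Verlinde number $N_m$, with formation commuting with arbitrary base change: I would deduce this from the identification of the spaces $\op{H}^0(\op{Bun}_{\op{SL}(r)}(C),\mc{D}^{\tensor m})$ with spaces of conformal blocks, together with the fact that the sheaves of conformal blocks on $\ovmc{M}_g$ are vector bundles of rank $N_m$ whose fibers are governed by factorization at the boundary. Consequently $\mathscr{A}_\bullet:=\bigoplus_{m\ge 0}\mathscr{A}_m$ is a sheaf of graded $\cO_{\ovmc{M}_g}$-algebras with $\mathscr{A}_0=\cO_{\ovmc{M}_g}$ (using that $\op{H}^0(\op{Bun}_{\op{SL}(r)}(C),\cO)=\C$) and with $\mathscr{A}_\bullet\otimes k([C])\cong\mathscr{A}^{C}_\bullet$ as graded $\C$-algebras for every $[C]\in\ovmc{M}_g$.

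\textbf{The main obstacle} is the second step: promoting the pointwise finite generation of Theorem~\ref{qmain} to finite generation of the sheaf of algebras $\mathscr{A}_\bullet$, that is, producing an integer $d$, valid over all of $\ovmc{M}_g$, such that $\mathscr{A}_\bullet$ is generated as an $\cO_{\ovmc{M}_g}$-algebra by $\bigoplus_{m\le d}\mathscr{A}_m$; a bound uniform in $[C]$ is needed because the degrees of generators must be controlled simultaneously in the infinitely many gradings, and, $\mathcal{B}$ being non-proper, semicontinuity is not available. If the proof of Theorem~\ref{qmain} yields a bound on the degrees of generators depending only on $r$ and $g$ (as a smoothing argument typically does), this is immediate. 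Otherwise I would first obtain a uniform $d_0$ over $\mc{M}_g$ from the fact that the spaces $\op{SU}_C(r)$ with their ample bundles $\theta$ form a flat projective family over the Noetherian scheme $\mc{M}_g$, and then extend it across the finitely many boundary strata using that $\ovmc{M}_g$ is Noetherian, that each $\mathscr{A}_m$ is locally free of the fixed rank $N_m$, that base change holds, and Theorem~\ref{qmain}, via graded Nakayama and spreading out. Granting such a $d$, after replacing $\mathscr{A}_\bullet$ by a Veronese subalgebra we may assume it is generated in degree $1$ by the locally free sheaf $\mathscr{A}_1$; we then set $\mathcal{X}:=\underline{\op{Proj}}_{\ovmc{M}_g}(\mathscr{A}_\bullet)$, a closed substack of a projective bundle over $\ovmc{M}_g$, so that $p\colon\mathcal{X}\to\ovmc{M}_g$ is relatively projective and, by base change, $\mathcal{X}_{[C]}\cong\op{Proj}(\mathscr{A}_\bullet\otimes k([C]))=\op{Proj}(\mathscr{A}^{C}_\bullet)$, which is the identification in (1).

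For flatness of $p$ I would use that $\ovmc{M}_g$ is reduced and connected, so it suffices to show that the Hilbert polynomial of $\mathcal{X}_{[C]}$ with respect to its natural polarization is independent of $[C]$; for $m\gg 0$ its values are Verlinde numbers, which the Verlinde formula exhibits as a polynomial in $m$ with coefficients independent of $C$. Hence $p$ is flat of relative dimension $\dim\op{SU}_C(r)$ (the degree of the Verlinde polynomial), and $\mathcal{X}$ is relatively projective over $\ovmc{M}_g$. It remains to identify the fibers. For $[C]\in\mc{M}_g$, by \cite{BeauvilleLaszlo,Faltings} the ring $\mathscr{A}^{C}_\bullet$ is isomorphic, as a graded ring, to $\bigoplus_{m\ge 0}\op{H}^0(\op{SU}_C(r),\theta^{\tensor m})$, the isomorphism being induced by pullback along $\op{Bun}^{\mathrm{ss}}_{\op{SL}(r)}(C)\to\op{SU}_C(r)$ and hence multiplicative; since $\op{SU}_C(r)$ is a normal projective variety and $\theta$ is ample, the $\op{Proj}$ of this ring is $\op{SU}_C(r)$, which is (2). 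Finally, for every $[C]\in\ovmc{M}_g$ the ring $\mathscr{A}^{C}_\bullet$ is a domain, because $\op{Bun}_{\op{SL}(r)}(C)$ is irreducible, so that a nonzero product of sections of powers of $\mc{D}$ remains nonzero on a dense open; and it is integrally closed in its fraction field, because it is the full section ring $\bigoplus_{m\ge 0}\op{H}^0(\op{Bun}_{\op{SL}(r)}(C),\mc{D}^{\tensor m})$ of a line bundle on the smooth, hence normal, stack $\op{Bun}_{\op{SL}(r)}(C)$ (an element of the fraction field that is integral over this ring is a rational section of some $\mc{D}^{\tensor m}$ which is regular in codimension one, hence regular). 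Therefore $\mathcal{X}_{[C]}=\op{Proj}(\mathscr{A}^{C}_\bullet)$ is integral, normal and irreducible, which completes (1).
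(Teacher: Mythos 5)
Your overall strategy is the same as the paper's: assemble the graded sheaf of algebras from the conformal-block bundles (equivalently from $f_*\mc{D}^{\tensor m}$, identified fiberwise through Theorem \ref{one}), take relative $\op{Proj}$ over $\ovmc{M}_g$, deduce flatness from local freeness of the graded pieces together with finite generation (the paper's Proposition \ref{flat} does this directly from the grading, without your Hilbert-polynomial detour, though that route can be made to work over the reduced base), obtain integrality and normality of the fibers from smoothness, irreducibility and normality of $\op{Bun}_{\op{SL}(r)}(C)$, and quote \cites{BeauvilleLaszlo,Faltings,KNR} for part (2). Those portions are essentially the paper's argument.

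The genuine gap is precisely at the step you yourself flag as the main obstacle, and neither of your proposed fixes closes it. First, ``graded Nakayama and spreading out'' from pointwise finite generation does not produce a uniform degree bound: if $\mathcal{B}_{\bullet}\subseteq\mathscr{A}_{\bullet}$ denotes the subalgebra generated in degrees at most $d$, then Nakayama gives, for each fixed degree $n$, an open neighborhood $U_n$ of a given point on which $\mathcal{B}_n=\mathscr{A}_n$, but these neighborhoods depend on $n$ and need not admit a common refinement; the implication ``every closed fiber of $\mathscr{A}_{\bullet}$ is a finitely generated algebra $\Rightarrow$ $\mathscr{A}_{\bullet}$ is a finitely generated sheaf of algebras'' is not formal, and in the paper Nakayama (Lemma \ref{montreal}) is invoked only \emph{after} a single constant valid at all points has been produced. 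Second, you cannot spread out from the generic points of the boundary strata using Theorem \ref{qmain} as a black box, since that theorem concerns one stable curve at a time (a closed point), whereas spreading out needs finite generation at the non-closed generic point of each stratum, i.e.\ a version of the entire construction carried out relative to the stratum base. Supplying exactly this is the content of Section \ref{Uniform} of the paper: relative Quot schemes and GIT quotients, Simpson's uniform boundedness and the Le Potier--Simpson estimate in families, commutation of normalization with passage to fibers over a dense open of the base (Lemma \ref{normality}, in characteristic zero), and Noetherian induction over shrinking open subsets of each equisingular stratum, yielding one degree bound per stratum and hence a global one. Your fallback option, that the proof of Theorem \ref{qmain} already gives a bound depending only on $r$ and $g$, is not available as stated: the bounds there depend on auxiliary choices (the ample bundle $\ml$, the twists $\mathcal{A}$, the boundedness thresholds) which a priori vary with the curve, and making them uniform is precisely what the relative argument accomplishes.
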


The main idea in our proof of Theorem \ref{qmain} is to factor each vector space $\op{H}^0(\op{Bun}_{\op{SL}(r)}(C), \mc{D}^{\tensor m})$, writing it as a direct sum.  We then show that each summand of the factorization corresponds to the global sections of a line bundle
 on a suitably defined projective variety constructed using torsion free sheaves (see Proposition \ref{SomethingIntro}).  This is described below in Section \ref{qmainDescribe}.

The theory of conformal blocks plays a crucial role in the proof of Theorem \ref{globaleJJs}: While individual fibers may be composed without them, vector bundles of conformal blocks are used in our construction of the flat family.  The proof of Theorem \ref{globaleJJs}, and further applications of  Theorem \ref{qmain}, all having to do with conformal blocks, are described below in Section \ref{Further}.

\subsection{Applications of Theorem \ref{qmain}}\label{Further}
Vector bundles of conformal blocks are intrinsic objects which relate questions about the moduli stack $\ovmc{M}_{g,n}$ on which they are defined, to problems in representation theory; information on one side tells us something about the other.

We write $\mathbb{V}(\mathfrak{g},\vec{\lambda},\ell)$ to denote the bundle given by the Lie algebra  $\mathfrak{g}$ for simply connected group $\op{G}$, the
integer $\ell$, and the n-tuple $\vec{\lambda}=(\lambda_1,\ldots, \lambda_n)$ of dominant integral weights for $\mathfrak{g}$ at level $\ell$ (See Section \ref{VS} for a definition).  We show fibers may be interpreted geometrically in terms of sections of a line bundle on the stack  parameterizing generalized parabolic $\op{G}$-bundles on $C$:

\begin{theorem}\label{one}$\mathbb{V}(\mathfrak{g},\vec{\lambda},\ell)|_{(C;\vec{p})}^* \cong \op{H}^0(\op{Parbun}_{\op{G}}(C,\vec{p}),\mc{L}_{\op{G}}(C,\vec{p}, \vec{\lambda})).$
\end{theorem}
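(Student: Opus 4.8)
The statement is a ``non-abelian Borel--Weil'' type identification: the dual of the conformal block fiber is realized as global sections of a natural line bundle on the moduli stack of parabolic $\op{G}$-bundles. I would prove this by combining the uniformization theorem for $\op{Parbun}_{\op{G}}(C,\vec p)$ with the representation-theoretic description of conformal blocks as coinvariants. Concretely, let $p_1,\dots,p_n$ be the marked points, and for a point (or for all points) fix a local coordinate; let $\op{G}(\mc O)$, $\op{G}(\mc K)$ denote the groups of sections on formal disks and punctured disks, and let $\op{G}(C\setminus\vec p)$ be the group of algebraic maps. Uniformization presents $\op{Parbun}_{\op{G}}(C,\vec p)$ as the double quotient of a product of affine Grassmannians/flag varieties for $\op{G}$ by $\op{G}(C\setminus\vec p)$, up to the usual subtleties about which coordinates are needed. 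The line bundle $\mc L_{\op{G}}(C,\vec p,\vec\lambda)$ is built by descending the box-tensor of the homogeneous line bundles $\mc L_{\lambda_i}$ at level $\ell$ on the partial affine flag varieties to the double quotient.

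**Key steps, in order.** First I would set up the precise uniformization statement for the parabolic stack: $\op{Parbun}_{\op{G}}(C,\vec p)\cong \op{G}(C\setminus\vec p)\backslash \prod_{i} \mathrm{Fl}_{\op{G}}^{(i)}$, where $\mathrm{Fl}_{\op{G}}^{(i)}$ is the affine partial flag variety whose relevant $\op{G}(\mc O_i)$-quotient carries $\mc L_{\lambda_i}$; this is essentially the Kumar--Narasimhan--Ramanathan / Laszlo--Sorger / Beauville--Laszlo picture adapted to parabolic structures, and I would cite it. Second, pull back sections of $\mc L_{\op{G}}(C,\vec p,\vec\lambda)$ to the product of flag varieties: by Borel--Weil for affine Kac--Moody groups, $\op{H}^0(\mathrm{Fl}_{\op{G}}^{(i)},\mc L_{\lambda_i})^* \cong \mathcal H_{\lambda_i}$, the integrable highest weight module $\mc L(\mathfrak g,\lambda_i,\ell)$, so sections on the product biject with $\op{G}(C\setminus\vec p)$-invariant vectors in $(\bigotimes_i \mathcal H_{\lambda_i})^*$, i.e. with coinvariants' duals. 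Third, identify the $\op{G}(C\setminus\vec p)$-coinvariants with the conformal block fiber: this is exactly the definition (or the standard comparison theorem, via the exponential/Lie-algebra version $\mathfrak g(C\setminus\vec p)$ acting) that $\mathbb V(\mathfrak g,\vec\lambda,\ell)|_{(C;\vec p)} = (\bigotimes_i \mathcal H_{\lambda_i})_{\mathfrak g(C\setminus\vec p)}$, whose dual is the space of invariant functionals. Assembling these three bijections, and checking they are compatible with the descent defining $\mc L_{\op{G}}(C,\vec p,\vec\lambda)$, gives the asserted isomorphism; I would also record that the isomorphism is canonical enough to globalize in families over $\ovmc M_{g,n}$, which is what later applications need.

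**The main obstacle.** The serious point is descent: one must show that a $\op{G}(C\setminus\vec p)$-invariant section on the product of (infinite-dimensional) flag varieties actually descends to a genuine section of a line bundle on the stack quotient, and conversely that every section on the stack arises this way — i.e. that $\op{H}^0$ commutes with the uniformization quotient. This requires knowing that $\op{G}(C\setminus\vec p)$ acts with the right properties (the relevant ind-group is ind-affine, the action is suitably free/transitive on the level of the stack), that higher cohomology of the line bundle on the flag varieties vanishes so that invariants compute the quotient's $\op{H}^0$ exactly (no $\op{H}^1$ correction), and that the central extension / level-$\ell$ twist is compatible across the marked points so the box-tensor line bundle is genuinely $\op{G}(C\setminus\vec p)$-equivariant (this is where the ``anomaly cancellation'' forcing a single level $\ell$ enters). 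For $\op{G}$ simply connected these are available in the literature for the non-parabolic case; the work here is to carry the flag-variety refinement through and to handle the case of a nodal or singular curve $C$ if that generality is intended — though the statement as phrased is for a smooth pointed curve $(C;\vec p)$, so the classical uniformization and vanishing results should suffice with only bookkeeping changes.
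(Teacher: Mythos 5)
Your outline is, for a smooth pointed curve, exactly the classical Beauville--Laszlo / Kumar--Narasimhan--Ramanathan / Laszlo--Sorger argument, and for that case it is fine --- but that case is already in the literature and is explicitly not the point of Theorem \ref{one} in this paper. The theorem is asserted for a stable curve $C$, and the proof given in Section \ref{TheoremOneProof} is stated for a point $(X_0,\vec p)$ \emph{on the boundary} of $\ovmc{M}_{g,n}$, i.e.\ a nodal curve; the only reason a proof is included at all is that the singular case is needed for Theorem \ref{qmain}. The step you dismiss as ``bookkeeping'' is precisely the hard part. For nodal $X_0$ the descent picture you invoke requires surjectivity of the uniformization map $\gamma:\mathcal{Q}_{\op{G}}\times(\op{G}/B)^n\to\op{Parbun}_{\op{G}}(X_0,\vec p)$, equivalently trivializability of $\op{G}$-bundles on the complement of finitely many smooth points of a singular curve; this is a genuine theorem (\cite{Serre} for $\op{SL}(r)$, \cite{BF} for general semisimple $\op{G}$), not a routine adaptation, and the paper deliberately avoids relying on it. Moreover, on a singular curve the line bundle $\mc{L}_{\op{G}}(X_0,\vec p)$ is \emph{defined} (Def \ref{LX_0}) by pullback from the normalization, so your identification of it with a descended Kac--Moody homogeneous bundle, and the equivariance/anomaly bookkeeping for the double quotient, would themselves have to be established in this setting. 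Finally, your claim that invariant sections upstairs biject with sections on the stack needs surjectivity (or a cohomology-vanishing descent statement) that you have not supplied for nodal $X_0$.

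The paper's route is different and circumvents all of this. It proves only that $\gamma$ is \emph{dominant} (Lemma \ref{Lemma1}, an elementary \v{C}ech/Kodaira--Spencer tangent-space computation with nilpotent generators of $\mathfrak{g}$), which already gives an injection of $\op{H}^0(\op{Parbun}_{\op{G}}(X_0,\vec p),\mc{L}^m)$ into the $\mathfrak{g}\otimes A_U$-invariants upstairs; those invariants are identified with $\mathbb{V}[m]^*_{(X_0,\vec p)}$ by propagation of vacua \cite{Ueno}, yielding the inequality $h^0\le\dim\mathbb{V}^*$. The reverse inequality is obtained not by descent but by the factorization assertion Lemma \ref{Lemma3}: sections over $X_0$ decompose, via $p_*\mathcal{O}$ and Peter--Weyl, into sections over the normalization $X$ twisted by weights $(\lambda,\lambda^*)$ at the preimages of the nodes, and matching this against the factorization theorem for conformal blocks gives $h^0\ge\dim\mathbb{V}^*$. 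Together the two inequalities prove the theorem without ever needing surjectivity of $\gamma$, line-bundle descent on the singular curve, or affine Borel--Weil vanishing --- and it is exactly this factorization mechanism that is reused in the proof of Theorem \ref{qmain}. So the gap in your proposal is not a technicality: as written it proves the smooth-curve statement (already known) and defers the singular-curve statement to results whose proof is the actual content here.
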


The line bundle $\mc{L}_{\op{G}}(C,\vec{p},\vec{\lambda})$ on $\op{Parbun}_{\op{G}}(C,\vec{p})$ is  analogous to $\mc{D}$ on $\op{Bun}_{\op{G}}(C,\vec{p})$. Theorem \ref{one}  was proved for smooth curves $C$  \cites{BeauvilleLaszlo,  LaszloSorger}, and
 for families of singular stable curves in \cite{BF}, using a different method. Since our original proof of Theorem \ref{one}    for a single  stable curve, using factorization, is needed in one of our proofs of Theorem \ref{qmain}, we include it here (see Section \ref{TheoremOneProof}).

Vector bundles of conformal blocks give rise to graded sheaves of $\mc{O}_{\ovmc{M}_{g,n}}$ algebras \cite[p.~368]{Faltings}, \cite{Manon}.  For example, $\mathscr{A}_{\bullet}=\bigoplus_{m\in \mathbb{Z}_{\ge 0}} \mathbb{V}(\sL_{r+1}, m)^*$, is the algebra of conformal blocks in type $\op{A}$.  To prove Theorem \ref{globaleJJs}, we show $\mathscr{A}_{\bullet}$ is finitely generated, and set $\mc{X}=\op{Proj}(\mathscr{A}_{\bullet})$.  Taking fibers commutes with  $\op{Proj}$, so $\mc{X}_{[C]}=\op{Proj}(\mathscr{A}^{[C]}_{\bullet})$. Flatness and assertion (1) follow from properties  of conformal blocks. For $C$ is smooth, there is a geometric interpretation for $\mathbb{V}(\sL_r,\ell)|_{C}^*$ on $\ovmc{M}_g$, as global sections of $\theta$ on $\op{SU}_{C}(r)$, giving (2).

In our second application of Theorem  \ref{qmain}, we show there are geometric interpretations for  fibers $\mathbb{V}(\sL_r,\ell)|_{C}^*$ at singular stable curves $C \in \ovmc{M}_g$, as global sections of a line bundle on a projective variety, under certain assumptions:

 \begin{theorem}\label{A2}Given  $[C]\in \ovmc{M}_g$, and a positive integer $r$, there exists a projective polarized pair $(\mc{X}_{C}(r,\ell), \mc{L}_{C}(r,\ell))$, and a positive integer $\ell$ such that
 \begin{equation}\label{BGKEq}
 \bigoplus_{m\in \mathbb{Z}_{\ge 0}}\mathbb{V}(\sL_r,m \ell)|_{[C]}^* \cong \bigoplus_{m\in \mathbb{Z}_{\ge 0}}\op{H}^0(\mc{X}_{C}(r,\ell), \mc{L}_{C}(r,\ell)^{\tensor m}).
 \end{equation}
We can be more precise about $\ell$ in some cases:
\begin{enumerate}
\item  For general $r$ if $C$ has only nonseparating nodes,  $\ell\ge 1$;
\item  For $r=2$,  $\ell$ divisible by $2$;
\item For general $r$, and $C$ with separating nodes, we know  such an $\ell$ exists.
\end{enumerate}
 \end{theorem}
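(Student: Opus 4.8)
The plan is to reduce the isomorphism \eqref{BGKEq} to the finite generation of $\mathscr{A}^C_\bullet$ provided by Theorem \ref{qmain}, together with a standard fact about Veronese subrings of finitely generated graded rings, and then to sharpen the choice of $\ell$ in cases (1)--(3) by exhibiting a concrete geometric model for the relevant $\op{Proj}$.

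First I would identify the left-hand side of \eqref{BGKEq} with the section ring studied above. Applying Theorem \ref{one} with a single trivially-weighted marked point --- which gives $\mathbb{V}(\sL_r,m)$ by propagation of vacua, with $\op{Parbun}_{\op{SL}(r)}(C,p)=\op{Bun}_{\op{SL}(r)}(C)$ and associated line bundle $\mc{D}^{\tensor m}$ at level $m$, the standard representation having Dynkin index one --- and using that the conformal-blocks product on $\bigoplus_m\mathbb{V}^*$ matches multiplication of sections, one obtains a graded-algebra isomorphism
$$\bigoplus_{m\ge 0}\mathbb{V}(\sL_r,m)|_{[C]}^*\ \cong\ \bigoplus_{m\ge 0}\op{H}^0(\op{Bun}_{\op{SL}(r)}(C),\mc{D}^{\tensor m})\ =\ \mathscr{A}^C_\bullet .$$
Thus $\bigoplus_{m\ge 0}\mathbb{V}(\sL_r,m\ell)|_{[C]}^*$ is the $\ell$-th Veronese subring of $R:=\mathscr{A}^C_\bullet$, which is finitely generated by Theorem \ref{qmain} and has integral $\op{Proj}$ by Theorem \ref{globaleJJs}(1).

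Next I would invoke the following standard fact: for a finitely generated graded $\C$-algebra $R$ with $R_0=\C$ and $\op{Proj}(R)$ integral, there is an integer $\ell_0$ such that for every multiple $\ell$ of $\ell_0$ the Veronese $R^{(\ell)}$ is generated in degree one and coincides with its saturation; equivalently, with $\mc{X}_C(r,\ell):=\op{Proj}(R)=\op{Proj}(R^{(\ell)})$ and $\mc{L}_C(r,\ell):=\mc{O}_{\op{Proj}(R)}(\ell)$, the latter is an ample line bundle with $\op{H}^0(\mc{X}_C(r,\ell),\mc{L}_C(r,\ell)^{\tensor m})=R_{m\ell}$ for all $m\ge 0$. (One takes $\ell_0$ a common multiple of an integer $d$ with $R^{(d)}$ generated in degree one and of a bound $m_0$ beyond which $R$ coincides with its saturation $\bar R$; both exist because $R$ and $\bar R$ are finitely generated.) For such $\ell$ this is exactly \eqref{BGKEq}, proving in particular part (3), where nothing further is asserted.

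The remaining work --- and the main obstacle --- is to improve $\ell$ in (1) and (2). For (1), when $C$ has only nonseparating nodes, I would run the factorization of conformal blocks over the nonseparating nodes (as in the proofs of Theorems \ref{one} and \ref{qmain}) to identify $\op{Proj}(\mathscr{A}^C_\bullet)$ with a single moduli space of torsion-free, or generalized parabolic, sheaves on the normalization of $C$, built as in Proposition \ref{SomethingIntro}; that space carries a natural very ample theta-type polarization which one shows embeds it projectively normally, so $\mathscr{A}^C_\bullet$ is already generated in degree one, saturated, with $\mc{O}(1)$ a line bundle, and hence every $\ell\ge1$ works. For (2), with $r=2$, the same strategy applies, but the relevant theta-type bundle on the moduli space is in general only a line bundle, or only yields a projectively normal model, after passing to its square (the usual index-two behaviour of rank-two theta bundles, e.g.\ at a separating node), so a direct analysis of the rank-two case gives that $\ell$ divisible by $2$ suffices. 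I expect the genuinely hard part to be this degree-one (resp.\ degree-two) generation --- i.e.\ controlling the projective normality of the explicit sheaf-theoretic model of $\op{Proj}(\mathscr{A}^C_\bullet)$ --- since the general assertion is essentially formal once Theorems \ref{qmain}, \ref{one} and \ref{globaleJJs}(1) are in hand.
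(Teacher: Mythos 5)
Your treatment of the general assertion and part (3) is essentially the paper's: Theorem \ref{one} identifies $\bigoplus_m\mathbb{V}(\sL_r,m)|_{[C]}^*$ with $\mathscr{A}^C_\bullet$, and then finite generation (Theorem \ref{qmain}) together with the standard $\op{Proj}$/Veronese fact (this is Lemma \ref{monty} in the paper) produces a polarized pair for some $\ell$. So far there is no issue.

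The gap is in parts (1) and (2). You route them through projective normality, i.e.\ generation of $\mathscr{A}^C_\bullet$ in degree one (resp.\ two) for a theta-type embedding of an explicit sheaf-theoretic model, and you yourself flag this as the hard step; it is indeed unproven (projective normality of such theta embeddings is delicate even for smooth curves), and, more importantly, it is neither needed nor what the paper does. The theorem only asks for \emph{some} projective polarized pair computing every graded piece; it does not ask that $\mc{X}_C=\op{Proj}(\mathscr{A}^C_\bullet)$ with $\mc{L}_C=\mc{O}(1)$, so no degree-one generation of the ring is required --- an ample line bundle whose section ring is $\mathscr{A}^C_\bullet$ need not be very ample or projectively normal. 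The paper's mechanism for (1) is: when all nodes are nonseparating, the covering-pair weights of Definition \ref{CP} reduce to the canonical polarization independently of $\lambda$ and of the level (the $\epsilon_{\sas}+\epsilon_{\sbs}=0$ constraint kills the $\lambda$-dependence), and one may take $\rk\mg=1$; hence a single GIT quotient $\mc{X}(\vec{a})=\mc{M}_{X_0}\git_{T(\mc{A})}\op{PGL}(V)$ works at every level, the inclusion of Proposition \ref{OneIntro} is onto at every level by Proposition \ref{SomethingIntro}, and Kempf's descent lemma descends $\mc{L}_{\mg}$ to an ample line bundle on the quotient, giving \eqref{BGKEq} already for $\ell=1$. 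For (2) the point is not an ``index two'' of rank-two theta bundles but integrality: for $r=2$ one has $\lambda=\lambda^*$, so the midpoint choice of $\epsilon_x$ (Remark \ref{something}) again returns the canonical polarization, but it prescribes half-integral $\deg(\mg_i)$; taking $\rk(\mg)=2$ restores integrality and captures exactly the even levels, whence $2\mid\ell$. As written, your argument for (1)--(2) stalls at the projective-normality claim, while the intended argument bypasses it entirely.
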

We note that by  \cite[Theorem 1.1]{BGK}, for $\mathbb{V}(\sL_2,1)$ on $\ovmc{M}_{2}$ there are points $[C]\in \Delta_1$ for which there is no projective polarized pair $(\mc{X}_C, \mc{L}_C)$ such that Eq \ref{BGKEq} holds with $\ell=1$.  Theorem \ref{A2}  is proved assuming Theorems \ref{qmain} and \ref{one} and with basic properties of $\op{Proj}$.

As  a third application of Theorem  \ref{qmain}, in Proposition \ref{C} we show that
the Chern character of $\mathbb{V}(\sL_r,\ell)$ on $\ovmc{M}_g$ is quasi-polynomial in $\ell$ for sufficiently large $\ell$.

\subsection{Outline of the proof of Theorem \ref{qmain}}\label{qmainDescribe} We use varieties  $\mc{X}(\vec{a})$, described in Def \ref{XA}, which compactify isomorphism classes of $\vec{a}$-semistable vector bundles of rank $r$ on $C$ with trivializable determinants (Def \ref{aStability}).
In Proposition \ref{OneIntro}, we show
for certain vector bundles $\mc{G}$ on $C$ of rank $m$, there are line bundles $\mc{L}_{\mc{G}}$ on $\mc{X}(\vec{a})$ (Def \ref{LBG}), such that
$\op{H}^0(\mc{X}(\vec{a}), \mc{L}_{\mc{G}})\hookrightarrow \op{H}^0(\op{Bun}_{\op{SL}(r)}(C), \mc{D}^{\tensor m})$.  These inclusions give way to a map
$$F: \bigoplus_{\{\vec{a},\mc{G}\}}\op{H}^0(\mc{X}(\vec{a}), \mc{L}_{\mc{G}})\to \bigoplus_{m\in \mathbb{Z}_{\ge 0}} \op{H}^0(\op{Bun}_{\op{SL}(r)}(C), \mc{D}^{\tensor m}).$$  We show  $F$ is surjective, and  the part  of the sum needed to prove surjectivity, is finitely generated.

To prove surjectivity, we first show  $\op{H}^0(\op{Bun}_{\op{SL}(r)}(C), \mc{D}^{\tensor m})$ can be expressed as a direct sum  of factors from the moduli stack  of principal bundles on the normalization of $C$.  Two proofs of this are given: one using conformal blocks, and a second one which is independent of conformal blocks (see Lemma \ref{Lemma3}(3)). Second, we  argue using pole calculations, that  components of the sum are contained in the image of particular such embeddings.  Here we use the crucial fact that that the direct decomposition  $\op{H}^0(\op{Bun}_{\op{SL}(r)}(C), \mc{D}^{\tensor m})$ only features summands of level $m$.

To prove finite generation of the part of the sum on the left hand side necessary for the map $F$ to be surjective, we show that for the relevant $\vec{a}$, the varieties $\mc{X}(\vec{a})$ are GIT quotients of the same Thaddeus master space $\mathcal{M}_{X_0}$ by different ample linearizations (Def \ref{biglist}).
Finite generation then follows from basic arguments  (see Section \ref{FG}).

For related finite generation results for $G=\op{SL}(2)$, and $\op{SL}(3)$ see \cite{Man2, Man1}.

\subsection{Modular interpretations and other questions} It is natural to study $\mc{X}=\op{Proj}(\mathscr{A}_{\bullet})$, and the fibers $\op{Proj}(\mathscr{A}^{C}_{\bullet})$ at singular stable curves $C$, since
$\mathscr{A}_{\bullet}$ and $\mathscr{A}^{C}_{\bullet}$ are finitely generated.

  Projective varieties analogous to  $\op{SU}_{C}(r)$, for stable curves $C$,  constructed  using torsion free sheaves on  $C$,  will differ from $\op{SU}_{C}(r)$ in fundamental ways.  First, there is no known definition of a determinant of a torsion-free sheaf on a stable curve (see e.g., \cite{Faltings2} for a discussion). Second, for semistability, one needs to choose  polarizations on the stable curve, and the resulting spaces depend upon the choice of polarization, unlike for the smooth case. Moreover, sections of the analogous  line bundles over such moduli spaces are not equal to sections of the determinant of cohomology line bundle over the stack ${\op{Bun}}_{\op{SL}(r)}(C)$. In Section \ref{partiale} we outline an approach toward a potential modular interpretation for an open subset of $\op{Proj}(\mathscr{A}^{C}_{\bullet})$ for $C$ a stable curves with singularities.

We may consider
$\mc{X}=\op{Proj}(\mathscr{A}_{\bullet})$ in analogy with the Satake compactification of the variety $\op{A}_g$, which parametrizes principally polarized Abelian varieties over $\mathbb{C}$ of dimension $g\ge 2$.  Just as with $\mc{D}$ on  $\op{Bun}_{\op{SL}(r)}(C)$,  there is a distinguished ample line bundle $\mc{L}$ on the stack $\mc{A}_g$. Global sections of $\mc{L}^{\tensor k}$ are Siegel modular forms of weight $k$, and the section ring of $(\mc{A}_g,\mc{L})$ is finitely generated.

While the original proof  used the existence of the Satake compactification  as an analytic space   \cite{Cartan},  finite generation of the section ring of $(\mc{A}_g,\mc{L})$  can be proved (eg. \cite{FaltingsChai}), using toroidal compactifications $\ovop{A}_g$  in a manner reminiscent of our proof of finite generation of $(\op{Bun}_{\op{SL}(r)}(C), \mc{D})$: We use extensions of global sections of $\mc{L}_{\mc{G}}=\pi^*\mc{D}^{\otimes m}$  to compactifications $\mc{X}(\vec{a})$, while  extensions of global sections of $\omega^{m}=\ov{\pi}^*(\mc{L})$ over $\op{A}_g$  to compactifications $\ovop{A}_g$ can be used in the Satake case.    There is a difference: All global sections extend to any compactification in the Satake case, while in our case, global sections extend to different compactifications.

 By taking $\op{Proj}\big(\bigoplus_{k\in \mathbb{Z}_{\ge 0}} \op{H}^0(\mc{A}_g, \mc{L}^{\tensor k})\big)$,
one obtains the Satake (or Baily-Borel) compactification $\op{A}^{\star}_g$  (eg. \cites{SatakeCompactification, FaltingsChai}). While no clear modular interpretation is known, much has been learned about $\op{A}_g^{\star}$.  For instance, $\op{A}^{\star}_g$ is considered the {\em{smallest}} known compactification of $\op{A}_g$: There are canonical
morphisms $\ov{\pi}:\ovop{A}_g \to \op{A}_g^{\star}$ from all (smooth) toroidal compactifications $\ovop{A}_g$ of  $\op{A}_g$  (eg. \cite[Theorem 2.3]{FaltingsChai}).     It would be interesting to know how $\mc{X}$ and $\op{A}_g^{\star}$ compare more generally.

It is natural to ask if Theorems \ref{qmain} and  \ref{globaleJJs} generalize to arbitrary simple groups
$G$ (and parabolic analogues with marked points), and whether Theorem \ref{qmain} holds for all singular curves.  We also wonder if finite generation holds for higher dimensional varieties: For example, for a smooth projective surface $Z$, the moduli-stack of $G=\op{SL}(r)$ bundles, denoted $\op{Bun}_G(Z)$, again carries a determinant of cohomology line bundle $\mathcal{D}$. It makes sense to ask (at least for special types of surfaces) whether the section ring of $(\op{Bun}_G(Z),\mathcal{D})$ is finitely generated.

\section{Two basic line bundles}\label{LBS}
Here we define two line bundles: the determinant of cohomology line bundle (Def \ref{DCLB}), and  $T(\mc{A})$, (Def \ref{TA}), basic to all of our constructions.   The latter gives the projective embedding of the locus which is used to construct moduli spaces $\mc{X}(\vec{a})$ described in the introduction (Def \ref{XA}).  This locus lies in a particular
Quot scheme, also introduced here (Def \ref{Quot}).
\subsection{The determinant of cohomology line bundle}
Following  \cite{Faltings4}, we describe the determinant of cohomology of a vector bundle on a curve.
\begin{definition}\label{DetCo}
For any vector bundle $\me$ on a curve $C$, the determinant of cohomology of $\me$ on $C$ is the one dimensional vector space given by
\begin{equation}\label{deCo}
\mathcal{D}(C,\mc{E})=\Big(\Lambda^{\mbox{\tiny{max}}}\op{H}^0\big(C, \mc{E}\big)\Big)^* \tensor \Big(\Lambda^{\mbox{\tiny{max}}}\op{H}^1\big(C, \mc{E}\big)\Big).
\end{equation}
\end{definition}
${\op{Bun}}_{{\op{G}}}(C)$ is the  smooth {algebraic} stack whose fiber over a scheme $T$ is the groupoid of principal ${\op{G}}$-bundles on $C\times T$ \cite[Thms 1.0.1 and 6.0.18]{Wang}. Following  \cite{LaszloSorger}, we define the determinant of cohomology line bundle on ${\op{Bun}}_{{\op{G}}}(C)$.
\begin{definition}\label{DCLB}Let $\rho: \op{G}\to \op{Gl}(V)$ be a representation of $\op{G}$.
If  $E$ is a family of $\op{G}$-bundles on $C$ parameterized by a scheme $T$,  then given a point $t \in T$, one has that $E_t$ is a $\op{G}$-bundle on $C$, and one can form  a vector bundle $\mc{E}_t(V)$ on $C$ by taking the contracted product $\mc{E}_t(V) = E_t\times_{G} V$.
The determinant of cohomology line bundle $\mc{D}_{E}(V)$ is the line bundle on $T$  whose fiber over a  point $t \in T$ is the line $\mathcal{D}(C,\mc{E}_t(V))$, described in Def \ref{DetCo}. \end{definition}
The determinant of cohomology bundle on ${\op{Bun}}_{{\op{SL}(r)}}(C)$ associated to any representation is a tensor power of that associated to the standard representation.  This reduces easily, by passing to the normalization using the method of Eq \eqref{long} to the case for smooth curves, and the smooth case can  proved using \cite{DNar,LaszloSorger}. Similar statements for other groups appear in \cite{LaszloSorger}.
\subsection{The Quot Scheme and $T(\mc{A})$}

Given an ample line bundle $\mc{L}$ on $X_0$, we write  $\op{P}_{\mc{L}}(\mc{E},m)$ to denote the Hilbert polynomial of a  coherent sheaf  $\mc{E}$ on $X_0$ with respect to $\mc{L}$.  When clear from the context, we drop the subscript $\mc{L}$, writing $\op{P}(\mc{E},m)$, or simply $\op{P}$.

\begin{definition}\label{PDO} A coherent sheaf $\mathcal{E}$ on a curve $X_0$ is torsion free if it satisfies either of the following equivalent conditions:
\begin{enumerate}
\item $\mc{E}$ has depth one at any closed point;
\item  $\mc{E}$ is pure of dimension one: the dimension of support of any non-zero subsheaf of $\mc{E}$ is one.
\end{enumerate}
\end{definition}

\begin{definition}\label{RelTF}
A quasi-coherent sheaf $\me$ on a family of curves $X\to T$ is relatively torsion free if is flat of finite presentation, and torsion free when restricted to any fiber.
\end{definition}

\begin{definition}\label{Quot}Given an ample line bundle $\mc{L}$ on $X_0$, let $V$ be a fixed vector space of dimension $h^0(X_0, \mc{O}_{X_0}^{\oplus r}\tensor \mc{L})$.  By
$\op{Quot}_{X_0}(V\tensor \mc{L}^{-1},\op{P})$ we mean the component of the Quot scheme consisting of quotients $[V\tensor \mc{L}^{-1} \twoheadrightarrow \mc{E}]$ on $X_0$
such that  $\op{P}_{\mc{L}}(\mc{E},m)=\op{P}_{\mc{L}}(\mc{O}_{X_0}^{\oplus r},m)=\op{P}$. Let $\op{Quot}_{X_0}(V\tensor \mc{L}^{-1},\op{P},1)$ be the closure in $\op{Quot}_{X_0}(V\tensor \mc{L}^{-1},\op{P})$ of the set of points   $[V\tensor \mc{L}^{-1} \twoheadrightarrow \mc{E}]$ in $\op{Quot}_{X_0}(V\tensor \mc{L}^{-1},\op{P})$ such that  $\mc{E}$ is torsion free
(cf. the discussion before Lemma 1.17 in \cite{simpson}).
\end{definition}

Recall Grothendieck's embedding:  Let $\mc{L}$ and $\mc{A}$ be ample line bundles on $X_0$.  There is an integer $N$ such that for all $m\ge N$,
there is a natural $\op{GL}(V)$-equivariant embedding
\begin{equation}\label{springbreak}
\psi_m: \op{Quot}_{X_0}(V\tensor \mc{L}^{-1},\op{P},1) \rightarrow \op{Grass}^{\op{quot}}(V\tensor W,\rho),\ W=\op{H}^0(X_0, \mc{A}^{m}),
\end{equation}
where
$$\Bigl(V\tensor \mc{L}^{-1} \twoheadrightarrow \mc{E} \Bigr) \mapsto \Bigl(V\tensor \op{H}^0(X_0, \mc{A}^{m}) \twoheadrightarrow \op{H}^0(X_0,\mc{E} \tensor \mc{L} \tensor \mc{A}^{m})\Bigr).$$

\begin{definition}\label{TA}
Let $T(\mathcal{A})$ be the line bundle on  $\op{Quot}_{X_0}(V\tensor \mc{L}^{-1},\op{P},1)$ obtained as the pull back of the ample line bundle $\mathcal{O}_{\op{Grass}^{\op{quot}}(V\tensor W,\rho)}(1)$ under the map $\psi_m$ given in Eq \eqref{springbreak}. We do not wish to complicate the notation further by expressing the dependence of $T(\mathcal{A})$ on
$m$: Instead we will assume that $\mathcal{A}$ has been replaced by $\mathcal{A}^m$, so that $m=1$. We will use the $\op{GL}(V)$ linearization on $T(\mathcal{A})$ coming from the $\op{PGL}(V)$ linearization on $\mathcal{O}_{\op{Grass}^{\op{quot}}(V\tensor W,\rho)}(1)$ (strictly speaking we need to take a tensor power for the $\op{PGL}(V)$-linearization).

\end{definition}

\section{Notions of semi-stability}
In Def \ref{XA} we describe varieties   $\mc{X}(\vec{a})$,  constructed as GIT quotients of $\vec{a}$-semistable torsion free sheaves on $X_0$ using a linearization given by the line bundle $T(\mc{A})$.   In this section we generalize the standard notion of {$\vec{a}$-semistable torsion free sheaves on $X_0$ to allow for some negative weights, as long as $\sum_{i}a_i >0$ (see Section \ref{GenSes}). We also describe
the correspondence between  slope-semistability weights $\vec{\alpha}$ for the line bundle  $T(\mc{A})$, defined in Section \ref{LBS}, and the semistability weights  $\vec{a}$.

\subsection{$\vec{a}$-semistable torsion free sheaves on $X_0$}

\begin{definition}\label{aStability}Suppose that $X_0$ is a stable curve with irreducible components $\{X_{0,i}\}_{i\in I}$.  Let $\mc{E}$ be a torsion free sheaf on $X_0$.  We will use the following notation.
\begin{enumerate}
\item If $r_i=\op{rk}_i(\mc{E})=\op{rk}(\mc{E}|_{X_{0,i}})$, then $(r_i)_{i\in I}$ is the multi-rank of $\mc{E}$, and $\mc{E}$ will be said to have uniform multi-rank if $r=r_i$ for all $i \in I$.
\item If $d_i=\op{deg}_i(\mc{E})=\op{deg}(\mc{E}|_{X_{0,i}})$, then $(d_i)_{i\in I}$ is the multi-degree of $\mc{E}$.
\item Let  $\vec{a}=(a_i)_{i\in I}$ be positive rational numbers.  Suppose $\mc{E}$ has multi-rank $(r_i)_{i\in I}$.  The $\vec{a}$-slope of $\mc{E}$ is
$$\mu_{\vec{a}}(\mc{E})=\frac{\chi(X_0,\mc{E})}{\sum_{i\in I}r_i a_i}.$$
\item  We say that a sheaf $\mc{E}$ on $X_0$ is $\vec{a}$- semistable (respectively $\vec{a}$-stable) if for every nonzero proper subsheaf $\mc{F} \subset \mc{E}$, one has
\begin{equation}\label{bart}
\mu_{\vec{a}}(\mc{F})\le \mu_{\vec{a}}(\mc{E}) \ \ \ (\mbox{respectively } \mu_{\vec{a}}(\mc{F}) < \mu_{\vec{a}}(\mc{E})).
\end{equation}
\end{enumerate}
\end{definition}
We have  found it useful to generalize Def (4) above, which is due to Seshadri \cite{Seshadri},  to allow for the possibility that some weights  $a_i$ can be negative or zero.  To do so, we write the semistability inequality in Eq  \eqref{bart} as follows:
\begin{equation}\label{newform}
\chi(X_0,\mc{F})\leq \mu_{\vec{a}}(\mc{E})\cdot\sum a_i r_i(\mf).
\end{equation}
Setting $\gamma_i=a_i  \mu_{\vec{a}}(\mc{E})$, we can rewrite the inequality as
$$\chi(X_0,\mc{F})\leq \sum \gamma_i r_i(\mf),$$
and we note that this inequality holds as an equality for $\mf=\me$. The following generalization, given in Def \ref{Generalization} is therefore natural.

\begin{definition}\label{Generalization}
 Suppose we are given real numbers $\vec{\gamma}=\{\gamma_i\}_{i\in I}$, where the $\gamma_i$ may possibly be negative or zero. Assume further that
 \begin{equation}\label{smiley0}
\chi(X_0,\me)= \sum \gamma_i r_i(\me).
\end{equation}
We say that  $\me$ is $\vec{\gamma}$-linearly semistable if for any subsheaf $\mf\subseteq \me$, the following inequality
\begin{equation}\label{smiley}
\chi(X_0,\mf) \leq \sum \gamma_i r_i(\mf)
\end{equation}
holds.
\end{definition}

\subsection{}\label{GenSes}We next explain why it is natural,  in Seshadri's definition of semistability  given in Def \ref{aStability}(4), if $\me$ has uniform rank,  to allow some weights $a_i$ to be negative, as long as $\sum_{i}a_i \neq 0$.   In particular, we will show that in case $\me$ has uniform rank, then modulo tensoring with line bundles, the new notion of semistability
is equivalent to the original definition of semi-stability.

If $\gamma_i>0$ for all $i$, then we can write Eq \eqref{smiley} as
\begin{equation}\label{smiley1}
\frac{\chi(X_0,\mf)}{\sum \gamma_i r_i(\mf)}\leq 1=\frac{\chi(X_0,\me)}{\sum \gamma_i r_i(\me)}.
\end{equation}

If $\gamma_i<0$ for all $i$, let $\delta_i=-\gamma_i$ and write Eq \eqref{smiley} as
\begin{equation}\label{smiley2}
\frac{\chi(X_0,\mf)}{\sum \delta_i r_i(\mf)}\leq -1=\frac{\chi(X_0,\me)}{\sum \delta_i r_i(\me)}.
\end{equation}

So in these two cases we recover Seshadri's definition of semistability. For the other cases, if $\ml$ a line bundle on $X_0$, consider the formula  \cite[Corollary 8, page 152]{Seshadri}:
\begin{equation}\label{norepeat}
\chi(X_0,\mf\tensor\ml)=\chi(X_0,\mf)+\sum_{i} \deg_i(\ml)r_i(\mf).
\end{equation}
This gives that Eq \eqref{smiley} is equivalent to, with $\mf'=\mf\tensor\ml\subset\me'=\me\tensor\ml$,
\begin{equation}\label{smileyshift}
\chi(X_0,\mf')\leq \sum_i (\gamma_i+\deg_i(\ml)) r_i(\mf)=\sum \tau_i r_i(\mf'),\ \ \tau_i=\gamma_i+\deg_i(\ml),
\end{equation}
with Eq \eqref{smiley} holding as an equality for $\mf'=\me'$. Here we have used $r_i(\mf)=r_i(\mf')$. Therefore, $\me\tensor \ml$ is $\vec{\tau}$-linearly semistable if and only if
$\me$ is $\vec{\gamma}$-linearly semistable.

But $\tau_i>0$ if $\ml$ is sufficiently ample, and therefore up to tensoring with line bundles, the new notion of semistability
is equivalent to the original (Seshadri) one.Therefore,
\begin{remark}\label{SeshadriGeneral}
We will always write Seshadri's notion of semistability using inequality \eqref{newform} for vector bundles $\me$ of uniform rank, and generalize this definition to allow some $a_i$ to be negative or zero, and $\sum a_i\neq 0$.
\end{remark}

\subsection{Correlation between semistability weights $\vec{a}$, and slope-semistability weights $\vec{\alpha}$}\label{twist}
\begin{definition}\label{ADef}Let $\vec{a}=\{a_i\}_{i\in I}$, and $\sum_{i\in I}a_i>0$, and let $\mc{L}$ be an ample line bundle on $X_0$.  Set $\vec{\alpha}=\{\alpha_i\}_{i\in I}$, where
\begin{equation}\label{haupti}
\alpha_i=\alpha \Bigl(\deg_i (\mathcal{L}) - \frac{a_i(g-1)}{\sum_{i\in I} a_i}\Bigr),
\end{equation}
and let $\mathcal{A}$ be the line bundle on $X_0$ with degree $\alpha_i=\deg_i(\mathcal{A})$ on $X_{0,i}$. Here $\alpha$ is a suitable very divisible integer (so that $\alpha_i$ is an integer). The line bundles  $\mathcal{A}$ are well defined up to scale. We will always work in situations where $\deg_i(\mathcal{A})>0$ for all $i$, and hence $\mathcal{A}$ is ample.
\end{definition}

This choice of weights $\vec{\alpha}$  is the following, an immediate consequence of  Eq \eqref{norepeat}:
\begin{lemma}\label{pastel}
Let $\me$ be a torsion free sheaf on $X_0$ with Hilbert polynomial with respect to $\ml$:
$$\chi(X_0,\me\tensor\ml^m)=\chi(X_0,\mathcal{O}^{\oplus r}\tensor\ml^m)=r\chi(X_0, \ml^m).$$
The following are equivalent,
\begin{enumerate}
\item $\me$ is $\vec{a}$- semistable (i.e., inequality \eqref{newform} holds for all $\mf\subseteq\me$, see Remark \ref{SeshadriGeneral});
\item $\me\tensor\ml$ is $\vec{\alpha}$- semistable; for weights given by $\alpha_i=\deg_i(\mathcal{A})$.
\end{enumerate}
\end{lemma}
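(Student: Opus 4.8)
The plan is to reduce Lemma \ref{pastel} to a direct computation with the formula \eqref{norepeat}, since the statement is essentially a bookkeeping identity once one unwinds the definitions. First I would record what it means for $\me$ to be $\vec{a}$-semistable in the generalized sense of Remark \ref{SeshadriGeneral}: by Eq \eqref{newform}, for every subsheaf $\mf\subseteq\me$ one has $\chi(X_0,\mf)\le \mu_{\vec{a}}(\me)\sum a_i r_i(\mf)$, and since by hypothesis $\chi(X_0,\me)=r\chi(X_0,\ml^0)=r\chi(X_0,\mc{O})=r(1-g)$ while $\sum a_i r_i(\me)=r\sum a_i$, we get $\mu_{\vec{a}}(\me)=\tfrac{r(1-g)}{r\sum a_i}=-\tfrac{g-1}{\sum a_i}$. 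Hence $\gamma_i:=a_i\mu_{\vec{a}}(\me)=-\tfrac{a_i(g-1)}{\sum a_i}$, so that $\vec{a}$-semistability of $\me$ is exactly $\vec{\gamma}$-linear semistability in the sense of Def \ref{Generalization} for this $\vec{\gamma}$ (and the normalization \eqref{smiley0} holds because the inequality is an equality for $\mf=\me$).

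Next I would apply the twist formula \eqref{norepeat} with $\ml$ in the role of the twisting line bundle: as recorded in \eqref{smileyshift}, $\mf\mapsto\mf\tensor\ml$ converts $\vec{\gamma}$-linear semistability into $\vec{\tau}$-linear semistability with $\tau_i=\gamma_i+\deg_i(\ml)$. Plugging in the value of $\gamma_i$ computed above gives $\tau_i=\deg_i(\ml)-\tfrac{a_i(g-1)}{\sum a_i}$, which up to the common divisible scalar $\alpha$ is precisely $\alpha_i=\deg_i(\mc{A})$ from Def \ref{ADef}, Eq \eqref{haupti}. Since scaling all the $\gamma_i$ by a positive constant does not change the semistability condition, $\vec{\tau}$-linear semistability of $\me\tensor\ml$ coincides with $\vec{\alpha}$-linear semistability. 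Finally, because we are in the situation where $\deg_i(\mc{A})>0$ for all $i$ (as stipulated in Def \ref{ADef}), the weights $\vec{\alpha}$ are all positive, so by Eq \eqref{smiley1} $\vec{\alpha}$-linear semistability is the same as ordinary ($\vec{\alpha}$-slope) semistability in the sense of Def \ref{aStability}(4). Combining these equivalences yields: $\me$ is $\vec{a}$-semistable $\iff$ $\me$ is $\vec\gamma$-linearly semistable $\iff$ $\me\tensor\ml$ is $\vec\tau$-linearly semistable $\iff$ $\me\tensor\ml$ is $\vec\alpha$-semistable, which is exactly the claim.

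The one point requiring a little care — and the closest thing to an obstacle — is matching the Hilbert-polynomial normalization across the twist. The hypothesis is stated for $\me$ with $\chi(X_0,\me\tensor\ml^m)=r\chi(X_0,\ml^m)$, and to invoke the $\vec\alpha$-slope definition of Def \ref{aStability}(4) for $\me\tensor\ml$ one should check that $\me\tensor\ml$ has the multi-rank $(r,\ldots,r)$ that makes $\mu_{\vec\alpha}$ well defined (immediate, since tensoring with a line bundle preserves multi-rank) and that the equality \eqref{smiley0}, equivalently $\chi(X_0,\me\tensor\ml)=\sum\alpha_i r$, holds after the twist — but this is forced, since \eqref{smiley} is an equality at $\mf=\me$ and the twist preserves equalities. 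So no genuine difficulty arises; the proof is the chain of equivalences above, and the whole lemma is, as the text says, an immediate consequence of Eq \eqref{norepeat}.
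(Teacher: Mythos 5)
Your argument is correct and is essentially the paper's own (the paper treats the lemma as an immediate consequence of Eq \eqref{norepeat} together with the choice \eqref{haupti}, which is exactly the computation you carry out: $\mu_{\vec a}(\me)=-\tfrac{g-1}{\sum a_i}$, twist by $\ml$ via \eqref{smileyshift}, identify $\tau_i$ with $\alpha_i/\alpha$, then pass between the linear and slope forms using \eqref{smiley1} and positivity of $\deg_i(\mc A)$). Your use of $\sum a_i r_i(\me)=r\sum a_i$ tacitly assumes uniform multi-rank $r$, but that is precisely the standing convention of Remark \ref{SeshadriGeneral} under which the lemma is stated, so no gap relative to the paper.
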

\begin{remark}\label{folge}
We will work in situations where the absolute values of $\frac{a_i}{\sum a_i}$ are bounded above.
\begin{enumerate}
\item For $\mathcal{A}$ as in Def \ref{ADef},  summing Eq \eqref{haupti} over $i$, we get
$$\deg(\mathcal{A})=\alpha(\deg(\mathcal{L})-(g-1))=\frac{\alpha}{r} \chi(X_0,\me\tensor\mathcal{L}).$$

\item If $\deg_i(\ml)$ are sufficiently large (which we will assume), then $\deg_i(\mathcal{A})>0$.
\item Writing $c_i=\frac{\deg_i(\mathcal{A})}{\deg(\mathcal{A})}$, and noting that $\sum c_i=1$, Eq \eqref{haupti} implies:
$$\deg(\ml)(c_i-\frac{\deg_i(\ml)}{\deg(\ml)})= (g-1)(c_i-\frac{a_i}{\sum a_i}).$$
Hence, $|(c_i-\frac{\deg_i(\ml)}{\deg(\ml)})|< \frac{E}{\deg(\ml)}$ for a constant $E$.
    Therefore the weights $c_i$ are very close to the weights determined by $\ml$, when $\deg(\ml)$ is large.
\end{enumerate}
\end{remark}

\section{The semistability locus  $\op{Q}_{X_0}^0$ for $T(A)$, and line bundle identities on $\op{Q}_{X_0}^0$}
In this section we define a locus  $\op{Q}_{X_0}^0$, and  show that all points semistable  for $T(\mathcal{A})$ are in $\op{Q}_{X_0}^0$ (see Remark \ref{attention}).  We also introduce line bundles $\mc{L}_{\mc{G}}$, and we describe relationships between   $T(\mathcal{A})$ and $\mathcal{D}$ with  $\mc{L}_{\mc{G}}$ on the locus $\op{Q}_{X_0}^0$.

\subsection{Notation}
\begin{definition}\label{Quot0} Let $\op{Q}_{X_0}^0$ be the open set of points $[V\tensor\ml^{-1}\to \me] \in \op{Quot}_{X_0}(V\tensor \mc{L}^{-1},\op{P},1)$ such that
\begin{enumerate}
\item  $\mc{E}$ is torsion free; and
\item the map $V\to \op{H}^0(X_0,\mc{E}\tensor \mc{L})$ is an isomorphism; in particular $h^1(X_0,\mc{E}\tensor \mc{L})=0$.
\end{enumerate}
\end{definition}

As mentioned in the introduction, throughout this work, while discussing singular stable curves, we use the notation $X_0$, and for their normalization, we write $\nu: X\to X_0$.  The set of irreducible components of $X_0$ is denoted $\{X_{0,i}\}_{i\in I}$, while the set of irreducible components of $X$ is expressed as $\{X_{i}\}_{i\in I}$.  In particular $\nu_i = \nu|_{X_i}: X_i \to X_{0,i}$ is the normalization, for each $i \in I$.
Choose smooth points $p_i$ on irreducible components $X_{0,i}$ of $X_0$. Henceforth we will  assume that $\mathcal{A}=\mathcal{O}(\sum \alpha_i p_i)$ and $\mathcal{L}=\mathcal{O}(\sum \beta_i p_i)$ for suitable $\alpha_i$ and $\beta_i$.

\begin{definition}\label{plus}
Let $\mathcal{G}=\mathcal{G}_{\vec{a}}$ be a vector bundle on $X_0$ of rank $r(\mg)=r\deg(\mathcal{A})$ obtained as a direct sum of line bundles of form $\mathcal{O}(\sum \gamma_i p_i)$ such that
$$\frac{\deg_i(\mathcal{G})}{\rk(\mg)}=\frac{a_i(g-1)}{\sum a_i}.$$
\end{definition}
\begin{definition}
Let  $\aleph(X_0)$ denote  the moduli stack of torsion free sheaves on $X_0$.
\end{definition}

\begin{definition}\label{LBG}For $\mc{G}$ a vector bundle on $X_0$, let $\mathcal{L}_{\mg}$ be a line bundle on $\aleph(X_0)$, whose fiber at $\mathcal{E}$ is the determinant of cohomology $\mc{D}(X_0,\me\tensor\mg)$.
\end{definition}

\subsection{The relationships between $\mc{L}_{\mc{G}}$, $T(\mathcal{A})$ and $\mathcal{D}$ on $\op{Q}_{X_0}^0$}
\begin{lemma}There is an identity of $T(\mathcal{A})$ and $\mathcal{D}$, as $\op{GL}(V)$ linearized line bundles, which at fibers over points
  $x=[V\tensor \mc{L}^{-1}\twoheadrightarrow \mc{E}]\in \op{Q}^0_{\mc{L}}(r)$, is given by
\begin{equation}\label{Khokar}
T(\mathcal{A})|_{x}^{\chi(X_0,\me\tensor \mathcal{L})} \cong (\mathcal{D}(X_0,\me\tensor \mathcal{L}\tensor\mathcal{A})^*)^{-\chi(X_0,\me\tensor \mathcal{L})} \tensor \mathcal{D}(X_0,\me\tensor \mathcal{L})^{\chi(X_0,\me\tensor \mathcal{L}\tensor\mathcal{A})}.
\end{equation}
\end{lemma}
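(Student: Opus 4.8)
The plan is to derive the identity from the general theory of the determinant of cohomology line bundle and its compatibility with the Grothendieck–Pl\"ucker embedding $\psi_m$ underlying the definition of $T(\mathcal{A})$. Recall from Definition \ref{TA} that $T(\mathcal{A}) = \psi_m^*\,\mathcal{O}_{\op{Grass}^{\op{quot}}(V\tensor W,\rho)}(1)$ with (after replacing $\mathcal{A}$ by $\mathcal{A}^m$) the map sending $[V\tensor\mc{L}^{-1}\twoheadrightarrow\mc{E}]$ to the quotient $V\tensor \op{H}^0(X_0,\mathcal{A}) \twoheadrightarrow \op{H}^0(X_0,\mc{E}\tensor\mc{L}\tensor\mathcal{A})$. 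On the locus $\op{Q}^0_{X_0}$, both $\mc{E}\tensor\mc{L}$ and $\mc{E}\tensor\mc{L}\tensor\mathcal{A}$ have vanishing $\op{H}^1$ (the first by Definition \ref{Quot0}(2), the second since $\mathcal{A}$ is ample of large degree and $X_0$ a curve), so the relevant Euler characteristics are just the dimensions $h^0$, and the fiber of $\mathcal{O}(1)$ under $\psi_m$ is $\bigl(\Lambda^{\op{max}}\op{H}^0(X_0,\mc{E}\tensor\mc{L}\tensor\mathcal{A})\bigr)^*$ up to the universal factor $\Lambda^{\op{max}}\op{H}^0(X_0,\mathcal{A})$, which is a constant line independent of the point and can be absorbed into the $\op{GL}(V)$-linearization.

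The key computational input is the standard ``determinant of cohomology is additive in short exact sequences'' principle: for a fixed line bundle $\mathcal{A}$ on the curve $X_0$, there is an exact sequence $0 \to \mathcal{A}^{-1}\to \mc{O}_{X_0}^{\oplus ?}\to \cdots$ — more precisely, I would use the resolution coming from $\mathcal{A}$ to express $\mc{D}(X_0,\mc{E}\tensor\mc{L}\tensor\mathcal{A})$ in terms of $\mc{D}(X_0,\mc{E}\tensor\mc{L})$ and $\mc{D}(X_0,\mc{E}\tensor\mc{L})$ twisted, together with constant factors. Concretely: writing $\mathcal{A}=\mc{O}(\sum\alpha_i p_i)$ as in the conventions fixed before Definition \ref{plus}, one has exact sequences $0\to \mc{E}\tensor\mc{L}\to \mc{E}\tensor\mc{L}\tensor\mathcal{A}\to Q\to 0$ with $Q$ a torsion sheaf supported at the $p_i$, of length $\sum \alpha_i r_i(\mc{E})$; additivity of $\mc{D}$ along this sequence relates $\mc{D}(X_0,\mc{E}\tensor\mc{L}\tensor\mathcal{A})$ to $\mc{D}(X_0,\mc{E}\tensor\mc{L})$ tensored with $(\det Q)^{\pm 1}$ and a constant. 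Taking the appropriate tensor powers to match the exponents $\chi(X_0,\mc{E}\tensor\mc{L})$ and $\chi(X_0,\mc{E}\tensor\mc{L}\tensor\mathcal{A})$ in Eq \eqref{Khokar}, the torsion contributions and constants cancel, leaving exactly the claimed isomorphism. I would organize this as: (i) identify the $\psi_m$-pullback fiber with $(\Lambda^{\op{max}}\op{H}^0(X_0,\mc{E}\tensor\mc{L}\tensor\mathcal{A}))^*$ up to a constant; (ii) use $h^1=0$ on $\op{Q}^0_{X_0}$ to rewrite this as $\mc{D}(X_0,\mc{E}\tensor\mc{L}\tensor\mathcal{A})$ up to a constant; (iii) apply additivity of $\mc{D}$ to cancel across the exponents and obtain \eqref{Khokar} as line bundles; (iv) check that the $\op{GL}(V)$-linearizations match, using that the discrepancy is a character of $\op{GL}(V)$ that is determined by degrees and hence agrees on both sides after the indicated tensor powers.

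The main obstacle I anticipate is step (iv): bookkeeping the $\op{GL}(V)$-linearizations. The line bundle identity on the level of underlying bundles is essentially formal, but $T(\mathcal{A})$ carries a specific linearization coming from $\op{PGL}(V)$ on $\mathcal{O}_{\op{Grass}^{\op{quot}}}(1)$ (as noted in Definition \ref{TA}, after a tensor power), whereas $\mc{D}(X_0,\mc{E}\tensor\mc{L}\tensor\mathcal{A})$ and $\mc{D}(X_0,\mc{E}\tensor\mc{L})$ inherit natural $\op{GL}(V)$-linearizations from the universal quotient $V\tensor\mc{L}^{-1}\twoheadrightarrow\mc{E}$. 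One must verify that raising everything to the powers $\chi(X_0,\mc{E}\tensor\mc{L})$ and $\chi(X_0,\mc{E}\tensor\mc{L}\tensor\mathcal{A})$ — which are precisely $h^0$'s and thus the ranks entering the Pl\"ucker weights — makes the central characters agree; this is exactly why those two particular exponents appear in \eqref{Khokar}. The remaining steps are routine once one commits to the conventions fixed before Definition \ref{plus} for $\mathcal{A}$ and $\mathcal{L}$.
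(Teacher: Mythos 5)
Your overall route---identify the fiber of $T(\mathcal{A})$ at $x$ with $\det\op{H}^0(X_0,\me\tensor\mc{L}\tensor\mathcal{A})$ via the Grassmannian embedding, use vanishing of $\op{H}^1$ on $\op{Q}^0_{X_0}$ to see this as (the dual of) $\mathcal{D}(X_0,\me\tensor\mc{L}\tensor\mathcal{A})$, and then match $\op{GL}(V)$-linearizations by a central-character computation in which the two Euler characteristics are the balancing exponents---is the paper's route. But two points keep the proposal from being a proof. First, your step (iii) is a misdirection: \eqref{Khokar} is not obtained by additivity of $\mathcal{D}$ along $0\to\me\tensor\mc{L}\to\me\tensor\mc{L}\tensor\mathcal{A}\to Q\to 0$, and the ``torsion contributions'' do not cancel. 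That exact-sequence mechanism is the content of Lemma \ref{repeatedly} and Lemma \ref{WhiteDog}, where the $\det Q$-type factors are precisely what is \emph{kept} in order to convert the right-hand side of \eqref{Khokar} into $\mathcal{D}(X_0,\me\tensor\mg)$; for \eqref{Khokar} itself no comparison between $\mathcal{D}(X_0,\me\tensor\mc{L}\tensor\mathcal{A})$ and $\mathcal{D}(X_0,\me\tensor\mc{L})$ is needed, since both occur as independent factors on the right.

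Second, and more importantly, the step you defer as ``the main obstacle'' (your (iv)) is the entire content of the lemma: the statement is fiberwise, so the identity of underlying lines is essentially vacuous, and what must be produced is the identity of $\op{GL}(V)$-linearized bundles. The paper's argument is short but uses one input you never invoke: on $\op{Q}^0_{X_0}$ the defining isomorphism $V\overset{\sim}{\to}\op{H}^0(X_0,\me\tensor\mc{L})$ of Def \ref{Quot0}(2) (not merely its consequence $h^1=0$) identifies $\mathcal{D}(X_0,\me\tensor\mc{L})$ with the constant line $\det(V)^*$, on which a central scalar $t$ acts by $t^{-\chi(X_0,\me\tensor\mc{L})}$; meanwhile $t$ acts trivially on $T(\mathcal{A})$ (its linearization comes from $\op{PGL}(V)$) and by $t^{\chi(X_0,\me\tensor\mc{L}\tensor\mathcal{A})}$ on $\det\op{H}^0(X_0,\me\tensor\mc{L}\tensor\mathcal{A})$. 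It is exactly this concrete realization of the correction line that lets one write $T(\mathcal{A})|_x\cong\mathcal{D}(X_0,\me\tensor\mc{L}\tensor\mathcal{A})^*\tensor\mathcal{D}(X_0,\me\tensor\mc{L})^{\chi(X_0,\me\tensor\mc{L}\tensor\mathcal{A})/\chi(X_0,\me\tensor\mc{L})}$ equivariantly, and raising to the power $\chi(X_0,\me\tensor\mc{L})$ gives \eqref{Khokar}. Saying the discrepancy ``is a character of $\op{GL}(V)$ determined by degrees'' is not enough: you must exhibit it as the specific line bundle $\mathcal{D}(X_0,\me\tensor\mc{L})$ that appears in the statement, which is where $\op{Q}^0_{X_0}$ enters beyond cohomology vanishing. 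Carrying out this bookkeeping will also force you to fix orientation conventions: with the quotient-Grassmannian convention the fiber of $T(\mathcal{A})$ is $\det\op{H}^0$ itself rather than its dual, and the central characters balance only for one choice of sign of the exponent on the $\mathcal{D}(X_0,\me\tensor\mc{L}\tensor\mathcal{A})^*$ factor, so the sign cannot be left to ``up to a constant'' bookkeeping.
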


\begin{proof}

There is an isomorphism $$T(\mathcal{A})|_{x}\to \det(\op{H}^0(X_0,\me\tensor \mathcal{L}\tensor \mathcal{A}))=(\mathcal{D}(X_0,\me\tensor \mathcal{L}\tensor \mathcal{A}))^*.$$
This map is not $\op{GL}(V)$ equivariant, but is equivariant for $\op{PGL}(V)$: A scalar matrix $t\in \op{GL}(V)$ acts trivially on the left hand side, but acts by $t$ raised to $\chi(X_0,\me\tensor \mathcal{M})$ on the right hand side.
Let $\mc{S}$ be the constant line bundle on $\op{Quot}_{\ml}(r)$, but with $t\in G$ acting like  inverse of how it does on $\det(V)$. Therefore, for $x\in \op{Q}^0_{\mc{L}}(r)$, we have
$$T(\mathcal{A})|_{x} = (\mathcal{D}(X_0,\me\tensor \mathcal{L}\tensor \mathcal{A}))^* \tensor \mathcal{S}^{\chi(X_0,\me\tensor \mathcal{L}\tensor\mathcal{A})}.$$

However on $\op{Q}_{X_0}^0$, we have that  $\op{H}^0(X_0,\me\tensor \mathcal{L})$ is trivial (compatibly with $\op{PGL}(V)$) but $\lambda\in \op{GL}(V)$ acts on the right hand side by $t$ raised to $-\chi(X_0,\me\tensor \mathcal{L}\tensor\mathcal{A})$. Therefore we obtain
canonical isomorphisms on $\op{Q}_{X_0}^0$:
$$T(\mathcal{A})|_{x} = (\mathcal{D}(X_0,\me\tensor \mathcal{L}\tensor \mathcal{A}))^* \tensor \mathcal{D}(X_0,\me\tensor \mathcal{L})^{\frac{\chi(X_0,\me\tensor \mathcal{L}\tensor\mathcal{A})}{\chi(X_0,\me\tensor \mathcal{L})}}.$$

\end{proof}

\begin{lemma}\label{repeat}
The restriction of $\mc{L}_{\mc{G}}$ on $\aleph_{r}(X_0)$ to $\op{Bun}_{\op{SL}(r)}(X_0)$ is $\mc{D}^{\ell}$, where
$\ell= \op{rk}(\mg)$.
\end{lemma}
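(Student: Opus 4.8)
The plan is to reduce the identity to the case where $\mg$ is a line bundle, and then track how the determinant of cohomology changes under an elementary twist $\mc{O}(\pm p_i)$ at a smooth point; on $\op{Bun}_{\op{SL}(r)}(X_0)$ that twist turns out to be invisible.

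\textbf{Step 1 (setup and reduction to $\mg$ a line bundle).} The map $\op{Bun}_{\op{SL}(r)}(X_0)\to\aleph_r(X_0)$ sends an $\op{SL}(r)$-bundle $E$ to the associated locally free sheaf $\me=E\times_{\op{SL}(r)}\V$, which has uniform multi-rank $r$ and, crucially, a canonically trivial determinant $\det\me\cong\mc{O}_{X_0}$ in families (this is the defining feature of the standard representation of $\op{SL}(r)$). Under this map, $\mc{D}$ becomes the line bundle with fiber $\mc{D}(X_0,\me)$ at $E$, and $\mc{L}_{\mg}$ (Def \ref{LBG}) becomes the one with fiber $\mc{D}(X_0,\me\tensor\mg)$. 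By Def \ref{plus}, $\mg=\bigoplus_{j=1}^{\ell}\mc{M}_j$ with $\ell=\rk(\mg)$ and each $\mc{M}_j=\mc{O}(\sum_i\gamma^{(j)}_i p_i)$ a line bundle supported on the chosen smooth points $p_i$. Since $\op{H}^0$ and $\op{H}^1$ commute with direct sums (and more generally $\mc{D}$ is multiplicative in short exact sequences of $T$-flat coherent sheaves on a family of curves), there is an identity of line bundles on $\op{Bun}_{\op{SL}(r)}(X_0)$, $\mc{D}(X_0,\me\tensor\mg)\cong\bigotimes_{j=1}^{\ell}\mc{D}(X_0,\me\tensor\mc{M}_j)$. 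So it suffices to prove that for every line bundle $\mc{M}=\mc{O}(\sum_i\gamma_i p_i)$ supported at the $p_i$, one has $\mc{D}(X_0,\me\tensor\mc{M})\cong\mc{D}(X_0,\me)$ as line bundles on $\op{Bun}_{\op{SL}(r)}(X_0)$.

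\textbf{Step 2 (the twist is invisible on $\op{Bun}_{\op{SL}(r)}$).} I would argue by induction on $\sum_i|\gamma_i|$, the base case $\mc{M}=\mc{O}_{X_0}$ being trivial. For the inductive step, pick a point $p=p_i$ occurring in $\mc{M}$ and use the relative exact sequence $0\to\me\tensor\mc{M}(-p)\to\me\tensor\mc{M}\to\mc{Q}\to 0$, where $\mc{Q}=(\me\tensor\mc{M})|_p$ is a rank-$r$ vector bundle supported on the section $p$. Multiplicativity of $\mc{D}$ gives $\mc{D}(X_0,\me\tensor\mc{M})\cong\mc{D}(X_0,\me\tensor\mc{M}(-p))\tensor\mc{D}(\mc{Q})$, and since $\op{H}^1(\mc{Q})=0$ we get $\mc{D}(\mc{Q})=(\det\mc{Q})^{*}=(\det\me|_p)^{*}\tensor(\mc{M}_p)^{\tensor(-r)}$, with $\mc{M}_p$ the fixed one-dimensional fiber of $\mc{M}$ at $p$. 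On $\op{Bun}_{\op{SL}(r)}(X_0)$ the line bundle $\det\me|_p$ is the restriction to the section $p$ of $\det\me\cong\mc{O}_{X_0}$, hence trivial, and $(\mc{M}_p)^{\tensor(-r)}$ is constant, so $\mc{D}(\mc{Q})$ is canonically trivial there. Thus $\mc{D}(X_0,\me\tensor\mc{M})\cong\mc{D}(X_0,\me\tensor\mc{M}(-p))$, which completes the induction; the opposite move (removing a point, i.e. $\gamma_i<0$) is handled by the analogous sequence $0\to\me\tensor\mc{M}\to\me\tensor\mc{M}(p)\to(\me\tensor\mc{M}(p))|_p\to 0$.

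\textbf{Step 3 (conclusion, and the main difficulty).} Combining Steps 1 and 2, $\mc{L}_{\mg}|_{\op{Bun}_{\op{SL}(r)}(X_0)}$ has fiber $\mc{D}(X_0,\me\tensor\mg)\cong\bigotimes_{j=1}^{\ell}\mc{D}(X_0,\me\tensor\mc{M}_j)\cong\mc{D}(X_0,\me)^{\tensor\ell}$, i.e. it is $\mc{D}^{\ell}$ with $\ell=\rk(\mg)$, as claimed. The only delicate point is bookkeeping: one must verify that the multiplicativity isomorphism for the determinant of cohomology and the trivialization of $\mc{D}(\mc{Q})$ are functorial in $T$, so that they glue to an identity of line bundles on the stack rather than merely a fiberwise isomorphism. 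This is routine given the Knudsen--Mumford/Faltings theory of the determinant of cohomology in families, together with the functorial triviality of the determinant of the bundle associated to an $\op{SL}(r)$-bundle via the standard representation; I expect this relative bookkeeping, rather than any genuinely new idea, to be the main thing to write carefully.
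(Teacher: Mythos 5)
Your proposal is correct and follows essentially the same route as the paper: the paper's proof is precisely a repeated application of its Lemma \ref{repeatedly} (the elementary twist $\mathcal{D}(X_0,\me\tensor\mathcal{O}(p))=\mc{D}(X_0,\me)\tensor\det(\me_p)^*$, i.e. your Step 2 exact sequence), combined with the fact that $\det\me$ is trivialized on $\op{Bun}_{\op{SL}(r)}(X_0)$ so the correction factors disappear and each line-bundle summand of $\mg$ contributes one copy of $\mc{D}$. Your Steps 1--3 simply spell out, summand by summand and in families, what the paper compresses into the phrase ``repeated use of Lemma \ref{repeatedly}.''
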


\begin{proof}
This follows from a repeated use of Lemma \ref{repeatedly}.
\end{proof}

\begin{lemma}\label{repeatedly}
Let $\me$ be a torsion free sheaf on $X_0$, and $p\in X_0$ a smooth point. Then $$\mathcal{D}(X_0,\me\tensor\mathcal{O}(p))=\mc{D}(X_0,\me)\tensor \det(\me_p)^*.$$
\end{lemma}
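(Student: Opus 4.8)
The plan is to deduce this from the additivity of the determinant of cohomology in short exact sequences, applied to the standard sequence cutting out the point $p$. First I would record that since $p$ is a smooth point of $X_0$, the local ring $\mathcal{O}_{X_0,p}$ is a discrete valuation ring, so the finitely generated torsion free module $\me_p$ is free there; thus $\me$ is locally free near $p$, hence flat over $X_0$ in a neighbourhood of $p$. Tensoring the structure sequence
\begin{equation*}
0\to \mathcal{O}_{X_0}\to \mathcal{O}_{X_0}(p)\to \mathcal{O}_{X_0}(p)|_p\to 0
\end{equation*}
by $\me$ therefore stays exact and, after using a local generator of $\mathcal{O}_{X_0}(p)$ at $p$ to identify the skyscraper quotient with the fibre $\me_p$, yields
\begin{equation*}
0\to \me\to \me\tensor\mathcal{O}(p)\to \me_p\to 0.
\end{equation*}
This identification is the only place a choice (equivalently a fixed nonzero scalar) enters, and it is harmless for the uses of the lemma.

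Next I would invoke the standard fact (in the form used implicitly throughout, cf. \cite{Faltings4}) that $\me\mapsto \mc{D}(X_0,\me)$ sends a short exact sequence to a canonical isomorphism $\mc{D}(\mathcal{B})\cong \mc{D}(\mathcal{A})\tensor\mc{D}(\mathcal{C})$. Concretely, one splices the long exact cohomology sequence of the sequence above into its constituent short exact pieces, takes top exterior powers, and checks the signs and duals against the convention $\mc{D}(X_0,\me)=(\Lambda^{\mathrm{max}}\op{H}^0)^*\tensor \Lambda^{\mathrm{max}}\op{H}^1$ of Def \ref{DetCo}. Applied to $0\to \me\to \me\tensor\mathcal{O}(p)\to \me_p\to 0$ this gives $\mc{D}(X_0,\me\tensor\mathcal{O}(p))\cong \mc{D}(X_0,\me)\tensor \mc{D}(X_0,\me_p)$.

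Finally I would compute the skyscraper term: since $\me_p$ is supported at the single point $p$, one has $\op{H}^0(X_0,\me_p)=\me_p$ and $\op{H}^1(X_0,\me_p)=0$, so by Def \ref{DetCo} directly $\mc{D}(X_0,\me_p)=(\Lambda^{\mathrm{max}}\me_p)^*=\det(\me_p)^*$. Combining the last two displays gives $\mc{D}(X_0,\me\tensor\mathcal{O}(p))=\mc{D}(X_0,\me)\tensor\det(\me_p)^*$, and running the same argument in a family of torsion free sheaves produces the corresponding identity of line bundles on $\aleph(X_0)$, which is what is iterated in Lemma \ref{repeat}. I do not expect a genuine obstacle here; the only points requiring care are the sign/duality bookkeeping in the additivity isomorphism (so that it is compatible with the $\op{GL}(V)$- and stack-linearizations elsewhere in the paper) and the fixed identification of $\me\tensor\mathcal{O}(p)/\me$ with the fibre $\me_p$.
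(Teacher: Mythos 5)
Your argument is correct and is essentially the paper's own proof: the paper likewise tensors the structure sequence of $p$ with $\me$ (using that $\me$ is locally free near the smooth point $p$) to get $0\to \me\to\me\tensor\mathcal{O}(p)\to \me_p\to 0$, and then concludes by additivity of the determinant of cohomology together with $\mc{D}(X_0,\me_p)=\det(\me_p)^*$. You have simply spelled out the steps the paper leaves implicit.
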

\begin{proof} There is an exact sequence
$$0\to \me\to\me\tensor\mathcal{O}(p)\to \me_p\to 0,$$
since  $\me$ is a vector bundle in a neighborhood of $p$.
\end{proof}

\begin{lemma}\label{WhiteDog}
For $x=[V\tensor \mc{L}^{-1}\twoheadrightarrow \mc{E}]\in \op{Q}^0_{\mc{L}}(r)$,
$$T(\mathcal{A})|_{x}^{\chi(X_0,\me\tensor \mathcal{L})}=\mathcal{D}(X_0,\me\tensor\mg)$$
as line bundles with $\op{GL}(V)$ linearizations.
\end{lemma}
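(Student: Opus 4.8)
The plan is to combine the two identities already established: the identity for $T(\mathcal{A})$ in terms of the determinants of cohomology $\mathcal{D}(X_0,\me\tensor\mathcal{L}\tensor\mathcal{A})$ and $\mathcal{D}(X_0,\me\tensor\mathcal{L})$ (Equation \eqref{Khokar}), and the iterated version of Lemma \ref{repeatedly} which expresses $\mathcal{D}(X_0,\me\tensor\mathcal{N})$ for any line bundle $\mathcal{N}=\mathcal{O}(\sum n_i p_i)$ in terms of $\mathcal{D}(X_0,\me)$ and factors $\det(\me_{p_i})$. Since $\mathcal{A}=\mathcal{O}(\sum\alpha_i p_i)$ and $\mathcal{L}=\mathcal{O}(\sum\beta_i p_i)$, and $\mg$ is by Definition \ref{plus} a direct sum of line bundles $\mathcal{O}(\sum\gamma_i p_i)$ with prescribed partial degrees, all four sheaves $\me\tensor\mathcal{L}$, $\me\tensor\mathcal{L}\tensor\mathcal{A}$, $\me\tensor\mg$, and $\me$ itself can be compared via repeated application of Lemma \ref{repeatedly}, and the comparison is governed entirely by the multidegrees.

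First I would rewrite the right-hand side of \eqref{Khokar}. Raising \eqref{Khokar} to a suitable power to clear the exponent (or simply reading it as an identity of $\mathbb{Q}$-line bundles, which is legitimate since $T(\mathcal{A})$ is being considered up to the fixed power $\chi(X_0,\me\tensor\mathcal{L})$ throughout), we get
\begin{equation*}
T(\mathcal{A})|_{x}^{\chi(X_0,\me\tensor\mathcal{L})} \cong \mathcal{D}(X_0,\me\tensor\mathcal{L})^{\chi(X_0,\me\tensor\mathcal{L}\tensor\mathcal{A})}\tensor \mathcal{D}(X_0,\me\tensor\mathcal{L}\tensor\mathcal{A})^{-\chi(X_0,\me\tensor\mathcal{L})},
\end{equation*}
as $\op{GL}(V)$-linearized line bundles over $\op{Q}^0_{\mathcal{L}}(r)$. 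Next I would use Lemma \ref{repeatedly} repeatedly: applying it to each point in $\sum\alpha_i p_i$ gives $\mathcal{D}(X_0,\me\tensor\mathcal{L}\tensor\mathcal{A}) \cong \mathcal{D}(X_0,\me\tensor\mathcal{L})\tensor\bigotimes_i (\det(\me_{p_i})^*)^{\alpha_i}$ (the local fiber $\me_{p_i}$ is unchanged under twisting by $\mathcal{L}$, since $p_i$ is smooth and $\me$ is locally free there). Substituting, the two $\mathcal{D}(X_0,\me\tensor\mathcal{L})$-contributions combine and their exponents cancel — using $\deg(\mathcal{A})=\sum_i\alpha_i$ and $\chi(X_0,\me\tensor\mathcal{L}\tensor\mathcal{A}) - \chi(X_0,\me\tensor\mathcal{L}) = \sum_i \alpha_i \cdot \op{rk}_i(\me) = r\deg(\mathcal{A})$ by Equation \eqref{norepeat} and uniform rank $r$ — leaving
\begin{equation*}
T(\mathcal{A})|_{x}^{\chi(X_0,\me\tensor\mathcal{L})}\cong \bigotimes_i \bigl(\det(\me_{p_i})\bigr)^{-\alpha_i\chi(X_0,\me\tensor\mathcal{L})}\otimes(\text{terms that cancel}),
\end{equation*}
so that the answer is a product of $\det(\me_{p_i})^*$-powers with exponents $\alpha_i\chi(X_0,\me\tensor\mathcal{L})$ up to an overall twist. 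On the other side, Lemma \ref{repeatedly} applied to $\mg=\bigoplus_j\mathcal{O}(\sum_i\gamma^{(j)}_i p_i)$ gives $\mathcal{D}(X_0,\me\tensor\mg)\cong \mathcal{D}(X_0,\me)^{\op{rk}(\mg)}\tensor\bigotimes_i(\det(\me_{p_i})^*)^{\deg_i(\mg)}$. So both sides become the same type of product, and I would check that the exponents of $\det(\me_{p_i})^*$ agree: this is exactly the normalization $\deg_i(\mg)/\op{rk}(\mg) = a_i(g-1)/\sum a_i$ from Definition \ref{plus} matched against $\alpha_i = \alpha(\deg_i(\mathcal{L}) - a_i(g-1)/\sum a_i)$ from Definition \ref{ADef}, together with $\chi(X_0,\me\tensor\mathcal{L}) = r(\deg(\mathcal{L}) - g + 1)$ and $\op{rk}(\mg) = r\deg(\mathcal{A})$; the $\det(\me_{p_i})$ for points coming from $\mathcal{L}$ itself will account for the $\deg_i(\mathcal{L})$-part of $\alpha_i$ versus the remaining $\mathcal{D}(X_0,\me)^{\op{rk}(\mg)}$ term on the $\mg$-side, which is trivial over $\op{Q}^0_{\mathcal{L}}(r)$ up to the $\op{PGL}(V)$-linearization.

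The main obstacle — and the only place real care is needed — is bookkeeping the $\op{GL}(V)$-linearizations, i.e. the action of the scalars $t\in\op{GL}(V)$. The previous lemma already tracked how $t$ acts on $T(\mathcal{A})|_x$ (via $t^{\chi(X_0,\me\tensor\mathcal{L}\tensor\mathcal{A})}$ after trivializing $\op{H}^0(X_0,\me\tensor\mathcal{L})$ with its $\op{PGL}(V)$-structure), and one must verify that on $\mathcal{D}(X_0,\me\tensor\mg)$ — built from $\det(\me_{p_i})$, which are quotients of $V\tensor\mathcal{L}^{-1}$ restricted to $p_i$ and hence carry natural $\op{GL}(V)$-actions — the scalar $t$ acts the same way. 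This should follow because $\sum_i\deg_i(\mg) = \op{rk}(\mg) = r\deg(\mathcal{A})$ matches the exponent produced on the $T(\mathcal{A})$ side after the substitution, but it requires writing each $\det(\me_{p_i})$ as an $r$-fold exterior power of a quotient of $V$ localized at $p_i$ and counting the $t$-weight, exactly $r$ per point, giving total weight $\sum_i r\cdot(\text{multiplicity of }p_i) $; one checks this equals $\chi(X_0,\me\tensor\mathcal{L}\tensor\mathcal{A})\cdot(\text{overall power}) - \chi(X_0,\me\tensor\mathcal{L})\cdot(\text{overall power})$ from \eqref{Khokar}. Once the weights are matched, the canonical isomorphism of underlying line bundles upgrades to an isomorphism of $\op{GL}(V)$-linearized line bundles, completing the proof.
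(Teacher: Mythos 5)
Your proposal follows essentially the same route as the paper's (very brief) proof: rewrite the degree condition on $\mg$ from Definitions \ref{plus} and \ref{ADef} together with Remark \ref{folge}(1) as $\frac{\deg_i(\mg)}{\rk(\mg)}=\deg_i(\ml)-\frac{\deg_i(\mathcal{A})}{\deg(\mathcal{A})}\chi(X_0,\ml)$, then combine Eq \eqref{Khokar} with repeated use of Lemma \ref{repeatedly} and match exponents of the $\det(\me_{p_i})^*$ factors, tracking the scalar action. The one blemish is your aside that the $\mathcal{D}(X_0,\me)^{\rk(\mg)}$ term is ``trivial over $\op{Q}^0_{\mc{L}}(r)$'' --- it is not (only $\mathcal{D}(X_0,\me\tensor\ml)$ trivializes there), but this is also unnecessary, since that factor appears identically on both sides once each side is expanded via Lemma \ref{repeatedly}, so the argument goes through as in the paper.
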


\begin{proof}
Note that by \eqref{haupti} and Remark \ref{folge}(1),
$$\frac{\deg_i(\mathcal{G})}{\rk(\mg)}=\deg_i(\ml) - \frac{\deg_i(\mathcal{A})}{\deg(\mathcal{A})}\cdot {\chi(X_0,\ml)}.$$

Then, using Eq \ref{Khokar}, and Lemma \ref{repeatedly} repeatedly, we obtain the result claimed.
\end{proof}

\subsection{All semistable points for $T(\mathcal{A})$ are in $\op{Q}_{X_0}^0$ }

In Proposition \ref{simpsonian} we will show that with regard to understanding $T(\mathcal{A})$, the set $\op{Quot}_{X_0}(V\tensor \mc{L}^{-1},\op{P},1)\setminus \op{Q}_{X_0}^0$ is rather wild from a geometric point of view, and we would therefore want to
restrict our study to the sublocus $\op{Q}_{X_0}^0$. The section rings over  $\op{Q}_{X_0}^0$ and $\op{Quot}_{X_0}(V\tensor \mc{L}^{-1},\op{P},1)$ (actually of normalizations of certain closed subsets, see Lemma \ref{EndGoal}) coincide because of the following lemma, attributed to Seshadri, and Proposition \ref{simpsonian}.

 \begin{lemma}\label{extendo}  \cite[Lemma 4.15]{NRam} Let $M$ be a projective scheme on which a reductive group $\op{G}$ acts. Let
$\mc{N}$ an ample line bundle linearizing the $\op{G}$ action, and $M^{ss}$ the open subscheme of semistable points.
Let $W$ be an open $\op{G}$-invariant (irreducible) normal subscheme of $M$ containing
$M^{ss}$.  Then $$\op{H}^{0}(M^{ss},\mc{N})^{\op{G}} = \op{H}^{0}(W,\mc{N})^{\op{G}}.$$
In particular, we may take $W=M$, if $M$ is normal.
\end{lemma}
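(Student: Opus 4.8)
The statement is a clean GIT fact, and I would prove it essentially following the strategy of Seshadri (as in \cite[Lemma 4.15]{NRam}): compare spaces of invariant sections by restriction. The plan is to show that the restriction map $\op{H}^0(W,\mc{N})^{\op{G}} \to \op{H}^0(M^{ss},\mc{N})^{\op{G}}$ is both injective and surjective. Injectivity is the easy direction and comes from irreducibility: since $W$ is irreducible and $M^{ss}$ is a nonempty open subscheme of $W$, any section of a line bundle on $W$ restricting to $0$ on $M^{ss}$ must vanish on all of $W$; one needs $W$ reduced here, which follows from normality. So the content is surjectivity: every $\op{G}$-invariant section $s$ of $\mc{N}$ over $M^{ss}$ extends to a $\op{G}$-invariant section over $W$.

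\textbf{Surjectivity.} First I would reduce to showing that $s$ extends as a section (the extension is then automatically $\op{G}$-invariant, again because $W$ is reduced and irreducible and $\op{G}$-invariance is a closed condition that already holds on the dense open $M^{ss}$). Since $W$ is normal, by Serre's criterion (or Hartogs) a section of $\mc{N}$ defined on an open subset extends across any closed subset of codimension $\ge 2$; the only obstruction to extending $s$ across $Z := W \setminus M^{ss}$ is therefore along the components of $Z$ of codimension one. The key point — and this is where properness of $M$ and ampleness of $\mc{N}$ enter — is the numerical criterion for semistability: for a GIT-unstable point, the Hilbert--Mumford functional is negative along some one-parameter subgroup, and this forces every $\op{G}$-invariant section of every positive tensor power $\mc{N}^{\otimes k}$ to vanish at that point. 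The standard way to package this is: the unstable locus $M \setminus M^{ss}$ is cut out (as a set) by the vanishing of all $\op{G}$-invariant sections of all powers $\mc{N}^{\otimes k}$, $k \ge 1$. Concretely, for each irreducible component $D$ of $Z = W\setminus M^{ss}$ of codimension one in $W$, pick its generic point $\eta_D$; since $\eta_D$ maps into the unstable locus of $M$, there is some $k$ and some $\op{G}$-invariant section $t \in \op{H}^0(M, \mc{N}^{\otimes k})^{\op{G}}$ with $t(\eta_D) = 0$, i.e. $t$ vanishes along $D$. Then $s^{k} \cdot t^{-1}$ — interpreting this as a rational section — is regular in codimension one at $D$ for a suitable bookkeeping, but more cleanly: the argument is that $s$, viewed as a rational section of $\mc{N}$ on $W$, has no pole along any such $D$ because the local ring $\mc{O}_{W,\eta_D}$ is a DVR and the invariant section $t$ provides enough vanishing to absorb any putative pole of $s$ after passing to a power; since $\mc{N}$ is a line bundle and $\op{H}^0$ is torsion-free, the absence of a pole after taking a power implies the absence of a pole. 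Hence $s$ extends over each codimension-one component of $Z$, and then Hartogs extends it over the rest of $Z$, giving a global section on $W$.

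\textbf{The main obstacle.} The delicate step is the codimension-one extension, i.e. verifying that an invariant section cannot acquire a pole along a divisorial component of the unstable locus inside $W$. This is exactly the place where one must invoke the Hilbert--Mumford numerical criterion together with the hypothesis that $\mc{N}$ is \emph{ample} on the \emph{proper} scheme $M$: without properness there is no numerical criterion, and without ampleness the unstable locus need not be the common zero scheme of invariant sections. The cleanest implementation is probably to avoid pole-chasing on $W$ directly and instead argue on $M$: every element of $\op{H}^0(M^{ss},\mc{N})^{\op{G}}$ is, by definition of semistability via the ample $\mc{N}$, represented near each semistable point by an honest invariant section of some $\mc{N}^{\otimes k}$ defined on a $\op{G}$-invariant open of $M$, and gluing these (using that $M^{ss}$ is covered by the non-vanishing loci $M_t = \{t \ne 0\}$ of invariant sections $t$ of powers of $\mc{N}$, which are affine) one sees that $s$ is a regular function on the affine pieces $M_t \cap W$, hence extends to $W$ by normality once one checks it has no pole along the finitely many boundary divisors. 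I expect this last normality/divisor bookkeeping to be the only real work; everything else is formal. Finally, the ``in particular'' clause is immediate: take $W = M$ when $M$ is normal, as $M \supseteq M^{ss}$ trivially and $M$ is irreducible by hypothesis (or one applies the statement componentwise).
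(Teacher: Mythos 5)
There is a genuine gap, and it sits exactly at the step you yourself flag as "the only real work": showing that an invariant section $s$ on $M^{ss}$ acquires no pole along a divisorial component $D$ of $W\setminus M^{ss}$. Note first that the paper gives no proof of this lemma at all — it simply cites \cite[Lemma 4.15]{NRam} — so the relevant comparison is with that argument, whose essential input is Seshadri's semistable reduction theorem (an Iwahori/Langton-type valuative statement: given a DVR arc whose generic point is semistable, after a finite extension some $\op{G}(K')$-translate of the arc lands entirely in $M^{ss}$). Your proposal contains no substitute for this input. The mechanism you offer instead does not work: from the fact that every invariant section $t$ of a positive power of $\mc{N}$ vanishes along $D$ (true, since $D$ is unstable) you try to "absorb" a putative pole of $s$, but multiplying $s$ by powers of $t$ only shows that $s^a t^b$ extends for $b\gg 0$, and regularity of $s^a t^b$ with $\op{ord}_D(t)\ge 1$ says nothing about $\op{ord}_D(s)$ (compare $s=1/x$, $t=x^2$). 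The principle "no pole after taking a power implies no pole" is valid only for the honest tensor powers $s^n$ of $s$ itself, and nothing in your argument shows any $s^n$ extends; indeed the vanishing of invariant sections on the unstable locus, which is the only place you invoke semistability, points in the wrong direction for producing a section that is \emph{nonvanishing} where you need leverage. Your alternative sketch ("argue on $M$ via the affine pieces $M_t$") has the same problem: $M_t\cap W=M_t\subseteq M^{ss}$, so gluing over these opens never sees the boundary divisor, and the sentence "once one checks it has no pole along the finitely many boundary divisors" is precisely the statement to be proved.

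For a correct argument along the lines of \cite{NRam}: take $\eta$ the generic point of $D$, $A=\mc{O}_{W,\eta}$ (a DVR by normality), and the canonical arc $\op{Spec}A\to W$ with generic point in $M^{ss}$ and closed point unstable. By properness of the GIT quotient $Y=M\dblq\op{G}$ the induced $K$-point of $Y$ extends over $A$, and by the semistable reduction theorem there are a finite extension $A\subset A'$ and $g\in \op{G}(K')$ such that $g\cdot x_{K'}$ extends to an $A'$-point of $M^{ss}$. Invariance of $s$ identifies its pullbacks along the two arcs over $K'$, and the integer discrepancy between the two $A'$-lattices in $\mc{N}$ is computed (and seen to be $\ge 1$) by testing against a global invariant section of a power of $\mc{N}$ that is nonvanishing at the new (semistable) limit but vanishes at $\eta$; this forces $\op{ord}_D(s)\ge 0$ (in fact $>0$), after which normality and Hartogs finish the proof as you say. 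Without the semistable-reduction/valuative ingredient (or an equivalent use of the Hilbert--Mumford criterion at the limit point), the surjectivity half of your proof does not go through; the injectivity half and the reduction to codimension one are fine, modulo the implicit assumption $M^{ss}\neq\emptyset$.
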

\begin{remark}\label{attention}
 The following result is valid after replacing  an initial choice $\ml_0$ of $\ml$ by a suitable $\ml^N$. Note that changing $\ml$ also changes the line bundles $\mathcal{A}$.
\end{remark}
\begin{proposition}\label{simpsonian}
If $x=[V\tensor \mc{L}^{-1}\twoheadrightarrow \mc{E}]\in \op{Quot}_{X_0}(V\tensor \mc{L}^{-1},\op{P},1)\setminus \op{Q}_{X_0}^0$, then $x$ is not semistable for
 $T(\mc{A})$.
\end{proposition}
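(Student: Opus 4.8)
The plan is to run the Hilbert--Mumford numerical criterion on the point $x=[V\tensor\mc{L}^{-1}\twoheadrightarrow\me]$ for the linearization $T(\mc{A})$, exhibiting an explicit one-parameter subgroup $\lambda$ of $\op{GL}(V)$ with $\mu(x,\lambda)\le 0$ (in the normalization of Def \ref{TA}) whenever $x\notin\op{Q}_{X_0}^0$. Recall that a point of $\op{Quot}_{X_0}(V\tensor\mc{L}^{-1},\op{P},1)$ fails to lie in $\op{Q}_{X_0}^0$ for one of two reasons: either $\me$ has torsion, or the evaluation map $V\to\op{H}^0(X_0,\me\tensor\ml)$ is not an isomorphism. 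Since we are free to replace $\ml$ by a high power $\ml^N$ (Remark \ref{attention}), and since $\psi_m$ realizes $T(\mc{A})$ as $\psi_m^*\mc{O}(1)$ for $\op{Grass}^{\op{quot}}(V\tensor W,\rho)$ with $W=\op{H}^0(X_0,\mc{A}^m)$, the natural strategy is to transport the problem to the Grassmannian and reuse Grothendieck/Simpson-style estimates: after a large twist, the relevant numerical function is governed by leading Hilbert-polynomial coefficients, and the failure of either condition (1) or (2) produces a destabilizing sub- or quotient-configuration.

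Concretely, I would split into two cases. \emph{Case 1: $\me$ has a torsion subsheaf $\tau\subset\me$.} Let $V'=\{v\in V:\ v\tensor\ml^{-1}\text{ maps into }\tau \text{ generically}\}$, or more cleanly, consider the saturation and the induced torsion-free quotient $\me/\tau$; choose the weighted filtration of $V$ by $V'\subset V$ adapted to the sub/quotient, giving a one-parameter subgroup $\lambda$. Because $\tau$ is supported in dimension $0$, its contribution to $h^0(X_0,\me\tensor\ml\tensor\mc{A}^m)$ is a constant (independent of $m$), while $\dim V$ grows linearly in $\deg\ml$; a direct count of the weight $\mu(x,\lambda)$ via the embedding $\psi_m$ shows it is $\le 0$, and strictly negative after enlarging $\ml$. \emph{Case 2: $\me$ is torsion free but $V\to\op{H}^0(X_0,\me\tensor\ml)$ is not an isomorphism.} By the flatness built into Def \ref{Quot} and the definition of $V$ (of dimension $h^0(X_0,\mc{O}^{\oplus r}\tensor\ml)=\chi(X_0,\mc{O}^{\oplus r}\tensor\ml)$ once $\ml$ is sufficiently ample), the map is automatically surjective; so it must fail injectivity, i.e.\ there is $0\neq v\in V$ with $v\tensor\ml^{-1}\mapsto 0$ in $\me$, equivalently the section vanishes identically. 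Take $\lambda$ with weight $1$ on the line $\langle v\rangle$ and weight $0$ on a complement (suitably normalized to lie in $\op{SL}(V)$). Then $v\tensor W$ sits in the kernel of $V\tensor W\twoheadrightarrow\op{H}^0(X_0,\me\tensor\ml\tensor\mc{A}^m)$ for every $m$, so along this $\lambda$ the whole subspace $v\tensor W$ has the wrong weight, and the Hilbert--Mumford computation for $\mc{O}(1)$ on the Grassmannian gives $\mu(x,\lambda)\le 0$; again one gets strict negativity after a twist, because $\dim W\to\infty$.

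The main obstacle, I expect, is \emph{bookkeeping the normalization and the linearization}: $T(\mc{A})$ carries the $\op{GL}(V)$-linearization descending from a $\op{PGL}(V)$-linearization on $\mc{O}(1)$ only after a tensor power (Def \ref{TA}), and there is the auxiliary twist by the character $\chi(X_0,\me\tensor\ml\tensor\mc{A})$ that appeared in Lemma \ref{WhiteDog} and Eq \eqref{Khokar}; so one must make sure the "center of mass" / normalization of each $\lambda$ is chosen so that the sign of $\mu$ is unambiguous, and that the estimates survive the replacement $\ml\rightsquigarrow\ml^N$ uniformly in the finitely many combinatorial types of destabilizing configuration. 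Once the numerical function is written down correctly via $\psi_m$, each case reduces to comparing a linear-in-$\deg\ml$ quantity with a constant (Case 1) or to the elementary observation that a fixed subspace $v\tensor W$ lies in the kernel (Case 2), so the inequalities themselves are routine; the care is entirely in the setup. I would also remark that this is the expected analogue of Simpson's argument (cf.\ the discussion before Lemma~1.17 in \cite{simpson}) adapted to the present weighted/multi-component setting, so the structure of the proof should mirror that reference closely.
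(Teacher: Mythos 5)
Your treatment of the non-injectivity case is fine and is exactly the paper's Lemma \ref{nA}: if $H=\ker\bigl(V\to \op{H}^0(X_0,\me\tensor\ml)\bigr)\neq 0$, then $H\tensor W$ dies in $U$, which already violates the criterion of Lemma \ref{GrassGIT}. But the proposal has two genuine gaps. First, you dismiss the case ``$V\to\op{H}^0(X_0,\me\tensor\ml)$ injective but not surjective'' as impossible (``the map is automatically surjective''). It is not: $\dim V=\chi(X_0,\me\tensor\ml)$, while $h^0(X_0,\me\tensor\ml)=\chi+h^1(X_0,\me\tensor\ml)$, and the Quot scheme for a \emph{fixed} $\ml$ does contain torsion-free quotients with $h^1(X_0,\me\tensor\ml)\neq 0$ (globally generated sheaves with an $\omega$-type summand, for instance). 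Replacing $\ml$ by $\ml^N$ does not remove them, since the ambient Quot scheme changes with $\ml$; this is precisely the paper's reason (B), and it cannot be defined away. The paper handles it by fixing a reference polarization $\ml_0$, observing that sheaves whose Harder--Narasimhan quotients all have slope $\geq\mu_0$ form a bounded family (so $h^1=0$ there after the replacement allowed by Remark \ref{attention}), and destabilizing the remaining sheaves through the Le Potier--Simpson estimate: a low-slope HN quotient $\mf$ makes $h^0(X_0,\mf\tensor\ml)-\sum r_i(\mf)\deg_i(\ml)$ small, and then Lemma \ref{eclat} (via Corollary \ref{morn}) produces an explicit destabilizing subspace $H\subset V$. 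None of this machinery appears in your sketch, and the case does not reduce to ``a fixed subspace lies in the kernel.''

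Second, your torsion case rests on the subspace $V'\subset V$ of sections landing in the torsion subsheaf being nonzero, and there is no reason for that: the kernel of $V\to\op{H}^0(X_0,(\me/\mathcal{C})\tensor\ml)$ can vanish when $h^1((\me/\mathcal{C})\tensor\ml)>0$, since sections generating the torsion stalks need not themselves lie in $\mathcal{C}\tensor\ml$. (Your growth comparison is also off: $\dim V$ is fixed once $\ml$ is; what grows is $\dim W$ and $\dim U$ with the twist $\mc{A}^m$.) The paper's proof of reason (C) therefore does not destabilize by a subspace supported in the torsion at all: it embeds $\me/\mathcal{C}$ in a torsion-free sheaf $\me'$ with the same Hilbert polynomial (Simpson's Lemma 1.17) and argues on the \emph{quotient} side. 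If all HN quotients of $\me'$ have large slope, then $\me'\tensor\ml$ has vanishing $h^1$ and is globally generated, and Corollary \ref{morn} forces $\op{H}^0(X_0,(\me/\mathcal{C})\tensor\ml)\to\op{H}^0(X_0,\me'\tensor\ml)$ to be an isomorphism, whence $\mathcal{C}=0$, a contradiction; if some HN quotient has small slope, one again feeds the induced quotient of $\me$ into Lemma \ref{eclat}. So both of the cases you treat lightly are exactly where the real work (boundedness, HN filtrations, the uniform $\beta$ from \cite[Corollary 1.7]{simpson}) is concentrated, and your proposal as written would not close them.
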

Our goal is Lemma \ref{EndGoal}, for which we need a number of results, relying upon the work of  \cite{simpson}.

 There are three possible reasons for $x$ not to be in $\op{Q}_{X_0}^0$:
\begin{enumerate}
\item[(A)] $\me$ is torsion free but the map $V\to \op{H}^0(X_0,\me\tensor \mathcal{L})$ is not injective.
\item[(B)] $\me$ is torsion free and the vector space $\op{H}^0(X_0,\me\tensor \mathcal{L})$ has dimension greater than the Euler characteristic, and   hence $V\to \op{H}^0(X_0,\me\tensor \mathcal{L})$ is not surjective, but injective.
\item [(C)] The sheaf $\mathcal{E}$ is not torsion free.
\end{enumerate}

\begin{lemma}\label{GrassGIT} \cite[Proposition 1.14]{simpson}
A quotient $V\tensor W\to U\to 0$ is semistable (for the action of $\op{GL}(V)$ if and only if for every $H\subseteq V$, the image $\op{Im}(H\tensor W)$ of $H\tensor W$
in $U$ is not zero, and
\begin{equation}\label{ss}
\frac{\dim H}{\dim \op{Im}(H\tensor W)}\leq \frac{\dim V}{\dim U}.
\end{equation}
\end{lemma}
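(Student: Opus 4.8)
This is the Hilbert--Mumford numerical criterion applied to the action of $\op{GL}(V)$ on the Pl\"ucker-polarized $\op{Grass}^{\op{quot}}(V\tensor W,\rho)$, where $\rho=\dim U$; since the centre of $\op{GL}(V)$ acts trivially on $\op{Grass}^{\op{quot}}(V\tensor W,\rho)$, ``semistable for $\op{GL}(V)$'' is tested against one-parameter subgroups of $\op{SL}(V)$. Such a $\lambda$ determines a weight decomposition $V=\bigoplus_n V_n$; putting $V^{\ge n}=\bigoplus_{m\ge n}V_m$ and $E=V\tensor W$, the standard analysis of the limit gives $\lim_{t\to 0}\lambda(t)\cdot[q]=[E\twoheadrightarrow U_0]$, where $U_0$ is the associated graded of $U$ for the decreasing filtration $\op{Im}(V^{\ge n}\tensor W)\subseteq U$, the weight-$n$ piece of $U_0$ having dimension $\dim\op{Im}(V^{\ge n}\tensor W)-\dim\op{Im}(V^{\ge n+1}\tensor W)$. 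Writing $\cO(1)$ for the Pl\"ucker line bundle, whose fibre at $[q]$ is $\Lambda^{\rho}U$, the numerical invariant is then $\mu([q],\lambda)=\sum_n n\big(\dim\op{Im}(V^{\ge n}\tensor W)-\dim\op{Im}(V^{\ge n+1}\tensor W)\big)$, and $[q]$ is semistable if and only if $\mu([q],\lambda)\ge 0$ for every $\lambda$.

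First I would evaluate this on the one-parameter subgroups attached to subspaces. For $0\neq H\subseteq V$ let $\lambda_H$ act with weight $\dim V-\dim H$ on $H$ and weight $-\dim H$ on a chosen complement, so $\lambda_H\in\op{SL}(V)$. The filtration then has two steps, and a one-line computation gives $\mu([q],\lambda_H)=\dim V\cdot\dim\op{Im}(H\tensor W)-\dim H\cdot\dim U$. Hence $\mu([q],\lambda_H)\ge 0$ is equivalent to $\op{Im}(H\tensor W)\neq 0$ together with $\frac{\dim H}{\dim\op{Im}(H\tensor W)}\le\frac{\dim V}{\dim U}$ (if $\op{Im}(H\tensor W)=0$, then $\mu([q],\lambda_H)=-\dim H\cdot\dim U<0$). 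Since the $\lambda_H$ are among all one-parameter subgroups of $\op{SL}(V)$, this already proves the ``only if'' direction.

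For the converse I would carry out the standard convexity reduction: an arbitrary $\lambda\in\op{SL}(V)$ determines a weighted flag $0=V^{(0)}\subsetneq\cdots\subsetneq V^{(k)}=V$ with strictly decreasing graded weights $\gamma_1>\cdots>\gamma_k$ and $\sum_j\gamma_j(\dim V^{(j)}-\dim V^{(j-1)})=0$; Abel summation applied both to the weight formula above and to this linear relation yields the identity $\mu([q],\lambda)=\sum_{j=1}^{k-1}\frac{\gamma_j-\gamma_{j+1}}{\dim V}\,\mu([q],\lambda_{V^{(j)}})$, a combination with strictly positive coefficients. Thus $\mu([q],\lambda_H)\ge 0$ for every nonzero $H\subseteq V$ forces $\mu([q],\lambda)\ge 0$ for every $\lambda$, and Hilbert--Mumford gives semistability. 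The only real difficulty is the sign bookkeeping --- the correct (decreasing) direction of the weight filtration, the normalization of $\lambda_H$, and the sign convention for $\mu$ --- together with the standard identification of $\lim_{t\to 0}\lambda(t)\cdot[q]$ with the associated-graded quotient; once these are fixed the two displayed identities are routine. As this is \cite[Proposition 1.14]{simpson}, one may alternatively simply invoke it.
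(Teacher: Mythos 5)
The paper offers no argument for this lemma beyond the citation to \cite[Proposition 1.14]{simpson}, and what you have written is essentially the cited proof itself (the Hilbert--Mumford computation on the Pl\"ucker-polarized Grassmannian of quotients, followed by the convexity reduction to two-step filtrations), so there is no methodological divergence to report. Your bookkeeping does come out right: with the polarization $\det U$ and the decreasing filtration $\op{Im}(V^{\ge n}\tensor W)$, the relevant weight is $\sum_n n\bigl(\dim\op{Im}(V^{\ge n}\tensor W)-\dim\op{Im}(V^{\ge n+1}\tensor W)\bigr)$, the two-step evaluation gives $\dim V\cdot\dim\op{Im}(H\tensor W)-\dim H\cdot\dim U$, and your Abel-summation identity expressing $\mu([q],\lambda)$ as a positive combination of the $\mu([q],\lambda_{V^{(j)}})$ is verified using the trace-zero relation together with $\op{Im}(V^{(k)}\tensor W)=U$. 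The only point you should state more carefully is the reduction to $\op{SL}(V)$: triviality of the central action on the Grassmannian as a variety is not by itself enough, since with the naive $\op{GL}(V)$-linearization a scalar $t$ acts on the fibre $\Lambda^{\rho}U$ with weight $\dim U$, and then no point would be semistable; the reduction is legitimate here because Definition \ref{TA} linearizes via the $\op{PGL}(V)$-linearization on a tensor power of $\mathcal{O}(1)$, so central one-parameter subgroups contribute $\mu=0$ and semistability is indeed detected on one-parameter subgroups of $\op{SL}(V)$.
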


\begin{definition} Let $X_0$ be a nodal curve with irreducible components $\{X_{0,i}\}_{i\in I}$ and $\mc{A}$ a line bundle on $X_0$.  For a sheaf $\mf$ on $X_0$, we set
$$r_{\mathcal{A}}(\mf)=\sum_i r_i(\mf)\deg_i(\mathcal{A}),$$
where $r_i(\mf)=\op{rk}(\mf |_{X_{0,i}})$, and $\deg_i(\mathcal{A})=\op{deg}(\mathcal{A}|_{X_{0,i}})$.
\end{definition}
\begin{lemma}\label{monty2}
There exists a number $N_0=N_0(\mathcal{A},\ml)$ so that for $m \geq N_0$, if $x=[V\tensor\ml^{-1}\to \me] \in \op{Quot}_{\ml, \op{P},1}(r)$ which is $T(\mathcal{A}^m)$-semi-stable, then the following property holds: For any non-zero subspace $H\subseteq V$, let $\mathcal{F}\subset\mathcal{E}$ the subsheaf generated by $H\tensor \mathcal{L}^{-1}$. Then, $r_{\mathcal{A}}(\mathcal{F})>0$ and
\begin{equation}\label{nom}
\frac{\dim H}{r_{\mathcal{A}}(\mathcal{F})}\leq \frac{\dim V}{r_{\mathcal{A}}(\mathcal{E})}.
\end{equation}
\end{lemma}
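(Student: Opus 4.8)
The plan is to apply Grothendieck's embedding (Eq \eqref{springbreak}) together with the GIT semistability criterion of Lemma \ref{GrassGIT} (Simpson), and translate the Grassmannian-of-quotients inequality back into an inequality about the subsheaf $\mc{F}\subset\mc{E}$. First I would recall that for $m$ large, $\psi_m$ identifies the $T(\mathcal{A}^m)$-semistable locus with the locus of points whose image in $\op{Grass}^{\op{quot}}(V\tensor W,\rho)$, $W=\op{H}^0(X_0,\mathcal{A}^m)$, is GIT-semistable. So if $x$ is $T(\mathcal{A}^m)$-semistable, then the quotient
$$V\tensor \op{H}^0(X_0,\mathcal{A}^m)\twoheadrightarrow \op{H}^0(X_0,\mc{E}\tensor\mc{L}\tensor\mathcal{A}^m)$$
is GIT-semistable, hence by Lemma \ref{GrassGIT}, for every $H\subseteq V$ the image of $H\tensor W$ in $\op{H}^0(X_0,\mc{E}\tensor\mc{L}\tensor\mathcal{A}^m)$ is nonzero and
$$\frac{\dim H}{\dim \op{Im}(H\tensor W)}\le \frac{\dim V}{h^0(X_0,\mc{E}\tensor\mc{L}\tensor\mathcal{A}^m)}.$$

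Next I would identify $\op{Im}(H\tensor W)$ inside $\op{H}^0(X_0,\mc{E}\tensor\mc{L}\tensor\mathcal{A}^m)$ with $\op{H}^0(X_0,\mc{F}\tensor\mc{L}\tensor\mathcal{A}^m)$, where $\mc{F}$ is the subsheaf of $\mc{E}$ generated by $H\tensor\mc{L}^{-1}$: indeed, sections coming from $H\tensor W$ are precisely the sections of $\mc{E}\tensor\mc{L}\tensor\mathcal{A}^m$ supported (as a sheaf map) on the image of $H$, i.e. landing in $\mc{F}\tensor\mc{L}\tensor\mathcal{A}^m$; surjectivity onto $\op{H}^0(\mc{F}\tensor\mc{L}\tensor\mathcal{A}^m)$ holds because $\mathcal{A}^m$ is sufficiently ample that $\mc{F}\tensor\mc{L}\tensor\mathcal{A}^m$ is globally generated by the images of the generators of $\mc{F}$ tensored with sections of $\mathcal{A}^m$. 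For this one must choose $N_0$ depending on $\mathcal{A}$ and $\mc{L}$ (and the finitely many Hilbert polynomials of subsheaves $\mc{F}$ that can occur, which are bounded since $\mc{E}$ has fixed Hilbert polynomial $\op{P}$), so that for $m\ge N_0$ the higher cohomology of all such $\mc{F}\tensor\mc{L}\tensor\mathcal{A}^m$ vanishes and they are globally generated — this is the standard boundedness-of-the-Quot-scheme argument, essentially as in \cite{simpson}. Given this, $\dim\op{Im}(H\tensor W)=h^0(X_0,\mc{F}\tensor\mc{L}\tensor\mathcal{A}^m)=\chi(X_0,\mc{F}\tensor\mc{L}\tensor\mathcal{A}^m)$, which by Riemann--Roch on the nodal curve (using Eq \eqref{norepeat} iterated $m$ times, and that $\mathcal{A}=\mathcal{O}(\sum\alpha_i p_i)$) equals $m\cdot r_{\mathcal{A}}(\mc{F}) + (\text{terms independent of }m)$. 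In particular for $m\ge N_0$ it is positive, forcing $r_{\mathcal{A}}(\mc{F})>0$, which also gives the nonvanishing clause automatically.

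Finally I would plug these into the Grassmannian inequality: with $\dim V$ fixed, $\dim U = h^0(X_0,\mc{E}\tensor\mc{L}\tensor\mathcal{A}^m)= m\, r_{\mathcal{A}}(\mc{E}) + O(1)$ and $\dim\op{Im}(H\tensor W)= m\, r_{\mathcal{A}}(\mc{F}) + O(1)$, the inequality $\frac{\dim H}{\dim\op{Im}(H\tensor W)}\le\frac{\dim V}{\dim U}$ reads $\dim H\cdot(m\,r_{\mathcal{A}}(\mc{E})+O(1))\le \dim V\cdot(m\,r_{\mathcal{A}}(\mc{F})+O(1))$. Here one must be slightly careful: this is an inequality for a single large $m$, but the ``$O(1)$'' constants are uniform over the finitely many possible $\mc{F}$, so dividing by $m$ and taking $m\ge N_0$ large enough (enlarging $N_0$ if necessary) forces the leading-order inequality $\dim H\cdot r_{\mathcal{A}}(\mc{E})\le \dim V\cdot r_{\mathcal{A}}(\mc{F})$, which rearranges to Eq \eqref{nom}. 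The main obstacle is the bookkeeping in choosing $N_0$ uniformly: one needs simultaneously (i) $\psi_m$ to be an embedding identifying the two semistability notions, (ii) vanishing of $\op{H}^1$ and global generation for all subsheaves $\mc{F}$ that arise, and (iii) the strict-inequality-after-dividing-by-$m$ step — all three are ``for $m\gg 0$'' conditions controlled by the boundedness of the relevant family of sheaves, exactly the technical input imported from \cite{simpson}, so the argument is a matter of assembling these rather than proving anything genuinely new.
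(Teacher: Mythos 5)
Your overall strategy is exactly the paper's: push $x$ into the Grassmannian of quotients, apply Lemma \ref{GrassGIT}, identify $\op{Im}(H\tensor W)$ with $\op{H}^0(X_0,\mf\tensor\ml\tensor\mathcal{A}^m)$ for $m\geq N_0$ using boundedness, expand the Euler characteristics linearly in $m$, and let the growth in $m$ beat the uniformly bounded constant terms (the paper phrases this last step contrapositively, but it is the same comparison). However, one of your justifications does not hold as written: the surjectivity of $H\tensor W\to \op{H}^0(X_0,\mf\tensor\ml\tensor\mathcal{A}^m)$ does not follow from global generation of $\mf\tensor\ml\tensor\mathcal{A}^m$ by the images of the generators --- a sheaf can be globally generated by a subspace of sections that fails to span $\op{H}^0$ (on $\Pro^1$ the sections $x^2,y^2$ generate $\mathcal{O}(2)$ but do not span its space of sections). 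The correct argument, and the one the paper uses, is cohomological: from the defining surjection one has $0\to\mk\to H\tensor\ml^{-1}\to\mf\to 0$ with $\mk$ ranging over a bounded family as $H\subseteq V$ varies, so for $m\geq N_0$ one can arrange $\op{H}^1(X_0,\mk\tensor\ml\tensor\mathcal{A}^m)=0$ for all of them; this vanishing is what makes $H\tensor \op{H}^0(X_0,\mathcal{A}^m)\to\op{H}^0(X_0,\mf\tensor\ml\tensor\mathcal{A}^m)$ surjective, and together with $\op{H}^1(X_0,\mf\tensor\ml\tensor\mathcal{A}^m)=0$ it gives $\dim\op{Im}(H\tensor W)=\chi(X_0,\mf\tensor\ml\tensor\mathcal{A}^m)$.

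A second, smaller slip: your deduction of $r_{\mathcal{A}}(\mf)>0$ from positivity of $h^0(X_0,\mf\tensor\ml\tensor\mathcal{A}^m)=m\,r_{\mathcal{A}}(\mf)+O(1)$ is not valid, since a torsion $\mf$ has $r_{\mathcal{A}}(\mf)=0$ while its $h^0$ is a positive constant. What actually rules this out is the same growth argument you use at the end: if $r_{\mathcal{A}}(\mf)=0$ then $\dim\op{Im}(H\tensor W)$ stays bounded while $\dim U=m\,r_{\mathcal{A}}(\me)+O(1)$ grows, so the inequality of Lemma \ref{GrassGIT} fails for $m\geq N_0$ (uniformly, by boundedness), contradicting semistability. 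With these two repairs your argument coincides with the paper's proof.
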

\begin{proof}
Assume $r_{\mathcal{A}}(\mathcal{F})=0$ or that Eq \eqref{nom} fails, and hence
\begin{equation}\label{nomnew}
\frac{\dim H}{r_{\mathcal{A}}(\mathcal{F})}> \frac{\dim V}{r_{\mathcal{A}}(\mathcal{E})}.
\end{equation}

Let $\mathcal{K}$ be the kernel of $H\tensor \ml^{-1}\to\mf$. We therefore have an exact sequence
$$0\to\mk\to H\tensor \ml^{-1}\to\mf \to 0.$$
Since the set of $H$ is bounded, tensoring by $\mathcal{A}_m=\ml\tensor\mathcal{A}^m$ for large enough $m$, we can assume
$$H^1(X_0,\mk\tensor\mathcal{A}_m)= H^1(X_0,\mf\tensor\mathcal{A}_m)=0,$$
for all $H$.  It follows that $\op{Im}(H\tensor W)$ in Lemma \ref{GrassGIT} equals $\op{H}^0(X_0, \mf\tensor\mathcal{A}_m)$. Therefore the  inequality given in
Eq \eqref{ss} (since $x$ is semistable for $T(\mathcal{A}^m)$) implies the inequality
\begin{equation}\label{nomnom}
\frac{\dim H}{\chi(X_0, \mf\tensor\mathcal{A}_m)}\leq \frac{\dim V}{\chi(X_0, \me\tensor\mathcal{A}_m)}.
\end{equation}
Here we have used that $\chi(X_0, \me\tensor\mathcal{A}_m)=\dim W$. But
$$\chi(X_0, \mf\tensor\mathcal{A}_m)=\chi(X_0,\mf\tensor\ml) + m \sum_i r_i(\mf) \deg_i(\mathcal{A})= \chi(X_0,\mf\tensor\ml) + m r_{\mathcal{A}}(\mathcal{F}).$$
Similarly
$$\chi(X_0, \me\tensor\mathcal{A}_m)=\chi(X_0,\me \tensor \ml) + m r_{\mathcal{A}}(\mathcal{E})$$
and therefore Eq \eqref{nomnom} becomes
\begin{equation}\label{nomnomnom}
\frac{\dim H }{\chi(X_0, \mf\tensor\ml)+ m r_{\mathcal{A}}(\mathcal{F})}\leq \frac{\dim V}{\chi(X_0,\me \tensor \ml) + m r_{\mathcal{A}}(\mathcal{E})}.
\end{equation}
It is easy to see that  Eq \eqref{nomnew} and Eq \eqref{nomnomnom} contradict each other for $m$ large.
\end{proof}

\begin{corollary}\label{morn}
 Assume $x=[V\tensor\ml^{-1}\to \me] \in \op{Quot}_{\ml, \op{P},1}(r)$ which is semi-stable for $T(\mathcal{A}^m)$, with $m\geq N_0$ and  $\me\to\mf$  a quotient. Let $J$ be the image of $V\to \op{H}^0(X_0,\ml\tensor\mf)$
 then $$\frac{\dim(J)}{r_{\mathcal{A}}(\mathcal{F})}\geq \frac{\dim V}{r_{\mathcal{A}}(\mathcal{E})}.$$
 \end{corollary}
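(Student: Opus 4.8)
The plan is to deduce Corollary~\ref{morn} from Lemma~\ref{monty2} by taking kernels. Given the quotient $\me\twoheadrightarrow\mf$, let $H\subseteq V$ be the preimage of $J\tensor\ml^{-1}$ under the composition, equivalently $H=\{v\in V : v\bmod\text{(image in }\mf\tensor\ml\text{)}=0\}$... more precisely, let $\mc{K}=\ker(\me\to\mf)$, and let $H\subseteq V$ be the subspace of sections of $V\tensor\ml^{-1}$ (i.e., elements of $V$ under the fixed isomorphism $V\cong\op{H}^0(X_0,\me\tensor\ml)$) that land in $\mc{K}\tensor\ml$, equivalently that map to zero in $\mf\tensor\ml$. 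Then $V/H$ injects into $\op{H}^0(X_0,\mf\tensor\ml)$ with image exactly $J$, so $\dim H=\dim V-\dim J$. Let $\mc{F}'\subseteq\mc{E}$ be the subsheaf generated by $H\tensor\ml^{-1}$; since all sections in $H$ factor through $\mc{K}$, we have $\mc{F}'\subseteq\mc{K}$, hence $r_i(\mc{F}')\le r_i(\mc{K})=r_i(\mc{E})-r_i(\mc{F})$ for every $i$, and therefore $r_{\mathcal{A}}(\mc{F}')\le r_{\mathcal{A}}(\mc{E})-r_{\mathcal{A}}(\mc{F})$.

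Next I would apply Lemma~\ref{monty2} to this $H$ and $\mc{F}'$: since $x$ is $T(\mathcal{A}^m)$-semistable with $m\ge N_0$, we get $r_{\mathcal{A}}(\mc{F}')>0$ and
$$\frac{\dim H}{r_{\mathcal{A}}(\mc{F}')}\leq \frac{\dim V}{r_{\mathcal{A}}(\mc{E})}.$$
Substituting $\dim H=\dim V-\dim J$ and the bound $r_{\mathcal{A}}(\mc{F}')\le r_{\mathcal{A}}(\mc{E})-r_{\mathcal{A}}(\mc{F})$ into this inequality, and then performing the elementary manipulation (clearing denominators, using that $r_{\mathcal{A}}(\mc{E})>r_{\mathcal{A}}(\mc{F}')>0$ so that $r_{\mathcal{A}}(\mc{E})-r_{\mathcal{A}}(\mc{F})\geq r_{\mathcal{A}}(\mc{F}')>0$ and $r_{\mathcal{A}}(\mc{F})< r_{\mathcal{A}}(\mc{E})$), one obtains
$$\frac{\dim J}{r_{\mathcal{A}}(\mc{F})}\geq \frac{\dim V}{r_{\mathcal{A}}(\mc{E})},$$
which is the claim. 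Concretely: $\dim H\cdot r_{\mathcal{A}}(\mc{E})\le \dim V\cdot r_{\mathcal{A}}(\mc{F}')\le \dim V\cdot(r_{\mathcal{A}}(\mc{E})-r_{\mathcal{A}}(\mc{F}))$, so $(\dim V-\dim J)r_{\mathcal{A}}(\mc{E})\le \dim V\cdot r_{\mathcal{A}}(\mc{E})-\dim V\cdot r_{\mathcal{A}}(\mc{F})$, giving $\dim V\cdot r_{\mathcal{A}}(\mc{F})\le \dim J\cdot r_{\mathcal{A}}(\mc{E})$.

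The one point needing a little care — the main obstacle, though a mild one — is establishing the multi-rank inequality $r_i(\mc{F}')\le r_i(\mc{E})-r_i(\mc{F})$, i.e., that the subsheaf of $\mc{E}$ generated by the sections vanishing in $\mf$ is contained in the kernel $\mc{K}$. This is immediate: each such section, viewed as a map $\ml^{-1}\to\mc{E}$, has image landing in $\mc{K}$ because its composite to $\mf$ is zero, so the subsheaf they generate lies in $\mc{K}$; then restricting to each component $X_{0,i}$ and taking ranks (generic ranks, which are additive in short exact sequences of sheaves on an integral curve) yields $r_i(\mc{F}')\le r_i(\mc{K})=r_i(\mc{E})-r_i(\mc{F})$. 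One should also note that the hypothesis implicitly uses the identification $V\cong\op{H}^0(X_0,\mc{E}\tensor\ml)$ valid on $\op{Q}^0_{X_0}$ (so that "the image of $V\to\op{H}^0(X_0,\ml\tensor\mf)$" makes sense and $\dim H$ is computed correctly); this is consistent with the standing assumptions, and in the application $x$ will indeed lie in $\op{Q}^0_{X_0}$ by Proposition~\ref{simpsonian}.
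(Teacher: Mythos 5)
Your proof is correct and is essentially the deduction the paper intends: Corollary \ref{morn} is stated as an immediate consequence of Lemma \ref{monty2}, obtained exactly as you do by taking $H=\ker\bigl(V\to \op{H}^0(X_0,\mf\tensor\ml)\bigr)$, noting that the subsheaf it generates lies in $\ker(\me\to\mf)$, and cross-multiplying. One small remark: your closing caveat about needing $V\cong \op{H}^0(X_0,\me\tensor\ml)$ is superfluous (and the appeal to Proposition \ref{simpsonian} there would be circular), since the map $V\to \op{H}^0(X_0,\mf\tensor\ml)$ is defined directly from the composite $V\tensor\ml^{-1}\to\me\to\mf$; this matters because the corollary is also used in case (C), where $x\notin\op{Q}^0_{X_0}$.
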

We will replace $\mathcal{A}$ by $\mathcal{A}^m$ with a $m\geq N_0$.

\subsubsection{Treating reason (A)}

\begin{lemma}\label{nA}
 Suppose $x=[V\tensor\ml^{-1}\to \me] \in \op{Quot}_{\ml}(r)$ is such that $V\to \op{H}^0(X_0, \me\tensor\ml)$ is not injective.  If  $H$ is the kernel of $V\to \op{H}^0(X_0,\me\tensor\ml)$, the semistability inequality for $H$ fails.
 \end{lemma}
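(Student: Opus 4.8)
The plan is to test the semistability criterion of Lemma \ref{GrassGIT} (Simpson's Proposition 1.14) against the subspace $H = \ker\big(V\to \op{H}^0(X_0,\me\tensor\ml)\big)$, after passing, as arranged, to a sufficiently ample power $\mathcal{A}^m$ (so that $W = \op{H}^0(X_0,\mathcal{A}^m)$ is as in Eq \eqref{springbreak} and $\psi_m$ is the relevant embedding). Recall the quotient to which Lemma \ref{GrassGIT} is applied is $V\tensor W \twoheadrightarrow U = \op{H}^0(X_0,\me\tensor\ml\tensor\mathcal{A}^m)$, sending $v\tensor w \mapsto $ (the section $v$ of $\me\tensor\ml$, multiplied by $w$). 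The key observation is that if $v\in H$, i.e. $v$ maps to $0$ in $\op{H}^0(X_0,\me\tensor\ml)$, then the product section $vw$ is already $0$ in $\op{H}^0(X_0,\me\tensor\ml\tensor\mathcal{A}^m)$ — multiplication by a section of a line bundle is injective on the torsion-free (hence here even locally free on the smooth locus, and pure everywhere) sheaf sections only away from where we might worry, but here the map is literally $v\mapsto v$ followed by tensoring, so $v=0$ forces $vw=0$. Hence $\op{Im}(H\tensor W) = 0$ in $U$.

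Once $\op{Im}(H\tensor W)=0$ with $H\neq 0$, Lemma \ref{GrassGIT} says outright that the quotient is \emph{not} semistable: the criterion requires $\op{Im}(H\tensor W)\neq 0$ for every nonzero $H\subseteq V$. So the semistability inequality \eqref{ss} for this $H$ fails in the strongest possible way (the denominator on the left is zero while the numerator is positive), and therefore $x$ is not $T(\mathcal{A})$-semistable. This is exactly the assertion: the semistability inequality for $H$ fails.

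The only point needing a word of care is the claim that $\op{Im}(H\tensor W)=0$, i.e. that the map in Grothendieck's embedding \eqref{springbreak} does send $H\tensor W$ to zero. This is immediate from the description of $\psi_m$: a quotient $[V\tensor\ml^{-1}\twoheadrightarrow\me]$ goes to $[V\tensor\op{H}^0(X_0,\mathcal{A}^m)\twoheadrightarrow \op{H}^0(X_0,\me\tensor\ml\tensor\mathcal{A}^m)]$, and the surjection factors as $V\tensor\op{H}^0(\mathcal{A}^m)\to \op{H}^0(\me\tensor\ml)\tensor\op{H}^0(\mathcal{A}^m)\to \op{H}^0(\me\tensor\ml\tensor\mathcal{A}^m)$, the first arrow being $(V\to\op{H}^0(\me\tensor\ml))\tensor\op{id}$. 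Since $H$ dies under $V\to\op{H}^0(\me\tensor\ml)$, it dies under the composite. I expect no real obstacle here; the substance of the lemma is simply recognizing that reason (A) produces a subspace violating the very nondegeneracy hypothesis built into Simpson's GIT criterion, which is strictly stronger than merely violating inequality \eqref{ss}.
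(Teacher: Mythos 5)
Your proof is correct and follows essentially the same route as the paper: both arguments observe that $H\tensor\ml^{-1}\to\me$ is the zero map, so the image of $H\tensor W$ in $\op{H}^0(X_0,\me\tensor\ml\tensor\mathcal{A}^m)$ vanishes, and then invoke Simpson's criterion (Lemma \ref{GrassGIT}, as packaged in Lemma \ref{monty2}) to conclude that $x$ is not $T(\mathcal{A})$-semistable.
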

\begin{proof}

   By hypothesis, $H=Ker(V \to  \op{H}^0(X_0, \me\tensor\ml))$ is nontrivial.  This means, considering the surjection
   $V\tensor \mc{O}_X \to \me \tensor \mathcal{L}$, the restriction $H\tensor \mc{O}_X \to \me \tensor \mathcal{L}$ is the zero map.  In other words,
   $H\tensor \mathcal{L}^{-1} \to \me$ is the zero map.  Tensoring with $\mathcal{A}_m=\ml\tensor\mathcal{A}^m$ and taking global sections, we get maps
   $$H \tensor W \to V \tensor W \to \op{H}^0(X_0, \me \tensor \mathcal{A}_m)=U,$$
   such that the image of $H \tensor W$ in $U$ is zero.  This means, by Lemma \ref{monty2}, that $\tilde{\tilde{x}}$ is not semistable in
    $\op{Gr}^{Quot}(V \tensor W)$, which  implies that $x$ is not semistable in $\op{Quot}_{\ml}(r)$.
\end{proof}

\subsubsection{An important estimate to treat cases (B) and (C)}\label{last}
If $\me$ belongs to an bounded set (specified  a priori), we can assume that $H^1(\me\tensor\ml)=0$ (replacing $\ml$ by $\ml^m$, and rescaling $\mathcal{A}$ after that). This will rule out (B).

We will fix an initial value of $\ml$, say $\ml_0$. The slope of a sheaf $\mf$ (always with respect to the polarization $\ml_0$ below) is
\begin{equation}\label{LA}
\mu(\mf)=\mu(\mf,\ml_0)=\frac{\chi(X_0,\mf)}{r_{\ml_0}(\mf)}.
\end{equation}
We will use the theory of Harder-Narasimhan filtrations (see e.g., \cite{simpson}), always with respect to the fixed $\ml_0$.

Now  assume that $\me$ is a sheaf (with Hilbert polynomial $r\chi(\ml_0)$) which has a Harder -Narasimhan quotient   of  slope $\mu<\mu_0$ (the complement is bounded). We will specify $\mu_0$ at the very end of this argument.

Therefore from a  semi-stable quotient $\me\to \mf$, and $\mu(\mf,\ml_0)<\mu_0$, we will need to  produce a subspace $H\subset V$ which contradicts the $T(\mathcal{A})$ semistability of $x$.

\begin{lemma}\label{eclat}
There is a constant $\Cee$, depending only $r$ and the bounds we have assumed for the absolute values of $\frac{a_i}{\sum a_i}$, with the following property: Let $\mathcal{F}$ be a quotient of $\me$ where $x=[V\tensor\ml^{-1}\to \me] \in \op{Quot}_{\ml}(r)$ , and such that
\begin{equation}\label{inegality}
\op{H}^0(X_0,\mf\tensor \ml)-\sum r_i(\mf)\deg_i(\ml)< \Cee.
\end{equation}
Here $\me$ is allowed to have torsion, a case that is used in (C).
The kernel
$H$ of the (composite) map $V\to \op{H}^0(X_0,\me\tensor\ml)\to \op{H}^0(X_0,\mf\tensor\ml)$
contradicts semistability of the point $x$ for the polarization $T(\mathcal{A})$.
\end{lemma}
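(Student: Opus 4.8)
The plan is to exhibit the subspace $H$ of the statement as an explicit violator of the numerical criterion for $T(\mathcal{A})$-semistability, i.e.\ of the inequality in Lemma~\ref{monty2} (equivalently, after pulling back along $\psi_m$, of Simpson's criterion, Lemma~\ref{GrassGIT}); the same content may also be extracted from Corollary~\ref{morn}, whose proof produces precisely this $H$. So let $H=\ker\bigl(V\to\op{H}^0(X_0,\me\tensor\ml)\to\op{H}^0(X_0,\mf\tensor\ml)\bigr)$, put $\mc{E}'=\ker(\me\to\mf)$, and let $\mc{E}_H\subseteq\me$ be the subsheaf generated by $H\tensor\ml^{-1}$. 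The goal is to prove that either $r_{\mathcal{A}}(\mc{E}_H)=0$ or $\dim H\cdot r_{\mathcal{A}}(\me)>\dim V\cdot r_{\mathcal{A}}(\mc{E}_H)$; in either case Lemma~\ref{monty2} forces $x$ to be unstable for $T(\mathcal{A})$, which is the assertion.

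The first step is structural: $\mc{E}_H\subseteq\mc{E}'$, because by definition of $H$ the image of $H$ in $\op{H}^0(X_0,\me\tensor\ml)$ lies in $\op{H}^0(X_0,\mc{E}'\tensor\ml)$, while $\mc{E}_H\tensor\ml$ is the subsheaf of $\me\tensor\ml$ generated by that image. Consequently $r_{\mathcal{A}}(\mc{E}_H)\le r_{\mathcal{A}}(\mc{E}')=r_{\mathcal{A}}(\me)-r_{\mathcal{A}}(\mf)$. Combining this with $\dim H\ge\dim V-h^0(X_0,\mf\tensor\ml)$, and using $\dim V=r\,\chi(X_0,\ml)$ (valid since $\ml$ is taken sufficiently ample) together with $r_{\mathcal{A}}(\me)=r\deg(\mathcal{A})$ (uniform multi-rank $r$), a short computation shows it suffices to prove the purely numerical inequality
\[
h^0(X_0,\mf\tensor\ml)\;<\;\chi(X_0,\ml)\sum_{i}r_i(\mf)\,c_i,\qquad c_i:=\frac{\deg_i(\mathcal{A})}{\deg(\mathcal{A})} ;
\]
indeed this inequality yields $\dim H\cdot r_{\mathcal{A}}(\me)>\dim V\cdot r_{\mathcal{A}}(\mc{E}')\ge\dim V\cdot r_{\mathcal{A}}(\mc{E}_H)$, and forces $\dim H>0$.

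For the numerical step I would invoke Remark~\ref{folge}(3), which gives $\bigl|c_i-\frac{\deg_i(\ml)}{\deg(\ml)}\bigr|<\frac{E}{\deg(\ml)}$ with $E$ depending only on $g$ and the assumed bounds on $|a_i/\sum a_i|$. Together with $\chi(X_0,\ml)/\deg(\ml)=1-(g-1)/\deg(\ml)$ and the crude estimates $\sum_i r_i(\mf)\le r|I|$ and $\sum_i r_i(\mf)\deg_i(\ml)\le r\deg(\ml)$, this gives
\[
\chi(X_0,\ml)\sum_{i}r_i(\mf)\,c_i\;>\;\sum_{i}r_i(\mf)\deg_i(\ml)-\bigl(r(g-1)+Er|I|\bigr),
\]
with right-hand side independent of $\ml$. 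Hence $\Cee:=-\bigl(r(g-1)+Er|I|\bigr)$ works: the hypothesis $h^0(X_0,\mf\tensor\ml)-\sum_i r_i(\mf)\deg_i(\ml)<\Cee$ is exactly what pushes $h^0(X_0,\mf\tensor\ml)$ below $\chi(X_0,\ml)\sum_i r_i(\mf)c_i$, completing the argument. (That $\Cee$ comes out negative is as it should be: already $\mf=\me$ has $h^0(\me\tensor\ml)-\sum_i r_i(\me)\deg_i(\ml)=-r(g-1)$, so a positive $\Cee$ would absurdly force every $x$ to be unstable.)

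The step I expect to be the main obstacle is keeping all the constants genuinely uniform. Remark~\ref{folge}(3) holds only once $\deg(\ml)$ is large, and Lemma~\ref{monty2} asks for $\mathcal{A}$ to be replaced by $\mathcal{A}^m$ with $m\ge N_0$; I would have to check that these successive replacements neither worsen $E$ nor the bounds on $|a_i/\sum a_i|$, so that $\Cee$ really depends only on the stated data. A secondary subtlety is that here $\me$ is allowed to carry torsion (the case needed for (C) in Proposition~\ref{simpsonian}) and that $\mf$ may have the same multi-rank as $\me$, in which case $\mc{E}'$, hence $\mc{E}_H$, is torsion and $r_{\mathcal{A}}(\mc{E}_H)=0$; this has to be absorbed by handling the ``$r_{\mathcal{A}}(\mc{E}_H)=0$'' alternative of Lemma~\ref{monty2} on equal footing, and by noting that the displayed numerical inequality still forces $\dim H>0$ since its right-hand side is at most $r\,\chi(X_0,\ml)=\dim V$.
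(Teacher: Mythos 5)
Your proposal is correct and follows essentially the same route as the paper: the paper's proof applies Corollary \ref{morn} (which is exactly Lemma \ref{monty2} packaged for the kernel $H$ of $V\to \op{H}^0(X_0,\mf\tensor\ml)$) and then makes the identical numerical estimate via Remark \ref{folge}(3), bounding $\op{H}^0(X_0,\mf\tensor\ml)-\sum r_i(\mf)\deg_i(\ml)$ below by a constant depending only on $r$, $g$ and $E$. You merely unwind Corollary \ref{morn} and invoke Lemma \ref{monty2} directly, making the witness $H$ and an explicit value of $\Cee$ visible, which is the same argument in contrapositive form.
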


\begin{proof}
Assume, by way of contradiction, that $x$ is semistable for the polarization given by $T(\mathcal{A})$.
Now let $J$ be the image of the map $V\to \op{Hom}(\ml^{-1},\mf)$, therefore
$\dim J\leq \op{H}^0(X_0,\mf\tensor\ml)$. However by Corollary \ref{morn} (see also \cite[Remark after Lemma 1.16]{simpson})
\begin{equation}
\dim J\geq \frac{r(1-g)+\deg(L)r}{r\sum b_i}(\sum b_i r_i(\mf))= ((1-g)+\deg(L))(\sum \frac{b_i}{\sum b_i} r_i(\mf))
\end{equation}
where $b_i=\deg_i(\mathcal{A})$. Now let $c_i=\frac{b_i}{\sum b_i}=\frac{\deg_i(\mathcal{A})}{\deg(\mathcal{A})}$. We therefore get
$$  \op{H}^0(X_0,\mf\tensor\ml)\geq ((1-g)+\deg(\ml))(\sum c_i r_i(\mf))$$
and hence
\begin{equation}\label{bund}
 \op{H}^0(X_0,\mf\tensor \ml)-\sum r_i(\mf)\deg_i(\ml)\geq  (1-g)\sum c_ir_i(\mf) +\deg(\ml)\sum (c_i -\frac{deg_i(\ml)}{\deg(\ml)})r_i(\mf).
 \end{equation}

The term $(1-g)\sum c_ir_i(\mf)$ is bounded below. Therefore we need to bound the remaining terms. By Remark \ref{folge},
$|(c_i-\frac{\deg_i(\ml)}{\deg(\ml)})|< \frac{E}{\deg(\ml)}$, and hence the second term on the right in Eq \eqref{bund} is also bounded below.

\end{proof}

\begin{lemma}
There is an integer $\beta$ which depends only on integer $s$  such that if $\mf$ is any torsion-free sheaf on $X_0$, which satisfies
\begin{itemize}
\item $\mf$ is semistable for the polarization given by $\ml_0$.
\item  $r_{\ml_0}(\mf)=s$,
\end{itemize}
 then for any integer $m\geq 0$
$$\op{H}^0(X_0,\mf\tensor \ml_0^{\tensor m})-m\sum r_i(\mf)\deg_i(\ml_0)\leq (\mu(\mf)+\beta)\sum r_i(\mf)\deg_i(\ml_0),$$
where $\mu(\mf)=\mu(\mf,\ml_0)$ is given by Eq \eqref{LA}.
\end{lemma}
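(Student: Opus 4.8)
The plan is to bound the quantity $\op{H}^0(X_0,\mf\tensor\ml_0^{\tensor m})-m\sum r_i(\mf)\deg_i(\ml_0)$ uniformly over all $\ml_0$-semistable torsion-free sheaves $\mf$ of fixed $r_{\ml_0}$-rank $s$, for all $m\ge 0$, using boundedness of the family of such $\mf$. First I would observe that $\sum r_i(\mf)\deg_i(\ml_0) = r_{\ml_0}(\mf) = s$ is a fixed positive integer, so both sides of the claimed inequality are affine functions of the single real number $\mu(\mf)$ with the same coefficient $s$ in front; thus it suffices to produce a bound of the form $\op{H}^0(X_0,\mf\tensor\ml_0^{\tensor m}) - m s \le \mu(\mf)\cdot s + \beta s$ with $\beta$ depending only on $s$ (and the fixed curve $X_0$ and $\ml_0$).

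The key input is that the set of $\ml_0$-semistable torsion-free sheaves on $X_0$ with $r_{\ml_0}(\mf)=s$ and fixed $\mu(\mf)$ forms a bounded family (standard, see \cite{simpson}); and as $\mu(\mf)$ varies the family is a union of boundedly-many translates. Concretely, $\op{H}^0(X_0,\mf\tensor\ml_0^{\tensor m}) = \chi(X_0,\mf\tensor\ml_0^{\tensor m}) + h^1(X_0,\mf\tensor\ml_0^{\tensor m})$, and by Riemann-Roch on $X_0$ the Euler characteristic is $\chi(X_0,\mf) + m\,r_{\ml_0}(\mf) = \mu(\mf)s + ms$. So the left side of the asserted inequality equals exactly $\mu(\mf)s + h^1(X_0,\mf\tensor\ml_0^{\tensor m})$, and the whole problem reduces to showing $h^1(X_0,\mf\tensor\ml_0^{\tensor m}) \le \beta s$ for a constant $\beta=\beta(s)$ independent of $m\ge 0$ and of the semistable sheaf $\mf$.

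To control $h^1$ I would use Serre duality together with the semistability hypothesis. By duality $h^1(X_0,\mf\tensor\ml_0^{\tensor m}) = \hom(\mf\tensor\ml_0^{\tensor m}, \omega_{X_0})$ (using that $\mf$ is torsion free, hence the relevant Ext/Hom duality on the Gorenstein curve $X_0$ applies), and any such homomorphism has image a quotient sheaf of $\mf\tensor\ml_0^{\tensor m}$; semistability of $\mf$ forces the slope of this image to be bounded below in terms of $\mu(\mf)$, while it must also embed into $\omega_{X_0}$, whose subsheaves have slope bounded above by a constant depending only on $X_0$. For $m$ large this already forces $h^1=0$; for the finitely many small values of $m$ one invokes boundedness of the family directly. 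Assembling these, $h^1(X_0,\mf\tensor\ml_0^{\tensor m})$ is bounded by a constant depending only on $X_0$, $\ml_0$ and $s$, which we may absorb into $\beta s$.

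The main obstacle I anticipate is making the Serre-duality/semistability argument clean on the possibly reducible nodal curve $X_0$: one must be careful that "slope" $\mu(\cdot,\ml_0)$ behaves well under passing to subsheaves and quotients of torsion-free (but not necessarily locally free) sheaves on a reducible curve, and that the relevant duality statement holds. The cleanest route is probably to bypass duality and instead argue directly: choose $m_0$ large enough (depending only on the bounded family, hence only on $s$) that $h^1(X_0,\mf\tensor\ml_0^{\tensor m})=0$ for all $m\ge m_0$ and all semistable $\mf$ in the family, and then note that for $0\le m< m_0$ the finitely many cohomology groups $\op{H}^0(X_0,\mf\tensor\ml_0^{\tensor m})$ are uniformly bounded over the bounded family; take $\beta$ large enough to dominate all these finitely many cases. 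This sidesteps the duality subtleties entirely at the cost of a less explicit $\beta$, which is all the statement requires.
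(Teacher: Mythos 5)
Your reduction is correct as far as it goes: since $\chi(X_0,\mf\tensor\ml_0^{\tensor m})=(\mu(\mf)+m)\,s$, the asserted inequality is equivalent to the uniform bound $h^1(X_0,\mf\tensor\ml_0^{\tensor m})\le \beta s$. The gap is in the two appeals to ``boundedness of the family.'' The lemma fixes only $r_{\ml_0}(\mf)=s$, not $\chi(X_0,\mf)$, and the semistable torsion-free sheaves with $r_{\ml_0}(\mf)=s$ and unconstrained Euler characteristic do \emph{not} form a bounded family: they are all the twists $\mf_0\tensor\ml_0^{k}$, $k\in\mathbb{Z}$, of a bounded family, and twisting by $\ml_0^{k}$ preserves semistability while shifting $\mu$ by $k$. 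Consequently there is no $m_0$ depending only on $s$ with $h^1(X_0,\mf\tensor\ml_0^{\tensor m})=0$ for all $m\ge m_0$ and all such $\mf$ (the threshold coming out of your duality argument is $m\ge C(X_0,\ml_0)-\mu(\mf)$, and $\mu(\mf)$ is unbounded below), and the ``finitely many small values of $m$'' are likewise not covered by any bounded family. In fact the target of your reduction is false as literally stated: take $m=0$ and $\mf=\mf_0\tensor\ml_0^{-k}$ with $\mf_0$ semistable and $k\gg0$; then $h^1(X_0,\mf)\ge-\chi(X_0,\mf)=ks-\chi(X_0,\mf_0)\to\infty$, and indeed the displayed inequality itself fails for $m=0$ once $\mu(\mf)<-\beta$, since its left side is $h^0(X_0,\mf)\ge 0$ while its right side is negative. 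So no argument can close the proposal as written.

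For comparison, the paper's entire proof is a citation to \cite[Corollary 1.7]{simpson}, and there the right-hand side involves the \emph{positive part} of (maximal slope $+$ constant) times the rank; that positive-part form is what is actually needed and used in treating cases (B) and (C), where one only wants an upper bound on $\op{H}^0(X_0,\mf\tensor\ml)-\sum_i r_i(\mf)\deg_i(\ml)$ by a very negative constant $\Cee$ when this quantity is not already enormously negative. Your Serre-duality strategy can be repaired to prove exactly that version: if $h^0(X_0,\mf\tensor\ml_0^{\tensor m})\neq 0$, a nonzero section has image isomorphic to $\mc{O}_Z$ for a subcurve $Z\subseteq X_0$, so semistability bounds $\mu(\mf)+m$ below by a constant of $(X_0,\ml_0)$; in that regime either $\mu(\mf)+m$ exceeds the maximal slope of subsheaves of $\omega_{X_0}$, and your $\op{Hom}(\,\cdot\,,\omega_{X_0})$ computation gives $h^1=0$, or $\mu(\mf)+m$ lies in a fixed interval, where boundedness legitimately applies. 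But you must carry the positive part (or an explicit lower bound on $\mu(\mf)+m$) through the statement; without it, the uniform bound you are aiming at simply does not hold.
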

 \begin{proof}
See \cite[Corollary 1.7]{simpson}.
\end{proof}

If $\mf$ is a quotient of a sheaf $\me$ of a fixed $\ml_0$ rank $r_0$, then the possible ranks of $\mf$ is a finite set. Therefore we can assume that we can choose a $\beta$-uniformly for such ranks in Lemma.  Find a $\mu_0$ such that $(\mu_0+\beta)r_{\ml_0}(\me)< \Cee$
($\Cee$ as specified in Lemma \ref{eclat}). Then we can  handle case (B) as follows: We only need to consider the case $\me$ has a Harder-Narasimhan quotient $\mf$ of slope $<\mu_0$. The LHS of \eqref{inegality} is $\leq (\mu_0 +\beta)r_{\ml_0}(\mf)<\Cee$, therefore Lemma \ref{eclat} produces a canonical witness to the non-semistability of $x$.

\subsubsection{Treating reason (C)}

Suppose $x=[V\tensor\ml^{-1}\to \me] \in \op{Quot}_{\ml}(r)$ is such that $\me$ is not torsion free. Let $\mathcal{C}\subset \mathcal{E}$ be the torsion subsheaf. By \cite[Lemma 1.17]{simpson} we can find a torsion free $\me'$ of the same Hilbert polynomial as $\me$ with respect to
$\ml$ so that there is an inclusion
$$0\to \me/\mathcal{C}\to \me'.$$

\subsubsection{If all Harder-Narasimhan quotients of $\me'$ (with respect to $\ml$) have slopes $>\mu_0$}
In this case we know that $\op{H}^1(X_0,\me'\tensor\ml)=0$ and
$\me'\tensor\ml$ is generated by global sections.

But global sections  of $(\me/\mathcal{C}\tensor\ml)^m$ sit inside $\op{H}^0(X_0,\me'\tensor\ml^m)$. Therefore define the following quotient of $\me$: $\mf=\me/\mathcal{C}$, and obtain
$\op{H}^0(X_0,\mf\tensor\ml)\le \chi(X_0,\me\tensor\ml)$.

If $x$ is semistable for $T(\mathcal{A})$, by Corollary \ref{morn} (also the remark following Lemma 1.16 in \cite{simpson}), we would have $\op{H}^0(X_0,\mf\tensor\ml)\geq \dim J \geq \chi(X_0,\me\tensor\ml)$
Therefore $\op{H}^0(X_0,\mf\tensor\ml)\to \op{H}^0(X_0,\me'\tensor\ml)$ is an isomorphism. But $\me'\tensor\ml$ is globally generated. This gives $\mathcal{C}=0$.
\subsubsection{If the smallest Harder-Narasimhan quotient of $\me'$ (with respect to $\ml$) has slopes $\leq \mu_0$}

Let $\me'\to \mf'$ be the corresponding quotient. Let $\mf\subseteq\mf'$ be the image of $\me$. Therefore $\mf$ is a quotient of $\me$. Now,
$$\op{H}^0(X_0,\mf'\tensor \ml)-\sum r_i(\mf')\deg_i(\ml)<\Cee$$
(note $\ml=\ml_0^m$)
for the same reasons as before. But
$\op{H}^0(X_0,\mf\tensor \ml)\leq \op{H}^0(X_0,\mf'\tensor \ml)$ and $r_i(\mf')=r_i(\mf)$, therefore
$$\op{H}^0(X_0,\mf\tensor \ml)-\sum r_i(\mf)\deg_i(\ml)<\Cee$$
and using Lemma \ref{eclat}, we reach a contradiction.

\section{Definition of the map $F$ and varieties $\mc{X}(\vec{a})$}
Here, in Proposition \ref{OneIntro}, we establish the inclusion which leads to the map $F$
discussed in the introduction, which we show in Section \ref{SomethingIntroSection} is surjective.  We also define the varieties $\mc{X}(\vec{a})$.

For this section assume that we are given weights $\vec{a}$, a vector bundle ${\mc{G}}=\mc{G}_{\vec{a}}$ on $X_0$ with  rank $m=r\deg(\mathcal{A})$,  and (see Def \ref{plus})
$$\frac{\deg_i(\mathcal{G})}{\rk(\mg)}=\frac{a_i(g-1)}{\sum a_i}.$$

\begin{proposition}\label{OneIntro}Let $\mc{D}$ be the determinant of cohomology line bundle on $\op{Bun}_{\op{SL}(r)}(X_0)$ associated to the standard representation of $\op{SL}(r)$. Then, there is a natural inclusion
\begin{equation}\label{SectionEmb}
\op{H}^0(\mc{X}(\vec{a}), \mc{L}_{\mc{G}})\hookrightarrow \op{H}^0(\op{Bun}_{\op{SL}(r)}(X_0), \mc{D}^{\tensor m}).
\end{equation}
In particular, there is a map
$$F: \bigoplus_{(\vec{a},\mc{G})} \op{H}^0(\mc{X}(\vec{a}), \mc{L}_{\mc{G}}) \to \bigoplus_{m\in \mathbb{Z}_{\ge 0}} \op{H}^0(\op{Bun}_{\op{SL}(r)}(X_0), \mc{D}^{\tensor m}).$$
\end{proposition}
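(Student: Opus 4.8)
The plan is to produce, for each valid pair $(\vec a,\mc G)$, a canonical injection of the space of global sections of $\mc L_{\mc G}$ on the projective variety $\mc X(\vec a)$ into $\op H^0(\op{Bun}_{\op{SL}(r)}(X_0),\mc D^{\otimes m})$, and then to assemble these injections over all pairs to obtain the map $F$. The key geometric input is the locus $\op Q_{X_0}^0$ of Definition \ref{Quot0}: by Proposition \ref{simpsonian} every $T(\mc A)$-semistable point of $\op{Quot}_{X_0}(V\tensor\mc L^{-1},\op P,1)$ lies in $\op Q_{X_0}^0$, so $\mc X(\vec a)$ is the GIT quotient of (an open, $\op{GL}(V)$-invariant, normal subset containing) the semistable locus inside $\op Q_{X_0}^0$ by $\op{GL}(V)$, linearized by $T(\mc A)$. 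On $\op Q_{X_0}^0$ we have the line-bundle identity of Lemma \ref{WhiteDog}, namely $T(\mc A)|_x^{\chi(X_0,\me\tensor\mc L)}=\mc D(X_0,\me\tensor\mc G)$ as $\op{GL}(V)$-linearized bundles, i.e.\ a suitable power of $T(\mc A)$ descends to $\mc L_{\mc G}$ on $\mc X(\vec a)$; moreover Lemma \ref{repeat} identifies the restriction of $\mc L_{\mc G}$ to $\op{Bun}_{\op{SL}(r)}(X_0)$ with $\mc D^{\otimes m}$, $m=\rk(\mc G)=r\deg(\mc A)$.

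\emph{First}, I would make the identification of $\mc X(\vec a)$ explicit: the open subset $\op Q^{ss}\subseteq\op Q_{X_0}^0$ of $T(\mc A)$-semistable points consists of quotients $[V\tensor\mc L^{-1}\twoheadrightarrow\me]$ with $\me$ an $\vec a$-semistable torsion free sheaf of uniform multirank $r$ and trivializable multidegree (via Lemma \ref{pastel} translating $\vec\alpha$-slope-semistability back to $\vec a$-semistability), and by Lemma \ref{extendo} the GIT invariants computed over $\op Q^{ss}$ agree with those over its normalization. \emph{Second}, by GIT descent, global sections of $T(\mc A)^{\otimes N}$ (for $N$ the appropriate power making it a genuine $\op{PGL}(V)$-linearized bundle) over the semistable locus that are $\op{GL}(V)$-invariant are exactly $\op H^0(\mc X(\vec a),\mc L_{\mc G})$; so a section $\sigma\in\op H^0(\mc X(\vec a),\mc L_{\mc G})$ pulls back to a $\op{GL}(V)$-invariant section $\tilde\sigma$ of $\mc L_{\mc G}$ (viewed via Lemma \ref{WhiteDog} as a power of $T(\mc A)$) over $\op Q^{ss}$. \emph{Third}, there is a natural rational map from $\op Q_{X_0}^0$ to $\op{Bun}_{\op{SL}(r)}(X_0)$: a point $[V\tensor\mc L^{-1}\twoheadrightarrow\me]\in\op Q_{X_0}^0$ with $\me$ a vector bundle of trivializable determinant gives a point of $\op{Bun}_{\op{SL}(r)}(X_0)$ (after a choice of trivialization of $\det\me$, which is where the $\op{GL}(V)$-versus-$\op{SL}(r)$ bookkeeping enters), and this map is $\op{GL}(V)$-invariant; under it $\mc L_{\mc G}$ pulls back to $\mc D^{\otimes m}$ by Lemma \ref{repeat}. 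Restricting $\tilde\sigma$ along this map and using that a $\op{GL}(V)$-invariant section descends to the quotient stack $\op{Bun}_{\op{SL}(r)}(X_0)$ yields the desired element of $\op H^0(\op{Bun}_{\op{SL}(r)}(X_0),\mc D^{\otimes m})$; injectivity follows because the locus of honest $\op{SL}(r)$-bundles is dense (indeed open and nonempty) in $\op Q^{ss}$, so a section vanishing there vanishes identically.

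\emph{Finally}, taking the direct sum of these injections over all admissible pairs $(\vec a,\mc G)$—noting that $\rk(\mc G)=r\deg(\mc A)$ determines the summand $m$ into which $\op H^0(\mc X(\vec a),\mc L_{\mc G})$ maps—gives the map $F$ of the statement. I expect the main obstacle to be \emph{Third}: carefully constructing the $\op{GL}(V)$-invariant rational map $\op Q_{X_0}^0\dashrightarrow\op{Bun}_{\op{SL}(r)}(X_0)$ and checking that the line-bundle identifications of Lemmas \ref{WhiteDog} and \ref{repeat} are compatible as $\op{GL}(V)$-\emph{linearized} bundles (not merely as bundles), so that invariant sections really do descend both to $\mc X(\vec a)$ and to the stack; the scalar-matrix bookkeeping that makes $T(\mc A)$ only $\op{PGL}(V)$-linearized, recorded in the auxiliary line bundle $\mc S$ in the proof of Lemma \ref{WhiteDog}, has to be tracked consistently through the whole argument. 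The density/injectivity point and the GIT-descent step are comparatively routine given Proposition \ref{simpsonian} and Lemma \ref{extendo}.
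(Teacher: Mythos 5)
Your overall architecture matches the paper's proof (Section \ref{OIProof}): GIT descent identifies $\op{H}^0(\mc{X}(\vec a),\mc{L}_{\mc{G}})$ with $\op{GL}(V)$-invariant sections of $T(\mc{A})$ over the semistable locus, Proposition \ref{simpsonian} together with Lemma \ref{extendo} transfers these to invariant sections over $\mc{M}^0_{X_0}$, and the identities of Lemmas \ref{WhiteDog} and \ref{repeat} are used to land in $\mc{D}^{\tensor m}$. The gap is in your \emph{Third} step. There is no $\op{GL}(V)$-invariant morphism, even rational, from $\op{Q}^0_{X_0}$ to $\op{Bun}_{\op{SL}(r)}(X_0)$: a point of $\op{Q}^{\op{det}}_{X_0}$ determines a bundle whose determinant is trivializable but not trivialized, and the choice of trivialization is a $\mathbb{C}^*$-torsor. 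The natural arrows go the other way: both $\op{Q}^{\op{det}}_{X_0}$ (after dividing by $\op{GL}(V)$, giving $\beta^0(X_0)$) and $\op{Bun}_{\op{SL}(r)}(X_0)$ map to the stack $\beta_{X_0}$ of bundles with trivializable determinant, and the paper transfers sections through $\beta_{X_0}$; the point that makes the final pull-back an isomorphism (Lemma \ref{W2}) is that $\mathbb{C}^*$ acts trivially on the determinant of cohomology of $\mc{E}\tensor\mc{G}$. You flag this bookkeeping as the expected obstacle but do not supply the argument.

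More seriously, descent from $\op{Q}^{\op{det}}_{X_0}$ only yields a section over the open substack $\beta^0(X_0)$ of bundles $\mc{E}$ with $\op{H}^1(X_0,\mc{E}\tensor\ml)=0$ and $\mc{E}\tensor\ml$ globally generated, since that substack is exactly $\op{Q}^{\op{det}}_{X_0}/\op{GL}(V)$. Because $\op{Bun}_{\op{SL}(r)}(X_0)$ is not of finite type, for any fixed $\ml$ there are bundles outside this locus, so ``restricting along the map'' produces a priori only a section over a proper open substack, not the global section required for \eqref{SectionEmb}. The paper's Lemma \ref{W1} closes precisely this hole: one takes $\ml$ sufficiently ample so that the complement of $\beta^0(X_0)$ in the smooth stack $\beta_{X_0}$ has codimension at least two, and then extends the section before pulling back along $\phi$. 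Your proposal has no substitute for this extension step, and without it the inclusion into $\op{H}^0(\op{Bun}_{\op{SL}(r)}(X_0),\mc{D}^{\tensor m})$ is not established. (A minor point: your \emph{First} step, identifying the $T(\mc{A})$-semistable locus with $\vec a$-semistable sheaves, is more than is needed; the paper deliberately avoids this and uses only the containment of the semistable locus in $\op{Q}^0_{X_0}$ from Proposition \ref{simpsonian}.)
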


\subsection{Notation and basic results}\label{MasterSpace}
To prove Proposition \ref{OneIntro} we will  refer to a number of stacks, some of which are pictured in the following diagram:
$$
\xymatrix{
  & {Q}_{X_0}^{\op{det}} \ar[r]\ar[d] & \mc{M}_{X_0}^0\\
 \op{Bun}_{\op{SL}(r)}(X_0)\ar[r]^{\phi} & \beta_{X_0},
}
$$
and defined below:
\begin{definition}\label{biglist}
\begin{itemize}
\item $\op{Q}^{\op{LF}}_{X_0}$, the set of points $[V\tensor\ml^{-1}\to \me] \in \op{Q}_{X_0}^0$, and $\mc{E}$  locally free;
\smallskip
\item $\op{Q}^{\op{det}}_{X_0}$, the set of points  $[V\tensor\ml^{-1}\to \me] \in \op{Q}^{\op{LF}}_{X_0}$  such that $\op{det}(\me)$ is trivializable;
\smallskip
\item  $\overline{\op{Q}}^{\op{det}}_{X_0}$, the closure of $\op{Q}^{\op{det}}_{X_0}$ in $\op{Quot}_{X_0}(V\tensor \mc{L}^{-1},\op{P},1)$;
\smallskip
\item $\mc{M}_{X_0}$,  the normalization of $\overline{\op{Q}}^{\op{det}}_{X_0}$;
\smallskip
\item $\mc{M}_{X_0}^0$, the inverse image in $\mc{M}_{X_0}$ of $\op{Q}_{X_0}^0$; and
\smallskip
\item $\beta_{X_0}$ is the moduli stack of vector bundles with trivializable determinant on $X_0$, (For any $T$ we consider vector bundles $\mathcal{E}$ on $X_0\times T$, such that Zariski locally on $T$, the determinant of $\mathcal{E}$ is the pull back of a line bundle from $T$.)
\end{itemize}
\end{definition}

Here we note that the Picard variety of $X_0$ is a separated scheme (disjoint union of open quasi-projective subschemes), see e.g.,
\cite[Corollary 4.18.3]{Kleiman}. This makes the locus of vector bundles with trivializable determinant a closed condition in families.

\begin{remark}
Recall that we refer to the restriction of $T(\mathcal{A})$ to $\op{Q}^{det}_{X_0}$ as $T(\mathcal{A})$. We also refer to it's
 pullback  to  $\mc{M}_{X_0}$ along the normalization map as $T(\mathcal{A})$.
\end{remark}

\begin{definition}\label{XA}We define $\mc{X}(\vec{a})$ to be the GIT quotient
$$\mc{X}(\vec{a})\cong \mc{M}_{X_0}\git_{T(\mc{A})} \op{PGL}(V).$$
\end{definition}

\begin{lemma}\label{LFSmooth}
\begin{enumerate}
\item[(a)] $\op{Q}^{\op{LF}}_{X_0}$ is  a smooth variety.
\item[(b)] $\op{Q}^{\op{det}}_{X_0}$ is  a smooth variety, and  is open in $\mc{M}_{X_0}$.
\end{enumerate}
\end{lemma}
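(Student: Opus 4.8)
The plan is to deduce smoothness from the standard deformation theory of the Quot scheme together with the fact that, on the relevant loci, all obstruction groups vanish. Recall that at a point $x = [V\tensor\ml^{-1} \twoheadrightarrow \mc{E}]$ of $\op{Quot}_{X_0}(V\tensor\mc{L}^{-1},\op{P})$ with kernel $\mc{K}$, the tangent space is $\op{Hom}_{X_0}(\mc{K},\mc{E})$ and the obstructions lie in $\op{Ext}^1_{X_0}(\mc{K},\mc{E})$; if this $\op{Ext}^1$ vanishes then the Quot scheme is smooth at $x$ of dimension $\dim\op{Hom}(\mc{K},\mc{E})$. So the first step is: for $x \in \op{Q}^{\op{LF}}_{X_0}$, show $\op{Ext}^1_{X_0}(\mc{K},\mc{E}) = 0$. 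Since $\mc{E}$ is locally free here, $\mathcal{H}om(\mc{K},\mc{E})$ is a sheaf on the curve $X_0$, so by the local-to-global spectral sequence (or directly, since $X_0$ is a curve and $\mc{K}$ is torsion-free hence locally free at the nodes — $\mc{K}$ is a subsheaf of the locally free $V\tensor\ml^{-1}$, hence torsion-free, and on a nodal curve a torsion-free subsheaf of a locally free sheaf is itself locally free) we get $\op{Ext}^1_{X_0}(\mc{K},\mc{E}) = \op{H}^1(X_0, \mathcal{H}om(\mc{K},\mc{E}))$. Now $\mathcal{H}om(\mc{K},\mc{E}) \cong \mc{K}^\vee \tensor \mc{E}$ is a vector bundle on $X_0$, and one must show its $\op{H}^1$ vanishes. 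This is where the hypotheses defining $\op{Q}^0_{X_0}$ enter: the condition $h^1(X_0, \mc{E}\tensor\mc{L}) = 0$ together with the surjection $V\tensor\mc{O}_{X_0} \twoheadrightarrow \mc{E}\tensor\mc{L}$ (i.e., $V\tensor\ml^{-1}\twoheadrightarrow\mc{E}$), and $V \cong \op{H}^0(X_0,\mc{E}\tensor\mc{L})$, force $\mc{E}\tensor\mc{L}$ to be globally generated with vanishing $\op{H}^1$; one then runs the standard argument: tensor $0\to\mc{K}\to V\tensor\ml^{-1}\to\mc{E}\to 0$ by $\mc{E}^\vee$... more precisely, after possibly replacing $\mc{L}$ by a high power as in Remark \ref{attention} (so that sheaves in the bounded family under consideration have all the needed vanishings), the bundle $\mc{K}^\vee\tensor\mc{E}$ has $\op{H}^1 = 0$ because $\mc{K} = \ker(V\tensor\ml^{-1}\to\mc{E})$ sits in an exact sequence whose terms, twisted appropriately, are $\op{H}^1$-acyclic and globally generated. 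This yields (a): $\op{Q}^{\op{LF}}_{X_0}$ is smooth, and being (by Def \ref{Quot0}) an open subset of the irreducible $\op{Quot}_{X_0}(V\tensor\mc{L}^{-1},\op{P},1)$ it is a variety.

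For (b), first observe that $\op{Q}^{\op{det}}_{X_0} \subseteq \op{Q}^{\op{LF}}_{X_0}$ is cut out by the condition that $\det(\mc{E})$ be trivializable. Since the Picard scheme $\op{Pic}(X_0)$ is a separated scheme (as noted after Def \ref{biglist}, via \cite[Corollary 4.18.3]{Kleiman}), the multidegree-$\vec{0}$ component $\op{Pic}^{\vec 0}(X_0)$ is an open subscheme, and "$\det(\mc{E})$ trivializable" is both an open condition (fixing the multidegree) and a closed condition (being the fiber over the identity of the map to $\op{Pic}^{\vec 0}$, which is separated). The cleanest route: the determinant defines a morphism $\det\colon \op{Q}^{\op{LF}}_{X_0} \to \op{Pic}(X_0)$, and $\op{Q}^{\op{det}}_{X_0}$ is the preimage of the identity point $[\mc{O}_{X_0}]$, which is a single point (reduced, since $\op{Pic}$ is smooth), hence $\op{Q}^{\op{det}}_{X_0}$ is a fiber of a morphism from a smooth variety. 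To conclude it is itself smooth one checks the determinant morphism is smooth (submersive) at the points in question — equivalently, that the derivative $\op{Hom}(\mc{K},\mc{E}) = \op{H}^0(X_0,\mc{K}^\vee\tensor\mc{E}) \to \op{H}^1(X_0,\mc{O}_{X_0}) = T_{[\mc{O}]}\op{Pic}^{\vec 0}(X_0)$, induced by the trace, is surjective; this surjectivity follows from the $\op{H}^1$-vanishing established in part (a) together with the trace splitting $\mc{K}^\vee\tensor\mc{E} = \mathcal{E}nd$-part plus trace-part after identifying appropriately, so the long exact sequence in cohomology gives the surjection onto $\op{H}^1(\mathcal{O})$. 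Hence $\op{Q}^{\op{det}}_{X_0}$ is smooth.

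Finally, openness of $\op{Q}^{\op{det}}_{X_0}$ in $\mc{M}_{X_0}$: by construction $\overline{\op{Q}}^{\op{det}}_{X_0}$ is the closure of $\op{Q}^{\op{det}}_{X_0}$ in $\op{Quot}_{X_0}(V\tensor\mc{L}^{-1},\op{P},1)$, and $\mc{M}_{X_0}$ is its normalization; since $\op{Q}^{\op{det}}_{X_0}$ is smooth it is normal, so the normalization map is an isomorphism over it, and it is open in $\overline{\op{Q}}^{\op{det}}_{X_0}$ (hence in $\mc{M}_{X_0}$) provided $\op{Q}^{\op{det}}_{X_0}$ is an open subset of $\overline{\op{Q}}^{\op{det}}_{X_0}$ — which is exactly what must be argued: one shows $\op{Q}^{\op{det}}_{X_0}$ is open in $\op{Quot}_{X_0}(V\tensor\mc{L}^{-1},\op{P},1)$ intersected with $\overline{\op{Q}}^{\op{det}}_{X_0}$. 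Openness of $\op{Q}^{\op{LF}}_{X_0}$ in the Quot scheme is clear (local freeness and the $\op{H}^0/\op{H}^1$ conditions of $\op{Q}^0_{X_0}$ are open), and within it "$\det$ has multidegree $\vec 0$" is open while "$\det$ trivializable" is then also closed as above, but on the closure $\overline{\op{Q}}^{\op{det}}_{X_0}$ every point is a limit of points with trivializable determinant, so on that closure "multidegree $\vec 0$" already forces, on the locally free locus, triviality — giving the needed openness. The main obstacle I expect is pinning down precisely the $\op{Ext}^1$-vanishing in part (a): one has to be careful that $\mc{K}$ need not be globally generated, so the vanishing of $\op{H}^1(X_0,\mc{K}^\vee\tensor\mc{E})$ must be extracted from the defining conditions of $\op{Q}^0_{X_0}$ and the boundedness/rescaling of $\mc{L}$, rather than from any ampleness of $\mc{K}^\vee$ directly; everything else is a standard packaging of deformation theory plus separatedness of $\op{Pic}(X_0)$.
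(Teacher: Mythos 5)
Your overall framing---tangent space $\op{Hom}(\mc{K},\mc{E})$, obstructions in $\op{Ext}^1(\mc{K},\mc{E})$, reduction to an $\op{H}^1$-vanishing, and for (b) the determinant map to the Jacobian with surjectivity on tangent spaces via the split trace---is exactly the paper's route, and your part (b) is essentially the paper's argument. The genuine gap is that you never establish the one statement everything hinges on, namely $\op{H}^1(X_0,\underline{\mc{H}om}(\mc{K},\mc{E}))=0$; you flag it yourself as ``the main obstacle'' and gesture at global generation, boundedness, and replacing $\ml$ by a high power. None of that is needed, and as written it would not work: you are at a fixed point of a fixed Quot scheme, so you cannot rescale $\ml$ inside the proof, and $\mc{K}^{\vee}\tensor\mc{E}$ carries no positivity to appeal to. The paper's argument is immediate from the defining conditions of $\op{Q}^0_{X_0}$: since $\mc{E}$ is locally free, applying $\underline{\mc{H}om}(-,\mc{E})$ to $0\to\mc{K}\to V\tensor\ml^{-1}\to\mc{E}\to 0$ gives the short exact sequence $0\to\underline{\mc{H}om}(\mc{E},\mc{E})\to V^*\tensor\mc{E}\tensor\ml\to\underline{\mc{H}om}(\mc{K},\mc{E})\to 0$; the middle term has $\op{H}^1=V^*\tensor\op{H}^1(X_0,\mc{E}\tensor\ml)=0$ because $V\to\op{H}^0(X_0,\mc{E}\tensor\ml)$ is an isomorphism on $\op{Q}^0_{X_0}$, and $\op{H}^2$ vanishes on a curve, so the long exact sequence kills $\op{H}^1(\underline{\mc{H}om}(\mc{K},\mc{E}))$. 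The same sequence is what produces the surjection $\op{H}^0(\underline{\mc{H}om}(\mc{K},\mc{E}))\to\op{H}^1(\underline{\mc{H}om}(\mc{E},\mc{E}))$ that you invoke in (b).

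A second, smaller error: your justification that $\mc{K}$ is locally free---``a torsion-free subsheaf of a locally free sheaf on a nodal curve is locally free''---is false in general (the ideal sheaf of a node inside $\mc{O}_{X_0}$ is torsion free but not locally free). The correct reason, which is what the paper uses, is that on $\op{Q}^{\op{LF}}_{X_0}$ the quotient $\mc{E}$ is locally free, so the presentation is locally split and the kernel is locally free; with that, $\op{Ext}^1(\mc{K},\mc{E})=\op{H}^1(\underline{\mc{H}om}(\mc{K},\mc{E}))$ as you want. Your discussion of openness of $\op{Q}^{\op{det}}_{X_0}$ in $\mc{M}_{X_0}$ is more elaborate than necessary but sound in substance: trivializability of the determinant is a closed condition in $\op{Q}^{\op{LF}}_{X_0}$ (separatedness of the Picard scheme), so $\op{Q}^{\op{det}}_{X_0}$ is locally closed, hence open in its closure, and its smoothness makes the normalization an isomorphism over it.
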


\begin{proof}
Recall that for $x=[V\tensor \mc{L}^{-1} \twoheadrightarrow \mc{E}]\in \op{Quot}_{\mc{L},\op{P},1}(\ml)$, one has that the tangent space $\op{T}_{x}(\op{Quot}_{\mc{L},\op{P},1}(\ml)) =\underline{\mc{H}om}(\mc{K},\mc{E})$, where $\mc{K}=\ker(V\tensor \mc{L}^{-1} \twoheadrightarrow \mc{E})$. We have $$0 \to \mc{K} \to V\tensor \mc{L}^{-1} \twoheadrightarrow \mc{E} \to 0.$$

 Now pick a point $x=[V\tensor \mc{L}^{-1} \twoheadrightarrow \mc{E}]$ in $\op{Q}^{\op{LF}}_{X_0}$.
The kernel $\mathcal{K}$ is locally free since both $V\tensor \mc{L}^{-1}$ and $\mc{E}$ are vector bundles.
We have the induced short exact sequence of sheaves:
\begin{equation}\label{jacoby}
0 \to \underline{\mc{H}om}(\mc{E},\mc{E}) \to \underline{\mc{H}om}(V\tensor \mc{L}^{-1}, \mc{E}) \twoheadrightarrow \underline{\mc{H}om}(\mc{K}, \mc{E}) \to 0.
\end{equation}
Since $x\in \op{Q}^{\op{LF}}_{X_0} \subset \op{Q}_{X_0}^0$, we have $V\overset{\cong}{\to} \op{H}^0(X_0,\mc{E}\tensor \mc{L})$, and hence $\op{H}^1(X_0, \mc{E}\tensor \mc{L})=0$.  This gives that $H^1(X_0,\underline{\mc{H}om}(V\tensor \mc{L}^{-1}, \mc{E}))=0$. By \eqref{jacoby},
$\op{H}^1(X_0,\underline{\mc{H}om}(\mc{K}, \mc{E}) )=0$, and hence $x$ is a smooth point of $\op{Q}^{\op{LF}}_{X_0}$. This proves (a).

Consider the map
$\op{Q}^{\op{LF}}_{X_0} \to \op{Jac}(X_0)$, given by taking $x=[V\tensor \mc{L}^{-1} \twoheadrightarrow \mc{E}]$ to  $\op{det}(\mc{E})$.  We will show that  the induced
 map on tangent spaces $\op{T}_{x}(\op{Q}^{\op{LF}}_{X_0}) \to \op{T}_{\op{det}(\mc{E})} (\op{Jac}(X_0))$ is surjective.

By \eqref{jacoby},
$$\op{T}_{x}(\op{Q}^{\op{LF}}_{X_0})=\op{H}^0(X_0, \underline{\mc{H}om}(\mc{K}, \mc{E})) \to \op{H}^1(X_0, \underline{\mc{H}om}(\mc{E},\mc{E})),$$
is surjective.  Now, composing with the (split) trace map to $\op{H}^{1}(X_0,\underline{\mc{H}om}(\op{det}(\mc{E}),\op{det}(\mc{E})))$ gives (b).

\end{proof}

\begin{lemma}\label{W1}$\op{H}^0(\mc{M}_{X_0}^0,\mathcal{L}_{\mg})^{\op{GL}(V)}\hookrightarrow \op{H}^0(\beta_{X_0},\mathcal{L}_{\mg}).$
\end{lemma}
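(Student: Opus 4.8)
The plan is to factor the map of the Lemma as a restriction followed by two isomorphisms,
$$\op{H}^0(\mc{M}_{X_0}^0,\mathcal{L}_{\mg})^{\op{GL}(V)}\ \hookrightarrow\ \op{H}^0(\op{Q}^{\op{det}}_{X_0},\mathcal{L}_{\mg})^{\op{GL}(V)}\ \cong\ \op{H}^0(U,\mathcal{L}_{\mg})\ \cong\ \op{H}^0(\beta_{X_0},\mathcal{L}_{\mg}),$$
where $U\subseteq\beta_{X_0}$ is the open substack that is the image of the vertical arrow $\op{Q}^{\op{det}}_{X_0}\to\beta_{X_0}$, $[\,V\tensor\ml^{-1}\twoheadrightarrow\me\,]\mapsto\me$, i.e. the substack of bundles $\me$ with $\op{H}^1(X_0,\me\tensor\ml)=0$ and $\me\tensor\ml$ globally generated. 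The first map is restriction along the open immersion $\op{Q}^{\op{det}}_{X_0}\hookrightarrow\mc{M}_{X_0}^0$ (Lemma \ref{LFSmooth}(b)); it is injective because $\mc{M}_{X_0}^0$ is reduced, being open in the normalization $\mc{M}_{X_0}$, and $\op{Q}^{\op{det}}_{X_0}$ is dense in it — it is open in $\mc{M}_{X_0}$, meets every component since $\mc{M}_{X_0}$ normalizes the closure of $\op{Q}^{\op{det}}_{X_0}$, and lies in $\mc{M}_{X_0}^0$ because $\op{Q}^{\op{det}}_{X_0}\subseteq\op{Q}_{X_0}^0$.

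For the middle isomorphism I would argue that over $U$, cohomology and base change guarantees that the spaces $\op{H}^0(X_0,\me\tensor\ml)$ fit into a rank-$\dim V$ vector bundle on $U$ and exhibits $\op{Q}^{\op{det}}_{X_0}\to U$ as its frame bundle: a point over $\me$ is an isomorphism $V\overset{\cong}{\to}\op{H}^0(X_0,\me\tensor\ml)$, from which the surjection $V\tensor\ml^{-1}\twoheadrightarrow\me$ is recovered using that $\me\tensor\ml$ is globally generated. Thus $\op{Q}^{\op{det}}_{X_0}\to U$ is a $\op{GL}(V)$-torsor and $U\cong[\op{Q}^{\op{det}}_{X_0}/\op{GL}(V)]$. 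The assignment $[q]\mapsto\me$ is $\op{GL}(V)$-invariant, and $\mathcal{L}_{\mg}$ on $\op{Q}^{\op{det}}_{X_0}$ is, with its tautological linearization, the pullback of $\mathcal{L}_{\mg}$ from $\aleph(X_0)$ — equivalently, by Lemma \ref{WhiteDog}, of $T(\mathcal{A})^{\chi(X_0,\me\tensor\ml)}$ — along which the scalars in $\op{GL}(V)$ act trivially on the base. Faithfully flat descent along $\op{Q}^{\op{det}}_{X_0}\to U$ then identifies $\op{GL}(V)$-invariant sections of $\mathcal{L}_{\mg}$ with sections of the descended line bundle $\mathcal{L}_{\mg}$ on $U$. (If the scalars acted through a nontrivial character instead, both sides of the Lemma would vanish and there is nothing to prove.)

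The last isomorphism carries the real content. It says sections over $U$ extend, necessarily uniquely, to $\beta_{X_0}$; since $\beta_{X_0}$ is a smooth algebraic stack, this will follow once $\beta_{X_0}\setminus U$ has codimension $\ge 2$ — pull back to a smooth atlas, extend the section across the codimension-$\ge2$ preimage, and descend. Now $\me$ lies outside $U$ exactly when $\op{H}^1(X_0,\me\tensor\ml)\ne0$ or $\me\tensor\ml$ fails to be globally generated at some smooth point $p$, and by Serre duality on $X_0$ each of these forces a nonzero map from $\me$ to a line bundle of multidegree $\deg_i(\omega_{X_0})-\deg_i(\ml)$ (with an extra $+p$ in the second case), hence a rank-one quotient sheaf of $\me$ of very negative degree, so that $\me$ is highly unstable. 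Since we may replace an initial $\ml_0$ by $\ml_0^{N}$ with $N\gg0$ (Remark \ref{attention}), $\deg(\ml)$ may be taken arbitrarily large, and a Harder--Narasimhan dimension estimate for the locus of $\me$ admitting such a quotient — of the kind developed in Section \ref{last} and in \cite{simpson} — shows the codimension of $\beta_{X_0}\setminus U$ grows without bound with $\deg(\ml)$, so we may assume it is at least $2$. Composing the three maps gives the inclusion. I expect this last step to be the main obstacle: pinning down the codimension estimate for the bad locus $\beta_{X_0}\setminus U$, which is precisely where the freedom to take $\ml$ sufficiently ample is used. The first two steps are formal — reducedness and density for the restriction, torsor descent for the middle identification.
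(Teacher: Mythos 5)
Your proposal is correct and follows essentially the same route as the paper's proof: restrict to $\op{Q}^{\op{det}}_{X_0}$ (which embeds in $\mc{M}_{X_0}^0$ via smoothness and the normalization), identify $\op{GL}(V)$-invariant sections there with sections on the quotient stack $\beta^0(X_0)\cong \op{Q}^{\op{det}}_{X_0}/\op{GL}(V)$ --- your $U$ is exactly the paper's $\beta^0(X_0)$ --- and then extend across a complement of codimension $\ge 2$ in $\beta_{X_0}$ after taking $\ml$ sufficiently ample. The only difference is one of detail: you spell out the injectivity of restriction, the torsor/descent identification, the scalar-character caveat, and a sketch of the codimension bound, all of which the paper asserts without argument.
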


\begin{proof}  First note that  ${Q}^{\op{det}}_{X_0} \subset  \op{Q}_{X_0}^0$. Moreover, by Lemma \ref{LFSmooth}, ${Q}^{\op{det}}_{X_0}$ is smooth, and so the inverse image of ${Q}^{\op{det}}_{X_0}$ in the normalization
$\mc{M}_{X_0}$ of $\overline{\op{Q}}^{\op{det}}_{X_0}$ is isomorphic to ${Q}^{\op{det}}_{X_0}$.  This gives an inclusion
$${Q}^{\op{det}}_{X_0}\hookrightarrow \mc{M}_{X_0}^0.$$
We may therefore restrict sections of $\mathcal{L}_{\mg}$ on $\mc{M}_{X_0}^0$ to ${Q}^{\op{det}}_{X_0}$, to get a map:
$$\op{H}^0(\mc{M}_{X_0}^0,\mathcal{L}_{\mg})^{\op{GL}(V)} \hookrightarrow \op{H}^0({Q}^{\op{det}}_{X_0},\mathcal{L}_{\mg})^{\op{GL}(V)}.$$
Let $\beta^0(X_0)$ be the moduli stack  parameterizing vector bundles $\mc{E}$ on $X_0$ with trivializable determinant, such that $H^1(X_0,\mc{E}\tensor \ml)=0$ and $\mc{E}\tensor \ml$ is globally generated. In particular, $\beta^0(X_0)\cong {Q}^{\op{det}}_{X_0}/\op{GL}(V)$,  and $\beta^0(X_0)\subset \beta_{X_0}$. We can assume that our line bundle $\ml$ is sufficiently ample to have the codimension of the complement of $\beta^0(X_0)$ in $\beta(X_0)$ to be at least two,
and therefore:
$$\op{H}^0({Q}^{\op{det}}_{X_0},\mathcal{L}_{\mg})^{\op{GL}(V)}=\op{H}^0(\beta^0(X_0),\mathcal{L}_{\mg})=\op{H}^0(\beta_{X_0},\mathcal{L}_{\mg}).$$
\end{proof}

\begin{lemma}\label{W2}Let $\mc{G}$ be a vector bundle on $X_0$ with the property that if $\mc{E}$ is any  vector bundle on $X_0$ with trivializable determinant, then $\chi(X_0,\mc{E}\tensor \mc{G})=0$.  Then. $\phi^*\mathcal{L}_{\mc{G}}=\mathcal{D}^{\tensor\ell}$, $\ell=\rk\mg$, and  the pull back map $$\op{H}^0(\beta_{X_0},\mathcal{L}_{\mg})\to \op{H}^0(\op{Bun}_{\op{SL}(r)}(X_0),\mathcal{D}^{\tensor \ell})$$ is an isomorphism.
\end{lemma}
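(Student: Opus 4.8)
The plan is to establish the two assertions of Lemma~\ref{W2} separately, with the identity $\phi^*\mathcal{L}_{\mc{G}}=\mathcal{D}^{\tensor\ell}$ coming first and then the statement about global sections following from a codimension estimate. First I would note that both $\beta_{X_0}$ and $\op{Bun}_{\op{SL}(r)}(X_0)$ carry the line bundles in question, and that $\phi:\op{Bun}_{\op{SL}(r)}(X_0)\to\beta_{X_0}$ is the natural map sending an $\op{SL}(r)$-bundle to the associated vector bundle (which has trivial, hence trivializable, determinant). The fiber of $\mathcal{L}_{\mc{G}}$ at $\mc{E}$ is $\mc{D}(X_0,\mc{E}\tensor\mc{G})$ by Def~\ref{LBG}, and this is exactly the content of Lemma~\ref{repeat}: the restriction of $\mathcal{L}_{\mc{G}}$ to $\op{Bun}_{\op{SL}(r)}(X_0)$ is $\mathcal{D}^{\tensor\ell}$ with $\ell=\rk\mc{G}$. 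So the identity $\phi^*\mathcal{L}_{\mc{G}}=\mathcal{D}^{\tensor\ell}$ is essentially a restatement of Lemma~\ref{repeat}, or follows by the same repeated application of Lemma~\ref{repeatedly}, using that $\mc{G}$ is a direct sum of line bundles of the form $\cO(\sum\gamma_i p_i)$ and that for a bundle $\mc{E}$ with trivial determinant the twist $\det(\mc{E}_p)^*$ contributes trivially.

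Next I would address the isomorphism on global sections. The strategy is to show that $\phi$ realizes $\op{Bun}_{\op{SL}(r)}(X_0)$ as (up to something harmless for $H^0$) an open substack of $\beta_{X_0}$ whose complement has codimension at least two, so that pulling back sections is an isomorphism by a normality/Hartogs-type argument on the smooth stack $\beta_{X_0}$. Concretely, the fibers of $\phi$ over a point of $\beta_{X_0}$ corresponding to a bundle $\mc{E}$ with trivializable determinant are a torsor under the finite group scheme of $r$-torsion line bundles $\op{Jac}(X_0)[r]$ (after fixing a trivialization of $\det\mc{E}$); this is a finite étale cover phenomenon and does not affect whether $H^0$ of a line bundle pulled back along $\phi$ is an isomorphism, provided one checks that the section descends — which it does because $\mathcal{D}^{\tensor\ell}=\phi^*\mathcal{L}_{\mc{G}}$ is literally pulled back. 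The essential point is then the hypothesis $\chi(X_0,\mc{E}\tensor\mc{G})=0$ for all such $\mc{E}$: this is what makes $\mathcal{L}_{\mc{G}}$ (rather than some nontrivial twist) the bundle whose sections match, i.e. it guarantees that on the open locus where $H^1(X_0,\mc{E}\tensor\ml)=0$ the determinant-of-cohomology construction is compatible with the $\op{GL}(V)$-quotient description used in Lemma~\ref{W1}.

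The cleanest route, and the one I expect the paper takes, is to combine this with Lemma~\ref{W1}: there we already have $\op{H}^0(\mc{M}_{X_0}^0,\mathcal{L}_{\mg})^{\op{GL}(V)}\hookrightarrow\op{H}^0(\beta_{X_0},\mathcal{L}_{\mg})$, and the present lemma supplies the comparison $\op{H}^0(\beta_{X_0},\mathcal{L}_{\mg})\cong\op{H}^0(\op{Bun}_{\op{SL}(r)}(X_0),\mathcal{D}^{\tensor\ell})$ needed to land in the target of Proposition~\ref{OneIntro}. I would prove the isomorphism by exhibiting both groups as $H^0$ of the same line bundle on a common smooth stack: choose $\ml$ sufficiently ample that the locus $\beta^0(X_0)\subset\beta_{X_0}$ (bundles with $H^1=0$ and globally generated after twisting by $\ml$) has complement of codimension $\ge 2$, so $\op{H}^0(\beta_{X_0},\mathcal{L}_{\mg})=\op{H}^0(\beta^0(X_0),\mathcal{L}_{\mg})$; then pull back along the smooth surjection $\op{Bun}_{\op{SL}(r)}(X_0)\to\beta_{X_0}$ and use that the map on $\op{SL}(r)$-bundles is, fiberwise, a finite étale gerbe/torsor quotient under which $H^0$ of a pulled-back line bundle is invariant. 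The main obstacle is the last point — controlling exactly how $\phi$ fails to be an isomorphism of stacks (the $r$-torsion ambiguity in choosing an $\op{SL}(r)$-structure on a determinant-trivializable bundle, and the fact that "trivializable" is weaker than "trivialized") and checking that these discrepancies do not change $H^0$; the hypothesis $\chi(X_0,\mc{E}\tensor\mc{G})=0$ is the key input that forces the bookkeeping to close up, and I would verify it by the same degree computation (via Eq.~\eqref{norepeat} and Def.~\ref{plus}) that underlies Lemma~\ref{WhiteDog}.
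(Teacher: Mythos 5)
Your first assertion is handled the same way the paper does it (repeated use of Lemma \ref{repeatedly}, i.e.\ Lemma \ref{repeat}), and that part is fine. The problem is the second half. You misidentify the geometry of $\phi$: since $\beta_{X_0}$ parametrizes vector bundles whose determinant is \emph{trivializable}, the fiber of $\phi$ over a point $\mc{E}$ consists of the choices of trivialization $\vartheta:\wedge^r\mc{E}\overset{\sim}{\to}\mc{O}_{X_0}$, which form a torsor under $\op{H}^0(X_0,\mc{O}^*_{X_0})=\mathbb{C}^*$ --- not under the finite group $\op{Jac}(X_0)[r]$. The $r$-torsion line bundles are relevant when one compares $\op{SL}(r)$-bundles with $\op{PGL}(r)$-bundles, or bundles with a fixed determinant up to twisting; here the underlying vector bundle is literally unchanged and only $\vartheta$ varies. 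Moreover, two of your supporting claims are not valid arguments: pullback along a finite \'etale cover does \emph{not} in general induce an isomorphism on $\op{H}^0$ of a pulled-back line bundle (for a degree $d>1$ cover $f$ one has $\op{H}^0(Y,f^*L)=\op{H}^0(X,L\tensor f_*\mc{O}_Y)$, usually strictly larger than $\op{H}^0(X,L)$), and ``the section descends because $\mathcal{D}^{\tensor\ell}=\phi^*\mathcal{L}_{\mc{G}}$ is literally pulled back'' is circular --- surjectivity of the pullback map on sections is exactly what has to be proved, and it fails in general for pullbacks along torsors or gerbes. The codimension-two statement about $\beta^0(X_0)$ that you import is the content of Lemma \ref{W1}, not of this lemma, and does not supply the missing descent step.

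The mechanism the paper actually uses, and which is absent from your write-up, is where the hypothesis $\chi(X_0,\mc{E}\tensor\mc{G})=0$ enters. Writing $\op{Bun}_{\op{SL}(r)}(X_0)$ as the stack of pairs $(\mc{E},\vartheta)$, the trivialization $\vartheta$ is unique up to $\mathbb{C}^*$, and a scalar $c$ (a scalar automorphism of $\mc{E}$, which changes $\vartheta$ but fixes the point of $\beta_{X_0}$) acts on the fiber $\mc{D}(X_0,\mc{E}\tensor\mc{G})$ of $\mathcal{L}_{\mc{G}}$ (Def \ref{LBG}) by $c^{\chi(X_0,\mc{E}\tensor\mc{G})}$. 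The hypothesis makes this action trivial, so the value of a section of $\mathcal{D}^{\tensor\ell}=\phi^*\mathcal{L}_{\mc{G}}$ at $(\mc{E},\vartheta)$ is independent of $\vartheta$ and invariant under all automorphisms of $\mc{E}$ (including the full scalar group $\mathbb{C}^*\cdot\op{id}$, which acts on $\beta_{X_0}$ but does not preserve a chosen $\vartheta$); hence it descends to a section over $\beta_{X_0}$, and the same argument works in families, giving surjectivity of the pullback map, while injectivity follows from surjectivity of $\phi$ on points. You cite the hypothesis as ``the key input that forces the bookkeeping to close up'' and propose to ``verify'' it by a degree computation, but it is an assumption of the lemma, not something to check, and without the explicit observation that scalars act by $c^{\chi}$ on the determinant of cohomology your proof of the isomorphism on sections does not go through.
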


\begin{proof}
For the assertion about $\phi^*\mathcal{L}_{\mc{G}}$, use Lemma \ref{repeatedly}.

Now,
$\op{Bun}_{\op{SL}(r)}(X_0)$ is the moduli stack of pairs $(\mathcal{E},\theta)$ where $\mathcal{E}$ is a vector bundle of rank $r$ and $\vartheta:\wedge^r\mathcal{E}\leto{\sim}\mathcal{O}_{X_0}$ is a trivialization of the determinant of $\mathcal{E}$.   Therefore there is a natural surjective (on points) map $\op{Bun}_{\op{SL}(r)}(X_0)\to \beta_{X_0}$. This gives the map in the statement of the lemma. For the surjection given a vector bundle $\mathcal{E}$ with trivializable determinant, the trivialization $\vartheta$ is unique up to  the action of $\Bbb{C}^*$. Now $\Bbb{C}^*$ acts trivially on the determinant of cohomology of $\mathcal{E}\tensor\mathcal{G}$ and hence we get the same element of determinant of cohomology of $\mathcal{E}\tensor\mathcal{G}$ independently of the choice of $\vartheta$. This argument works in families as well, and hence the lemma is proved.
\end{proof}

\subsection{Proof of Proposition \ref{OneIntro}}\label{OIProof}
\begin{proof}Given $\vec{a}$, and $\mc{L}$ an ample line bundle on $X_0$, we let $\mc{A}$ be an ample line bundle on $X_0$ as described in Def \ref{ADef}.  Let $T(\mathcal{A})$ on $\mc{M}^{0}_{X_0}$ be the pull back of  $\ml_{\mathcal{G}}$ from the moduli-stack of torsion free sheaves, defined in Section \ref{twist}.   By Proposition \ref{simpsonian}, $\mc{M}^{0}_{X_0}$  contains
$\mc{M}_{X_0}^{ss}$.   We denote the restriction of $T(\mc{A})$ to $\mc{M}_{X_0}^{ss}$ by $T(\mc{A})$, and recall that $\mc{X}(\vec{a})$ is the GIT quotient
$$\mc{X}(\vec{a})\cong \mc{M}_{X_0}\git_{T(\mc{A})} \op{PGL}(V) = \mc{M}_{X_0}^{ss}/\op{PGL}(V).$$
The variety  $\mc{X}(\vec{a})$ maps to the GIT quotient $\op{Quot}_{\mc{L},\op{P},1}(\ml)\git_{T(\mc{A})} \op{PGL}(V)$. On the latter space, a multiple of $\mc{L}_{\mc{G}}$ will descend (this amounts to replacing $\mathcal{A}$ by a multiple), and we will denote by $\mc{L}_{\mc{G}}$ the pull back to $\mc{X}(\vec{a})$.  Then
$$\op{H}^0(\mc{X}(\vec{a}), \mc{L}_{\mc{G}})=\op{H}^0(\mc{M}^{ss}(\mc{L})/\op{PGL}(V),\mc{L}_{\mc{G}})=\op{H}^0(\mc{M}_{X_0}^{ss}, T(\mathcal{A}))^{\op{GL}(\op{V})}.$$
We recall that $\mc{M}_{X_0}^0$ is the inverse image in $\mc{M}_{X_0}$ of $\op{Q}_{\mc{L}}^{0}(r)$, so $\mc{M}_{X_0}^0$ is an open, irreducible, normal subvariety,  So by Lemma \ref{extendo}, one has $\op{H}^0(\mc{M}_{X_0}^{ss}, T(\mathcal{A}))^{\op{GL}(\op{V})}=\op{H}^0(\mc{M}_{X_0}^0, T(\mathcal{A}))^{\op{GL}(\op{V})}$.
Finally, by Lemmas \ref{W1} and \ref{W2}, one has $\op{H}^0(\mc{M}_{X_0}^0, T(\mathcal{A}))^{\op{GL}(\op{V})}\hookrightarrow \op{H}^0(\beta_{X_0},T(\mathcal{A})) \overset{\cong}{\to}\op{H}^0(\op{Bun}_{\op{SL}(r)}(X_0),\phi^*\mathcal{L}_{\mg})$.
\end{proof}
\subsection{}
The GIT quotient  $\mc{X}(\vec{a})$ maps to the GIT quotient of
$\op{Quot}_{X_0}(V\tensor \mc{L}^{-1},\op{P})$  by $\op{GL}(V)$ for the linearization $T(\mathcal{A})$. It can be shown, using \cite{simpson}, that
the latter  is the same as the moduli space of semistable torsion free sheaves for the weights $\vec{a}$. We omit the proof since it does not play a role in our assertions, although it does give a heuristic as to how one may  consider $\mc{X}(\vec{a})$ as the compactified moduli space of vector bundles with trivial determinant using the polarization $\vec{a}$.

 We indicate why a point $x=[V\tensor \mc{L}^{-1} \twoheadrightarrow \mc{E}]\in \op{Quot}_{X_0}(V\tensor \mc{L}^{-1},\op{P})$ semistable for the linearization $T(\mathcal{A})$ is semistable for the weights $\vec{a}$. Assume the contrary, We can assume that the set of $\me$ is a priori bounded (see Section \ref{last}, here we look at the worst quotient of $\me$ for the polarization $\ml_0$). Now we may pick contradictors of semistability which remain bounded. We can assume that such $\mf\subset \me$ are generated by global sections after tensoring with $\ml$ (and $h^1(X_0,\mf\tensor\ml)=0)$. Let $H=H^0(X_0,\mf\tensor\ml)\subset V=H^0(X_0,\me\tensor\ml)$.  Apply inequality \eqref{nom} to get an inequality resulting from the semistability of $x$. By Lemma \ref{pastel}, we obtain a contradicting inequality since $\mf\tensor\ml$ should  contradict the $\vec{\alpha}$-semistability ($\alpha_i=\deg \mathcal{A}_i$) of $\me\tensor\ml$ (since $\mf$ contradicts the $\vec{a}$ semistability of $\me$).

\section{A factorization assertion}\label{FA}
In this section we prove that ${\op{H}}^0\big({\op{Parbun}}_{{\op{G}}}(X_0, \vec{p}), \mc{L}_{{\op{G}}}(X_0,\vec{p})^{\tensor m}\big)$ is a direct sum
\begin{equation}\label{decompEq}
{\op{H}}^0\Big({\op{Parbun}}_{{\op{G}}}(X_0, \vec{p}), \mc{L}_{{\op{G}}}(X_0,\vec{p})^{\tensor m}\Big)\cong \bigoplus_{\lambda}{\op{H}}^0\Big({\op{Parbun}}_{{\op{G}}}(X, \vec{p}), \mc{D}_{m,\lambda}\Big),
\end{equation}
where $X$ is the normalization of $X_0$.  The line bundles $\mc{D}_{m,\lambda}$ and weights $\lambda$ are precisely described in Lemma \ref{Lemma3}, after sufficient notation is given.  The arguments used here are inspired by  \cite[Proof of Theorem 2.4]{Faltings} and \cite[Page 41]{Teleman}.
\subsection{Notation}    Let $X_0$ be a reduced curve with at worst ordinary double points defined over an algebraically closed field, and let $\nu: X \longrightarrow X_0$
be its normalization.   The curve $X$, which may have more than one component, is  smooth,   and given any set of smooth marked points $\vec{p}=\{p_1,\ldots,p_n\}$ on $X_0$
we put $\vec{p}=\{\nu^{-1}(p_1),\ldots,\nu^{-1}(p_n)\}$.  We denote the set of nodes of $X_0$ by $S$, and   $\nu^{-1}(s)=\{\sas,\sbs\}$ for $s\in S$. Let  $\vec{\lambda}=(\lambda_1,\ldots, \lambda_n)$ is an $n$-tuple of dominant integral weights for $\mathfrak{g}$ at level $\ell$. Let $\op{Parbun}_{\op{G}}(X_0,\vec{p})$ denote the moduli stack parameterizing tuples $(\me,\gamma_1,\dots, \gamma_n)$ where $\me$ is a principal $G$-bundle over $X_0$, and $\gamma_i\in\mathcal{E}_{p_i}/B$. Here $B$ is a fixed Borel subgroup of $G$.
  Let
$$p: \op{Parbun}_{\op{G}}(X_0,\vec{p})\rightarrow \op{Parbun}_{\op{G}}(X,\vec{p}),$$
be the representable morphism of stacks given by pullback along $\nu$. For each irreducible component  $X_{i}$ of $X$ one has
 $$g_{i}: \op{Parbun}_{\op{G}}(X,\vec{p}) \longrightarrow \op{Parbun}_{\op{G}}(X_{i},\vec{p}(X_i)),$$
 given by restriction to $X_{i}$, and where  $\vec{p}(X_i)$ denotes the set of marked points $\{p_j \in X_i\}$.    If $X$ is irreducible,  $g_{i}=g_1=\op{id}$.

\begin{definition}\label{LX_0}
\begin{enumerate}
 \item  $\mc{L}_{\op{G}}(X,\vec{p}): =\bigotimes_{i\in I} g_{i}^{\star}(\mc{L}_{\op{G}}(X_{i},\vec{p}(X_i)))$,  where the $\mc{L}_{\op{G}}(X_{i},\vec{p}(X_{i}))$ are the line bundles from \cite[Theorem, p 499]{LaszloSorger} given on $\op{Parbun}_{\op{G}}(X_{i},\vec{p}(X_i))$, associated to the level $\ell$ and those weights $\lambda_i$ at the points on the component $X_{i}$.
 \item  $\mc{L}_{\op{G}}(X_0,\vec{p}):=p^*\mc{L}_{\op{G}}(X,\vec{p})$.
 \item For $G=\op{SL}(r)$ and no marked points, $$\mc{L}_{\op{G}}(X,\vec{p})=\mathcal{D}^{\otimes \ell}, \ \mbox{ and } \ p^*(\mathcal{D}^{\otimes \ell})=\mathcal{D}^{\otimes\ell},$$
  see Eq \eqref{long}.
\end{enumerate}
\end{definition}

\begin{definition} Let $\mathcal{E}$ be the universal bundle over ${\op{Parbun}}_{{\op{G}}}(X, \vec{p})$, and for $x \in X$, let $\mathcal{E}_x$  be the induced principal $G$-bundle over
${\op{Parbun}}_{{\op{G}}}(X, \vec{p})$.  Given a weight $\lambda$, and its associated irreducible representations $V_{\lambda}$, we let
\begin{equation}\label{Fiber}
\mathcal{E}_{\lambda}^x = \mathcal{E}_x \times_{{\op{G}}} V_{\lambda},
\end{equation}
denote the corresponding vector bundle  over ${\op{Parbun}}_{{\op{G}}}(X, \vec{p})$.
\end{definition}

\begin{remark}\label{PeterWeyl}
In the proofs of Lemmas \ref{Lemma3}, and \ref{LiftLemma}, and in Construction \ref{StepOnePropSI},  we use the algebraic Peter-Weyl Theorem \cite[Theorem 27.3.9]{TY},  which says
$$\bigoplus_{\lambda } V_{\lambda}^* \otimes_{k} V_{\lambda} \cong \mathcal{O}_{{\op{G}}}({\op{G}}), \ \ v_{\lambda}^*\tensor  v_{\lambda}\mapsto f_{v_{\lambda}^*\tensor  v_{\lambda}},$$
where $f_{v_{\lambda}^*\tensor  v_{\lambda}}(g)=v_{\lambda}^*(g\cdot v_{\lambda})$.
\end{remark}

\subsection{The precise statement and proof}
\begin{lemma}\label{Lemma3} For $p: {\op{Parbun}}_{{\op{G}}}(X_0, \vec{p}) \longrightarrow {\op{Parbun}}_{{\op{G}}}(X, \vec{p})$,
given by pulling back along the normalization $\nu: X \rightarrow X_0$,
\begin{enumerate}
\item[(1)] $$p_*(\mathcal{O}) = \bigoplus_{\lambda}\bigl(\bigotimes_{s\in S} \mathscr{E}_{\lambda_{\sas}}^{\sas} \otimes \mathscr{E}_{\lambda_{\sbs}}^{\sbs}\bigr).$$ Here $\lambda$ ranges over all assignments $x\mapsto \lambda_x$, where  $x\in \nu^{-1}(S)$, $\lambda_x$ is a dominant integral weight, such that $\lambda_{\sas}$ is dual to $\lambda_{\sbs}$ for all $s\in S$.
\item[(2)]${\op{H}}^0\Big({\op{Parbun}}_{{\op{G}}}(X_0, \vec{p}), \mc{L}_{{\op{G}}}(X_0,\vec{p})^{\tensor m}\Big)$ is a direct sum $$\bigoplus_{\lambda}{\op{H}}^0\Big({\op{Parbun}}_{{\op{G}}}(X, \vec{p}), \mc{L}_{{\op{G}}}(X,\vec{p})^{\tensor m} \otimes \bigl(\bigotimes_{s\in S} \mathscr{E}_{\lambda_{\sas}}^{\sas} \otimes \mathscr{E}_{\lambda_{\sbs}}^{\sbs}\bigr)\Big).$$ Here $\lambda$ ranges over the same set as in (1).
\item[(3)] In (2) above, the summand corresponding to $\lambda$ is zero if the level of $\lambda$ is not $m$ (i.e., if $(\lambda_x,\theta)>m$ for some $x\in \nu^{-1}(S)$).
\item[(4)] In the case of $G=\op{SL}(r)$ and no marked points, the sum $\sum_{x\in \nu^{-1}(S)\cap X_i} \lambda_x$ is in the root lattice for non zero summands.
\end{enumerate}
\end{lemma}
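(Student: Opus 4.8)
\textbf{Proof proposal for Lemma \ref{Lemma3}.}
The plan is to prove (1) first, since (2)--(4) are all deduced from it together with the projection formula and standard facts about conformal blocks on smooth curves. For (1), the morphism $p$ is representable and affine (it is a base change of the normalization $\nu\colon X\to X_0$ along the evaluation maps into $G$-bundles), so computing $p_*\mathcal{O}$ is a local question on $\op{Parbun}_{\op{G}}(X,\vec{p})$. The node-by-node picture is: pulling a $G$-bundle on $X_0$ back to $X$ and then pushing forward amounts to remembering, for each node $s\in S$ with $\nu^{-1}(s)=\{\sas,\sbs\}$, a choice of identification of the two fibers $\mathcal{E}_{\sas}$ and $\mathcal{E}_{\sbs}$, i.e.\ an element of the $G$-torsor $\underline{\op{Isom}}(\mathcal{E}_{\sas},\mathcal{E}_{\sbs})$. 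Hence $p$ is the fiber product over $S$ of these $G$-torsors (descended to $\op{Parbun}_{\op{G}}(X,\vec{p})$), and $p_*\mathcal{O}$ is the tensor product over $s\in S$ of the pushforward of $\mathcal{O}$ from $\underline{\op{Isom}}(\mathcal{E}_{\sas},\mathcal{E}_{\sbs})$. For a single node, this pushforward is $\mathcal{O}_{\op{G}}({\op{G}})$ with its $({\op{G}}\times{\op{G}})$-action through the two fibers, and the algebraic Peter--Weyl theorem of Remark \ref{PeterWeyl} decomposes it as $\bigoplus_\lambda V_\lambda^*\otimes V_\lambda$; associating the $V_\lambda^*$-factor to the point $\sas$ (so $\lambda_{\sas}=\lambda^*$) and the $V_\lambda$-factor to $\sbs$ (so $\lambda_{\sbs}=\lambda$), and using the contracted-product description $\mathscr{E}_{\lambda}^x=\mathcal{E}_x\times_{\op{G}}V_\lambda$, gives exactly the stated decomposition, with the duality constraint $\lambda_{\sas}=\lambda_{\sbs}^*$ arising from the single Peter--Weyl index $\lambda$ per node.

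Given (1), part (2) is immediate: since $\mc{L}_{\op{G}}(X_0,\vec{p})=p^*\mc{L}_{\op{G}}(X,\vec p)$ by Definition \ref{LX_0}(2), the projection formula yields
\begin{equation*}
p_*\big(\mc{L}_{\op{G}}(X_0,\vec p)^{\tensor m}\big)=\mc{L}_{\op{G}}(X,\vec p)^{\tensor m}\otimes p_*\mathcal{O},
\end{equation*}
and taking $\op{H}^0$ over $\op{Parbun}_{\op{G}}(X,\vec p)$ (which agrees with $\op{H}^0$ over $\op{Parbun}_{\op{G}}(X_0,\vec p)$ since $p$ is affine) distributes over the direct sum in (1). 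For part (3), I would observe that each $\mathscr{E}^x_{\lambda_x}$ is, up to the action of the automorphism group, the bundle with fiber $V_{\lambda_x}$ over $\op{Parbun}_{\op{G}}(X_i,\vec p(X_i))$ at the point $x$; tensoring $\mc{L}_{\op{G}}(X,\vec p)^{\tensor m}$ (which is the conformal-blocks line bundle at level $m$) by $\mathscr{E}^x_{\lambda_x}$ and taking sections computes a conformal-blocks-type space on the component $X_i$ with an extra marked point carrying weight $\lambda_x$; by the standard vanishing for $\mathfrak g$-integrable highest weight modules at level $m$, this is zero unless $\lambda_x$ is dominant at level $m$, i.e.\ $(\lambda_x,\theta)\le m$. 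Combined with the duality $\lambda_{\sas}=\lambda_{\sbs}^*$ and the fact that $\lambda$ and $\lambda^*$ have the same level, this forces every $\lambda_x$ to have level exactly $m$ on nonzero summands. Finally (4): for $G=\op{SL}(r)$ the line bundle $\mc{L}_{\op{G}}(X_i,\vec p(X_i))=\mathcal{D}^{\tensor m}$ is pulled back from $\op{Bun}_{\op{SL}(r)}(X_i)$, whose component group (acting by tensoring with $r$-torsion line bundles, or equivalently by the center $\mu_r$) detects $\sum_{x}\lambda_x$ modulo the root lattice; since $\mathcal{D}^{\tensor m}$ restricted to a nontrivial such component still has sections only in the factor where the total weight is trivial in $P/Q$, the center $Z(\op{SL}(r))=\mu_r$ acts on $\bigotimes_{x\in\nu^{-1}(S)\cap X_i}\mathscr{E}^x_{\lambda_x}\otimes\mathcal{D}^{\tensor m}|_{X_i}$ through the character $\sum_x\lambda_x \bmod Q$, and this must be trivial for a nonzero $\op{H}^0$, giving $\sum_x\lambda_x\in Q$.

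The main obstacle I expect is making the informal node-by-node torsor description in (1) rigorous at the level of stacks: one must verify that $p$ is indeed (representable and) affine, identify its relative spectrum with the fiber product of the $\underline{\op{Isom}}$-torsors, and check that the descended Peter--Weyl decomposition really does globalize over $\op{Parbun}_{\op{G}}(X,\vec p)$ with the contracted products $\mathscr{E}^x_\lambda$ — the subtlety being that $V_\lambda^*\otimes V_\lambda$ must be reassembled into $\mathscr{E}^{\sas}_{\lambda_{\sas}}\otimes\mathscr{E}^{\sbs}_{\lambda_{\sbs}}$ using the two different universal bundles $\mathcal{E}_{\sas}$, $\mathcal{E}_{\sbs}$ correctly, so that the $G$-equivariance bookkeeping at each node matches. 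This is essentially the argument of \cite[Proof of Theorem 2.4]{Faltings} and \cite[Page 41]{Teleman}, which I would follow; once (1) is set up carefully, (2)--(4) are formal consequences of the projection formula plus standard conformal-blocks vanishing and the central-character/component-group analysis.
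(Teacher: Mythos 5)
Your proposal follows essentially the paper's route: part (1) is the Peter--Weyl decomposition of $p_*\mathcal{O}$ coming from the node-gluing data (the paper realizes the same isomorphism concretely, sending a section $\sigma$ to the section built from $v^*(g\cdot v)$, where $g$ is the patching element obtained from trivializations at $\sas$ and $\sbs$, and then checks in a trivialization, via the algebraic Peter--Weyl theorem, that the resulting bundle map is an isomorphism independent of choices); part (2) is the projection formula together with Def \ref{LX_0}; and part (4) is the action of the center, exactly as in the paper's one-line argument.

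The one step where your reasoning is genuinely looser than the paper's is (3). When $(\lambda_x,\theta)>m$ there is no integrable highest weight module of highest weight $\lambda_x$ at level $m$, so you cannot first identify the summand with a ``conformal-blocks-type space'' and then invoke vanishing for integrable modules: the identification of sections with conformal blocks on the smooth components is only available for weights at level at most $m$, so as written the justification is circular precisely in the case you need. What is required is the vanishing statement itself, ${\op{H}}^0\big({\op{Parbun}}_{{\op{G}}}(X,\vec{p}),\mc{L}_{{\op{G}}}(X,\vec{p})^{\tensor m}\otimes \mathscr{E}_{\lambda^*}^{\sbs}\otimes\mathscr{E}_{\lambda}^{\sas}\big)=0$ for $(\lambda,\theta)>m$, which the paper obtains by adding $\sas,\sbs$ to the marked points, pulling the resulting line bundle back to a product of Iwahori affine Grassmannians $G(\mathbb{C}((t)))/I$ surjecting onto ${\op{Parbun}}_{{\op{G}}}(X,\vec{q})$, and applying the affine Borel--Weil theorem of Kumar and Mathieu: the line-bundle factor at $\sas$ is not dominant, hence has no sections (this is \cite[Lemma 6.5]{BK}). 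Alternatively, the paper reproves (3) indirectly via conformal blocks by the dimension comparison of Remark \ref{YesBlocks}. Citing either of these closes the gap; as it stands, your step (3) asserts the needed vanishing rather than proving it.
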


\begin{proof}
For simplicity, assume $X_0$ has a single node $\xzero$ and $\nu^{-1}(\xzero)=\{\xone,\xtwo\}$.

For $(1)$, we first describe the map between the two vector spaces we'll show the same in $(2)$.  This will give a map between the vector bundles in $(1)$, which we  show it is an isomorphism by choosing appropriate trivializations.

To simplify the argument, we will also assume there are no marked points.
 Given an element $\sigma \in \op{H}^0\big(\op{Bun}_{\op{G}}(X),\mc{L}_{\op{G}}(X)^{\tensor m}\otimes  \mathscr{E}_{\lambda^*}^{\xtwo} \otimes \mathscr{E}_{\lambda}^{\xone}\big)$, one can construct a section of $\mc{L}_{\op{G}}(X_0)^{\tensor m}$ on $\op{Bun}_{\op{G}}(X_0)$ as follows.  For this, let $E_0$ be a principal $\op{G}$ bundle on $X_0$, so that $\nu^*E_0=E$ is a $\op{G}$-bundle on $X$.  In particular, $E|_{\xone}\cong \op{G}$, and by trivializing at $\xtwo$ we can obtain an element $g \in \op{G}$.  Now $\mc{L}_{\op{G}}(X_0)\cong p^*\mc{L}_{\op{G}}(X)$, and we have the isomorphism on fibers:
$$\Big(\mc{L}_{\op{G}}(X)^{\tensor m}\otimes  \mathscr{E}_{\lambda^*}^{\xtwo} \otimes \mathscr{E}_{\lambda}^{\xone}\Big)|_{\mc{E}}=\Big(\op{det}\big(\op{H}^*(X,\mc{E})\big)\Big)^{\tensor m}\otimes V_{\lambda^*} \otimes V_{\lambda}\cong \Big(\op{det}\big(\op{H}^*(X_0,\mc{E}_0)\big)\Big)^{\tensor m}\otimes V_{\lambda^*} \otimes V_{\lambda},$$
where $\mc{E}$ and $\mc{E}_0$ are the vector bundles obtained from $E$ and $E_0$ by contracting with the standard representation $G\to \op{GL}(V)$.  Moreover, under this isomorphism, $\sigma|_{E}$ corresponds to an element $s\tensor v^* \tensor v$, where
$$s\in \Big(\op{det}\big(\op{H}^*(X_0,\mc{E}_0)\big)\Big)^{\tensor m}, \ v^*\in V_{\lambda^*},  \mbox{ and } \ v\in V_{\lambda}.$$
  To obtain a section of $\mc{L}_{\op{G}}(X_0)$ at $E_0$, we  take  $s \tensor v^*( g\cdot v)$.  Now to see this map is an isomorphism we do the following: Since $\mathscr{E}_{\lambda}^{\xone}  \cong \mathscr{E}_{\lambda}^{\xtwo}$ and $(\mathscr{E}_{\lambda}^{x})^* \cong \mathscr{E}_{\lambda^*}^{x}$,
$$\bigoplus_{\lambda } \mathscr{E}_{\lambda^*}^{\xtwo} \otimes \mathscr{E}_{\lambda}^{\xone} \cong \bigoplus_{\lambda } \mathscr{E}_{\lambda^*}^{\xone} \otimes \mathscr{E}_{\lambda}^{\xone}
=\bigoplus_{\lambda } (\mathscr{E}_{\lambda}^{\xone})^* \otimes \mathscr{E}_{\lambda}^{\xone}.$$

Recall that given a weight $\lambda$, and its associated irreducible representations $V_{\lambda}$, we let
\begin{equation}\label{FiberUp}
\mathcal{E}_{\lambda}^x = \mathcal{E}_x \times_{{\op{G}}} V_{\lambda},
\end{equation}
be the corresponding bundle of \ ${\op{G}}$-representations over ${\op{Bun}}_{{\op{G}}}(X_0, \vec{p})$.
Therefore, at a point of ${\op{Bun}}_{{\op{G}}}(X, \vec{p})$ corresponding to a particular principal ${\op{G}}$-bundle $\mc{E}$
$$\Big(\bigoplus_{\lambda} (\mathscr{E}_{\lambda}^{\xone})^* \otimes \mathscr{E}_{\lambda}^{\xone}\Big)|_{\mc{E}} = \bigoplus_{\lambda} (\mc{E}_{\lambda}^{\xone})^* \otimes \mc{E}_{\lambda}^{\xtwo}.$$
Now choosing trivializations of $\mc{E}$ at $\xone$ and $\xtwo$, we have the three identifications:
$$\mc{E}_{\lambda}^{\xone} \cong V_{\lambda},  \  \mc{E}_{\lambda^*}^{\xtwo} \cong V_{\lambda^*}, \ \mbox{ and } \ p_*(\mathcal{O})|_{E}=\mathcal{O}(G),$$
giving the map $T$ is an isomorphism on this trivialization.
By the algebraic Peter-Weyl Theorem (See Remark \ref{PeterWeyl}),  one has that  the sum
$$\bigoplus_{\lambda } V_{\lambda}^* \otimes_{k} V_{\lambda} \cong \mathcal{O}_{{\op{G}}}({\op{G}}).$$
The resulting map $(\mc{E}_{\lambda}^{\xone})^* \otimes \mc{E}_{\lambda}^{\xtwo} \rightarrow p_*(\mathcal{O})|_{E}$ is independent of the choice of trivialization.  The argument with parabolic structures is the same.

\smallskip
To prove Part $(2)$, by Def \ref{LX_0}, and the assertion of Part $(1)$
\begin{multline}
{\op{H}}^0\Big({\op{Parbun}}_{{\op{G}}}(X_0, \vec{p}), \mc{L}_{{\op{G}}}(X_0,\vec{p})^{\tensor m}\Big)
={\op{H}}^0\Big({\op{Parbun}}_{{\op{G}}}(X, \vec{p}), \mc{L}_{{\op{G}}}(X,\vec{p})^{\tensor m} \otimes p_*(\mathcal{O})\Big)\\
={\op{H}}^0\Big({\op{Parbun}}_{{\op{G}}}(X, \vec{p}), \mc{L}_{{\op{G}}}(X,\vec{p})^{\tensor m} \otimes \big(  \bigoplus_{\lambda} \mathscr{E}_{\lambda^*}^{\xtwo} \otimes \mathscr{E}_{\lambda}^{\xone} \big)\Big).
\end{multline}

\smallskip

Part (4) follows from the action of the center of $G$ on spaces of sections. To prove Part $(3)$, we recall a standard argument  that since $X$ is smooth,
\begin{equation}\label{quantity}
{\op{H}}^0\Big({\op{Parbun}}_{{\op{G}}}(X, \vec{p}), \mc{L}_{{\op{G}}}(X,\vec{p})^{\tensor m} \otimes \mathscr{E}_{\lambda^*}^{\xtwo} \otimes \mathscr{E}_{\lambda}^{\xone}\Big)=0
\end{equation}
if $(\lambda,\theta)>m$.  See e.g., \cite[Lemma 6.5]{BK}, where the level is $2g^*m$.
 We add on $\xone$ and $\xtwo$ to the collection of points $\vec{p}$, and get a new set of points $\vec{q}$. We can write  \eqref{quantity} as sections of a line bundle over a new parabolic moduli space, here $\mathcal{L}'$ includes contributions at $\xone$ and $\xtwo$ via the Borel-Weil theorem:
 \begin{equation}\label{quantity2}
{\op{H}}^0\Big({\op{Parbun}}_{{\op{G}}}(X, \vec{q}), \mc{L}'\Big)
\end{equation}
The first proof given in \cite{BK} uses the Borel-Weil theorem on affine flag varieties, due to S. Kumar and O. Mathieu (and does not use the theory of conformal blocks). The Iwahori affine Grassmannian is a quotient $G(\Bbb{C}((t)))/I$ where $I\subset G(\Bbb{C}[[t]])$ is the subset of elements which lie in $B$ when  $t$ is set equal to $0$. This is the moduli space of principal bundles on $X$ trivialized outside of a point $x$, equipped with a full flag structure at $x$. A product of these dominates (indeed, surjects onto) ${\op{Parbun}}_{{\op{G}}}(X, \vec{q})$. By a basic calculation (see \cite{BK}), the pull back of $\mc{L}'$ to this product of Iwahori Grassmannians can be identified. The line bundle breaks up into an external  product of line bundles, and the line bundle corresponding to $\xone$ is not dominant and hence we get the desired vanishing. Note that this vanishing  is an affine generalization of the classical vanishing $H^0(G/B,L_{\lambda})=0$ if $\lambda:B\to \Bbb{C}^*$ is an integral weight which is not dominant.
\end{proof}
\begin{remark}
The vanishing assertion in Lemma \ref{Lemma3} is proved again using the theory of conformal blocks in the next section (see Remark \ref{YesBlocks}). Assertion (4) follows also from the theory of conformal blocks.
\end{remark}
\subsection{Normalizations and determinant of cohomology}
For a vector bundle $\mc{E}_0$ on $X_0$ with trivialized determinant, we have an exact sequence
\begin{equation}\label{sess1}
0\to \mc{E}_0\to \nu_*\nu^*\mc{E}_0\to \oplus s (\mc{E}_0)_s\to 0
\end{equation}
and the given triviality of the determinant of $\me_0$, we can write
\begin{equation}\label{long}
\mathcal{D}(X,\nu^*(\mc{E}_0))= \mathcal{D}(X_0,\nu_*\nu^*(\mc{E}_0)) =\mathcal{D}(X_0,\mc{E}_0).
\end{equation}

\section{Surjectivity of $F$: The proof of Proposition \ref{SomethingIntro}}\label{SomethingIntroSection}
In Proposition \ref{OneIntro} we proved that there are natural inclusion maps
\begin{equation}\label{NIMs}
\op{H}^0(\mc{X}(\vec{a}), \mc{L}_{\mc{G}})\overset{f_{(\vec{a}, \mc{G})}}{\hookrightarrow} \op{H}^0(\op{Bun}_{\op{SL}(r)}(X_0), \mc{D}^{\tensor m}).
\end{equation}
that give rise to the morphism
 $$F: \bigoplus_{(\vec{a},\mc{G})} \op{H}^0(\mc{X}(\vec{a}), \mc{L}_{\mc{G}}) \to \bigoplus_{m\in \mathbb{Z}_{\ge 0}} \op{H}^0(\op{Bun}_{\op{SL}(r)}(X_0), \mc{D}^{\tensor m}).$$
Then, in Section \ref{FA} we proved that for each $m \in \mathbb{Z}$, the summand  $\op{H}^0(\op{Bun}_{\op{SL}(r)}(X_0), \mc{D}^{\tensor m})$ factors into so-called
$\lambda$ components
$\op{H}^0(\op{Bun}_{\op{SL}(r)}(X), \mc{D}_{m,\lambda})$
where $X$ is the normalization of $X_0$, and line bundles $\mc{D}_{m,\lambda}$ and weights $\lambda$ are as described in Lemma \ref{Lemma3}.  Here, in Proposition \ref{SomethingIntro}, we show that given
pair $(\lambda,\ell)$, where $\ell$ is an even integer,
one can find a pair $(\vec{a}, \mc{G})$ so that the given $\lambda$ component  of $\op{H}^0(\op{Bun}_{\op{SL}(r)}(X_0), \mc{D}^{\ell})$ is contained in the image of the map $f_{(\vec{a}, \mc{G})}$.  To say this precisely, we need a small amount of notation.

\subsubsection{Notation}
Let $\nu: X=\cup_{i\in I}X_i \to X_0$ be the normalization of $X_0$, and  $S$ the set of nodes of $X_0$.  For each node $s \in S$, we let $\nu^{-1}(s)=\{\sas,\sbs\}$.   Let $n_i=|\nu^{-1}(S)\cap X_i|$,  be the number of (inverse images of) nodes on the component $X_i$, and $c: \nu^{-1}(S)\to I$ be such that for $x\in \nu^{-1}(s)$, we have $x\in X_{c(x)}$.
We use the notation $\xrminusone=\frac{1}{r}(r-1,-1,\dots,-1)\in \mathfrak{h}$, where $\mathfrak{h}$ is the Lie algebra of the (standard) Cartan subgroup of $\op{SL}(r)$. Note that $\alpha_i(\xrminusone)=\delta_{1,i}$.  Moreover, if  $\mathcal{G}$ is a vector bundle on $X_0$, then $\deg(\mg_i)$ denotes the degree of the restriction of the pullback of $\mathcal{G}$ to the irreducible component of the normalization containing $X_i$.

\begin{definition}\label{CP}Given $\lambda$, a representation for $\op{SL}(r)$ of level $\ell$, we say  $(\vec{a},\mathcal{G})$ is a covering pair for $(\lambda, \ell)$, if $\mathcal{G}$ is a vector bundle on $X_0$ of  rank $\ell$ and $\vec{a}=(a_i)_{i\in I}$, such that for each $i \in I$:
\begin{equation}\label{perturbation}
(g-1) a_i = \frac{\deg(\mg_i)}{\rk(\mg)}=\frac{2g_i-2+n_i}{2}+\sum_{x\in \nu^{-1}(S) \cap X_i} \epsilon_x,
\end{equation}
where $\epsilon_x$ is either element of the following two element set $\{\frac{\lambda_x^*(\xrminusone)}{\ell} -\frac{1}{2},\frac{1}{2}-\frac{\lambda_{x}(\xrminusone)}{\ell}\},$
so that if $\nu^{-1}(s)=\{\sas,\sbs\}$, $\epsilon_{\sas}+\epsilon_{\sbs}=0$ for all nodes $s\in S$.
\end{definition}

\begin{remark}\label{something}
\begin{enumerate}
\item Note that ${\deg(\mg_i)}$ is an integer in Eq \eqref{perturbation} if we assume that $\ell$ is even: This is because  the difference of the possible values for $\epsilon_x$ is an integer multiple of $\frac{1}{\ell}$ (see Lemma \ref{integra} below). Further since we can assume the $\lambda$ summand to be non-zero, the
sum $\sum_{x\in \nu^{-1}{S}}\lambda_{x}$ is in the root lattice and hence $\sum_{x\in \nu^{-1}{S}}\lambda_{x}(\xrminusone)\in \Bbb{Z}$. Replacing a $\lambda_{x}(\xrminusone)$ by $-\lambda^*_{x}(\xrminusone)$ does not change the sum modulo $\Bbb{Z}$.
\item The term $\frac{2g_i-2+n_i}{2}$ in Eq \eqref{perturbation} is the (one half of) degree of the canonical polarization of $X_0$ on the component which corresponds to $X_i$. Therefore we are assigning to $\vec{a}$, a value that is a ``perturbation'' of the canonical polarization on $X_0$.
\item In Section \ref{prius}, we present a variation of the argument which  shows that the finite generation statements can be proved even if the assigned values of $\deg \mathcal{G}_i$ are non-integral, as long as the denominators are bounded.
\end{enumerate}
\end{remark}

\subsubsection{Main result}

\begin{proposition} \label{SomethingIntro} Let $\ell$ be an even integer, and $\lambda$ a representation for $\op{SL}(r)$ of level $\ell$.  If $(\vec{a},\mathcal{G})$ is a covering pair for $(\lambda, \ell)$,
then the image of  the natural map
$$\op{H}^0(\mc{X}(\vec{a}), \mc{L}_{\mc{G}})=\op{H}^0(\mc{M}^0(\ml),\mathcal{L}_{\mathcal{G}})^{\op{GL}(V)}\overset{f_{(\vec{a},\mc{G})}}{\hookrightarrow} \op{H}^0(\op{Bun}_{\op{SL}(r)}(X_0),\phi^*\mathcal{L}_{\mg})=\op{H}^0(\op{Bun}_{\op{SL}(r)}(X_0),\mc{D}^{\ell})$$
contains  the $\lambda$ component of $\op{H}^0(\op{Bun}_{\op{SL}(r)}(X_0),\mc{D}^{\ell})$.
\end{proposition}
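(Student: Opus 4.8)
The plan is to unwind the map $f_{(\vec a,\mc G)}$ of Proposition \ref{OneIntro}, identify its image inside $\op{H}^0(\op{Bun}_{\op{SL}(r)}(X_0),\mc D^{\ell})$ as an \emph{extension} condition over the compactification $\mc M_{X_0}$, and then carry out the pole calculations promised in \S\ref{qmainDescribe}.

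\emph{Reduction to a no-pole statement.} Following the proof of Proposition \ref{OneIntro}, one has
\[
\op{H}^0(\mc X(\vec a),\mc L_{\mc G})=\op{H}^0(\mc M_{X_0}^{ss},T(\mc A))^{\op{GL}(V)}=\op{H}^0(\mc M_{X_0}^{0},T(\mc A))^{\op{GL}(V)}\hookrightarrow\op{H}^0(\op{Q}^{\op{det}}_{X_0},T(\mc A))^{\op{GL}(V)},
\]
where the middle equality is Lemma \ref{extendo} (using Proposition \ref{simpsonian} and that $\mc M_{X_0}^{0}$ is open, irreducible, normal) and the last arrow is restriction, injective because $\op{Q}^{\op{det}}_{X_0}$ is dense in the irreducible $\mc M_{X_0}^{0}$; by Lemmas \ref{W1} and \ref{W2} the target is $\op{H}^0(\op{Bun}_{\op{SL}(r)}(X_0),\mc D^{\ell})$. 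Hence $f_{(\vec a,\mc G)}$ identifies its domain with the subspace of those $\sigma\in\op{H}^0(\op{Bun}_{\op{SL}(r)}(X_0),\mc D^{\ell})$ whose pull-back to $\op{Q}^{\op{det}}_{X_0}$ extends $\op{GL}(V)$-invariantly to $\mc M_{X_0}^{0}$. Since $\mc M_{X_0}^{0}$ is normal, by Hartogs this holds if and only if $\sigma$ has no pole (as a section of $T(\mc A)$) along any codimension-one irreducible component of $\mc M_{X_0}^{0}\setminus\op{Q}^{\op{det}}_{X_0}$. So it suffices to bound $\op{ord}_{D}(\sigma)$ from below, for $\sigma$ in the $\lambda$-component and each such divisor $D$.

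\emph{The boundary divisors and a local model.} A point of $\mc M_{X_0}^{0}\setminus\op{Q}^{\op{det}}_{X_0}$ is a torsion-free sheaf on $X_0$ that is not locally free; since the trivializable-determinant condition is closed (\cite{Kleiman}), at its generic point the sheaf is locally free away from one node $s$ and at $s$ is $\cong\mathcal O_{X_0,s}^{\,r-1}\oplus\mathfrak m_s$. Viewed through the normalization (generalized parabolic, i.e. Hecke, data) these form, for each node $s$, two divisors $D_s^{a}$ and $D_s^{b}$, according to whether the rank drop occurs along the branch $\sas$ or $\sbs$. Along a one-parameter family $\{\mc E_t\}$ in $\op{Q}^{\op{det}}_{X_0}$ specializing to the generic point of $D_s^{a}$, the pull-back $E_t=\nu^*\mc E_t$ (saturated at $t=0$) is a family of bundles on $X$ with trivializable determinant whose gluing datum at $s$, after trivializing at $\sbs$, agrees up to regular invertible factors with $\op{diag}(t,1,\dots,1)$ — the minuscule Hecke modification at $\sas$, whose coweight class is $\xrminusone$ (recall $\alpha_i(\xrminusone)=\delta_{1,i}$); symmetrically at $\sbs$ for $D_s^{b}$.

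\emph{Matching the two $t$-orders, and conclusion.} By Lemma \ref{Lemma3} and the Peter--Weyl decomposition $p_*\mathcal O=\bigoplus_\lambda\mathscr E^{\sas}_{\lambda_{\sas}}\otimes\mathscr E^{\sbs}_{\lambda_{\sbs}}$, the restriction of a $\lambda$-component section $\sigma$ to this family is a section of $\mc D^{\ell}$ on the (smooth) normalization family tensored with the matrix coefficient $v^*\!\otimes v\mapsto v^*(g_t\cdot v)$, $v\in V_{\lambda_{\sas}}$. Expanding in weight bases and using $g_t\sim t^{\xrminusone}$, the $t$-order of this coefficient is at least $\min_{\mu\in\mathrm{wts}(V_{\lambda_{\sas}})}\mu(\xrminusone)=-\lambda^*_{\sas}(\xrminusone)$, the lowest-weight value ($\xrminusone$ being a dominant coweight). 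On the other side $T(\mc A)$ restricted to the locally free locus is $\mc L_{\mc G}=\mc D(X_0,\mc E\otimes\mc G)$ (Lemmas \ref{repeat}, \ref{W2}); by Lemma \ref{repeatedly} and the normalization sequence \eqref{sess1}, its trivialization along $D_s^{a}$ degenerates to order governed by $\deg(\mc G_{c(\sas)})=\rk(\mc G)\bigl(\tfrac{2g_{c(\sas)}-2+n_{c(\sas)}}{2}+\sum_{x\in\nu^{-1}(S)\cap X_{c(\sas)}}\epsilon_x\bigr)$, whose $s$-contribution is the single term $\epsilon_{\sas}$ (the term $\tfrac{2g_i-2+n_i}{2}$ being the canonical polarization, cf. Remark \ref{something}). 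Comparing, $\op{ord}_{D_s^{a}}(\sigma)\ge0$ for every $\sigma$ in the $\lambda$-component amounts to $\epsilon_{\sas}\ge\frac{\lambda^*_{\sas}(\xrminusone)}{\ell}-\frac12$, and likewise $\op{ord}_{D_s^{b}}(\sigma)\ge0$ to $\epsilon_{\sbs}\ge\frac{\lambda^*_{\sbs}(\xrminusone)}{\ell}-\frac12$, i.e. (using $\lambda_{\sbs}=\lambda_{\sas}^*$, $\epsilon_{\sbs}=-\epsilon_{\sas}$) to $\epsilon_{\sas}\le\frac12-\frac{\lambda_{\sas}(\xrminusone)}{\ell}$. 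So $\sigma$ extends across every boundary divisor exactly when
\[
\frac{\lambda^*_{\sas}(\xrminusone)}{\ell}-\frac12\ \le\ \epsilon_{\sas}\ \le\ \frac12-\frac{\lambda_{\sas}(\xrminusone)}{\ell}\qquad\text{for each node }s,
\]
a nonempty interval since $\lambda_{\sas}(\xrminusone)+\lambda^*_{\sas}(\xrminusone)=(\lambda_{\sas},\theta)\le\ell$. For a covering pair (Definition \ref{CP}), $\epsilon_{\sas}$ is by construction one of the two endpoints of this interval, hence lies in it; thus every section in the $\lambda$-component extends to $\mc M_{X_0}^{0}$, so it lies in the image of $f_{(\vec a,\mc G)}$. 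The evenness of $\ell$ is used only to make $\deg\mc G_i$ integral so that $\mc G$ exists in the required form (Remark \ref{something}(1)), not in the extension argument.

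\emph{Main obstacle.} The crux is the third step — the precise matching of the $t$-orders along $D_s^{a}$ and $D_s^{b}$. This requires a careful local study of a nodal degeneration: choosing the one-parameter family, recognizing the Hecke cocharacter as $\xrminusone$, propagating the determinant-of-cohomology line bundle $\mc L_{\mc G}=\mc D(X_0,\mc E\otimes\mc G)$ through it via \eqref{sess1} and Lemma \ref{repeatedly}, and reconciling the representation-theoretic grading of Lemma \ref{Lemma3} with the geometric order of vanishing. Everything else is bookkeeping with the identifications already in place.
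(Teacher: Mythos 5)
Your overall architecture matches the paper's (reduce to showing that sections in the $\lambda$-component acquire no poles along one-parameter degenerations into torsion-free sheaves, then estimate via Lemma \ref{Lemma3} and a Peter--Weyl matrix-coefficient bound), but the step you yourself flag as the crux --- the matching of $t$-orders --- is exactly where the argument has a genuine gap, and it is not mere bookkeeping. First, you replace the check over \emph{all} arcs $\op{Spec}k[[t]]\to\aleph_r(X_0)$ (the paper's Lemma \ref{ETS}) by a check at generic points of the boundary divisors of $\mc{M}^0_{X_0}\setminus\op{Q}^{\op{det}}_{X_0}$, and you assert without proof that there the universal family is a minuscule Hecke modification with gluing $\op{diag}(t,1,\dots,1)$ and an untwisted determinant trivialization. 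That requires (i) a codimension estimate ruling out divisorial components over deeper strata (rank drop $\ge 2$, several nodes) in the normalization $\mc{M}_{X_0}$, and (ii) a determination of the local structure of the universal family, including the behavior of the trivialization of the determinant; neither is supplied. The paper avoids both by lifting arbitrary arcs to Bhosle bundles (Lemma \ref{LiftLemma}) and putting the gluing in a normal form with arbitrary exponents $\alpha^s_j,\beta^s_j$ (Proposition \ref{normalform}), which is why the general estimate of Lemma \ref{openeye} and the level bound $\mu_1-\mu_r\le\ell$ are needed.

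Second, and more seriously, your local model is inconsistent with membership in $\op{Q}^{\op{det}}_{X_0}$, and your derivation of the endpoint inequalities does not track where $\deg\mc{G}_i$ (hence $\epsilon_x$) actually enters. If the gluing degenerates only along the branch $\sas$ (so $\det q_{\sas}$ vanishes at $t=0$ while $\det q_{\sbs}$ does not) and the determinant trivialization of $\nu^*\mc{E}_t$ does not degenerate, then $\det\mc{E}_t$ is not trivializable for $t\ne 0$ (at a non-separating node the gluing scalar in fixed trivializations must be $1$); staying in $\op{Q}^{\op{det}}$ forces either a two-sided degeneration with $|\alpha^s|=|\beta^s|$ or a compensating twist $t^{f(i)}$ of the trivialization on some component --- precisely the function $f$ of Lemma \ref{LiftLemma}. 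In the actual pole count (Proposition \ref{PoleBreakDown}) the bundle $\mc{G}$ enters \emph{only} through the term $\sum_i f(i)\,\chi(X_i,\mc{E}_i\tensor\mc{G}_i)$; hence in the untwisted situation ($f\equiv 0$, the paper's case (a)) no condition on $\epsilon_x$ arises at all, and the inequalities $\frac{\lambda^*(\xrminusone)}{\ell}-\frac12\le\epsilon_{\sas}\le\frac12-\frac{\lambda(\xrminusone)}{\ell}$ come exclusively from the twisted cases (b) and (c). Your claim that along $D_s^{a}$ the trivialization of $\mc{L}_{\mc{G}}$ ``degenerates to order governed by $\deg\mc{G}_{c(\sas)}$ with $s$-contribution $\epsilon_{\sas}$'' is asserted rather than computed and does not correspond to this mechanism, so the interval condition is not actually derived. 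Repairing this means reinstating the determinant-twist data and balancing the three contributions (1), (2), (3) of Proposition \ref{PoleBreakDown} case by case, at which point you have reconstructed the paper's proof.
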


\begin{remark}  By  Lemma \ref{integra}, it can be seen that in the definition of $a_i$ given in Eq \eqref{perturbation}, since
$$\frac{\lambda_x^*(\xrminusone)}{\ell} -\frac{1}{2}< \frac{1}{2}-\frac{\lambda_{x}(\xrminusone)}{\ell},$$ any choice of $\epsilon_x$ which lies in the closed interval formed by these points is valid for the conclusion of  Proposition \ref{SomethingIntro} as long as the degrees of $\mg_i$ are integers. For example, for $G=\op{SL}(2)$, one can take $\epsilon_x$ to be the mid-point, which is zero, so that the $a_i$ are the canonical polarization.
\end{remark}
\subsubsection{Outline of the proof of Proposition \ref{SomethingIntro}}
Fix an even integer $\ell$ and a representation $\lambda$ of $\op{SL}(r)$ at level $\ell$.  We take the following steps:
\begin{enumerate}
\item In Construction \ref{StepOnePropSI}, which gives Lemma \ref{Lemma3} in a simpler setting, we show that given an element $\sigma$ in $\op{H}^0(\op{Bun}_{\op{SL}(r)}(X),\mathcal{D}_{m,\lambda})$, how to engineer a section of
$\mc{D}^{\ell}$ over $\op{Bun}_{\op{SL}(r)}(X_0)$.
\item For $R=k[[t]]$, we consider any map $\pi: \op{Spec}(R)\setminus \{0\} \to \op{Bun}_{\op{SL}(r)}(X_0)$.
 In Lemma \ref{ETS}, we show that given $\lambda$, and any $\sigma$ of $\op{H}^0(\op{Bun}_{\op{SL}(r)}(X_0),\mathcal{D}^{\ell})$, suppose that  $a_{\sigma}$ extends to an element $\widetilde{a}_{\sigma} \in \op{H}^0(\op{Spec}(R), \widetilde{\pi}^*(\mc{L}_{\mc{G}}))$ for all possible $\pi$ which extend to a map $\widetilde{\pi}$. Then the image of the natural map
$$\op{H}^0(\mc{M}^0(\ml),\mathcal{L}_{\mathcal{G}})^{\op{GL}(V)}\to \op{H}^0(\op{Bun}_{\op{SL}(r)}(X_0),\phi^*\mathcal{L}_{\mg})=\op{H}^0(\op{Bun}_{\op{SL}(r)}(X_0),\mathcal{D}^{\ell})$$
contains the $\lambda$ component of $\op{H}^0(\op{Bun}_{\op{SL}(r)}(X_0),\mathcal{D}^{\ell})$.
\item We show that every $a_{\sigma} \in \op{H}^0(\op{Bun}_{\op{SL}(r)}(X),\mathcal{D}_{\ell, \lambda})$ extends to an element
 $\widetilde{a}_{\sigma} \in \op{H}^0(\op{Spec}(R), \tilde{\pi}^*( \mc{L}_{\mc{G}}))$, for all possible $\pi$ which extend to a map $\widetilde{\pi}$. This  is explained in Section \ref{StepThree}.
 \end{enumerate}

\subsection{Step (1)}
We recall the following construction which  is a special case of Lemma \ref{Lemma3}.
\begin{construction}\label{StepOnePropSI}
Every section $\sigma \in \op{H}^0(\op{Bun}_{\op{SL}(r)}(X),\mathcal{D}_{m,\lambda})$ gives rise to an associated section in $\op{H}^0(\op{Bun}_{\op{SL}(r)}(X_0),\mc{D}^{\ell}).$
\end{construction}

Given  $\sigma \in \op{H}^0(\op{Bun}_{\op{SL}(r)}(X),\mathcal{D}_{m,\lambda})$, for $E_0 \in \op{Bun}_{\op{SL}(r)}(X_0)$, set $E=\nu^*E_0$.  We have that $\sigma|_{E}$ is an element of
$$\mathcal{D}(X, \nu^*\mc{E}_0)^{\tensor \ell} \tensor  \bigotimes_{s \in S}{\mc{E}}^{\sbs}_{\lambda^*}\tensor \mc{E}^{\sas}_{\lambda},$$
where $\mc{E}_0$ is the vector bundle obtained from $E_0$ by  contracting with the standard representation $\op{SL}(r) \to \op{GL}(V)$.
For each node $s\in S$, choose a trivialization of $(\mc{E}_0)_s$. This trivializes $\mc{E}^{\sas}$ and $\mc{E}^{\sbs}$, and we also get a patching function $g \in \op{SL}(r)$ (which is identity given the above choices). Therefore $\sigma|_{E}$ is a sum of elements  of pure tensors of the form the form $\tau \tensor_{s\in S}v_s^* \tensor v_s$,
where $\tau \in \mathcal{D}(X, \mc{E})^{\tensor \ell}$,  $v_s^* \in {V}_{\lambda^*}$, and $v_s \in V_{\lambda}$ (and $\mc{E}$ is the vector bundle obtained from $E$ by contraction with $V$, and $\nu^*\mc{E}_0=\mc{E}$). The corresponding
section $a_{\sigma}$ at $E_0$ is the corresponding sum of $(\prod_{s\in S} v_s^*(g\cdot v_s))\cdot \tau$. Note that by Eq \eqref{long}, $\mathcal{D}(X, \mc{E})$  can be identified with  $\mathcal{D}(X_0,\mc{E}_0)$.

\subsection{Step (2)}

By contracting with the standard representation, one can associate to every principal $\op{SL}(r)$-bundle $E$ on $X_0$,   a vector bundle,  $\mc{E}$ on $X_0$, such that $\op{det}(\mc{E})$ is  trivializable.  This gives a natural transformation from $\op{Bun}_{\op{G}}(X_0)$ to  $\aleph_{r}(X_0)$.  Composing with $\pi$, we obtain a map:
$$\op{Spec}(R)\setminus \{0\} \overset{\pi}{\to}\op{Bun}_{\op{G}}(X_0) \to \aleph_{r}(X_0),$$
which we continue to call $\pi$.  We will assume that $\pi$ extends to a map $\widetilde{\pi}: \op{Spec}(R) \to \aleph_{r}(X_0)$.
We therefore have an extension of the family of vector bundles, with trivial determinant, parameterized by $\op{Spec}(R)\setminus \{0\}$, to a family of torsion free sheaves parameterized by $\op{Spec}(R)$.
If $\mc{G}$ is a vector bundle on $X_0$ of rank $\ell$,  by Lemma \ref{repeat}, the restriction of $\mc{L}_{\mc{G}}$ on $\aleph_{r}(X_0)$ to $\op{Bun}_{\op{G}}(X_0)$ is $\mc{D}^{\ell}$.

 \begin{lemma}\label{ETS}
Given $\lambda$, and any $\sigma$ in the $\lambda$ component of $\op{H}^0(\op{Bun}_{\op{SL}(r)}(X_0),\mathcal{D}^{\ell})$, suppose that  $a_{\sigma}$ extends to an element $\widetilde{a}_{\sigma} \in \op{H}^0(\op{Spec}(R), \widetilde{\pi}^*(\mc{L}_{\mc{G}}))$ for all possible $\pi$ which extend to a map $\widetilde{\pi}$. Then the natural map
$$\op{H}^0(\mc{X}(\vec{a}), \mc{L}_{\mc{G}})=\op{H}^0(\mc{M}^0(\ml),\mathcal{L}_{\mathcal{G}})^{\op{GL}(V)}\to \op{H}^0(\op{Bun}_{\op{SL}(r)}(X_0),\phi^*\mathcal{L}_{\mg})=\op{H}^0(\op{Bun}_{\op{SL}(r)}(X_0),\mathcal{D}^{\ell})$$
contains in its image, the $\lambda$ component of $\op{H}^0(\op{Bun}_{\op{SL}(r)}(X_0),\mathcal{D}^{\ell})$.
\end{lemma}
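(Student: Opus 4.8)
The plan is to rewrite $f_{(\vec a,\mc G)}$ as a composition of restriction, descent and extension maps, so that the assertion becomes the statement that a certain rational section of a line bundle on the normal variety $\mc{M}^0(\ml)$ is in fact regular; the hypothesis then disposes of this valuation by valuation. By the proof of Proposition \ref{OneIntro} (via Lemmas \ref{extendo}, \ref{WhiteDog}, \ref{W1} and \ref{W2}) the map in the statement factors as
$$\op{H}^0(\mc{M}^0(\ml),\mc{L}_{\mc{G}})^{\op{GL}(V)}\hookrightarrow \op{H}^0(\op{Q}^{\op{det}}_{X_0},\mc{L}_{\mc{G}})^{\op{GL}(V)}=\op{H}^0(\beta^0(X_0),\mc{L}_{\mc{G}})=\op{H}^0(\beta_{X_0},\mc{L}_{\mc{G}}),$$
followed by the isomorphism $\phi^{*}\colon\op{H}^0(\beta_{X_0},\mc{L}_{\mc{G}})\overset{\cong}{\to}\op{H}^0(\op{Bun}_{\op{SL}(r)}(X_0),\mc{D}^{\ell})$: here the first arrow is restriction to the dense open $\op{Q}^{\op{det}}_{X_0}\subset\mc{M}^0(\ml)$ (Lemma \ref{LFSmooth}(b)), the middle equality is descent along the $\op{GL}(V)$-quotient $\op{Q}^{\op{det}}_{X_0}\to\beta^0(X_0)$, and the last uses that $\beta_{X_0}\setminus\beta^0(X_0)$ has codimension at least two. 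In particular $f_{(\vec a,\mc G)}$ is injective, and $a_{\sigma}$ lies in its image if and only if the $\op{GL}(V)$-invariant section $s'$ of $\mc{L}_{\mc{G}}$ on $\op{Q}^{\op{det}}_{X_0}$ obtained by pulling back $a_{\sigma}$ — first to a section $\bar s$ on $\beta_{X_0}$ under $\phi^{*}$, then to $\op{Q}^{\op{det}}_{X_0}$ — extends to a regular section on all of $\mc{M}^0(\ml)$.

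Now $\mc{M}^0(\ml)$ is normal (it is open in the normalization $\mc{M}_{X_0}$ of $\overline{\op{Q}}^{\op{det}}_{X_0}$) and $\op{Q}^{\op{det}}_{X_0}$ is dense and open in it. Since $\mc{L}_{\mc{G}}$ is a line bundle, a section defined on a dense open of a normal variety extends globally as soon as it extends over each codimension-one point of the complement — the remaining locus has codimension at least two, where extension is automatic. So it suffices to prove, for each codimension-one point $\eta$ of $\mc{M}^0(\ml)\setminus\op{Q}^{\op{det}}_{X_0}$, that $s'$ has no pole along $\eta$. Completing the local ring at $\eta$ (and enlarging the residue field if necessary) gives a discrete valuation ring $R$ together with a map $\op{Spec}(R)\to\mc{M}^0(\ml)$ whose generic point lands in $\op{Q}^{\op{det}}_{X_0}$ and whose closed point is $\eta$. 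Composing $\op{Spec}(R)\to\mc{M}^0(\ml)\to\op{Q}_{X_0}^0\to\aleph_{r}(X_0)$ (forgetting the quotient structure and keeping only the torsion-free sheaf) produces $\widetilde{\pi}\colon\op{Spec}(R)\to\aleph_{r}(X_0)$; over the punctured spectrum this composite factors through $\beta^0(X_0)\hookrightarrow\beta_{X_0}\hookrightarrow\aleph_{r}(X_0)$ and, after trivializing the determinant over $\op{Frac}(R)$, lifts to a map $\pi\colon\op{Spec}(R)\setminus\{0\}\to\op{Bun}_{\op{SL}(r)}(X_0)$ that $\widetilde{\pi}$ extends — so $(\pi,\widetilde{\pi})$ is an admissible pair for the hypothesis.

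Along the punctured spectrum the section $\widetilde{\pi}^{*}\bar s$ of $\widetilde{\pi}^{*}\mc{L}_{\mc{G}}$ agrees both with $\pi^{*}a_{\sigma}$ (since $\phi\circ\pi$ is the composite factoring through $\beta^0(X_0)$, and $a_{\sigma}=\phi^{*}\bar s$) and with the pullback of $s'$; hence the hypothesis that $a_{\sigma}$ extends to $\widetilde{a}_{\sigma}\in\op{H}^0(\op{Spec}(R),\widetilde{\pi}^{*}\mc{L}_{\mc{G}})$ says precisely that $s'$ is pole-free along $\eta$. Running this over all such $\eta$ shows $s'$ extends to a section of $\mc{L}_{\mc{G}}$ on $\mc{M}^0(\ml)$; it remains $\op{GL}(V)$-invariant since the obstruction to invariance is a section of a line bundle vanishing on the dense open $\op{Q}^{\op{det}}_{X_0}$, and by construction $f_{(\vec a,\mc G)}$ maps it back to $a_{\sigma}$. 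As $\sigma\mapsto a_{\sigma}$ is linear with image the $\lambda$-component of $\op{H}^0(\op{Bun}_{\op{SL}(r)}(X_0),\mc{D}^{\ell})$ (Lemma \ref{Lemma3}) and $\sigma\mapsto s'$ is linear (uniqueness of extension), the image of $f_{(\vec a,\mc G)}$ contains the whole $\lambda$-component. The part I expect to require the most care is bookkeeping rather than an idea: checking that the various incarnations of $\mc{L}_{\mc{G}}$ — on $\aleph_{r}(X_0)$, on $\beta_{X_0}$ (where descent of $\mc{D}^{\ell}$ uses $\chi(X_0,\mc{E}\tensor\mc{G})=0$, valid for a covering pair), on $\op{Q}^{\op{det}}_{X_0}$, and its pullback $T(\mc{A})$ to $\mc{M}_{X_0}$ after rescaling $\mc{A}$ — are identified compatibly, so that ``$a_{\sigma}$ extends along $\widetilde{\pi}$'' and ``$s'$ is regular on $\mc{M}^0(\ml)$'' are literally equivalent, and that every codimension-one boundary point of $\mc{M}^0(\ml)$ is captured by an admissible $(\pi,\widetilde{\pi})$.
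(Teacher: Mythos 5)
Your argument is correct and is essentially the paper's own proof, spelled out in more detail: the paper simply observes that $\mc{M}^0(\ml)$ is normal, that $\sigma$ gives a section of $\mc{L}_{\mc{G}}$ on the open subset $\op{Q}^{\op{det}}_{X_0}$, and that since the variety is smooth in codimension one, poles can be detected along curves, which is exactly the hypothesis on the pairs $(\pi,\widetilde{\pi})$. One small adjustment: since the hypothesis is stated for $R=k[[t]]$, the pole along a codimension-one point $\eta$ should be tested along a smooth arc $\op{Spec}(k[[t]])\to \mc{M}^0(\ml)$ transverse to the divisor at a general smooth closed point (this is what smoothness in codimension one provides), rather than via the completed local ring at $\eta$ itself, whose residue field is larger than $k$; with that substitution your argument goes through verbatim.
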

\begin{proof}
Clearly $Z=\mc{M}^0(\ml)$ is a normal variety and $\sigma$ gives rise to a section of $\mc{L}_{\mc{G}}$ on the smooth
open subset $\op{Q}^{\op{det}}_{X_0}$. We need to make sure that this section does not have poles on any codimension one point of $Z$. Since $Z$ is smooth in codimension one, poles can be detected along curves.
\end{proof}

\subsection{Step (3)}\label{StepThree}  To carry out the final step of the proof of Proposition \ref{SomethingIntro}, we proceed in four steps:

\begin{enumerate}
\item[(3.1)] Define stack $\Upsilon_r(\nu)$ of Bhosle bundles (Section \ref{BB});
\item[(3.2)] Show any extension $\widetilde{\pi}: \op{Spec}(R) \to \op{Bun}_{\op{SL}(r)}(X_0)$, lifts  to a map $\widetilde{\widetilde{\pi}}$ to $\Upsilon_r(\nu)$ (Lemma \ref{LiftLemma});
\item[(3.3)] Using Bhosle bundles, classify poles of  $a_{\sigma}\in \op{H}^0(\op{Spec}(R)\setminus \{0\}, \pi^*(\mc{L}_{\mc{G}}))$ (Proposition \ref{PoleBreakDown});
\item[(3.4)]  Do pole analysis to show  $a_{\sigma}$ extends to
$\widetilde{a}_{\sigma}\in \op{H}^0(\op{Spec}(R), \widetilde{\pi}^*(\mc{L}_{\mc{G}}))$  (divided into three cases).
\end{enumerate}

\subsubsection{Bhosle bundles}\label{BB}

\begin{definition}\label{BBDef}   A Bhosle bundle of rank $r$  on $\nu: X\to X_0$ is a triple  $(\mc{E}, \vec{q}, \delta)$ where
\begin{enumerate}
\item  $\mc{E}$ is a vector bundle of rank $r$ on $X$;
\item For each  $s\in S$,  and points $\sas$, and $\sbs \in X$ such that $\nu^{-1}(s)=\{\sas,\sbs\}$, there are maps
$$q_{\sas}: \mc{E}|_{\sas}\to Q^s, \mbox{ and } \  q_{\sbs}: \mc{E}|_{\sbs}\to Q^s,$$
such that $\mc{E}|_{\sas} \oplus \mc{E}|_{\sbs} \to Q^s$,  is  an $r$-dimensional quotient; and
\item  $\delta: \mathcal{O}_{X} \rightarrow \op{det}\mc{E}$ is isomorphism.
\end{enumerate}
\end{definition}
\begin{definition}\label{asgpbStackDef} Let  $\Upsilon_r(\nu)$ denote the stack of Bhosle bundles of rank $r$ on $\nu: X\to X_0$.
\end{definition}

\begin{definition/lemma}\label{f} Let  $f:\Upsilon_r(\nu)\to \aleph_r(X_0)$, be the map which takes $(\mc{E}, \vec{q}, \delta)\in \Upsilon_r(\nu)$ to  $\mathcal{K}={\op{ker}}\big(\nu_*\mc{E} \to \bigoplus_{s\in S} \iota_{s}*Q^s\big)$.

\end{definition/lemma}

\subsubsection{Lift of $\widetilde{\pi}$ to stack of Bhosle bundles}

\begin{lemma}\label{LiftLemma} Let $R=k[[t]]$, and any map $\pi: \op{Spec}(R)\setminus \{0\} \to \op{Bun}_{\op{SL}(r)}(X_0)$, such that there is an extension $\widetilde{\pi}\op{Spec}(R) \to \op{Bun}_{\op{SL}(r)}(X_0)$.  There is a lift of $\widetilde{\pi}$ to map to $\Upsilon_r(\nu)$, making the following diagram
\[\xymatrix{
&  \Upsilon_r(\nu) \ar[d]^{f}\\
 \op{Spec}(R)  \ar@{->}[ru]^-{\widetilde{\widetilde{\pi}}}\ar[r]^{\widetilde{\pi}}&  \aleph_r(X_0),
}\]
commute.
\end{lemma}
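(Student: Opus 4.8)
The plan is to produce the lift $\widetilde{\widetilde{\pi}}$ by working with the family of torsion free sheaves on $X_0$ obtained from $\widetilde{\pi}$, pulling it back to the normalization $X$, and recording the "gluing data" at the nodes. Concretely, write $\mathcal{K}$ for the relatively torsion free sheaf on $X_0\times\op{Spec}(R)$ associated to $\widetilde{\pi}$ (via contraction with the standard representation, as in Step (2)); over the punctured disc it is the locally free sheaf coming from the $\op{SL}(r)$-bundle, and it has trivialized determinant there. The first task is to show that $\nu^*\mathcal{K}$ becomes locally free, with trivialized determinant, after modification supported at the nodes. Since $\op{Spec}(R)$ is a discrete valuation ring and $X$ is a smooth family of curves over it, the double dual $\mc{E}:=(\nu^*\mathcal{K})^{\vee\vee}$ is locally free (reflexive sheaves on a regular two-dimensional scheme are locally free), and it agrees with $\nu^*\mathcal{K}$ over the punctured disc and away from $\nu^{-1}(S)$. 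This produces candidate $\mc{E}$ in part (1) of Definition \ref{BBDef}.

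Next I would recover the quotient data $\vec{q}$ and the trivialization $\delta$. For $\delta$: on the punctured disc $\det\mc{E}$ is canonically trivialized (the $\op{SL}(r)$-structure), so $\det\mc{E}\cong\mathcal{O}_X$ on the punctured disc; since $\det\mc{E}$ is a line bundle on the smooth family $X/\op{Spec}(R)$ which is trivial on the generic fibre, and because the relative Picard scheme is separated (this is exactly the separatedness of $\op{Pic}$ invoked after Definition \ref{biglist}, applied componentwise to $X$), the trivialization extends over $\op{Spec}(R)$, giving $\delta$. For $\vec{q}$: by construction there is a natural inclusion $\mathcal{K}\hookrightarrow \nu_*\nu^*\mathcal{K}\hookrightarrow\nu_*\mc{E}$ of sheaves on $X_0\times\op{Spec}(R)$ (the second map because $\mc{E}\supseteq\nu^*\mathcal{K}/(\text{torsion})$ compatibly), and the cokernel is supported on $S\times\op{Spec}(R)$. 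Flatness over $R$ and the rank count on the generic fibre — where $\mathcal{K}$ is the locally free sheaf of an $\op{SL}(r)$-bundle and the cokernel of $\mathcal{K}\to\nu_*\nu^*\mathcal{K}$ at a node is precisely the $r$-dimensional diagonal quotient of $\mc{E}|_{\sas}\oplus\mc{E}|_{\sbs}$ — force the cokernel $\bigoplus_s\iota_{s*}Q^s$ to be a locally free $\mathcal{O}_{\op{Spec}(R)}$-module of rank $r$ at each node, and the surjection $\mc{E}|_{\sas}\oplus\mc{E}|_{\sbs}\twoheadrightarrow Q^s$ gives the pair $(q_{\sas},q_{\sbs})$. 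This assembles a point $\widetilde{\widetilde{\pi}}\in\Upsilon_r(\nu)(\op{Spec}(R))$, and unwinding the definition of $f$ in Definition/Lemma \ref{f} shows $f\circ\widetilde{\widetilde{\pi}}=\widetilde{\pi}$ by the very construction $\mathcal{K}=\ker(\nu_*\mc{E}\to\bigoplus_s\iota_{s*}Q^s)$.

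The main obstacle I anticipate is controlling what happens at a node over the special point $0\in\op{Spec}(R)$: a priori the modification $\mc{E}=(\nu^*\mathcal{K})^{\vee\vee}$ could change the torsion free sheaf $\mathcal{K}_0$ on $X_0$ (replacing it by a different torsion free sheaf in its "gluing class"), so one must check that $\ker(\nu_*\mc{E}_0\to\bigoplus Q^s_0)$ really is $\mathcal{K}_0$ and not some other sheaf — equivalently, that no unwanted length drops at the node. The way to handle this is to argue that $Q^s$ is $R$-flat of the expected rank $r$: the quotient $\nu_*\mc{E}/\mathcal{K}$ is $R$-flat because $\mathcal{K}$ is $R$-flat and $\nu_*\mc{E}$ is $R$-flat with the quotient generically of constant rank, so its special fibre has the same length as the generic fibre, which pins down $\mathcal{K}_0=\ker(\nu_*\mc{E}_0\to\bigoplus Q^s_0)$ on the nose. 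A secondary technical point — that $(\nu^*\mathcal{K})^{\vee\vee}$ is indeed locally free rather than merely reflexive — is immediate once one notes $X\times_{\op{Spec}(k)}\op{Spec}(R)$ (more precisely the total space of $X\to\op{Spec}(R)$) is regular of dimension two, so I would not belabour it.
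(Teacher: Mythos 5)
Your construction of $\me=(\nu^*\mk)^{\vee\vee}$ is the same as the paper's (there one pushes the vector bundle forward from the complement of the finitely many points $\nu^{-1}(S)\times\{0\}$ and quotes \cite[Corollary 1.4]{Harty}; the two extensions coincide), but you produce the node data differently: the paper extends the quotients $q_{\sas},q_{\sbs}$ over $t=0$ by properness of the Grassmannian and then identifies $\ker(\nu_*\me\to\oplus_s\iota_{s*}Q^s)$ with the original family by a depth argument (both are $R$-flat and relatively torsion free, hence canonically the push-forwards of their restrictions to the complement of $S\times\{0\}$, as in the proof of \cite[Lemma 1.17]{simpson}), whereas you take $Q^s$ to be the cokernel of $\mk\hookrightarrow\nu_*\me$, which makes commutativity automatic. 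That route can work, but the justification you give for its key point is not valid as stated: $R$-flatness of the subsheaf and of the ambient sheaf, together with constancy of the generic rank of the quotient, does \emph{not} imply that the quotient is $R$-flat or that its special fibre has the same length as the generic one (the ideal sheaf of a point on the central fibre inside the structure sheaf is already a counterexample to that implication). So the step you yourself identify as the main obstacle is precisely the one left unproved. It is true, but one needs extra input: e.g.\ tensoring $0\to\mk\to\nu_*\me\to C\to 0$ with $k=R/t$ and using that $\nu_*\me$ is $R$-flat, the $t$-torsion of $C$ embeds into $\mk_0$ and is supported at the finitely many points $S\times\{0\}$, hence vanishes because $\mk_0$ is torsion free; alternatively follow the paper's uniqueness-of-flat-torsion-free-extension argument. (A smaller slip: $\nu^*\mk$ can have torsion at $\nu^{-1}(S)\times\{0\}$, so $\nu_*\nu^*\mk\to\nu_*\me$ need not be injective; only the composite from $\mk$ is.)

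On the determinant: separatedness of the Picard scheme gives only that the isomorphism class of $\det\me$ on the special fibre is trivial; the specific trivialization $\delta$ coming from the $\op{SL}(r)$-structure over the punctured disc does not extend in general, but acquires a zero or pole along the central fibre on each component, and only $t^{f(i)}\delta$ extends. For the bare statement of Lemma \ref{LiftLemma} this is harmless, since the map $f:\Upsilon_r(\nu)\to\aleph_r(X_0)$ forgets $\delta$ and any trivialization of the (trivializable) line bundle $\det\me$ over $X_i\times\op{Spec}(R)$ will do; but the discrepancy, recorded in the paper by the function $f:I\to\Z_{\geq 0}$, is exactly the data fed into the pole analysis of Proposition \ref{PoleBreakDown}, so you should not assert that ``the trivialization extends.''
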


\begin{proof}
Such a lift clearly exists over $\op{Spec}(R)\setminus \{0\}$.  The pull back of the given family of torsion free sheaves on $X_0$ gives a vector bundle on $X_i\times \op{Spec}(R)\setminus S_i\times \{0\}$ where $S_i=\nu^{-1}(S)\cap X_i$, for each $i \in I$. Each curve $X_i$ is smooth, hence $X_i\times \op{Spec}(R)$ is a regular scheme of dimension two, and  so the push forward of such a vector bundle from $X_i\times \op{Spec}(R)\setminus S_i\times \{0\}$ to $X_i\times \op{Spec}(R)$
gives rise to a vector bundle on $X_i\times \op{Spec}(R)$ (use \cite[Corollary 1.4]{Harty}).

As the Grassmann variety of quotients is proper, one may extend the family of quotients  to $t=0$. We therefore obtain a map $\nu: \op{Spec}(R)\to \Upsilon_r(\nu)$. The resulting family of torsion free sheaves parameterized by
$\op{Spec}(R)$ agrees with the family $\widetilde{\pi}$ outside of the central fiber over $t=0$.   By depth considerations, the families have to coincide (canonically) since both sheaves, being flat over $\op{Spec}(R)$ and relatively torsion free, are canonically the sheaf theoretic push-forwards from $X_0\times \op{Spec}(R)\setminus S\times\{0\}$ (see proof of \cite[Lemma 1.17]{simpson}).

The determinant of $\mc{E}(t)|_{X_i}$ on $X_i$ may develop a zero or pole at $t=0$. However, we can assume the following: The new family has a trivialization that extends to $t=0$, moreover there is  a function $f: I\to \mathbb{Z}_{\geq 0}$, so that for every node $s$ on $X_0$, and $\{\sas,\sbs\}=\nu^{-1}(s)$, the diagram

\[
 \xymatrix{
\mathbb{C} \ar@{->}[rr]^{\displaystyle t^{f(c(\sas))}\delta} \ar@{->}[dd]_{\displaystyle t^{f(c(\sbs))}\delta}
      && \op{det}(\mc{E}(t)_{\sas}) \ar@{->}[dd]^{\displaystyle \op{det}(q_{\sas})}    \\ \\
  \op{det}(\mc{E}(t)_{\sbs}) \ar@{->}[rr]_{\displaystyle \op{det}(q_{\sbs})} && \op{det}(Q^s)
 }
 \]
 commutes.

We then obtain a quadruple $(\mc{E}(t), \vec{q}(t), \delta(t), f)$, where $(\mc{E}(t), \vec{q}(t), t^{f}\delta )$ is a family of Bhosle bundles, such that  $(\mc{E}(t), \vec{q}(t), t^{f}\delta )$  is equivalent to the original family of Bhosle bundles for $t\neq 0$, and if
 \begin{equation}\label{sound}
\mc{K}_t=Ker(\nu_*\mc{E}(t)\to \otimes_{s\in S}i_* Q^s),
\end{equation}
then  for $t\ne 0$, the torsion free sheaf $\mc{K}_t$ will have trivial determinant.
\end{proof}

By Proposition \ref{normalform}, for each $s \in S$, we may choose bases for the vector spaces $\mc{E}(t)|_{\sas}$ and $\mc{E}(t)|_{\sbs}$, and $Q^s$, and represent the maps $q(t)_{\sas}$ and $q(t)_{\sbs}$ by matrices of the form

\[
\begin{bmatrix}
    t^{\alpha^s_1} & \dots  & 0 & 0 &  \dots  & 0\\
   \vdots  &   \ddots & \vdots & \vdots & \ddots &\vdots \\
   0 &\cdots & t^{\alpha^s_{k_s}}&  0 & \cdots & 0 \\
    0 & \dots  & 0& 1&\dots  & 0 \\
   \vdots &  \ddots & \vdots & \vdots & \ddots & \vdots \\
    0 &0& 0 & \dots  & 0&1
\end{bmatrix}
\mbox{ and }
\begin{bmatrix}
    1 &  \dots  & 0 & 0  & \dots  & 0\\
    \vdots &  \ddots  & \vdots & \vdots & \ddots & \vdots \\
    0 & \dots& 1 & 0 & \dots   & 0 \\
     0 &\dots &0 &  t^{\beta^s_{k_{s+1}}} & \dots  & 0 \\
       \vdots&  \ddots & \vdots & \vdots & \ddots & \vdots \\
  0&   \dots  & 0 & 0  & \dots  & t^{\beta^s_{r}}
\end{bmatrix},
\]
where, without loss of generality, $\alpha^s_1 \ge \cdots \ge \alpha^s_{k_s}$, and $\beta^s_{k_{s+1}} \le \cdots \le \beta^s_{r}$.

\subsubsection{Using Bhosle bundles to classify the poles of $a_{\sigma}$}\label{PartThree}
\begin{proposition}\label{PoleBreakDown}
The pole contribution to $a_{\sigma}$ is a product of the following three quantities.
\begin{enumerate}
\item[(1)] A sum over $i\in I$, here $\mathcal{G}_i$ is the pull back of $\mathcal{G}$ under $X_i\to X_0$:
$$\sum_{i\in I} t^{f(i)} \chi(X_i, \mc{E}_i\tensor\mathcal{G}_i).$$
\end{enumerate}
The remaining two terms are sums over nodes $s\in X$. Let $\{\sas,\sbs\}=\nu^{-1}(s)$. Using the notation established above,
these are (for each $s$),
\begin{enumerate}
\item[(2)] A order of pole of a sum of terms of the form $v^*(g_t v)$ where $v\in V_{\lambda_{a_x}}$ and $v^*\in V_{\lambda_{b_x}}^*$, and
    \begin{equation}\label{gluon}
    g_t= t^{\frac{f(\sas)-f(\sbs)}{r}}q(t)_{\sbs}^{-1}q(t)_{\sas}.
    \end{equation}
\item[(3)] $t$ raised to the exponent $-\ell(f(\sas)+ \sum_{j=1}^{k_s} \alpha^s_j)$. This exponent comes from the order of zero (raised to $\ell$) of $t^{f(\sas)}\det q(t)_{\sas}$, which equals in turn the order of zero of  $t^{f(\sbs)}\det q(t)_{\sbs}$. So we could have written this term also as
    $-\ell(f(\sbs)+ \sum_{j=k_s+1}^{r} \beta^s_j)$.

\end{enumerate}
\end{proposition}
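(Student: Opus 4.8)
The plan is to rewrite $a_{\sigma}$ over $\op{Spec}(R)\setminus\{0\}$ in the Bhosle--bundle coordinates produced by Lemma \ref{LiftLemma} and Proposition \ref{normalform}, to describe $\widetilde{\pi}^*\mc{L}_{\mc{G}}$ near $t=0$ in the same coordinates, and then to read off the order of vanishing of $a_{\sigma}$ at $t=0$ as a product of three independent contributions, one from each of the three data of a Bhosle bundle (the vector bundle on $X$, the quotients at the nodes, and the trivialization of the determinant).

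First I would identify the target line bundle. Since $\widetilde{\pi}$ factors through the family of torsion free sheaves $\mc{K}_t=\op{ker}\bigl(\nu_*\mc{E}(t)\to\bigoplus_{s\in S}\iota_{s*}Q^s\bigr)$, the fibre of $\widetilde{\pi}^*\mc{L}_{\mc{G}}$ at $t$ is $\mc{D}(X_0,\mc{K}_t\tensor\mc{G})$. Tensoring the defining sequence of $\mc{K}_t$ with $\mc{G}$, applying the projection formula $\nu_*\mc{E}(t)\tensor\mc{G}\cong\nu_*(\mc{E}(t)\tensor\nu^*\mc{G})$ and $\iota_{s*}Q^s\tensor\mc{G}\cong\iota_{s*}(Q^s\tensor\mc{G}_s)$, and using that $\mc{D}$ is additive along short exact sequences (and equals the dual of the determinant on a skyscraper), one obtains, as line bundles on $\op{Spec}(R)$,
\[
\mc{D}(X_0,\mc{K}_t\tensor\mc{G})\ \cong\ \Bigl(\bigotimes_{i\in I}\mc{D}(X_i,\mc{E}(t)_i\tensor\mc{G}_i)\Bigr)\tensor\Bigl(\bigotimes_{s\in S}\det(Q^s\tensor\mc{G}_s)\Bigr).
\]
After fixing bases as in Proposition \ref{normalform}, the right side has a preferred local generator near $t=0$, and $a_{\sigma}$ is a meromorphic section of it whose order at $t=0$ is the ``pole contribution'' to be described.

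I would then substitute the description of $a_{\sigma}$ from Construction \ref{StepOnePropSI}: $a_{\sigma}|_{E_0(t)}$ is a finite sum of terms $\bigl(\prod_{s\in S}v_s^*(g_{s,t}\cdot v_s)\bigr)\cdot\tau_t$, with $\tau_t\in\mc{D}(X,\mc{E}(t))^{\tensor\ell}=\bigotimes_i\mc{D}(X_i,\mc{E}(t)_i)^{\tensor\ell}$ and $g_{s,t}$ the patching element, and track the $t$-powers from each factor; they multiply. \emph{(i)} Matching $\tau_t$ against the factors $\mc{D}(X_i,\mc{E}(t)_i\tensor\mc{G}_i)$ requires Lemma \ref{repeatedly} applied to the line-bundle summands of $\mc{G}_i$ (together with \eqref{long} and Lemma \ref{repeat}); this uses the trivialization of $\det\mc{E}(t)_i$, which in the extended family is $t^{f(i)}\delta$ rather than $\delta$, and replacing $\delta|_{X_i}$ by $t^{f(i)}\delta|_{X_i}$ alters the identification by $t$ to the power $f(i)\,\chi(X_i,\mc{E}_i\tensor\mc{G}_i)$; collecting these over $i$ is term $(1)$. \emph{(ii)} For a node $s$ with $\nu^{-1}(s)=\{\sas,\sbs\}$ the patching element in the extended family is $g_{s,t}=t^{(f(\sas)-f(\sbs))/r}q(t)_{\sbs}^{-1}q(t)_{\sas}$ of Eq \eqref{gluon}, and $v_s^*(g_{s,t}\cdot v_s)$ is a Laurent series in $t$ whose order is, by definition, term $(2)$. \emph{(iii)} Expressing $\det(Q^s\tensor\mc{G}_s)$ in the bases of Proposition \ref{normalform} passes through $\det q(t)_{\sas}$, which in the normal form is a unit times $t^{\sum_{j=1}^{k_s}\alpha^s_j}$, composed with the twist $t^{f(\sas)}\delta$, for total order $f(\sas)+\sum_{j=1}^{k_s}\alpha^s_j$; since $\det(Q^s\tensor\mc{G}_s)=(\det Q^s)^{\tensor\ell}\tensor(\det\mc{G}_s)^{\tensor r}$ with $\det\mc{G}_s$ constant, this enters the generator with multiplicity $\ell$, giving for $a_{\sigma}$ a pole of order $\ell\bigl(f(\sas)+\sum_{j}\alpha^s_j\bigr)$, which is term $(3)$. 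Its equality with $\ell\bigl(f(\sbs)+\sum_{j=k_s+1}^{r}\beta^s_j\bigr)$ is exactly the commuting square in the proof of Lemma \ref{LiftLemma}, which identifies $t^{f(\sas)}\det q(t)_{\sas}$ with $t^{f(\sbs)}\det q(t)_{\sbs}$ as maps $\mathbb{C}\to\det Q^s$, so the two orders of vanishing agree.

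The step I expect to be the main obstacle is the determinant-of-cohomology bookkeeping behind terms $(1)$ and $(3)$: cascading through \eqref{sess1}, the defining sequence of $\mc{K}_t$, the projection formula, and the $\ell$-fold use of Lemma \ref{repeatedly}, one must keep exact track of which determinant lines occur with which sign and multiplicity, confirm the Riemann--Roch identity that produces $\chi(X_i,\mc{E}_i\tensor\mc{G}_i)$ as the exponent of $t^{f(i)}$, and verify that the three contributions genuinely assemble multiplicatively with no leftover cross terms. A secondary issue needing care is that the basis choices of Proposition \ref{normalform} be usable simultaneously in Construction \ref{StepOnePropSI} and in the identification of $\widetilde{\pi}^*\mc{L}_{\mc{G}}$, so that the ``pole contribution'' is well defined up to units independently of all choices.
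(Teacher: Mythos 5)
Your overall architecture is the same as the paper's: pass to the Bhosle presentation of Lemma \ref{LiftLemma}, decompose the fibre $\mc{D}(X_0,\mc{K}_t\tensor\mc{G})$ through the defining sequence \eqref{sound} into the factors $\mc{D}(X_i,\mc{E}(t)_i\tensor\mc{G}_i)$ and $\det(Q^s)^{\rk\mc{G}}$, feed Construction \ref{StepOnePropSI} into this frame, and read off three multiplicative contributions, with term (3) and the equality of its two expressions handled, as in the paper, by the commuting square of Lemma \ref{LiftLemma}. The genuine gap is in your item (i), i.e.\ term (1). First, the mechanism you describe does not produce your claimed exponent: pushing the replacement $\delta|_{X_i}\mapsto t^{f(i)}\delta|_{X_i}$ through the iterated applications of Lemma \ref{repeatedly} rescales one factor $\det(\mc{E}(t)_{i,p})^{\pm 1}$ per point-twist of $\mc{G}_i$, hence $\deg(\mc{G}_i)$ of them in all, so it changes the identification by $t^{f(i)\deg(\mc{G}_i)}$, not by $t^{f(i)\chi(X_i,\mc{E}_i\tensor\mc{G}_i)}$. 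A change of the trivialization of $\det\mc{E}(t)_i$ by $t^{f(i)}$ is what is induced by rescaling $\mc{E}(t)_i$ itself by $t^{f(i)/r}$, and it is that rescaling which acts on $\mc{D}(X_i,\mc{E}_i\tensor\mc{G}_i)$ through the Euler characteristic, with exponent $\frac{f(i)}{r}\chi(X_i,\mc{E}_i\tensor\mc{G}_i)=f(i)\bigl(\deg(\mc{G}_i)+\ell(1-g_i)\bigr)$ (using $\deg_i\mc{E}(t)=0$); this, and not $f(i)\chi(X_i,\mc{E}_i\tensor\mc{G}_i)$, is the coefficient the paper uses in the ensuing pole estimates.

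Second, once (2) and (3) are fixed as in the statement, neither $f(i)\chi_i$ nor $f(i)\deg(\mc{G}_i)$ can be the right coefficient, where $\chi_i=\chi(X_i,\mc{E}_i\tensor\mc{G}_i)$. By the argument of Lemma \ref{W2} (here $\chi(X_0,\mc{K}_t\tensor\mc{G})=0$), the value $a_{\sigma}(t)\in\mc{D}(X_0,\mc{K}_t\tensor\mc{G})$ is independent of the choice of determinant trivialization, and the reference extension over $t=0$ involves only the family $\mc{K}_t$; so the total order at $t=0$ is unchanged if $f$ is replaced by $f+1$ on every component. Under that shift (2) is unchanged and (3) drops by $\ell$ at each node, so (1) must rise by exactly $\ell|S|$; since by Def \ref{CP} one has $\sum_i\bigl(\deg(\mc{G}_i)+\ell(1-g_i)\bigr)=\ell|S|$, this forces the coefficient $\frac{f(i)}{r}\chi_i$, whereas $f(i)\chi_i$ would rise by $r\ell|S|$ and $f(i)\deg(\mc{G}_i)$ by $\ell(\sum_i g_i-|I|+|S|)$. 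With your coefficient the cancellations in cases (b) and (c) of the proof of Proposition \ref{SomethingIntro} fail. The missing device is the paper's auxiliary family $(\mc{E}(t),\vec{q}'(t),\delta)$ with $q'(t)_x=t^{f(c(x))/r}q(t)_x$: this restores determinant-one gluing (and is where \eqref{gluon} comes from, which you assumed rather than derived), and term (1) is then exactly the transfer factor under the isomorphism $\mc{K}'(t)\cong\mc{K}(t)$ given by multiplication by $t^{f(i)/r}$ on $X_i$, acting on each $\mc{D}(X_i,\mc{E}_i(t)\tensor\mc{G})$ by its Euler characteristic.
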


\begin{proof}
Form a new family of Bhosle bundles for $t\neq 0$: $(\mc{E}(t), \vec{q}'(t), \delta)$, with
$q(t)'_x= t^{\frac{f(c(x)}{r}}q(t)_x$. Therefore the vector bundles $\mc{E}(t)$, the trivialization $\delta$ of determinants,
 and the spaces $Q^s$ remain the same, only the maps $q(t)$ change. This gives rise of a family of vector bundles $\mathcal{K}'(t)$ for $t\neq 0$ on $X_0$, and therefore $\sigma$ induces an element in the determinant of cohomology of $\mathcal{K}'(t)\tensor \mg$. Now $\mk'(t)$ is isomorphic to $\mk(t)$: The isomorphism is induced by a map
 $\mc{E}(t)\to \mc{E}(t)$ which is multiplication by $t^{\frac{f(i)}{r}}$ on component $X_i$, the identity maps $Q^s\to Q^s$.

 We need to transfer the induced section (from $\sigma$) in $\mathcal{D}(\mk'(t)\tensor\mg)$ to $\mathcal{D}(\mk'(t)\tensor\mg)$. Recall that the determinant of cohomology of $\mathcal{K}(t)$ is (using \eqref{sound})

 $$\bigotimes_{i\in I}\mathcal{D}(\mc{E}_i(t)\tensor\mg)\tensor \bigotimes_{s\in S} \det(Q^s)^{\rk(\mg)}.$$

 This transfer results in Part (1) of Proposition \ref{PoleBreakDown}.

Terms in Parts (2) and (3) of Proposition \ref{PoleBreakDown} arise in the new family $(\mc{E}(t), \vec{q}'(t), \delta)$. A contribution of type (2) is a so-called ``Peter-Weyl" term.

 The gluing data from $\sas$ to $\sbs$ is given by Eq \eqref{gluon}. The third term is a transfer factor from the determinant of cohomology of $\nu^*\mk'(t)$, where the produced section live, to the determinant of cohomology of $\mk(t)$ as
we next explain.

Suppose we are given a Bhosle-bundle $(\mc{E},q,\delta)$, as described in Def \ref{BBDef}, such that $q_x$ maps are all surjections, and the determinants $\delta$  patch together to give rise to a vector bundle $\mc{E}_0$ with trivialized determinant on $X_0$. It is clear that
$\mc{E}=\nu^*\mc{E}_0$, and $\mc{E}_0$ sits in an exact sequence
\begin{equation}\label{sess2}
0\to \mc{E}_0\to \nu_*\mc{E}\to \oplus_s Q^s\to 0.
\end{equation}

We find a map
$\mathcal{D}(\mc{E}_0)^{\ell}\to \mathcal{D}(\mc{E})^{\ell}\tensor \bigotimes_{s\in S} \det(Q^s)^{\ell}$, but by Eq \eqref{long},
we have an isomorphism $\mathcal{D}(\mc{E})^{\ell}\to \mathcal{D}(\mc{E}_0)^{\ell}$.
We claim that the composite map $\mathcal{D}(\mc{E})^{\ell}\to \mathcal{D}(\mc{E}_0)^{\ell}\to \mathcal{D}(\mc{E})^{\ell}\tensor \bigotimes_{s\in S} \det(Q^s)^{\ell}$
is multiplication  by  the following quantity raised to $\ell$, with $x=\sas$
\begin{equation}\label{composite}
\Bbb{C}\leto{\delta_x}\mc{E}_x\leto{\det q_x} Q^s.
\end{equation}

 This follows from the natural map from $(\mc{E}_0)_s$ in Eq \eqref{sess1}, to $Q^s$ in Eq \eqref{sess2}, inducing a map on exact sequences. The map $\Bbb{C}=\det (\mc{E}_0)_s\to \det Q^s$ equals the composite Eq \eqref{composite}. The above claim justifies the third term.

\end{proof}
\subsection{Final step: Calculating contribution of each pole type for proof of Proposition \ref{SomethingIntro}}
\subsubsection{Pole contributions of type (1)} The order of pole in (1) simplifies, using Eq \eqref{perturbation}, to
$$\frac{f(i)}{r}r(\deg(\mg_i)+(1-g_i)\ell)=f(i)\ell(\frac{n_i}{2} + \sum_{x\in \nu^{-1}(S)\cap X_i} \epsilon_x),$$
which can be written as $$\sum_{x\in \nu^{-1}(S)\cap X_i} f(c(x))(\epsilon_x +\frac{1}{2}).$$
Therefore the total contribution of (1) can also be written as a sum over nodes in $X_0$: For every node $s$, we have a sum of $\ell$ times.
$$f(c(\sas))(\frac{1}{2}+\epsilon_{\sas})+f(c(\sbs))(\frac{1}{2}+\epsilon_{\sbs})=\frac{1}{2}(f(c(a_x))+f(c(b_x))+ (\epsilon_{\sas}f(c(\sas))+\epsilon_{\sbs}f(c(\sbs))).$$

Therefore contribution (1) takes the form
\begin{enumerate}
\item[(1)'] A sum over nodes $s\in X_0$, of
$$(\frac{1}{2}(f(c(a_x))+f(c(b_x))+ (\epsilon_{\sas}f(c(\sas))+\epsilon_{\sbs}f(c(\sbs))))\cdot \ell.$$
\end{enumerate}

Our aim now is to show that the contributions to (1)', (2) and (3) from a fixed node, sum to $\leq 0$. Let $\mu=\lambda_{a_x}$.
If $f(c(a_x))$ and $f(c(b_x))$ are both increased by one, then the contribution to (1)' from $s$ increases by $\ell$ since
 $\epsilon_{\sas}+\epsilon_{\sbs}=0$. The contribution to (3) decreases by $\ell$, and (2) remains unchanged. Therefore for the purposes of showing the local contribution at $s$ is $\leq 0$, we may assume that one of the following three cases occurs
 \begin{enumerate}
 \item[(a)] $f(c(a_x))=f(c(b_x))=0$.
 \item[(b)] $f(c(a_x))>0$ and $f(c(b_x))=0$.
 \item[(c)] $f(c(a_x))=0$ and $f(c(b_x))>0$.
 \end{enumerate}
\subsubsection{Pole contributions of remaining types: Case (a)}

In this case only (2) and (3) contribute, and in (3), we have $\ell$ times $-\sum_{j=1}^{k_s} \alpha^s_j=-|\alpha^s|$ and (2) contributes terms of the form $v^*(g_t)v$ with $g_t=q(t)_{\sbs}^{-1}q(t)_{\sas}$ (a diagonal matrix), which, by Lemma \ref{openeye} below, produces a pole of order at most $(\mu_1-\mu_r)|\alpha^s|\leq \ell |\alpha^s|$. Therefore there are no poles in this case.
\subsubsection{Pole contributions of remaining types: Case (b)}
Set $f(c(a_x))=m$. Therefore $m+|\alpha^s|=|\beta^s|$, and the term (1)' for the node $s$ is
$\frac{m}{2}+ (\epsilon_{\sas}m)\cdot \ell =m\ell(\frac{1}{2} +\epsilon_{\sas})$. Note that the determinant of $\me(t)_{a_s}$  has a trivialization that may not agree with the basis given, but since the quotient is invertible, this does not effect the pole calculation.

By Lemma \ref{openeye}, term (3) contributes $-\ell(|\alpha^s|+m)$ and term (2) contributes no more than
$(\mu_1-\mu_r)|\alpha^s| + m \mu(\xrminusone)$.  Therefore the order of pole is no more than $\ell$ times
$$\frac{1}{\ell}(\mu_1-\mu_r)|\alpha^s| -|\alpha^s| -m +  m (\frac{\mu(\xrminusone)}{\ell}  + \frac{1}{2} +\epsilon_{\sas})
=(\frac{1}{\ell}(\mu_1-\mu_r)-1)|\alpha^s| +m(\frac{\mu(\xrminusone)}{\ell} -\frac{1}{2} +\epsilon_{\sas}).$$
Noting that $(\mu_1-\mu_r)\leq \ell$, we therefore need to verify:
$$\epsilon_{\sas}\leq \frac{1}{2}-\frac{1}{\ell}\mu(\xrminusone).$$
\subsubsection{Pole contributions of remaining types: Case (c)}
Set $f(c(b_x))=n$. Therefore $|\alpha^s|=|\beta^s|+n$, and the term (1)' for the node $s$ is
$\frac{n}{2}+ (-\epsilon_{\sas}n)\cdot \ell =n\ell(\frac{1}{2} -\epsilon_{\sas})$. Term (3) contributes $-\ell(|\beta^s|+n)$ and term (2) contributes no more than (by Lemma \ref{openeye})
$(\mu_1-\mu_r)|\beta^s| - (-n) \mu^*(\xrminusone)$. Therefore the order of pole is no more than $\ell$ times
$$\frac{1}{\ell}(\mu_1-\mu_r)|\beta^s| -|\beta^s| -n +  n (\frac{\mu^*(\xrminusone)}{\ell}  + \frac{1}{2} -\epsilon_{\sas})=(\frac{1}{\ell}(\mu_1-\mu_r)-1)|\alpha^s| +n(\frac{\mu^*(\xrminusone)}{\ell} -\frac{1}{2} -\epsilon_{\sas}).$$
Therefore, noting that  $(\mu_1-\mu_r)\leq \ell$, we need to verify that
$$\epsilon_{\sas}\geq \frac{1}{\ell}\mu^*(\xrminusone)-\frac{1}{2}.$$

\subsubsection{Pole contributions of remaining types: Conclusion}
Therefore all in all, we need
\begin{equation}\label{allinall}
\frac{1}{\ell}\mu^*(\xrminusone)-\frac{1}{2}\leq \epsilon_{\sas}\leq \frac{1}{2}-\frac{1}{\ell}\mu(\xrminusone).
\end{equation}
It follows that $\frac{1}{\ell}\mu^*(\xrminusone)-\frac{1}{2}\leq \frac{1}{2}-\frac{1}{\ell}\mu(\xrminusone)$ since by Lemma \ref{integra}
$(\mu+\mu^*)(\xrminusone)= \mu_1-\mu_r\leq \ell$. Therefore either of the two choices for $\epsilon_x$ works for the pole calculations.

\begin{lemma}\label{openeye}
Let $A_t$ and $B_t$ be diagonal matrices with entries in $R$, of the form
\[
\begin{bmatrix}
    t^{\alpha_1} & \dots  & 0 & 0 &  \dots  & 0\\
   \vdots  &   \ddots & \vdots & \vdots & \ddots &\vdots \\
   0 &\cdots & t^{\alpha_{k}}&  0 & \cdots & 0 \\
    0 & \dots  & 0& 1&\dots  & 0 \\
   \vdots &  \ddots & \vdots & \vdots & \ddots & \vdots \\
    0 &0& 0 & \dots  & 0&1
\end{bmatrix}
\mbox{ and }
\begin{bmatrix}
    1 &  \dots  & 0 & 0  & \dots  & 0\\
    \vdots &  \ddots  & \vdots & \vdots & \ddots & \vdots \\
    0 & \dots& 1 & 0 & \dots   & 0 \\
     0 &\dots &0 &  t^{\beta_{k}} & \dots  & 0 \\
       \vdots&  \ddots & \vdots & \vdots & \ddots & \vdots \\
  0&   \dots  & 0 & 0  & \dots  & t^{\beta_{r}}
\end{bmatrix},
\]

Let $m+|\alpha|=|\beta|$, and $|\alpha|=\sum_{j=1}^k \alpha_j$, $|\beta|=\sum_{j=k+1}^r \beta_j$. Let $V=V_{\mu}$ be a representation of $\op{SL}(r)$, and $v\in V, v^*\in V^*$. Set
$$g_t= t^{\frac{m}{r}}A_t B_t^{-1}.$$
Then the order of pole of $v^*(g_t v)$, at $t=0$ is at most

\begin{equation}\label{openeye1}
(\mu_1-\mu_r)|\beta| +m(\mu_r-\frac{1}{r}|\mu|)=(\mu_1-\mu_r)|\beta| -m \mu^*(\xrminusone)
\end{equation}
which may also be written as
\begin{equation}\label{openeye2}
(\mu_1-\mu_r)|\alpha| +m(\mu_1-\frac{1}{r}|\mu|)=(\mu_1-\mu_r)|\alpha| + m \mu(\xrminusone).
\end{equation}
\end{lemma}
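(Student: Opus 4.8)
The plan is to recognize $g_t$ as a (rational) cocharacter of the diagonal torus of $\op{SL}(r)$, decompose $v$ and $v^*$ into weight vectors, and reduce the pole estimate to a linear optimization over the Weyl group $S_r$ that the rearrangement inequality resolves explicitly; no input beyond elementary highest weight theory is needed. Note first that the two displayed bounds \eqref{openeye1} and \eqref{openeye2} are equal as soon as $m+|\alpha|=|\beta|$ (their difference is $(\mu_1-\mu_r)(|\beta|-|\alpha|-m)=0$), so it suffices to establish either one.

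\emph{Steps 1--2.} Multiplying out, $A_tB_t^{-1}=\op{diag}(t^{\alpha_1},\dots,t^{\alpha_k},t^{-\beta_{k+1}},\dots,t^{-\beta_r})$, so $g_t=t^{\xi}$ with $\xi$ the diagonal $\mathbb{Q}$-cocharacter given by $\xi_j=\frac mr+\alpha_j$ for $j\le k$ and $\xi_j=\frac mr-\beta_j$ for $j>k$. Since $\sum_j\xi_j=m+|\alpha|-|\beta|=0$, $g_t$ indeed lies in $\op{SL}(r)$; after the base change $t\mapsto t^r$ the exponent $\xi$ becomes an honest cocharacter and all pole orders below get divided by $r$, which is the source of the fractional term $\frac mr|\mu|$ in the claimed bound. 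I would then write $\xi=\frac mr(1,\dots,1)+\zeta$ with $\zeta=(\alpha_1,\dots,\alpha_k,-\beta_{k+1},\dots,-\beta_r)$; because $A_t$ and $B_t$ have entries in $R$, the first $k$ coordinates of $\zeta$ are $\ge 0$ and the last $r-k$ are $\le 0$. Decomposing $v=\sum_w v_w$ and $v^*=\sum_w v^*_w$ into weight vectors for the diagonal torus (the weights $w$ of $V_\mu$, written as $r$-tuples, all have coordinate sum $|\mu|$) and using that $g_t$ scales the $w$-weight space by $t^{\langle w,\xi\rangle}$, one gets $v^*(g_tv)=\sum_w t^{\langle w,\xi\rangle}v^*(v_w)$ with $\langle w,\xi\rangle=\frac mr|\mu|+\langle w,\zeta\rangle$; hence the order of pole of $v^*(g_tv)$ at $t=0$ is at most $-\frac mr|\mu|+\max_w\langle w,-\zeta\rangle$.

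\emph{Step 3} is the fiddliest part. Every weight of $V_\mu$ lies in the convex hull of the Weyl orbit $S_r\cdot\mu$, so $\max_w\langle w,-\zeta\rangle\le\max_{\sigma\in S_r}\langle\sigma\mu,-\zeta\rangle$, and the rearrangement inequality identifies the latter with $\langle\mu,(-\zeta)^{+}\rangle$, where $(-\zeta)^{+}$ is the weakly decreasing rearrangement of the tuple $-\zeta$; the sign pattern of $\zeta$ makes $(-\zeta)^{+}=(\beta_r,\dots,\beta_{k+1},-\alpha_k,\dots,-\alpha_1)$. Pairing coordinates and bounding $\mu_i\le\mu_1$ on the first $r-k$ slots and $\mu_i\ge\mu_r$ on the last $k$ slots (this is exactly where $\alpha_j,\beta_j\ge 0$ enters) yields
$$\langle\mu,(-\zeta)^{+}\rangle=\sum_{i=1}^{r-k}\mu_i\beta_{r+1-i}-\sum_{i=r-k+1}^{r}\mu_i\alpha_{r+1-i}\le \mu_1|\beta|-\mu_r|\alpha|.$$
Thus the order of pole is at most $-\frac mr|\mu|+\mu_1|\beta|-\mu_r|\alpha|$.

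\emph{Step 4.} Substituting $|\alpha|=|\beta|-m$ rewrites the bound $-\frac mr|\mu|+\mu_1|\beta|-\mu_r|\alpha|$ as $(\mu_1-\mu_r)|\beta|+m\bigl(\mu_r-\frac1r|\mu|\bigr)$, and (substituting the other way) equally as $(\mu_1-\mu_r)|\alpha|+m\bigl(\mu_1-\frac1r|\mu|\bigr)$. Since $\mu(\xrminusone)=\mu_1-\frac1r|\mu|$, and since $\mu^{*}=(-\mu_r,\dots,-\mu_1)$ gives $\mu^{*}(\xrminusone)=-\mu_r+\frac1r|\mu|$, these are precisely the forms \eqref{openeye1} and \eqref{openeye2} asserted in the statement. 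There is no serious obstacle in this argument: the only point that needs genuine care is Step 3 --- keeping the inequality directions straight and using the hypothesis that $A_t,B_t$ have entries in $R$ to pin down the sign pattern of $\zeta$ --- while Steps 1, 2, 4 are bookkeeping, the one mild subtlety being the passage through the fractional cocharacter $\xi$ and the $\det=1$ check that legitimizes it.
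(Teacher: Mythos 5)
Your proof is correct and follows essentially the same route as the paper's: the paper's brief appeal to ``highest weight theory'' is exactly your weight-space decomposition plus the convex-hull/rearrangement maximization over the Weyl orbit, arriving at the same intermediate bound $\mu_1|\beta|-\mu_r|\alpha|-\frac{m}{r}|\mu|$ before substituting $|\alpha|=|\beta|-m$ to get the two displayed forms. The only difference is that you make explicit the cocharacter bookkeeping, the $\det g_t=1$ check, and the sign conditions $\alpha_j,\beta_j\ge 0$ that the paper leaves implicit.
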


\begin{proof}
Assume $\alpha_j$ are increasing and $\beta_j$ are decreasing non negative integers. By highest weight theory, the pole terms are maximized with value (of order of pole) equal to
$$\mu_1(\beta_{r}-\frac{m}{r})+\dots+\mu_{r-k}(\beta_{k+1}-\frac{m}{r})+\mu_{r-k+1}(-\alpha_{k} -\frac{m}{r}) +\dots + \mu_{r}(-\alpha_{1} -\frac{m}{r})$$
which is $\leq$
$\mu_1|\beta|-\mu_r|\alpha| -\frac{m}{r}|\mu|$
we can write this in two different ways as in the statement of the Lemma.
\end{proof}
\begin{lemma}\label{integra}
Let $\mu=(\mu_1\geq\mu_2\geq \dots\geq \mu_r)$ be a dominant integral weight of level $\ell$.
\begin{enumerate}
\item Then
$(\mu^*+\mu)(\xrminusone)=\mu_1-\mu_r \in \Bbb{Z}$.
\item $\frac{\mu^*(\xrminusone)}{\ell} -\frac{1}{2}\leq \frac{1}{2}-\frac{\mu(\xrminusone)}{\ell}$.
\item One of the two numbers in $\{\frac{\mu^*(\xrminusone)}{\ell} -\frac{1}{2},\frac{1}{2}-\frac{\mu(\xrminusone)}{\ell}\}$ is in the range $[-\frac{1}{2},\frac{1}{2}]$.
\end{enumerate}
\end{lemma}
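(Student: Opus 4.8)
The plan is to reduce all three parts to the explicit coordinate formula for the pairing with $\xrminusone$. Writing $\mu=(\mu_1\ge\mu_2\ge\cdots\ge\mu_r)$ as in the statement, the dual weight for $\op{SL}(r)$ is $\mu^*=-w_0\mu=(-\mu_r\ge-\mu_{r-1}\ge\cdots\ge-\mu_1)$, which is again dominant. Since $\xrminusone=\frac1r(r-1,-1,\dots,-1)$ — equivalently the first fundamental coweight, pinned down by $\alpha_i(\xrminusone)=\delta_{1,i}$ — a direct computation gives
$$\mu(\xrminusone)=\mu_1-\frac1r\sum_{j=1}^r\mu_j,\qquad \mu^*(\xrminusone)=-\mu_r+\frac1r\sum_{j=1}^r\mu_j.$$

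Part (1) is then immediate: adding the two displayed expressions, the $\frac1r\sum_j\mu_j$ terms cancel, so $(\mu^*+\mu)(\xrminusone)=\mu_1-\mu_r$, which is an integer because $\mu$ is an integral weight. For part (2), clearing denominators shows the asserted inequality is equivalent to $\mu(\xrminusone)+\mu^*(\xrminusone)\le\ell$, i.e. $\mu_1-\mu_r\le\ell$ by part (1); and $\mu_1-\mu_r$ is precisely the pairing of $\mu$ with the highest coroot of $\mathfrak{sl}_r$, which is $\le\ell$ exactly because $\mu$ has level $\ell$. So (2) is just an unwinding of the level hypothesis.

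For part (3) I would record two elementary positivity bounds. Because $\mu$ is dominant, $\mu_1\ge\frac1r\sum_j\mu_j$, so $\mu(\xrminusone)\ge0$; and since $\mu^*$ is dominant too, the same argument gives $\mu^*(\xrminusone)\ge0$. Combining with part (1), $0\le\mu^*(\xrminusone)\le\mu^*(\xrminusone)+\mu(\xrminusone)=\mu_1-\mu_r\le\ell$, hence $\frac{\mu^*(\xrminusone)}{\ell}\in[0,1]$ and therefore $\frac{\mu^*(\xrminusone)}{\ell}-\frac12\in[-\frac12,\frac12]$. Thus the first of the two listed numbers already lies in $[-\frac12,\frac12]$ (and by the symmetric argument so does the second), which proves (3). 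There is no real obstacle here; the only points requiring care are the bookkeeping of conventions — that $\mu^*=-w_0\mu$ is dominant, that ``level $\ell$'' translates to $\mu_1-\mu_r\le\ell$ in these coordinates, and that $\xrminusone$ is normalized as stated so that the pairing formulas above are correct.
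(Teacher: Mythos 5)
Your proof is correct, and on the only substantive point, part (3), it takes a genuinely different route from the paper. Parts (1) and (2) agree with the paper's treatment: (1) is the immediate coordinate computation, and (2), after clearing denominators, is exactly the level condition $(\mu+\mu^*)(\xrminusone)=\mu_1-\mu_r\le\ell$, which is also how the paper uses it. For (3), however, the paper does not argue endpoint by endpoint: it forms the \emph{midpoint} of the two numbers, $\tfrac{1}{2\ell}(\mu-\mu^*)(\xrminusone)=\tfrac{1}{2\ell}\bigl((\mu_1+\mu_r)-\tfrac{2}{r}|\mu|\bigr)$, uses invariance under adding a constant to all $\mu_i$ to normalize $\mu_r=0$, $\mu_1\le\ell$, bounds its absolute value by $\tfrac12-\tfrac1r$ (the lower bound by duality), and then combines this with the fact that the two numbers differ by $1-\tfrac{\mu_1-\mu_r}{\ell}\in[0,1]$ to conclude that the one lying toward $0$ is in $[-\tfrac12,\tfrac12]$. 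You instead note that dominance gives $\mu(\xrminusone)\ge0$ and $\mu^*(\xrminusone)\ge0$, so each of $\tfrac{\mu(\xrminusone)}{\ell}$, $\tfrac{\mu^*(\xrminusone)}{\ell}$ lies in $[0,1]$ because their sum is $\tfrac{\mu_1-\mu_r}{\ell}\le1$; hence \emph{both} numbers lie in $[-\tfrac12,\tfrac12]$. This is more elementary, avoids the normalization and duality steps, and proves a stronger statement than the lemma asserts. What the paper's computation buys is the sharper estimate on the midpoint itself, which is essentially Lemma \ref{vince} and is reused for the midpoint choice of $\epsilon_x$ in Section \ref{prius} and in Lemma \ref{NN}; your argument does not recover that refinement, but for Lemma \ref{integra} as stated it is complete and cleaner.
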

\begin{proof}
The first calculation is immediate. For the second, note that the difference between the two choices for $\epsilon_x$ is less than or equal to one. Their average is
\begin{equation}\label{diamond}
 \frac{1}{2\ell}(\mu-\mu^*)(\xrminusone)=\frac{1}{2\ell}((\mu_1+\mu_{r})-\frac{2}{r}|\mu|).
 \end{equation}

These expressions do not change if we increase all $\mu_i$ by one, and therefore we may assume that $\mu_1\leq \ell$, and $\mu_r=0$. Hence the quantity in \eqref{diamond} is at most  $$\frac{1}{2\ell}((\mu_1-\frac{2}{r}\mu_1)=\frac{\mu_1}{2\ell}(1-\frac{2}{r})\leq \frac{1}{2}(1-\frac{2}{r})= \frac{1}{2}- \frac{1}{r}$$
which gives the desired assertion. The lower bound for $\epsilon$ follows from duality.
\end{proof}

\begin{proposition}\label{normalform}
Let $V,W$ and $Q$ be  vector bundles over $\operatorname{Spec}(k[[t]])$ of rank  $r$. Assume that we are given $\phi:V\to Q$ and $\psi:W\to Q$ so that
\begin{enumerate}
\item[(a)] The resulting map $V\oplus W\to Q$ is surjective (i.e., surjective on fibers at $t=0$).
\item[(b)] $\phi$ and $\psi$ are isomorphisms over $\op{Spec}(k((t)))$.
\end{enumerate}
Then for each $t$, one can choose bases for fibers $V_t$, $W_t$, and $Q_t$, say
$$\{e_1,\dots,e_r\}, \  \ \{f_1,\dots,f_r\}, \  \mbox{and }  \ \{q_1,\dots,q_r\} \ \ (\mbox{respectively})$$ so that for a suitable
index $p$, such that  $1\leq p\leq r$, one has
\begin{enumerate}
\item $\phi(e_i)= q_i$ for $i=1,\dots,p$.
\item $\phi(e_{j})= t^{a_j} q_j$ for $j>p$ and $a_j\geq 0$.
\item $\psi(f_i)= t^{b_i} q_i$ for $i=1,\dots,p$ with $b_i\geq 0$.
\item $\psi(f_{j})= q_j$ for $j> p$.
\end{enumerate}
\end{proposition}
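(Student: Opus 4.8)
The plan is to reduce the statement to elementary divisor theory over the discrete valuation ring $R=k[[t]]$ and to apply Smith normal form twice: once to $\phi$, and once to a suitable piece of $\psi$, gluing the two normal forms by means of the surjectivity hypothesis (a). Since $R$ is local, $V,W,Q$ are free $R$-modules of rank $r$; hypothesis (b) says $\det\phi$ and $\det\psi$ are nonzero in $R$, so $\phi$ and $\psi$ are injective and $A:=\phi(V)$, $B:=\psi(W)$ are sublattices of $Q$ of full rank $r$, with $\psi\colon W\leto{\sim}B$. Hypothesis (a) together with Nakayama's lemma (applied to the finitely generated module $Q$) gives the honest equality $A+B=Q$. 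First I would apply elementary divisors over the PID $R$ to $\phi$: choose bases $\{e_1,\dots,e_r\}$ of $V$ and $\{q_1,\dots,q_r\}$ of $Q$ with $\phi(e_i)=t^{c_i}q_i$, where $0\le c_1\le\cdots\le c_r$ (the exponents are $\ge 0$ because $\phi(V)\subseteq Q$). Let $p$ be the number of indices with $c_i=0$, so $\phi(e_i)=q_i$ for $i\le p$ and $\phi(e_j)=t^{c_j}q_j$ with $c_j\ge 1$ for $j>p$.

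Next I would straighten $\psi$ in the last $r-p$ coordinates. Fix $j>p$; using $A+B=Q$, write $q_j=\phi(v)+\psi(w_j)$ and expand $\phi(v)=\sum_i\xi_i t^{c_i}q_i$. Then $\psi(w_j)=-\sum_{i\le p}\xi_iq_i+(1-\xi_jt^{c_j})q_j-\sum_{k>p,\,k\ne j}\xi_kt^{c_k}q_k$, so its coefficient on $q_j$ is the unit $1-\xi_jt^{c_j}$, its coefficients on the other $q_k$ with $k>p$ lie in $(t)$, and its coefficients on $q_1,\dots,q_p$ are arbitrary. Hence the $(r-p)\times(r-p)$ matrix recording the $q_k$-coefficients ($k>p$) of the various $\psi(w_j)$ is congruent to the identity modulo $t$, so is invertible over $R$; replacing $(w_j)_{j>p}$ by the corresponding $R$-linear recombination yields $w'_j\in W$ with $\psi(w'_j)=q_j+\rho_j$ for some $\rho_j\in\langle q_1,\dots,q_p\rangle$.

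Then I would adjust the bases. Put $q'_j:=\psi(w'_j)$ for $j>p$ and $q'_i:=q_i$ for $i\le p$; this is a new basis of $Q$. Since $q'_j-q_j=\rho_j\in\langle q_1,\dots,q_p\rangle=\phi(\langle e_1,\dots,e_p\rangle)$, we may replace each $e_j$ ($j>p$) by $e_j$ plus a suitable $R$-combination of $e_1,\dots,e_p$, call it $e'_j$, so that $\phi(e'_j)=t^{c_j}q'_j$; the first $p$ relations $\phi(e_i)=q'_i$ are unchanged. Thus $\phi$ is in the desired normal form with $a_j=c_j$ (in fact $a_j>0$), and $Q':=\langle q'_{p+1},\dots,q'_r\rangle\subseteq B$. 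Writing $Q'':=\langle q'_1,\dots,q'_p\rangle$, we have $Q=Q'\oplus Q''$ with $Q'\subseteq B$, hence $B=Q'\oplus(B\cap Q'')$; by a rank count $B\cap Q''$ is a full-rank sublattice of the rank-$p$ free module $Q''$. Applying elementary divisors a second time to the inclusion $B\cap Q''\hookrightarrow Q''$ produces a basis $q''_1,\dots,q''_p$ of $Q''$ with $B\cap Q''=\langle t^{b_1}q''_1,\dots,t^{b_p}q''_p\rangle$, $b_i\ge 0$. Replace $e_i$ ($i\le p$) by $e''_i:=\phi^{-1}(q''_i)$, so that $\phi(e''_i)=q''_i$. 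Finally, using $\psi\colon W\leto{\sim}B$, set $f_i:=\psi^{-1}(t^{b_i}q''_i)$ for $i\le p$ and $f_j:=w'_j$ for $j>p$; these form a basis of $W$. With the basis $\{q''_1,\dots,q''_p,q'_{p+1},\dots,q'_r\}$ of $Q$, the basis $\{e''_1,\dots,e''_p,e'_{p+1},\dots,e'_r\}$ of $V$, and the basis $\{f_1,\dots,f_r\}$ of $W$, all four required identities hold.

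The substantive point is not any single deep fact but the bookkeeping in the last step: each modification of the basis of $Q$ needed to bring $\psi$ into shape forces a compensating modification of the basis of $V$ in order not to destroy the normal form already arranged for $\phi$, and one has to check these are compatible. They are, precisely because every such modification of $Q$ takes place inside the span $\langle q_1,\dots,q_p\rangle$, which is exactly the $\phi$-image of $\langle e_1,\dots,e_p\rangle$; this is why the first $p$ of the $c_i$ being $0$ is what makes the whole scheme work. Apart from Smith normal form, the only ingredients are Nakayama (to upgrade (a) to a genuine surjection) and the elementary splitting $B=Q'\oplus(B\cap Q'')$ coming from $Q'\subseteq B$ and $Q=Q'\oplus Q''$.
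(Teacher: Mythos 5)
Your proof is correct, and it takes a genuinely different route from the paper's. The paper notes that over $k((t))$ the kernel of $V\oplus W\to Q$ is the graph (up to sign) of the composite $\psi^{-1}\phi\colon V\to W$, puts $t^m\psi^{-1}\phi$ ($m\gg 0$) in Smith normal form over $k[[t]]$, and then invokes separatedness of the Grassmannian to conclude that the quotient over $\operatorname{Spec}(k[[t]])$ splits accordingly, reducing to the rank-one case $(\alpha,\beta)\mapsto \alpha-t^m\beta$, which is checked by hand. You instead stay entirely inside $Q$: you use Nakayama to upgrade hypothesis (a) to $\phi(V)+\psi(W)=Q$, apply elementary divisors once to $\phi$, perform the unipotent straightening of $\psi$ on the block where $\phi$ has positive elementary divisors (the key observation that the relevant $(r-p)\times(r-p)$ matrix is $\equiv I \bmod t$), and then apply elementary divisors a second time to $B\cap Q''\subset Q''$, with the compensating change of basis on $\langle e_1,\dots,e_p\rangle$ made possible precisely because $\phi$ maps that span isomorphically onto $Q''$. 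What your argument buys is a purely module-theoretic proof over the DVR, with no appeal to limits or separatedness of Quot/Grassmannian schemes (so it works verbatim over any DVR, not just $k[[t]]$), at the cost of more explicit bookkeeping; the paper's argument is shorter but leans on the geometric uniqueness-of-limit step. One cosmetic remark: in your construction $p$ is the number of unit elementary divisors of $\phi$, which can be $0$ (e.g.\ if $\phi\otimes k=0$) or $r$; the paper's proof has the same feature, so the bound $1\leq p\leq r$ in the statement should really be read as $0\leq p\leq r$ --- this is an issue with the statement, not with your argument.
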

\begin{proof}
For $t\neq 0$, the quotient map in (a) is $V\oplus W$ modulo the graph (up to a sign) of a map $\phi:V\to W$.  We can choose bases for $V$ and $W$ so that the matrix for $V\to W$, for $t\neq 0$ is diagonal with entries $t^{m_i}$ with $m_i$ integers. Indeed, consider $t^m\phi$ for $m>>0$:  Any square matrix over a principal ideal domain has a Smith normal form  (see \cite[Theorem 3.8]{Jake}) $MAN$ with $M,N$ invertible and $A$ diagonal, all of the same size as the original matrix. By  separatedness properties of the Grassmannian, we can now reduce to the case $V$ and $W$ one dimensional.

In this case we need only check the case $\mathcal{O}\to \mathcal{O}$ is multiplication by $t^m$, $m$ a positive integer ($m=0$ is trivial, and $m<0$ can be handled by reversing the roles of $V$ and $W$). In this case the quotient map
is $\mathcal{O}\oplus\mathcal{O}\to Q=\mathcal{O}$ which takes
$(\alpha,\beta)$ to $\alpha- t^m\beta$. This map is of the desired form in the basis $1$ and $-1$ for $V$ and $W$ respectively.
\end{proof}

\section{Finite generation}\label{FG}
Here in Section \ref{MainPoint}, we complete the proof of  Theorem \ref{qmain}.
This argument relies on two preliminary results: Part (c) of Proposition \ref{se2},  and Lemma \ref{verona}, proved here as well.
\subsection{Notation}
Recall from Lemma \ref{WhiteDog} that $T(\mathcal{A})$ on $\mc{M}^0_{X_0}$ is the pull back of  $\ml_{\mathcal{G}}$ from $\aleph(X_0)$, the moduli-stack of torsion free sheaves.

\begin{definition}\label{collection}Let $\mc{L}$ be an ample line bundle on $X_0$, and for every $j \in \{1,\ldots, t\}$, let weights $\vec{a}_j=\{a^j_{i}\}_{i\in I}$ be given.  To this data, for each $j \in \{1,\ldots, t\}$, we can associate an ample line bundle $\mathcal{A}_j$, and  for every vector $\vec{m} =(m_1,\dots,m_t)$, we
define $$\ml^{\vec{m}}=\bigotimes_{j=1}^t  T(\mathcal{A}_j)^{\tensor m_j}.$$
\end{definition}
Note that, on $\mc{M}^0_{X_0}$:
$$\ml^{\vec{m}}=\bigotimes_{i=1}^s  \ml_{\mg_i}^{\tensor m_i}=\ml_{\oplus \mathcal{G}_i^{\oplus m_i}}.$$
We will let $\ell(\vec{m})=\sum_i m_i \rk\mathcal{G}_i$.

\subsection{Preliminary results}

\begin{proposition}\label{se2}
\begin{enumerate}
\item[(a)] The natural restriction map
$\op{H}^0(\mc{M}_{X_0}, \ml^{\vec{m}})^{\op{GL}(V)}\to \op{H}^0(\mc{M}^0_{X_0}, \ml^{\vec{m}})^{\op{GL}(V)}$ is an isomorphism if $m_i\geq 0$ for $i=1,\dots,t$.
\item[(b)] The algebra  $\bigoplus_{\vec{m}\geq \vec{o}} \op{H}^0(\mc{M}^0_{X_0}, \ml^{\vec{m}})^{\op{GL}(V)}$ is finitely generated.
\item[(c)] The algebra
\begin{equation}\label{bigone}
\bigoplus_{\stackrel{\vec{m}\geq \vec{o},}{  \ell(\vec{m}) \text{ is a multiple of } d}} \op{H}^0(\mc{M}^0_{X_0}, \ml^{\vec{m}})^{\op{GL}(V)}
\end{equation}
is finitely generated for any positive integer $d$.
\end{enumerate}
\end{proposition}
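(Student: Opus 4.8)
The plan is to prove (a) --- the only part drawing on geometry specific to our situation --- and then to derive (b) and (c) formally, via a projective-bundle construction and the finiteness of invariants under reductive groups. Throughout, recall that $\mc{M}_{X_0}$ is a normal projective integral variety (the normalization of the closed subscheme $\overline{\op{Q}}^{\op{det}}_{X_0}$ of a projective Quot scheme), that $\mc{M}^0_{X_0}$ is a nonempty $\op{GL}(V)$-invariant open subscheme of it, hence irreducible and normal, and that each $T(\mc{A}_j)$ pulls back to an ample $\op{GL}(V)$-linearized line bundle on $\mc{M}_{X_0}$, so that $\ml^{\vec m}$ is ample when $\vec m\geq\vec o$, $\vec m\neq\vec o$.

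\emph{Part (a).} Injectivity of the restriction map is automatic, as $\mc{M}_{X_0}$ is integral and $\mc{M}^0_{X_0}$ is dense open; the content is surjectivity. The key point is the inclusion
$$\mc{M}_{X_0}^{ss}(\ml^{\vec m})\ \subseteq\ \mc{M}^0_{X_0}\qquad(\vec m\geq\vec o,\ \vec m\neq\vec o),$$
(for $\vec m=\vec o$ both sides of (a) equal $k$, the second because $\mc{M}^0_{X_0}$ contains the semistable locus of some ample $\ml^{\vec m'}$, whose GIT quotient is a connected projective variety). Granting the inclusion, Lemma \ref{extendo}, applied with $W=\mc{M}^0_{X_0}$ and, since $\mc{M}_{X_0}$ is normal, with $W=M=\mc{M}_{X_0}$, yields
$$\op{H}^0(\mc{M}_{X_0},\ml^{\vec m})^{\op{GL}(V)}=\op{H}^0(\mc{M}_{X_0}^{ss},\ml^{\vec m})^{\op{GL}(V)}=\op{H}^0(\mc{M}^0_{X_0},\ml^{\vec m})^{\op{GL}(V)},$$
the composite of these identifications being the restriction map, so (a) follows. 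To prove the inclusion I would revisit the proof of Proposition \ref{simpsonian}. Only finitely many weight systems $\vec a_1,\dots,\vec a_t$ --- hence finitely many ample bundles $\mc{A}_1,\dots,\mc{A}_t$ --- are at play, and the constants occurring there ($\Cee$ of Lemma \ref{eclat}, the integer $\beta$, the bound $E$ of Remark \ref{folge}) depend only on $r$ and on a priori bounds for the $|a_i/\sum a_i|$, which may be fixed uniformly; thus, after replacing $\ml$ by a sufficiently high power (which rescales all the $\mc{A}_j$ at once), the argument of Proposition \ref{simpsonian} applies to all the $\mc{A}_j$ simultaneously. For $x=[V\tensor\ml^{-1}\twoheadrightarrow\me]$ lying outside $\op{Q}^0_{X_0}$ --- one of the cases (A), (B), (C) --- that argument produces a \emph{single} subspace $H\subseteq V$: the kernel of $V\to\op{H}^0(X_0,\me\tensor\ml)$ in case (A), or the kernel of $V\to\op{H}^0(X_0,\mf\tensor\ml)$ for a suitable Harder--Narasimhan quotient $\mf$ of $\me$ with respect to the fixed $\ml_0$ in cases (B) and (C); in every case $H$ is chosen with no reference to any $\mc{A}_j$ and violates the semistability inequality of Lemma \ref{GrassGIT} for the Grassmann embedding attached to each $\mc{A}_j$. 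Equivalently, the one-parameter subgroup $\lambda_H$ of $\op{GL}(V)$ determined by the flag $0\subset H\subset V$ has $\mu^{T(\mc{A}_j)}(x,\lambda_H)>0$ for every $j$; hence, by additivity of Hilbert--Mumford weights, $\mu^{\ml^{\vec m}}(x,\lambda_H)=\sum_j m_j\,\mu^{T(\mc{A}_j)}(x,\lambda_H)>0$ for all $\vec m\geq\vec o$ with $\vec m\neq\vec o$, so $x$ is not $\ml^{\vec m}$-semistable. This proves the inclusion.

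\emph{Parts (b) and (c).} By (a) it is enough to show that $A:=\bigoplus_{\vec m\geq\vec o}\op{H}^0(\mc{M}_{X_0},\ml^{\vec m})$ has finitely generated $\op{GL}(V)$-invariants (for (b)), and finitely generated $\op{GL}(V)\times\mu_d$-invariants for the $\mu_d$-action described below (for (c)). I would set $\pi:\bP\to\mc{M}_{X_0}$ with $\bP:=\bP_{\mc{M}_{X_0}}\!\bigl(\bigoplus_{j=1}^t T(\mc{A}_j)\bigr)$. As $\mc{M}_{X_0}$ is projective and each $T(\mc{A}_j)$ is ample, $\bigoplus_j T(\mc{A}_j)$ is an ample vector bundle, so $\mathcal{O}_{\bP}(1)$ is an ample line bundle on the projective variety $\bP$; and since $\pi_*\mathcal{O}_{\bP}(n)=\op{Sym}^n\!\bigl(\bigoplus_j T(\mc{A}_j)\bigr)=\bigoplus_{|\vec m|=n}\ml^{\vec m}$ for $n\geq 0$, we get $\bigoplus_{n\geq 0}\op{H}^0(\bP,\mathcal{O}_{\bP}(n))=A$ as $k$-algebras. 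Thus $A$ is a finitely generated $k$-algebra; the $\op{GL}(V)$-linearizations of the $T(\mc{A}_j)$ make the $\op{GL}(V)$-action on $A$ rational, so $A^{\op{GL}(V)}$ is finitely generated ($\op{GL}(V)$ being reductive), and by (a) this is the algebra of (b). For (c), since $\vec m\mapsto\ell(\vec m)$ is additive, the finite group $\mu_d$ of $d$-th roots of unity acts on $A$ by $k$-algebra automorphisms, a root $\zeta$ scaling the multidegree-$\vec m$ summand by $\zeta^{\ell(\vec m)}$; this action is rational and commutes with $\op{GL}(V)$, and $A^{\mu_d}=\bigoplus_{\vec m\geq\vec o,\ d\mid\ell(\vec m)}\op{H}^0(\mc{M}_{X_0},\ml^{\vec m})$. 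Hence $A^{\op{GL}(V)\times\mu_d}$ is finitely generated ($\op{GL}(V)\times\mu_d$ being reductive), and by (a) it coincides with the algebra \eqref{bigone}.

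I expect the main obstacle to be the uniform form of Proposition \ref{simpsonian} used in (a): one must check that the destabilizing witness produced in its proof can be taken independently of the chosen $\mc{A}_j$, so that a single one-parameter subgroup destabilizes $x$ for all the $T(\mc{A}_j)$ at once --- hence for every nonnegative combination $\ml^{\vec m}$. Once that is in hand, parts (b) and (c) are routine.
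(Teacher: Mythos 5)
Your proposal is correct and follows essentially the same route as the paper: part (a) rests on exactly the paper's observation (its Lemma \ref{EndGoal}) that the destabilizing subspaces $H\subseteq V$ produced in Proposition \ref{simpsonian} are independent of the $\mc{A}_j$, so one $1$-parameter subgroup destabilizes every nonnegative combination $\ml^{\vec m}$, after which Lemma \ref{extendo} gives the isomorphism, and (b),(c) follow from finite generation of the multigraded section ring on the projective variety $\mc{M}_{X_0}$ plus reductivity of $\op{GL}(V)$. The only (cosmetic) differences are that you prove finite generation of $\bigoplus_{\vec m}\op{H}^0(\mc{M}_{X_0},\ml^{\vec m})$ via the ample bundle $\mathcal{O}_{\bP}(1)$ on $\bP\bigl(\bigoplus_j T(\mc{A}_j)\bigr)$ where the paper cites the Zariski/Hu--Keel lemma, and you extract the congruence condition $d\mid\ell(\vec m)$ by taking $\mu_d$-invariants where the paper invokes finite generation of Veronese-type subrings.
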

To prove Proposition \ref{se2}, we need the following result.
\begin{lemma}\label{EndGoal} An element $\tilde{x}\in \mc{M}_{X_0}\setminus \mc{M}_{X_0}^0$  is not semistable for $\mc{L}^{\vec{m}}$.
\end{lemma}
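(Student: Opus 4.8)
The plan is to deduce the lemma from the instability result Proposition \ref{simpsonian} by means of the additivity of the Mumford numerical function, the point being that the destabilizing ``witnesses'' produced in the proof of Proposition \ref{simpsonian} can be taken uniform over the \emph{finite} collection of weights $\vec a_1,\dots,\vec a_t$ fixed in Definition \ref{collection}. (We tacitly assume $\vec m\ge\vec o$ and $\vec m\ne\vec o$, the statement being otherwise vacuous.)

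First I would pass from $\mc{M}_{X_0}$ down to the Quot scheme. Since $\mc{M}_{X_0}$ is the normalization of the projective scheme $\overline{\op{Q}}^{\op{det}}_{X_0}\subseteq\op{Quot}_{X_0}(V\tensor \mc{L}^{-1},\op{P},1)$, and $\ml^{\vec m}=\bigotimes_{j=1}^t T(\mc{A}_j)^{\tensor m_j}$ is by its very definition the pullback, along the finite $\op{GL}(V)$-equivariant map $\mc{M}_{X_0}\to\op{Quot}_{X_0}(V\tensor \mc{L}^{-1},\op{P},1)$, of the line bundle with the same name there, the Hilbert--Mumford function $\mu^{\ml^{\vec m}}(\tilde x,\lambda)$ of a point $\tilde x$ and a one-parameter subgroup $\lambda$ of $\op{GL}(V)$ equals $\mu^{\bigotimes_j T(\mc{A}_j)^{\tensor m_j}}(x,\lambda)$, where $x$ denotes the image of $\tilde x$ (both $\mc{M}_{X_0}$ and $\op{Quot}_{X_0}(V\tensor \mc{L}^{-1},\op{P},1)$ being projective, the relevant limits exist and are matched by the finite map). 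Because $\mc{M}_{X_0}^0$ is by Definition \ref{biglist} the inverse image of $\op{Q}_{X_0}^0$, the point $\tilde x\in\mc{M}_{X_0}\setminus\mc{M}_{X_0}^0$ maps to some $x=[V\tensor \mc{L}^{-1}\twoheadrightarrow\me]\in\op{Quot}_{X_0}(V\tensor \mc{L}^{-1},\op{P},1)\setminus\op{Q}_{X_0}^0$; it therefore suffices to destabilize $x$ for $\bigotimes_j T(\mc{A}_j)^{\tensor m_j}$.

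Next I would re-run the proof of Proposition \ref{simpsonian}, keeping track of what the destabilizing data depends on. There $x\notin\op{Q}_{X_0}^0$ is sorted into one of the reasons (A), (B), (C), and in each case one exhibits a subspace $H\subseteq V$ --- equivalently the one-parameter subgroup $\lambda_H$ of $\op{GL}(V)$ attached to the filtration $0\subset H\subset V$ --- witnessing non-semistability for $T(\mc{A})$, via Lemma \ref{nA} in case (A) and via Lemma \ref{eclat} together with the Harder--Narasimhan estimates in cases (B) and (C). The essential observation is that $H$ is manufactured only out of $\me$ and $\mc{L}$ (it is the kernel of a map $V\to\op{H}^0(X_0,\me\tensor\mc{L})\to\op{H}^0(X_0,\mf\tensor\mc{L})$), while the quantitative thresholds needed to verify that $H$ does the job --- the integer $N_0$ of Lemma \ref{monty2}, the constant $\Cee$ of Lemma \ref{eclat} (which depends only on $r$ and on an upper bound for the $|a_i/\sum_i a_i|$), the bounded family of sheaves, and the power of $\mc{L}$ chosen in Remark \ref{attention} --- can all be chosen simultaneously for the finite list $\vec a_1,\dots,\vec a_t$. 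Hence one single $\lambda_H$ destabilizes $x$ for every $T(\mc{A}_j)$, $j=1,\dots,t$.

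Finally, additivity of the numerical function gives $\mu^{\ml^{\vec m}}(x,\lambda_H)=\sum_{j=1}^t m_j\,\mu^{T(\mc{A}_j)}(x,\lambda_H)$, and since the nonzero summands $\mu^{T(\mc{A}_j)}(x,\lambda_H)$ all have the common sign of a destabilizing weight, and the $m_j$ are non-negative and not all zero, the sum has that same sign; thus $\lambda_H$ destabilizes $x$ for $\ml^{\vec m}$, and by the first step $\tilde x$ is not semistable for $\ml^{\vec m}$. The main obstacle is exactly the audit carried out in the third step: one must check carefully that the proof of Proposition \ref{simpsonian} genuinely produces a destabilizing subspace $H$ independent of the choice among $\mc{A}_1,\dots,\mc{A}_t$, and that the finitely many auxiliary constants admit common values. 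Once that uniformity is in hand, the conclusion is a formal consequence of the additivity of Mumford weights.
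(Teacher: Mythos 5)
Your argument is correct and follows essentially the same route as the paper: the paper's proof likewise notes that the destabilizing subspaces $H\subseteq V$ produced in Proposition \ref{simpsonian} are independent of the $\mc{A}_j$, so one one-parameter subgroup destabilizes $x$ for every $T(\mc{A}_j)$ and hence, by additivity of the Hilbert--Mumford weight, for the tensor product $\ml^{\vec m}$ with non-negative exponents, and then transfers non-semistability of $\tilde x$ along the finite normalization map $\mc{M}_{X_0}\to\overline{\op{Q}}^{\op{det}}_{X_0}$ (citing \cite{MumBook}). Your only addition is the explicit audit of the uniform constants, which the paper leaves implicit.
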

\begin{proof}Recall from Def \ref{biglist}, that $\mc{M}_{X_0}\to \overline{\op{Q}}^{\op{det}}_{X_0}$ is the normalization,  and so in particular,  $\mc{M}_{X_0}$ is  finite over $\overline{\op{Q}}^{\op{det}}_{X_0}$. Let $x=[V\tensor \mc{L}^{-1} \twoheadrightarrow \mc{E}]\in \overline{\op{Q}}^{\op{det}}_{X_0}\subseteq\op{Quot}_{\mc{L},\op{P},1}(\ml)$ be the image of $\tilde{x}$.

Now by assumption, $x\in \op{Quot}_{X_0}(V\tensor \mc{L}^{-1},\op{P},1)\setminus \op{Q}_{X_0}^0$, and hence by Proposition \ref{simpsonian}, $x$ is not semistable for $T(\mc{A}_i)$. Our witnesses of non-semistability in that proposition are subspaces $H\subseteq V$ which do not depend upon $\mathcal{A}_i$. Therefore, the same one parameter subgroup of $\op{GL}(V)$ renders $x$
 non-semistable for $T(\mathcal{A}_i)$, and hence renders $x$ non-semistable also for a tensor product (with non-negative exponents, not all zero) of the $T(\mathcal{A}_i)$. By GIT (see Theorem I.19, and the comments after Corollary 1.20 on page 48 in \cite{MumBook}),   $\tilde{x}$ is also non semistable for $\mc{L}^{\vec{m}}$.
 \end{proof}

\begin{proof}(of Proposition \ref{se2})
We may apply Lemmas \ref{extendo} and  \ref{EndGoal} and obtain (a).

Now recall that since $Z=\mc{M}_{X_0}$ is a projective variety with an action of a reductive group $\op{GL}(V)$, and  $\ml_1,\dots,\ml_s$ are $\op{GL}(V)$ linearized ample
line bundles on $Z$, so
\begin{equation}\label{fillin}
\bigoplus_{\vec{m}\geq \vec{o}} \op{H}^0(Z, \ml^{\vec{m}})
\end{equation}
is finitely generated as a $\Bbb{C}$-algebra (this is credited to Zariski: See \cite[Lemma 2.8]{HuKeel}). Since $\op{GL}(V)$ is reductive, and acts algebraically on the algebra given in Eq \eqref{fillin},
the algebra of invariants is also finitely generated (Hilbert). This proves (b).

Since Veronese subrings of finitely generated rings are finitely generated, (c) follows from (b).
\end{proof}

\begin{lemma}\label{verona}Let $R_{\bullet}=\oplus R_m$ be a graded integral domain of $A$-modules where $A$ is an excellent integral domain. Assume $R_i$ are finitely generated and free as $A$-modules. Suppose $R^{[d]}_{\bullet}=\oplus_{m\geq 0}R_{md}$ is finitely generated as an $A$-algebra. Then,
$R_{\bullet}$  is finitely generated as an $A$-algebra.
\end{lemma}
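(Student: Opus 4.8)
The plan is to show that $R_\bullet$ is module-finite over the subring $B:=R^{[d]}_\bullet=\bigoplus_{m\ge 0}R_{md}$, and then to apply the lemma of Artin and Tate: a ring which is finite as a module over a finitely generated $A$-algebra is itself a finitely generated $A$-algebra. Since $A$ is excellent it is Noetherian, so $B$ — finitely generated as an $A$-algebra by hypothesis — is a Noetherian ring. Splitting $R_\bullet$ according to residues modulo $d$ gives a decomposition as a graded $B$-module, $R_\bullet=\bigoplus_{j=0}^{d-1}R^{(j)}$, where $R^{(j)}:=\bigoplus_{m\ge 0}R_{md+j}$ with $R^{(j)}_m=R_{md+j}$ (so that $R^{(0)}=B$); it therefore suffices to prove that each $R^{(j)}$ is a finitely generated $B$-module. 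We may assume $d\ge 2$, as otherwise $B=R_\bullet$; and if $R_m=0$ for all $m>0$ then $R_\bullet=R_0$ is a finite $A$-module, hence a finitely generated $A$-algebra, so we may assume $R_m\ne 0$ for some $m>0$.

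First I would extract the one combinatorial consequence of the domain hypothesis that the argument needs. Let $\Sigma=\{m\in\mathbb{Z}_{\ge 0}:R_m\ne 0\}$. Because $R_\bullet$ is an integral domain, a product of nonzero homogeneous elements is nonzero, so $\Sigma$ is a submonoid of $(\mathbb{Z}_{\ge 0},+)$ containing $0$. The key point is that if $R^{(j)}\ne 0$ for some $0<j<d$, then $R^{(d-j)}\ne 0$ as well: picking $m_0$ with $m_0d+j\in\Sigma$, the element $(d-1)(m_0d+j)$ again lies in $\Sigma$, is $\ge d-j$, and is congruent to $-j\equiv d-j\pmod d$, hence equals $m_1d+(d-j)$ for some $m_1\ge 0$, which is a witness that $R^{(d-j)}\ne 0$.

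Next, fix $j$ with $0<j<d$ and $R^{(j)}\ne 0$ (the case $R^{(j)}=0$ being trivial, and $R^{(0)}=B$ being cyclic over $B$). Choose a nonzero homogeneous element $u\in R_{kd+(d-j)}\subseteq R^{(d-j)}$. Multiplication by $u$ is a $B$-linear map $R^{(j)}\to R_\bullet$ carrying $R_{md+j}$ into $R_{(m+k+1)d}$, so its image lies in $B=R^{(0)}$; it is injective because $R_\bullet$ is a domain and $u\ne 0$. Thus $R^{(j)}$ is isomorphic, as a $B$-module up to a shift of grading, to a $B$-submodule of the cyclic module $B$, and since $B$ is Noetherian this submodule — hence $R^{(j)}$ — is finitely generated over $B$. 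Therefore $R_\bullet=\bigoplus_{j=0}^{d-1}R^{(j)}$ is a finitely generated $B$-module, $R_\bullet$ is module-finite over the finitely generated $A$-algebra $B$, and the Artin--Tate lemma finishes the proof.

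The only genuinely non-formal step is the monoid argument in the second paragraph — producing a nonzero element of the complementary piece $R^{(d-j)}$ to multiply by; once that is in hand the argument collapses to the Noetherianity of $B$ (which comes from excellence, hence Noetherianity, of $A$) together with the standard Artin--Tate lemma, both routine. Everything else, including the verification that the displayed multiplication maps are $B$-linear and injective, is bookkeeping with the grading.
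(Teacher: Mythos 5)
Your argument is correct, but it follows a genuinely different route from the paper's. The paper passes to fraction fields: it notes that $R_{\bullet}$ is integral over $R^{[d]}_{\bullet}$, that the fraction field $L$ of $R_{\bullet}$ is finite over that of $R^{[d]}_{\bullet}$ (argued under the simplifying assumption $R_1\neq 0$, using the $A$-module generators of $R_1$), and then uses excellence of $A$ to conclude that the integral closure of $R^{[d]}_{\bullet}$ in $L$ is a finite $R^{[d]}_{\bullet}$-module; $R_{\bullet}$ sits inside this integral closure, hence is a finite module over the Noetherian ring $R^{[d]}_{\bullet}$, and module generators together with algebra generators of $R^{[d]}_{\bullet}$ finish the proof. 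You instead decompose $R_{\bullet}=\bigoplus_{j=0}^{d-1}R^{(j)}$ by residues mod $d$ and, for each nonzero piece with $0<j<d$, use the fact that the set of degrees of nonzero components is a submonoid of $\mathbb{Z}_{\ge 0}$ (this is where the domain hypothesis enters for you) to produce a nonzero homogeneous $u$ of degree $\equiv d-j \pmod d$; multiplication by $u$ then embeds $R^{(j)}$ as an ideal of the Noetherian ring $B=R^{[d]}_{\bullet}$, so each $R^{(j)}$, and hence $R_{\bullet}$, is a finite $B$-module. Your version buys something: it needs only that $A$ is Noetherian (excellence enters solely through Noetherianity), it does not use the hypothesis that the $R_i$ are free, and it avoids both the finiteness of integral closure and the paper's assumption $R_1\neq 0$; the paper's version is shorter to state because it outsources the work to the standard finiteness theorem for normalization of excellent rings, which is exactly what the excellence hypothesis in the lemma was put there to enable. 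One small nitpick: the fact you invoke at the end (a ring that is module-finite over a finitely generated $A$-algebra is a finitely generated $A$-algebra) is the easy, essentially trivial direction and is not really the Artin--Tate lemma, which goes the other way; this does not affect correctness.
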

\begin{proof}(Standard, see e.g., \cite[Proof of Proposition 2.3, p 151]{FaltingsChai})
Let $L$ be the field of fractions of $R_{\bullet}$, and $K\subseteq L$ the field of fractions of $R^{[d]}_{\bullet}$. We make the following observations
\begin{enumerate}
\item $R_{\bullet}$ is integral over $R^{[d]}_{\bullet}$.
\item $L$ is a finite algebraic extension of $K$. Here assume for simplicity that $R_1\neq 0$ (this will be the case for us). Then $L=K(u_1,\dots,u_s)$ where $u_1,\dots,u_s$ generate $R_1$ as an $A$-module.
\item The integral closure of $R^{[d]}_{\bullet}$ in $L$ is finitely generated as an $R^{[d]}_{\bullet}$-module, and hence Noetherian as a $R^{[d]}_{\bullet}$-module. Now $R_{\bullet}$ is a submodule of the integral closure and hence is finitely generated as a $R^{[d]}_{\bullet}$-module. The module generators of $R_{\bullet}$ together with ring generators of $R^{[d]}_{\bullet}$  generate $R_{\bullet}$ as a ring.

\end{enumerate}
\end{proof}
\subsection{Proof of Theorem \ref{qmain}}\label{MainPoint}
\begin{proof}
For $g \ge 2$, we consider tuples $(\{d_i\}_{i\in I},\ell)$  where the $d_i$ and $\ell$ are integers, and satisfy the following conditions:
\begin{enumerate}
\item  $\sum d_i = (g-1)\ell$.
\item $d_i\geq r_0 \ell$ where $r_0$ is a possibly negative rational number.
\end{enumerate}
The set of such tuples is finitely generated as a semigroup by Gordon's Lemma (see \cite{Gordon} for a proof of Gordon's Lemma).  We pick generators $(\{d^{(j)}_i\}_{i\in I},\ell^{(j)})$ for $j=1,\dots,t$, and set $a^{(j)}_i=\frac{d^{(j)}_i}{\ell^{(j)}}$. This gives the elements
$\vec{a}_j$ in Def \ref{collection}.

The algebra from Eq \eqref{bigone} maps to $\mathscr{A}^{C}_{\bullet}$:
A summand with $\ell(\vec{m})=m$ in Eq \eqref{bigone} maps to the
summand $\op{H}^0(\op{Bun}_{\op{SL}(r)}(C), \mc{D}^{\tensor m})$.
Proposition \ref{SomethingIntro} shows that the image of  Eq $\eqref{bigone}$  contains the summands $\op{H}^0(\op{Bun}_{\op{SL}(r)}(C), \mc{D}^{\tensor m})$ with $m$ sufficiently divisible. Theorem \ref{qmain} now follows from Lemma \ref{verona} and Proposition \ref{se2} (c).

\end{proof}

\subsection{A variation on definition of weights $\vec{a}$}\label{prius}
We have proved finite generation of the section ring of the determinant of cohomology line bundle using varieties $\mc{X}(\vec{a})$ and vector bundles $\mc{G}$ on  $X_0$ where weights $\vec{a}=(a_i)_{i\in I}$ were chosen as in Def \ref{CP} such that the  $a_i=\op{deg}(\mc{G}|_{X_{0,i}})=\op{deg}(\mc{G}_i)$ may be negative, as long as $\sum_{i\in I}a_i>0$.
In particular,
\begin{equation}\label{perturbationAgain}
(g-1) a_i = \frac{\deg(\mg_i)}{\rk(\mg)}=\frac{2g_i-2+n_i}{2}+\sum_{x\in \nu^{-1}(S) \cap X_i} \epsilon_x,
\end{equation}
where $\epsilon_x$ is either element of the following two element set $\{\frac{\lambda_x^*(\xrminusone)}{\ell} -\frac{1}{2},\frac{1}{2}-\frac{\lambda_{x}(\xrminusone)}{\ell}\},$
so that if $\nu^{-1}(s)=\{\sas,\sbs\}$, $\epsilon_{\sas}+\epsilon_{\sbs}=0$ for all nodes $s$ on $X_0$.
There is another way to choose $\epsilon_x$ so that the $a_i=\op{deg}(\mg_i)$ are necessarily non-negative (and $>0$ if the genera $g_i$ are all $>0$).  In this section we describe this alternative approach and show that it works.

\subsubsection{The midpoint choice for $\epsilon_x$}
If we take $\epsilon_x$ to be the midpoint of the two extreme choices in Proposition \ref{SomethingIntro}, i.e.,
\begin{equation}\label{middy}
\epsilon_x= \frac{\lambda_x^*(\xrminusone)-\lambda_x(\xrminusone)}{2\ell},
\end{equation}
then the degrees of $\mg_i$ are half-integers and not necessarily integers. That is, we get a "trace" element in $(\Bbb{Z}/2\Bbb{Z})^{|I|}$.

 Suppose $\sigma$ and $\tau$ are in components $\lambda$ and $\mu$ of $\op{H}^0(\op{Bun}_{\op{G}}(X_0),\phi^*\mathcal{D}^{\ell})$,
and $\op{H}^0(\op{Bun}_{\op{G}}(X_0),\phi^*\mathcal{D}^{\ell'})$ respectively such that the traces  produced for each are equal (e.g., if $\sigma=\tau$), where $G=\op{SL}(r)$. Then it follows from the proof of \ref{SomethingIntro}) that the product of the sections corresponding to $\sigma$ and $\tau$ in $\op{H}^0(\op{Bun}_{\op{G}}(X_0),\phi^*\mathcal{D}^{\ell})$ comes from an element in $\op{H}^0(\mc{M}^0(\ml),\mathcal{L}_{\mathcal{G}})^{\op{GL}(V)}$
with $\rk(\mg)=\ell +\ell'$ and the degrees of $\mg_i$ integers (which are sum of those for $\sigma$ and $\tau$).

Therefore if $R_0$ is the subring of the ring $\mathscr{A}^{C}_{\bullet}$ coming from compactifications (i.e., the image over all $\mathcal{G}$ of $\op{H}^0(\mc{M}^0(\ml),\mathcal{L}_{\mathcal{G}})^{\op{GL}(V)}$, then $\mathscr{A}^{C}_{\bullet}$ is integral over $R_0$ (squares of generating elements of $\mathscr{A}^{C}_{\bullet}$ are in $R_0$).
Furthermore picking a representative $\sigma$ each (if available) for each $(\Bbb{Z}/2\Bbb{Z})^{|I|}$, we see that the fraction field of $\mathscr{A}^{C}_{\bullet}$ is algebraic over $R_0$. Since $R_0$ is finitely generated, its integral closure in any algebraic extension is finite over it, as is $\mathscr{A}^{C}_{\bullet}$ (a $R_0$ submodule). Hence the ring $\mathscr{A}^{C}_{\bullet}$ is finitely generated.

\subsubsection{Non negativity of $\deg(\mg_i)$ for the midpoint choice of $\epsilon_x$}
\begin{lemma}\label{vince}
Let $\epsilon =\frac{1}{2\ell}(\mu-\mu^*)(x_{r-1})$ where $\mu_1-\mu_r\leq \ell$. Then $\epsilon \in (\frac{1}{r}-\frac{1}{2}, \frac{1}{2}- \frac{1}{r})$.
\end{lemma}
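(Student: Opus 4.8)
The plan is to reduce Lemma~\ref{vince} to an elementary estimate, by first making the pairing with $x_{r-1}=\frac1r(r-1,-1,\dots,-1)$ explicit and then using the obvious translation invariance of $\epsilon$. Indeed this is essentially the computation already carried out in the proof of Lemma~\ref{integra} around Eq~\eqref{diamond}, recast as a two-sided bound. First I would compute $(\mu-\mu^\ast)(x_{r-1})$ directly: for any tuple $\nu=(\nu_1,\dots,\nu_r)$ one has $\nu(x_{r-1})=\nu_1-\frac1r\sum_i\nu_i$ (immediate from the coordinates of $x_{r-1}$), and since the dual weight is $\mu^\ast=(-\mu_r,-\mu_{r-1},\dots,-\mu_1)$ this gives
\[
(\mu-\mu^\ast)(x_{r-1})=\Big(\mu_1-\tfrac1r\sum_{i}\mu_i\Big)-\Big(-\mu_r+\tfrac1r\sum_i\mu_i\Big)=\mu_1+\mu_r-\tfrac2r\sum_{i=1}^r\mu_i,
\]
so that $\epsilon=\frac1{2\ell}\big(\mu_1+\mu_r-\frac2r\sum_i\mu_i\big)$, which is exactly the quantity in Eq~\eqref{diamond}.

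Next I would note that $\epsilon$, the dominance condition $\mu_1\ge\cdots\ge\mu_r$, and the hypothesis $\mu_1-\mu_r\le\ell$ are all unchanged under $\mu\mapsto\mu+(c,\dots,c)$. Hence I may normalize $\mu_r=0$, and then $0\le\mu_i\le\mu_1\le\ell$ for every $i$. With this normalization $\sum_{i=1}^r\mu_i=\sum_{i=1}^{r-1}\mu_i$ lies in $[\mu_1,(r-1)\mu_1]$, whence
\[
\mu_1-\tfrac2r\sum_i\mu_i\ \in\ \Big[\mu_1-\tfrac{2(r-1)}{r}\mu_1,\ \mu_1-\tfrac2r\mu_1\Big]=\Big[-\tfrac{r-2}{r}\mu_1,\ \tfrac{r-2}{r}\mu_1\Big].
\]
Since $0\le\mu_1\le\ell$ and $r\ge2$ (so $\frac{r-2}{r}\ge0$), dividing by $2\ell$ yields $\epsilon\in\big[-(\tfrac12-\tfrac1r),\ \tfrac12-\tfrac1r\big]=\big[\tfrac1r-\tfrac12,\ \tfrac12-\tfrac1r\big]$, which is the claim. (Alternatively the lower bound follows from the upper bound applied to $\mu^\ast$, using $\epsilon(\mu^\ast)=-\epsilon(\mu)$ and that $\mu^\ast$ satisfies the same hypotheses.)

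There is essentially no obstacle here; the only point one must not miss is the translation invariance that normalizes $\mu_r=0$ and turns "level $\le\ell$'' into the usable inequality $\mu_1\le\ell$, after which the estimate is one line of convexity. The one subtlety worth a remark: equality at an endpoint forces $\mu_1=\ell$ and $\mu_2=\dots=\mu_{r-1}=0$, i.e.\ $\mu$ (or $\mu^\ast$) equal to $(\ell,0,\dots,0)$ up to translation, so strictly speaking one obtains the \emph{closed} interval; for $r=2$ this simply reads $\epsilon=0$, and in all cases the closed interval is already enough for the application to non-negativity of $\deg(\mathcal G_i)$ in the midpoint construction.
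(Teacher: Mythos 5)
Your proof is correct and takes essentially the same route as the paper, which establishes exactly this bound inside the proof of Lemma \ref{integra}: the identity $\frac{1}{2\ell}(\mu-\mu^*)(\xrminusone)=\frac{1}{2\ell}\bigl((\mu_1+\mu_r)-\frac{2}{r}|\mu|\bigr)$, translation invariance to normalize $\mu_r=0$, $\mu_1\le\ell$, the one-line estimate, and duality for the lower bound. Your caveat is also accurate: the bound one actually obtains is the closed interval $[\frac{1}{r}-\frac{1}{2},\,\frac{1}{2}-\frac{1}{r}]$ (equality occurs at $\mu=(\ell,0,\dots,0)$, and for $r=2$ the open interval is empty while $\epsilon=0$), which is all that the application in Lemma \ref{NN} requires.
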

\begin{lemma}\label{NN}
The "midpoint" value \eqref{middy} for $\frac{\deg(\mg_i)}{\rk(\mg)}$ in \eqref{perturbation} is $>0$ if $g_i>0$ and is $\geq 0$ for $g_i=0$.
\end{lemma}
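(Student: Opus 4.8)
The plan is to split on whether $g_i\ge 1$ or $g_i=0$. Write $n_i=|\nu^{-1}(S)\cap X_i|$, so that by Eq \eqref{perturbation} and the midpoint choice \eqref{middy},
$$\frac{\deg(\mg_i)}{\rk(\mg)}=\frac{2g_i-2+n_i}{2}+\sum_{x\in\nu^{-1}(S)\cap X_i}\epsilon_x,\qquad \epsilon_x=\frac{1}{2\ell}\bigl(\lambda_x^*(\xrminusone)-\lambda_x(\xrminusone)\bigr).$$
For $g_i\ge 1$ I would use only Lemma \ref{vince}: each $\epsilon_x>\frac1r-\frac12$, so summing the $n_i$ terms gives $\frac{\deg(\mg_i)}{\rk(\mg)}>(g_i-1)+\frac{n_i}{r}$. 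If $n_i\ge 1$ this is $>0$ since $g_i-1\ge 0$; and if $n_i=0$ then $X_{0,i}$ is a connected component of the connected curve $X_0$, hence equals $X_0$ and is smooth of genus $g\ge 2$, so $g_i\ge 2$ and $\frac{\deg(\mg_i)}{\rk(\mg)}=g_i-1\ge 1$. (In particular $g_i=1,\ n_i=0$ cannot occur.) This settles the case $g_i>0$.

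The case $g_i=0$ is the crux, and here one must use that the $\lambda$-summand is non-zero; by stability $n_i\ge 3$. First I would unwind the hypothesis: by Lemma \ref{Lemma3}(2) the $\lambda$-summand is an external tensor product over the components of the normalization $X$, so for the rational component $X_i$ the corresponding factor $\op{H}^0\bigl(\op{Bun}_{\op{SL}(r)}(X_i),\ \mc{D}^{\otimes\ell}\otimes\bigotimes_{x\in\nu^{-1}(S)\cap X_i}\mathscr{E}^x_{\lambda_x}\bigr)$ is non-zero. By Theorem \ref{one} this factor equals $\mathbb{V}(\sL_r,(\lambda_x)_{x\in\nu^{-1}(S)\cap X_i},\ell)^*$, and a non-zero conformal block on a rational curve forces the classical invariant $\bigl(\bigotimes_{x} V_{\lambda_x}\bigr)^{\op{SL}(r)}$ to be non-zero, i.e.\ $V_{\lambda_x}^*\subseteq\bigotimes_{y\ne x}V_{\lambda_y}$ for every $x$. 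Writing each $\lambda_x$ as a partition with at most $r-1$ parts and largest part $\lambda_{x,1}=(\lambda_x,\theta)\le\ell$, and restricting the $\op{GL}(r)$ tensor product: any $\op{GL}(r)$-irreducible restricting to $V_{\lambda_x^*}$ and occurring in $\bigotimes_{y\ne x}V_{\lambda_y}$ has highest weight $\lambda_x^*+c\,(1^{r})$ with $c=\frac1r\sum_y|\lambda_y|-\lambda_{x,1}$ forced by homogeneity of the tensor product, and $c\ge 0$ since it must be a partition (integrality of $c$ being the root-lattice condition of Lemma \ref{Lemma3}(4)). Hence for every $x\in\nu^{-1}(S)\cap X_i$,
$$\lambda_{x,1}\ \le\ \frac1r\sum_{y\in\nu^{-1}(S)\cap X_i}|\lambda_y|\qquad(\star).$$

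It then remains to combine $(\star)$ with the displayed formula. Using $\lambda_x^*(\xrminusone)=\frac{|\lambda_x|}{r}$ and $\lambda_x(\xrminusone)=\lambda_{x,1}-\frac{|\lambda_x|}{r}$, the inequality $\frac{\deg(\mg_i)}{\rk(\mg)}\ge 0$ becomes $\sum_x\lambda_{x,1}-\frac2r S\le(n_i-2)\ell$, where $S:=\sum_x|\lambda_x|$. If $S>r\ell$ this follows from $\lambda_{x,1}\le\ell$ ($n_i$ terms); if $S\le r\ell$ then $(\star)$ gives $\sum_x\lambda_{x,1}\le n_iS/r$, so $\sum_x\lambda_{x,1}-\frac2rS\le\frac{(n_i-2)S}{r}\le(n_i-2)\ell$. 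Either way $\frac{\deg(\mg_i)}{\rk(\mg)}\ge 0$, which finishes $g_i=0$.

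I expect $(\star)$ to be the only genuinely non-formal point. The elementary estimate from Lemma \ref{vince} alone does \emph{not} suffice when $g_i=0$ and $r$ is large relative to $n_i$ — e.g.\ a rational component with three nodes all carrying the weight $\ell\omega_1$ would force the degree to be negative — and it is precisely in such configurations that the conformal block on that component vanishes, so one really has to extract the representation-theoretic bound $(\star)$ from the non-vanishing hypothesis. Everything else (the $g_i\ge 1$ case and the final arithmetic from $(\star)$) is routine, relying only on the definition of a stable curve.
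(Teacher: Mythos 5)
Your proof is correct, and for the crucial case $g_i=0$ it takes a genuinely different route from the paper. The paper reduces to the same per-component inequality $\sum_x\bigl(\lambda_{x,1}-\frac{2}{r}|\lambda_x|\bigr)\le (n_i-2)\ell$ and likewise uses that the $\lambda$-summand, hence the genus-zero factor, is a nonzero conformal block on $X_i\cong\mathbb{P}^1$; but it then passes through the multiplicative eigenvalue machinery: by \cite[Proposition 3.5]{b4} the nonvanishing yields matrices in $\op{SU}(r)$ with product the identity and eigenvalues dictated by the $\lambda_x$, and the inequality is the one attached, via \cite{AW} and \cite[Theorem 1.1]{b4}, to a nonvanishing Gromov--Witten invariant of $\op{Gr}(2,r)$ of degree at most $n_i-2$, exhibited by an explicit quantum Schubert calculus computation with $\sigma_{\{1,r\}}^{n}$. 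You instead use only that a nonzero genus-zero conformal block injects into the classical invariants (fusion coefficients are bounded by classical tensor-product multiplicities --- this is the one external fact you lean on and should cite explicitly, e.g.\ it follows from propagation as in \cite{Ueno}), and then a $\op{GL}(r)$-homogeneity argument: any polynomial constituent restricting to $V_{\lambda_x^*}$ has highest weight $\lambda_x^*+c(1^r)$ with $rc=\sum_y|\lambda_y|-r\lambda_{x,1}\ge 0$, which is your $(\star)$; combined with the level bound $\lambda_{x,1}\le\ell$ and $n_i\ge 3$, the two-case check on whether $S\le r\ell$ closes the argument. I verified the translation $2\ell\epsilon_x=\frac{2|\lambda_x|}{r}-\lambda_{x,1}$ and both cases of the final estimate; they are correct, as is your observation that the equality configuration ($n_i=r$, $\ell=1$, all weights $\omega_1$) and the vanishing configuration (three copies of $\ell\omega_1$ with $r>3$) are consistent with $(\star)$. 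What each approach buys: yours is self-contained and elementary, showing that for this particular inequality the classical invariant-theoretic necessary condition plus the trivial level bound already suffice, with no quantum cohomology or unitary local systems; the paper's argument is heavier but sits naturally in the authors' eigenvalue-problem framework and directly identifies the inequality as a quantum Horn inequality. Your handling of $g_i\ge 1$ is the same as the paper's (via Lemma \ref{vince}), with the additional correct remark that $n_i=0$ forces $X_{0,i}=X_0$ smooth of genus $g\ge 2$, so strict positivity is not endangered there.
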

\begin{proof}One has
$$
\frac{\deg(\mg_i)}{\rk(\mg)}=\frac{2g_i-2+n_i}{2}+\sum_{x\in \nu^{-1}(S)\cap X_i} \epsilon_x =\frac{2g_i-2}{2} + \sum_{x\in \nu^{-1}(S)\cap X_i}(\epsilon_x+\frac{1}{2}).$$

The claim for $g_i> 0$ therefore  follows from Lemma \ref{vince}. The proof for $g_i=0$ is the following: Let $n=n_i$, then for every $\lambda$ which contributes a non-zero term in the factorization formula, then setting $\mu^{x}=\frac{1}{\ell}\lambda^*_{x}$
\begin{equation}\label{horn}
\sum_{x\in \nu^{-1}(S)\cap X_i} ((\mu^{x}_1+\mu^{x}_r) -\frac{2}{r}|\mu^x|)\leq n-2.
\end{equation}

We will show that this is an multiplicative unitary eigenvalue inequality \cite{AW,Bloc} for the group $\op{SL}(r)$. We will find it convenient
to quote formulations from \cite{b4}. Set $\nu^{-1}(S)\cap X_i=\{p_1,\dots,p_n\}\subseteq X_i=\Bbb{P}^1$.
\begin{itemize}
\item Let $I=\{1,r\}\subseteq \{1,2,\dots, r\}$. Then the quantum product $\sigma_I^n\in QH(\op{Gr}(2,r))$ has a term $q^c\sigma_{J}$ with $c\leq n-2$: Note that (use, e.g.,  \cite[Lemma 2.6]{b4}) $\sigma_{I}^2=\sigma_{\{1,2\}}$, and $\sigma_{I}^b= q^{b-2}\sigma_{\{1+b-2,2+b-2\}}$ if $2\leq b\leq r$.
Write $n=ar+b$ with $0\leq b<r$, we get $\sigma_{I}^n= q^{(r-2)a}\sigma_{I}^b$. If $b=0$ or $b=1$, we
 need verify that $(r-2)a= n-b-2a\leq n-2$. If $a>0$, this is immediate. If $a=0$, then we need $0\leq n-2$ which holds since $n\geq 3$. If $b\geq 2$, we need show $(r-2)a+b-2\leq n-2$ which is clear.
 \item Therefore an $n+1$ pointed small Gromov-Witten invariant ($\sigma_K$ is the class dual to $\sigma_J$)
\begin{equation}\label{GW}
\langle \sigma_I,\dots,\sigma_I,\sigma_K\rangle_{c}\neq 0,\ c\leq n-2.
\end{equation}
\item From the data of $\lambda_{p_1},\dots,\lambda_{p_n}$ at level $\ell$ with corresponding conformal block on $\Bbb{P}^1$ non-zero, we get the data of $A^{(1)},\dots,A^{(n)}$ in $\op{SU}(r)$ which product to the identity $I\in \op{SU}(r)$, with eigenvalues, $\mu^{p_i}- \frac{1}{r}|\mu|$, $i=1,\dots,n$ (see e.g., \cite[Proposition 3.5]{b4}).
\item Now clearly the $n+1$ fold product
 $A^{(1)}A^{(2)}\cdots A^{(n)}\cdot I=I\cdot I=I$, and we write the eigenvalue inequality corresponding to \eqref{GW} (see \cite[Theorem 1.1]{b4}). This gives \eqref{horn} with $n-2$  replaced by $c$, but since $c\leq n-2$, we are done. Note that the contribution
    of $\omega_K$ on the identity matrix is zero.
\end{itemize}
Note that the inequality in \eqref{GW} may hold as an equality: For $\op{SL}(r)$, take $n=r$, $\ell=1$ and the $\mu$ weights $\omega_1=(1,0,\dots,0)$.

\end{proof}

\section{Conformal Blocks}\label{CBS}
The remainder of the paper, essentially the applications to Theorem \ref{qmain}, involve vector bundles of conformal blocks, which we define in Section \ref{VS}.   The applications are given in Section \ref{Apps}.  A key result is Theorem \ref{One}, proved in this section.

\subsection{Brief sketch of construction of the sheaf of conformal blocks}\label{VS} Given a triple $(\mathfrak{g},\vec{\lambda},\ell)$ as above,
let  ${\hat{\mathfrak{g}}}=\big( \mathfrak{g} \otimes k ((\xi)) \big)\oplus \  \mathbb{C} \cdot c$, be the Lie algebra with bracket
  $[\op{X}\otimes f(\xi), \op{Y}\otimes g(\xi)]=[\op{X}, \op{Y}]\otimes f(\xi) g(\xi)
  + ( \op{X} , \op{Y} ) \cdot \op{Res}(  g(\xi) df(\xi) ) \cdot c$, where  $\op{X}$, $\op{Y}\in \mathfrak{g}$, and  $c$ is in the center of ${\hat{\mathfrak{g}}}$.

  For each $\lambda_i$, there is a unique ${\hat{\mathfrak{g}}}$-module $\op{H}_{\lambda_i}$.  Set $\op{H}_{\vec{\lambda}}=\bigotimes_{i=1}^n \op{H}_{\lambda_i}$, and  let
$T$ be a smooth variety over a field $k$, and $\pi: \mathcal{C} \rightarrow T$ a proper flat family of curves whose fibers
have at worst ordinary double point singularities.  For $1\le i \le n$,  let  $p_i: T\rightarrow \mathcal{C}$ be sections of $\pi$ whose images are disjoint and contained in the smooth locus of $\pi$.

First suppose that  $T=\op{Spec}(\op{A})$ for some $k$-algebra $\op{A}$, and for each $i$  assume there are isomorphisms $\eta_i: \widehat{\mathcal{O}}_{\mathcal{C},p_i(T)} \rightarrow \op{A}[[\xi]]$.  Set $\op{B}=\Gamma(\mc C\setminus \cup_{i=1}^n p_i(T))$.  Then
for each $i$,  using the $\eta_i$,  there are maps $B \rightarrow \op{A}((\xi))$.   It can be shown that $\mathfrak{g} \otimes_k \op{B}$ is a Lie sub-algebra of
$\hat{\mathfrak{g}} \otimes_k  \op{A}$, and moreover that $\op{H}_{\vec{\lambda}} \otimes_k \op{A}$ is a representation of $\hat{\mathfrak{g}} \otimes_k  \op{A}$. Now  define the sections of the sheaf of conformal blocks $V_{C}(\vec{\lambda},\vec{p})$ over $T$ to be the quotient
$V_{C}(\vec{\lambda},\vec{p})=\op{H}_{\vec{\lambda}} \otimes_k \op{A} / (\mathfrak{g} \otimes_k \op{B}) \cdot \op{H}_{\vec{\lambda}} \otimes_k \op{A}$.
To define $V_{C}(\vec{\lambda},\vec{p})$ for  non-affine $T$,  take an open affine covering and extend by the sheaf property.
In this description,  the open set $\mc C\setminus \cup_{i=1}^n p_i(T)$ has been implicitly assumed to be affine.  But this premise can be removed using a descent argument: See \cite[Prop 2.1]{Fakh}, and the discussion following.

 \subsubsection{The algebra of conformal blocks}

 \begin{definition}For integers $m \ge 1$, we let $\mathbb{V}(\mathfrak{g}, \vec{\lambda}, \ell)[m] = \mathbb{V}(\mathfrak{g}, m\vec{\lambda}, m\ell)$,
  where if $\vec{\lambda}=\{\lambda_1,\ldots,\lambda_n\}$, then $m\vec{\lambda}=\{m\lambda_1,\ldots,m\lambda_n\}$.
 \end{definition}

Let  $\mathbb{V}[m]$ be a vector bundle of conformal blocks on $\ovmc{M}_{g,n}$, and let $x=(C,\vec{p})$ be any (closed) point in $\ovmc{M}_{g,n}$.  The direct sum
 $$\bigoplus_{m\in \mathbb{Z}_{>0}} \mathbb{V}[m]|^*_x$$
 carries an algebra structure \cite{Manon}.

\subsection{General geometric interpretation  in terms of stacks}\label{TheoremOneProof}

  \begin{theorem}\label{One} Let  $\mathbb{V}=\mathbb{V}(\mathfrak{g},\vec{\lambda},\ell)$ be a vector bundle of conformal blocks on $\ovmc{M}_{g,n}$, and  $(X_0,\vec{p}) \in \ovmc{M}_{g,n}$ a point on the boundary.  There is an isomorphism of algebras
$$\bigoplus_{m \in \mathbb{Z}_{\ge 0}} \mathbb{V}[m]|_{(X_0;\vec{p})}^* \cong \bigoplus_{m \in \mathbb{Z}_{\ge 0}} {\op{H}^0}(\op{Parbun}_{\op{G}}(X_0,\vec{p}),\mc{L}_{\op{G}}(X_0,\vec{p})^{\otimes m}),$$
where $\op{Parbun}_{\op{G}}(X_0,\vec{p})$ is the moduli stack of quasi-parabolic $\op{G}$-bundles on $X_0$,  and  $\mc{L}_{\op{G}}(X_0,\vec{p})$ is the line bundle described in Def \ref{LX_0}(2).
\end{theorem}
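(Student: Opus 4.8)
The plan is to reduce Theorem~\ref{One}, for the nodal curve $X_0$, to the already--known smooth case applied to the normalization $\nu\colon X\to X_0$ with extra marked points, by matching two factorizations: the \emph{geometric} factorization of $\op{H}^0(\op{Parbun}_{\op{G}}(X_0,\vec{p}),\mc{L}_{\op{G}}(X_0,\vec{p})^{\otimes m})$ provided by Lemma~\ref{Lemma3}, and the \emph{sewing} factorization of the conformal block bundle $\mathbb{V}[m]$ at the boundary point $(X_0;\vec{p})$. The inputs are: (i) the smooth--curve case of Theorem~\ref{One} \cites{BeauvilleLaszlo,LaszloSorger}; (ii) Lemma~\ref{Lemma3}(1)--(3) together with $\mc{L}_{\op{G}}(X_0,\vec{p})=p^{*}\mc{L}_{\op{G}}(X,\vec{p})$ from Def~\ref{LX_0}(2); (iii) the Borel--Weil theorem, used to convert a twist by an associated bundle $\mathscr{E}_{\mu}^{x}$ into an extra quasi--parabolic point carrying the weight $\mu$; and (iv) the factorization (sewing) theorem for vector bundles of conformal blocks.

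First I would reduce to a single node. Treating $\nu^{-1}(S)$ all at once as in Lemma~\ref{Lemma3} (or inducting on $|S|$), it suffices to analyze one node $s$ with $\nu^{-1}(s)=\{\sas,\sbs\}$. The normalization $X$ may be disconnected, but both sides of the claimed isomorphism are compatible with disjoint unions: conformal blocks on a disjoint union are the tensor product over connected components, and $\op{Parbun}_{\op{G}}$ of a disjoint union is the product, with $\mc{L}_{\op{G}}$ the external tensor product (Def~\ref{LX_0}(1)); so one may pass to connected components freely when invoking (i).

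Next, Lemma~\ref{Lemma3}(2)--(3) and the projection formula give
\[
\op{H}^0\bigl(\op{Parbun}_{\op{G}}(X_0,\vec{p}),\mc{L}_{\op{G}}(X_0,\vec{p})^{\otimes m}\bigr)\ \cong\ \bigoplus_{\mu}\op{H}^0\Bigl(\op{Parbun}_{\op{G}}(X,\vec{p}),\ \mc{L}_{\op{G}}(X,\vec{p})^{\otimes m}\otimes\mathscr{E}_{\mu^{*}}^{\sbs}\otimes\mathscr{E}_{\mu}^{\sas}\Bigr),
\]
the sum over dominant integral weights $\mu$ of level bounded by that of $\mc{L}_{\op{G}}(X_0,\vec{p})^{\otimes m}$ (the summands for larger $\mu$ vanish by Lemma~\ref{Lemma3}(3)). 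Now the forgetful morphism $\op{Parbun}_{\op{G}}(X,\vec{p}\cup\{\sas,\sbs\})\to\op{Parbun}_{\op{G}}(X,\vec{p})$ is a $(\op{G}/B)^{2}$--bundle, and by the Borel--Weil theorem its pushforward of the line bundle of level $m\ell$ with weights $(m\vec{\lambda},\mu,\mu^{*})$ is precisely $\mc{L}_{\op{G}}(X,\vec{p})^{\otimes m}\otimes\mathscr{E}_{\mu}^{\sas}\otimes\mathscr{E}_{\mu^{*}}^{\sbs}$; hence the $\mu$--summand is $\op{H}^0$ of that line bundle on $\op{Parbun}_{\op{G}}(X,\vec{p}\cup\{\sas,\sbs\})$, which by (i), applied componentwise to $X$, equals $\mathbb{V}(\mathfrak{g},m\vec{\lambda}\cup\{\mu,\mu^{*}\},m\ell)|_{X}^{*}$. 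Summing over $\mu$ and applying the conformal--block sewing theorem (iv), which identifies $\mathbb{V}[m]|_{(X_0;\vec{p})}^{*}$ with exactly $\bigoplus_{\mu}\mathbb{V}(\mathfrak{g},m\vec{\lambda}\cup\{\mu,\mu^{*}\},m\ell)|_{X}^{*}$, yields the degree--$m$ isomorphism.

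It remains to see that the degreewise isomorphisms assemble into a ring map. Both algebra structures are ``diagonal'' for the sewing decomposition: on the conformal--block side the product $\mathbb{V}[m]^{*}\otimes\mathbb{V}[m']^{*}\to\mathbb{V}[m+m']^{*}$ is induced by the Cartan projections $V_{\mu}\otimes V_{\mu'}\to V_{\mu+\mu'}$ at the two insertion points, and on the geometric side multiplication of sections carries the $\mathscr{E}_{\mu}^{\sas}$--isotypic part times the $\mathscr{E}_{\mu'}^{\sas}$--isotypic part into the $\mathscr{E}_{\mu+\mu'}^{\sas}$--part by the same projection built into the algebra structure of $p_{*}\mathcal{O}$ (Peter--Weyl, Remark~\ref{PeterWeyl}); since the smooth--curve identification (i) is itself an isomorphism of algebras, compatibility on each summand, hence overall, follows. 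I expect the main obstacle to be exactly this last point: normalizing the geometric factorization of Lemma~\ref{Lemma3} and the sewing isomorphism of (iv) so that they match the multiplicative structures on the nose (not merely up to a scalar on each summand), and confirming that the nonvanishing range for $\mu$ coming from Lemma~\ref{Lemma3}(3) coincides with the range of weights over which the sewing sum runs.
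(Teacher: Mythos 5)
Your degreewise identification is essentially sound, and it reuses the same ingredients the paper deploys for one half of its argument (the decomposition of Lemma \ref{Lemma3}, Borel--Weil at the two preimages of the node, the smooth-curve theorem of Laszlo--Sorger applied componentwise, and factorization of conformal blocks). The genuine gap is in the final step, where you must upgrade a family of linear isomorphisms to an isomorphism of \emph{algebras}. Your argument rests on the claim that both products are ``diagonal,'' i.e.\ that the product of the $\mathscr{E}^{\sas}_{\mu}$-isotypic and $\mathscr{E}^{\sas}_{\mu'}$-isotypic pieces lands in the $\mathscr{E}^{\sas}_{\mu+\mu'}$-piece via the Cartan projection $V_{\mu}\otimes V_{\mu'}\to V_{\mu+\mu'}$. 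That is false: under Peter--Weyl (Remark \ref{PeterWeyl}) the product of matrix coefficients of $V_{\mu}$ and $V_{\mu'}$ in $\mathcal{O}_{\op{G}}(\op{G})$ is a matrix coefficient of $V_{\mu}\otimes V_{\mu'}$ and therefore spreads over \emph{all} irreducible constituents of $V_{\mu}\otimes V_{\mu'}$, not only the Cartan component; correspondingly neither the algebra structure of $p_{*}\mathcal{O}$ nor the sewing decomposition of $\bigoplus_{m}\mathbb{V}[m]^{*}$ is graded by the weight lattice. So the compatibility of your composite isomorphism (Lemma \ref{Lemma3}, then Borel--Weil, then the smooth case, then sewing) with the two ring structures is not established; and since each constituent identification is canonical only up to normalizations (trivializations at the node, the choice of sewing isomorphism, the Borel--Weil identification), this is not a cosmetic point -- without a single canonical map there is nothing forcing the summand-by-summand comparisons to cohere multiplicatively.

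This is precisely the difficulty the paper's proof is built to avoid, and it takes a different route for the map itself: it first constructs one canonical homomorphism by pulling sections back along $\gamma\colon \mathcal{Q}_{\op{G}}\times(\op{G}/B)^{n}\to\op{Parbun}_{\op{G}}(X_0,\vec{p})$ and identifying $\mathfrak{g}\otimes_k A_U$-invariant sections with the dual conformal block (Claims \ref{Step1} and \ref{Step2}); pullback of sections is automatically compatible with multiplication, and injectivity follows from dominance of $\gamma$ (Lemma \ref{Lemma1}). The factorization chain you describe then enters only as a dimension count in the opposite direction (Claim \ref{Step3}), where no multiplicative compatibility is needed. To repair your proposal you would either import that construction (or the methods of \cite{BF}), or prove directly that the sewing isomorphism of conformal blocks agrees, as an isomorphism of algebras, with the $p_{*}\mathcal{O}$-decomposition of Lemma \ref{Lemma3} -- but that statement is essentially the content of the theorem rather than a normalization to be checked at the end.
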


We note that Theorem \ref{One} does not, a priori, imply finite generation of the algebra of conformal blocks, because section rings of arbitrary line bundles on algebraic stacks (indeed, even of non-ample line bundles over projective varieties) are not necessarily finitely generated.

To prove this, we recall  the definition of the affine Grassmannian $\mathcal{Q}_{{\op{G}}}$, which for affine open sets $U$,  parameterizes pairs $(E,\phi)$ where $E$ principal ${\op{G}}$-bundle on  $X_0$ and $\phi: E|_{U}\rightarrow U\times G$ is a trivialization of $E$.  Letting $A_{U}$ denote the algebra of functions on   $U$,  and $\gamma$, the natural map:
\begin{equation}\label{tau}
\gamma: \mathcal{Q}_{\op{G}} \times (G/B)^n  \longrightarrow {\op{Parbun}}_{{\op{G}}}(X_0, \vec{p}),
\end{equation}
the proof of  Theorem \ref{One} follows from three assertions:
\begin{enumerate}
\item  There is an injective map
$${\op{H}}^0({\op{Parbun}}_{{\op{G}}}(X_0, \vec{p}), \mc{L}^{m}_{{\op{G}}}(X_0,\vec{p})) \hookrightarrow {\op{H}}^0( \mathcal{Q}_{{\op{G}}}\times ({\op{G}}/B)^n , \gamma^*(\mc{L}^m_{\op{G}}(X_0,\vec{p})))^{\mathfrak{g}\otimes_k A_U};$$
\item
${\op{H}}^0( \mathcal{Q}_{{\op{G}}}\times ({\op{G}}/B)^n , \gamma^*(\mc{L}^m_{\op{G}}(X_0,\vec{p})))^{\mathfrak{g}\otimes_k A_U} \cong \mathbb{V}[m]|^*_{(X_0,\vec{p})}$; and
\medskip
\item  ${\op{h}}^0({\op{Parbun}}_{{\op{G}}}(X_0, \vec{p}), \mc{L}_{{\op{G}}}(X_0,\vec{p})) \ge {\op{dim}}({\op{H}}^0(\mathcal{Q}_{{\op{G}}}\times ({\op{G}}/B)^n, \gamma^*\mc{L}_{{\op{G}}}(X_0,\vec{p}))^{\mathfrak{g}\otimes A_{U}})$.
\end{enumerate}

\subsubsection*{The Affine Grassmannian $\mathcal{Q}_{{\op{G}}}$}  Given $X_0$, we remove smooth points $\{q_1,\ldots, q_k\}$ so that $U=X_0\setminus \{q_1,\ldots, q_k\}$ is affine.  It is well known that one may  parameterize pairs $(E,\phi)$ where $E$ principal ${\op{G}}$-bundle on  $X_0$ and $\phi: E|_{U}\rightarrow U\times G$ is a trivialization of $E$, for $U=X_0\setminus q_i$,  by the quotient
$$\mathcal{Q}_{i}={\op{G}}(\mathbb{C}((\xi_i)))/{\op{G}}(\mathbb{C}[[\xi_i]]).$$
Here, if  $R$ is any commutative \ $\mathbb{C}$-algebra, then ${\op{G}}(R)={\op{Hom}}_{Sch/\mathbb{C}}({\op{Spec}}(R),{\op{G}})$.
 One needs to remove at least one point for every component of $X_0$; and to  parameterize  principal ${\op{G}}$-bundles $E$ on $X_0$ with trivialization $\phi: E|_{U}\rightarrow U\times G$
for $U=X_0\setminus \{q_1,\ldots,q_k\}$, one takes  $\mathcal{Q}_{{\op{G}}}=\Pi_{i=1}^k \mathcal{Q}_{i}$.
This correspondence is explained, for example, in \cite[Theorem 3.8, page 61]{Gomez}.

\subsubsection{Step $1$}
\begin{claim}\label{Step1}  Let $A_{U}$ be the algebra of functions on the open set  $U=X_0 \setminus \{p_1,\ldots,p_k\}$.  There is an injective map
$${\op{H}}^0({\op{Parbun}}_{{\op{G}}}(X_0, \vec{p}), \mc{L}^{m}_{{\op{G}}}(X_0,\vec{p})) \hookrightarrow {\op{H}}^0( \mathcal{Q}_{{\op{G}}}\times ({\op{G}}/B)^n , \gamma^*(\mc{L}^m_{\op{G}}(X_0,\vec{p})))^{\mathfrak{g}\otimes_k A_U}.$$
\end{claim}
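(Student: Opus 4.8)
The plan is to exhibit the map directly by descent along the surjection $\gamma\colon \mathcal{Q}_{\op{G}}\times(\op{G}/B)^n \to \op{Parbun}_{\op{G}}(X_0,\vec p)$, and then identify its image with the $\mathfrak{g}\otimes_k A_U$-invariants. The key structural input is that $\gamma$ is a torsor (or at least a surjection realizing $\op{Parbun}_{\op{G}}(X_0,\vec p)$ as a quotient stack) for the group ind-scheme $\op{G}(A_U)$, which acts on $\mathcal{Q}_{\op{G}}$ by changing the trivialization $\phi$ on $U$, and whose ``Lie algebra'' is $\mathfrak{g}\otimes_k A_U$. This is precisely the statement recalled after Eq.~\eqref{tau} together with the description of $\mathcal{Q}_i = \op{G}(\mathbb{C}((\xi_i)))/\op{G}(\mathbb{C}[[\xi_i]])$ and the Gomez reference cited.

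\smallskip\noindent\textbf{Key steps, in order.}
\begin{enumerate}
\item First I would record that a section $\sigma$ of $\mc{L}^m_{\op{G}}(X_0,\vec p)$ on $\op{Parbun}_{\op{G}}(X_0,\vec p)$ pulls back under $\gamma$ to a section $\gamma^*\sigma$ of $\gamma^*\mc{L}^m_{\op{G}}(X_0,\vec p)$, giving a map of vector spaces ${\op{H}}^0(\op{Parbun}_{\op{G}}(X_0,\vec p),\mc{L}^m_{\op{G}}(X_0,\vec p))\to {\op{H}}^0(\mathcal{Q}_{\op{G}}\times(\op{G}/B)^n,\gamma^*\mc{L}^m_{\op{G}}(X_0,\vec p))$.
\item Next, injectivity: since $\gamma$ is faithfully flat (it is a surjection of the required type — a quotient presentation of the stack), pullback along $\gamma$ is injective on global sections of any quasi-coherent sheaf. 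I would spell this out by noting $\op{Parbun}_{\op{G}}(X_0,\vec p)$ is, locally, the stack quotient of $\mathcal{Q}_{\op{G}}\times(\op{G}/B)^n$ by $\op{G}(A_U)$, so $\gamma^*$ is injective with image the invariants for the $\op{G}(A_U)$-action.
\item Then I would identify the invariance condition. A section pulled back from the quotient stack is invariant under the $\op{G}(A_U)$-action on $\mathcal{Q}_{\op{G}}\times(\op{G}/B)^n$; differentiating this action (equivalently, working with the ind-infinitesimal structure as in the standard conformal-blocks formalism, cf.\ the construction in Section~\ref{VS} and \cite[Prop 2.1]{Fakh}) gives exactly the annihilation condition by $\mathfrak{g}\otimes_k A_U$. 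Hence $\gamma^*\sigma$ lands in ${\op{H}}^0(\mathcal{Q}_{\op{G}}\times(\op{G}/B)^n,\gamma^*\mc{L}^m_{\op{G}}(X_0,\vec p))^{\mathfrak{g}\otimes_k A_U}$, which is the target of the claimed map.
\end{enumerate}

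\smallskip\noindent\textbf{Main obstacle.} The delicate point is that $\op{G}(A_U)$ and $\mathcal{Q}_{\op{G}}$ are infinite-dimensional (ind-scheme) objects, so ``$\gamma$ is faithfully flat with invariant-theoretic image'' and ``the stabilizer-invariance condition equals annihilation by $\mathfrak{g}\otimes_k A_U$'' both need care: one must work with the pro/ind structures, and with the fact that the open set $\mathcal{Q}_{\op{G}}\setminus(\text{bad locus})$ surjecting onto $\op{Parbun}_{\op{G}}(X_0,\vec p)$ has complement of codimension $\ge 2$ (as in Hartshorne-type extension arguments used elsewhere in the paper), so that sections extend and the invariance statement is not affected by the locus where $\gamma$ fails to be nice. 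I expect the actual verification that ``group-invariance $\Leftrightarrow$ Lie-algebra-annihilation'' for a nontrivial line bundle (the linearization of $\gamma^*\mc{L}^m_{\op{G}}$ must be compatible with the $\op{G}(A_U)$-action up to the central extension, and only the honest — not projective — part survives because $\mc{L}_{\op{G}}(X_0,\vec p)$ is pulled back from $\op{Parbun}_{\op{G}}(X,\vec p)$ via Def~\ref{LX_0}) is where the real content lies; everything else is formal descent.
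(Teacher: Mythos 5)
There is a genuine gap: your entire argument hinges on the assertion that $\gamma$ is a faithfully flat quotient presentation (a $\op{G}(A_U)$-torsor) of $\op{Parbun}_{\op{G}}(X_0,\vec{p})$, i.e.\ that every parabolic $\op{G}$-bundle on the nodal curve $X_0$ is trivializable over the affine open $U$ and that the stack is the quotient of $\mathcal{Q}_{\op{G}}\times(\op{G}/B)^n$ by $\op{G}(A_U)$. This is not ``precisely the statement recalled after Eq.~\eqref{tau}'': the description of $\mathcal{Q}_i=\op{G}(\mathbb{C}((\xi_i)))/\op{G}(\mathbb{C}[[\xi_i]])$ and the reference \cite{Gomez} only say that $\mathcal{Q}_{\op{G}}$ parameterizes \emph{pairs} $(E,\phi)$ with $\phi$ a trivialization over $U$; whether every $E$ admits such a trivialization (surjectivity of $\gamma$), let alone flatness and local triviality of $\gamma$, is a nontrivial uniformization theorem. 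For singular curves this is exactly the content of \cite{Serre} (for $\op{SL}(r)$) and \cite{BF} (general semisimple $\op{G}$), which the paper cites but deliberately does not use: its proof of Claim~\ref{Step1} rests only on the much weaker statement that $\gamma$ is \emph{dominant}, proved by an explicit \v{C}ech/Kodaira--Spencer tangent-space computation with exponentials of nilpotent elements (Lemma~\ref{Lemma1}). Dominance, together with smoothness and irreducibility of ${\op{Bun}}_{\op{G}}(X_0)$ (so a section vanishing on a dense open vanishes), already gives injectivity of $\gamma^*$ on sections of a line bundle, and equivariance of $\gamma$ then places the image inside the $\op{G}(A_U)$-invariants, hence inside the $\mathfrak{g}\otimes_k A_U$-invariants. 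Your proposal supplies no proof of the torsor/flatness input and no substitute for Lemma~\ref{Lemma1}, so the injectivity step is unsupported as written (unless you import \cite{BF}, which is a different and much heavier route than the paper's).

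Two smaller points. First, you claim the image of $\gamma^*$ is \emph{exactly} the $\op{G}(A_U)$-invariants; that is the hard direction (it is the content of Claim~\ref{Step3} and Step~2 in the paper's proof of Theorem~\ref{One}) and is neither needed for Claim~\ref{Step1} nor justified by formal descent in this infinite-dimensional setting. For the claim at hand one only needs the easy inclusion ``group-invariant $\Rightarrow$ Lie-algebra-invariant,'' obtained by differentiating the action, which is how the paper concludes. Second, the remark about a ``bad locus'' of codimension $\ge 2$ in $\mathcal{Q}_{\op{G}}$ and Hartshorne-type extension is a red herring: no such locus or extension argument enters here.
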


\begin{lemma}\label{Lemma1}The natural map $\gamma: \mathcal{Q}_{\op{G}} \times (G/B)^n  \longrightarrow {\op{Parbun}}_{{\op{G}}}(X_0, \vec{p})$
 is dominant.
\end{lemma}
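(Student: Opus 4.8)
\textbf{Plan for the proof of Lemma \ref{Lemma1}.} The goal is to show that the morphism $\gamma: \mathcal{Q}_{\op{G}} \times (G/B)^n \to \op{Parbun}_{\op{G}}(X_0,\vec{p})$ is dominant. The strategy is to factor $\gamma$ through the forgetful map $\op{Parbun}_{\op{G}}(X_0,\vec{p}) \to \op{Bun}_{\op{G}}(X_0)$ and to reduce the statement, essentially, to the assertion that every principal $\op{G}$-bundle on $X_0$ becomes trivial after restriction to the affine open set $U = X_0 \setminus \{q_1,\ldots,q_k\}$. Indeed, the fiber of $\op{Parbun}_{\op{G}}(X_0,\vec{p}) \to \op{Bun}_{\op{G}}(X_0)$ over a bundle $E$ is the product $\prod_i E_{p_i}/B$, which is a torsor-twisted form of $(G/B)^n$; once we know a generic $E$ lies in the image of $\mathcal{Q}_{\op{G}}$ (i.e., $E|_U$ is trivial for generic — in fact all — $E$), a choice of trivialization of $E|_U$ simultaneously identifies each $E_{p_i}/B$ with $G/B$, and then the $(G/B)^n$ factor of the source surjects onto the parabolic data. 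So the content is concentrated in the surjectivity (on points, over the algebraically closed ground field) of $\mathcal{Q}_{\op{G}} \to \op{Bun}_{\op{G}}(X_0)$.

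\textbf{Key steps, in order.} First I would recall the uniformization description already set up in the excerpt: points of $\mathcal{Q}_i = \op{G}(\mathbb{C}((\xi_i)))/\op{G}(\mathbb{C}[[\xi_i]])$ correspond to pairs $(E,\phi)$ with $E$ a $\op{G}$-bundle on $X_0$ and $\phi$ a trivialization over $X_0 \setminus q_i$, and $\mathcal{Q}_{\op{G}} = \prod_{i=1}^k \mathcal{Q}_i$ parametrizes $\op{G}$-bundles trivialized over $U = X_0 \setminus \{q_1,\ldots,q_k\}$, following \cite[Theorem 3.8, page 61]{Gomez}. Second, I would invoke the standard fact — valid since $\op{G}$ is semisimple and $U$ is a smooth affine curve (the normalization $X$ of $X_0$ glued along the nodes away from $U$; or more directly $U$ affine of dimension one) — that any principal $\op{G}$-bundle on $U$ is Zariski-locally trivial, hence trivial because $\op{G}$ is connected and one is on an affine curve; this is exactly the input that makes $\mathcal{Q}_{\op{G}} \to \op{Bun}_{\op{G}}(X_0)$ surjective on $\bar k$-points. (Here one uses that at least one point $q_i$ has been removed from each irreducible component of $X_0$, which is the reason $\mathcal{Q}_{\op{G}}$ is taken to be the full product $\prod_i \mathcal{Q}_i$.) Third, given $(E, \gamma_1,\ldots,\gamma_n) \in \op{Parbun}_{\op{G}}(X_0,\vec{p})$, pick a trivialization $\phi$ of $E|_U$; this gives a point of $\mathcal{Q}_{\op{G}}$ lying over $E$, and $\phi$ trivializes each fiber $E_{p_i}$ (the $p_i$ being smooth points of $X_0$ lying in $U$), turning $\gamma_i \in E_{p_i}/B$ into a point of $G/B$. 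The tuple $(\phi; \bar\gamma_1,\ldots,\bar\gamma_n) \in \mathcal{Q}_{\op{G}} \times (G/B)^n$ then maps to $(E,\vec\gamma)$ under $\gamma$. This shows $\gamma$ is surjective on closed points, hence dominant.

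\textbf{Main obstacle.} The only real subtlety is the local-triviality input in the second step: one must be careful that $X_0$ is a (possibly reducible, nodal) curve rather than a smooth one, so "trivial on the affine open $U$" should be argued either by pulling back to the normalization and using that $\op{G}$-bundles on a smooth affine curve are trivial (semisimplicity of $\op{G}$, plus vanishing of the relevant $H^1$), together with the gluing datum at the nodes being absorbed into the trivialization, or by citing a version of the uniformization theorem for nodal curves directly as in \cite{Gomez}. I expect this to be routine given the references already in the paper, and the remaining bookkeeping — that trivializing $E|_U$ simultaneously rigidifies all the parabolic fibers, and that "dominant" can be checked on closed points here since both sides are (ind-)schemes/stacks of finite type over an algebraically closed field in the relevant sense — is straightforward.
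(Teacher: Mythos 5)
Your route is genuinely different from the paper's. You reduce dominance to surjectivity of $\mathcal{Q}_{\op{G}} \to \op{Bun}_{\op{G}}(X_0)$, i.e.\ to the statement that every $\op{G}$-bundle on $X_0$ is trivial over the affine open $U$; the paper deliberately avoids this and proves only dominance, by an infinitesimal argument: it builds a map $\mathbb{A}^M_k \to \mathcal{Q}_{\op{G}}$ from exponentials of nilpotent elements of $\mathfrak{g}$ multiplied by functions on $U_1\cap U_2$ representing \v{C}ech $1$-cocycles, and checks that the resulting Kodaira--Spencer map onto $\op{H}^1(X_0,\mathfrak{g}\otimes\mathcal{O})$, the tangent space of the smooth irreducible stack $\op{Bun}_{\op{G}}(X_0)$ at the trivial bundle, is surjective. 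The surjectivity you rely on is in fact true --- the paper records it in the remark right after the lemma, attributing it to \cite{Serre} for $\op{SL}(r)$ and to \cite{BF} for general semisimple groups --- and your bookkeeping with the parabolic factors (a trivialization of $E|_U$ rigidifies every fiber $E_{p_i}/B$, so the $(G/B)^n$ factor covers the flag data) is fine. What the paper's tangent-space computation buys is a self-contained proof that does not invoke the triviality theorem, which is exactly the nontrivial input on a singular curve.

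That said, your justification of the key triviality step contains a flaw. ``Zariski-locally trivial, hence trivial because $\op{G}$ is connected and one is on an affine curve'' is not a valid implication: $\mathbb{G}_m$ is connected, yet line bundles on an affine curve (smooth of positive genus, or nodal) need not be trivial. What is actually needed is that $\op{G}$ is semisimple and simply connected: pull back to the smooth affine curve $\nu^{-1}(U)$, where triviality of $\op{G}$-torsors holds (Harder/Steinberg-type vanishing), and then normalize the gluing elements $g_s\in \op{G}$ at the preimages of the nodes to the identity by modifying the trivialization; this last step uses that the evaluation map $\op{G}\bigl(\mathcal{O}(\nu^{-1}(U))\bigr)\to \prod_s \op{G}$ at finitely many points is surjective, e.g.\ because $\op{G}$ is generated by unipotent one-parameter subgroups and one can interpolate functions with prescribed values at finitely many points. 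Either spell this out or cite \cite{BF}; the reference \cite{Gomez} only supplies the dictionary between $\mathcal{Q}_{\op{G}}$ and bundles trivialized on $U$, not the surjectivity itself, so as written this step is a genuine gap in your argument even though the statement it aims at is correct.
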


\begin{proof}
For simplicity in notation, we first assume there are no marked points and show
$\gamma: \mathcal{Q}_{\op{G}}  \longrightarrow {\op{Bun}}_{{\op{G}}}(X_0)$,
is  dominant.   We note that by \cite{Wang}, ${\op{Bun}}_{{\op{G}}}(X_0)$ is smooth;  a fiber bundle over ${\op{Bun}}_{{\op{G}}}(X)$, with fiber ${\op{G}}$, it is irreducible.

Let $U_1=X_0\setminus \{q_1,\ldots, q_k\}$ be affine, and  $U_2$ be any affine open containing the points  $q_1,\ldots, q_k $, so that $\mathcal{U}=\{U_1,U_2\}$ is a \v{C}ech covering of $X_0$.
 Let $A_1$, $\ldots$, $A_N \in \mathfrak{g}$ be any collection of nilpotent elements that freely generate $\mathfrak{g}$ as a $k$-module. One can always find such a basis as it exists for $\sL_2(k)$, and by hypothesis $\mathfrak{g}$ is simple.    In particular,  $A_1$, $\ldots$, $A_N$ is also a basis for
 $\mathfrak{g} \otimes \mathcal{O}_{X_0}(U_1\cap U_2)$ as an $\mathcal{O}_{X_0}(U_1\cap U_2)$-module.
   We may represent a basis for ${\op{H}}^1(X_0,\mathfrak{g}\otimes \mathcal{O}) \cong \check{{\op{H}}}^1(\mathcal{U}, \mathfrak{g}\otimes \mathcal{O})$ by
    $\{\phi_1,\ldots,\phi_{D}\}$, where for $j \in \{1,\ldots, D\}$,  we write
 $$\phi_j=\sum_{i=1}^N A_i \otimes f_{ji}, \ \ \mbox{ with } \ f_{ji} \in \mathcal{O}_{X_0}(U_1\cap U_2).$$

For $M=ND$  consider the map $\mathbb{A}^M_k \longrightarrow {\op{G}}(U_1\cap U_2)$,
 given by
 \begin{multline}
 (\alpha_{11},\ldots, \alpha_{1N}, \alpha_{21} \ldots \alpha_{2N},\ldots, \alpha_{D1}, \ldots, \alpha_{DN}) \mapsto \\
 {\op{exp}}(A_1 \otimes \alpha_{11} f_1)\cdots {\op{exp}}(A_1 \otimes \alpha_{1N} f_N)\cdot {\op{exp}}(A_2 \otimes \alpha_{21} f_2)\cdots
  {\op{exp}}(A_2 \otimes \alpha_{2N} f_2)\\ \cdots \cdots {\op{exp}}(A_N \otimes \alpha_{D1} f_N)\cdots {\op{exp}}(A_N \otimes \alpha_{DN} f_N) ,
 \end{multline}
which may  be used to patch trivializations $U_i \times {\op{G}} \rightarrow G$, thereby giving a map $\delta: \mathbb{A}^M_k \rightarrow \mathcal{Q}_{{\op{G}}}$.

 The map $\gamma  \circ \delta$  gives an induced map  on tangent spaces.  The Kodaira-Spencer map:

 $$T_{0}(\mathbb{A}^M_k)=(k)^M \longrightarrow {\op{H}}^1(X_0,\mathfrak{g}\otimes \mathcal{O}) \cong \check{{\op{H}}}^1(\mathcal{U}, \mathfrak{g}\otimes \mathcal{O}),$$
 $$(\alpha_{11},\ldots, \alpha_{1N}, \alpha_{21} \ldots \alpha_{2N},\alpha_{D1}, \ldots, \alpha_{DN})
 \mapsto \sum_{1\le i \le D, 1 \le j\le N}^N A_j \otimes \alpha_{ij} f_{ij},$$
takes  $(1,\ldots,1,0,\ldots \ldots,0)$ to $\phi_1$, $(0,\ldots, 0, 1,\ldots,1,0,\ldots \ldots,0)$ to $\phi_2$, $\ldots$, $(0\ldots \ldots 0, 1\ldots 1)$ to $\phi_D$, and hence is surjective.     So the map $\mathbb{A}^M_k \longrightarrow  {\op{Bun}}_{{\op{G}}}(X_0)$ is dominant and hence the claim holds.

An analogous argument will carry through if the bundles in question have parabolic structures at marked points.
\end{proof}

\begin{remark}
The map $\gamma$ is shown to be surjective in \cite{Serre}, for $G={\op{SL}}(r)$, and for general semisimple groups in \cite{BF}.
\end{remark}

\begin{proof}(of Claim \ref{Step1}) \ The group scheme ${\op{G}}(A_U)$ acts on $\mathcal{Q}_{{\op{G}}}\times ({\op{G}}/B)^n$ and the map $\tau$ is equivariant for this action.  Therefore since  by Lemma \ref{Lemma1},  $\tau$ is dominant, we have that
$${\op{H}}^0({\op{Parbun}}_{{\op{G}}}(X_0, \vec{p}), \mc{L}^{m}_{{\op{G}}}(X_0,\vec{p})) \hookrightarrow {\op{H}}^0( \mathcal{Q}_{{\op{G}}}\times ({\op{G}}/B)^n , \gamma^*(\mc{L}^m_{\op{G}}(X_0,\vec{p})))^{{\op{G}}(A_U)}.$$
The claim follows from the fact that invariants for the k-group  ${\op{G}}(A_U)$ are a subset of invariants for its associated Lie algebra $\mathfrak{g}\otimes_k A_U$.
\end{proof}

\subsubsection{Step $2$}
\begin{claim}\label{Step2}  \begin{equation}\label{S2E}\mathbb{V}[m]|^*_{(X_0,\vec{p})} \cong {\op{H}}^0( \mathcal{Q}_{{\op{G}}}\times ({\op{G}}/B)^n , \gamma^*(\mc{L}^m_{\op{G}}(X_0,\vec{p})))^{\mathfrak{g}\otimes_k A_U}
\end{equation}
\end{claim}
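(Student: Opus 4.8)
The plan is to identify the invariant-section space on the affine Grassmannian side with the space of conformal coinvariants, and then dualize. Recall that $\mathcal{Q}_{\op{G}} = \prod_{i} \mathcal{Q}_i$ with $\mathcal{Q}_i = \op{G}(\mathbb{C}((\xi_i)))/\op{G}(\mathbb{C}[[\xi_i]])$, and that the line bundle on $\mathcal{Q}_i$ whose sections at level $\ell$ realize the integrable highest weight module is the basic line bundle; by the Borel--Weil theorem for affine Kac--Moody algebras (Kumar, Mathieu) one has $\op{H}^0(\mathcal{Q}_i, \mathcal{L}^{\ell}) \cong \op{H}_{0,\ell}^*$ where $\op{H}_{0,\ell}$ is the level-$\ell$ vacuum module, and more generally twisting by a flag factor $\op{G}/B$ at a marked point and taking the Borel--Weil line bundle $\mathcal{L}_{\lambda_i}$ there produces $\op{H}_{\lambda_i,\ell}^*$. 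So first I would record the identification
$$\op{H}^0\big(\mathcal{Q}_{\op{G}}\times (\op{G}/B)^n,\ \gamma^*(\mc{L}^m_{\op{G}}(X_0,\vec{p}))\big) \cong \op{H}_{m\vec{\lambda}}^*,$$
where the right side is the (restricted) dual of $\op{H}_{m\vec{\lambda}} = \bigotimes_i \op{H}_{m\lambda_i}$ at level $m\ell$; here one must be slightly careful that the relevant sections are the ``algebraic'' ones (finite sums of highest-weight-module matrix coefficients), which is exactly the content of the affine Borel--Weil theorem and matches the module $\op{H}_{m\lambda_i}$ being a direct limit. The pullback $\gamma^*\mc{L}_{\op{G}}(X_0,\vec p)$ decomposes as an external product of the basic bundle on each $\mathcal{Q}_i$ (with the correct level, using Def \ref{LX_0} and that $\mc{L}_{\op{G}}(X_0,\vec p) = p^*\mc{L}_{\op{G}}(X,\vec p)$ pulls back from the normalization componentwise) tensored with the $\mathcal{L}_{\lambda_i}$ on the $\op{G}/B$ factors; the required compatibility of the determinant-of-cohomology/theta line bundle with this basic-line-bundle normalization is the statement of \cite[Theorem, p.~499]{LaszloSorger}, which I would simply cite.

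Next I would take $\mathfrak{g}\otimes_k A_U$-invariants of both sides. On the right, $\op{H}_{m\vec\lambda}^{*,\, \mathfrak{g}\otimes A_U}$ is by definition the space of $\mathfrak{g}\otimes A_U$-invariant functionals on $\op{H}_{m\vec\lambda}$, i.e. the linear dual of the space of coinvariants $\op{H}_{m\vec\lambda}/(\mathfrak{g}\otimes A_U)\op{H}_{m\vec\lambda}$. But this quotient is precisely the fiber of the sheaf of conformal blocks $V_{X_0}(m\vec\lambda, \vec p)$ as constructed in Section \ref{VS} (taking $B = A_U = \Gamma(X_0\setminus\cup p_i(T))$), so its dual is $\mathbb{V}(\mathfrak{g}, m\vec\lambda, m\ell)|^*_{(X_0,\vec p)} = \mathbb{V}[m]|^*_{(X_0,\vec p)}$. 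Putting these together yields \eqref{S2E}. I would present this as: (i) affine Borel--Weil identification of $\op{H}^0$ over $\mathcal{Q}_{\op{G}}\times(\op{G}/B)^n$ with $\op{H}_{m\vec\lambda}^*$; (ii) the action of $\op{G}(A_U)$, differentiated, is the action of $\mathfrak{g}\otimes A_U$ used to define coinvariants, so invariant sections correspond to invariant functionals; (iii) invariant functionals on $\op{H}_{m\vec\lambda}$ = dual of coinvariants = $\mathbb{V}[m]|^*_{(X_0,\vec p)}$ by the definition in Section \ref{VS}.

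The main obstacle I expect is step (i): making precise that the global sections of $\gamma^*\mc{L}^m$ over the ind-scheme $\mathcal{Q}_{\op{G}}$ are exactly $\op{H}_{m\vec\lambda}^*$ with no missing or extra sections, and that the identification is $\op{G}(A_U)$-equivariant with the correct central charge on each factor. This requires (a) the affine Borel--Weil--Bott theorem identifying $\op{H}^0(\mathcal{Q}_i,\mathcal{L}_{\lambda}^{\otimes})$ with the dual Weyl/integrable module — a result of Kumar and Mathieu that holds over a general algebraically closed field, which is why the paper phrases Lemma \ref{Lemma1} over such a field; (b) a Künneth/external-product statement across the finitely many points $q_i$ and marked points $p_j$; and (c) verifying that the level/central extension appearing in $\hat{\mathfrak{g}}$ matches the polarization $\mc{L}_{\op{G}}(X_0,\vec p)^{\otimes m}$ — this is exactly where the normalization in \cite{LaszloSorger} and \cite{BeauvilleLaszlo} is needed, and where the passage to the normalization $X$ via $p^*$ must be checked to be compatible with gluing the local data at the nodes. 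Once these normalizations are pinned down the argument is formal; I would keep the write-up short, citing \cite{LaszloSorger}, \cite{BeauvilleLaszlo}, and \cite{Gomez} for (a)--(c) and giving details only for the invariant-functional/coinvariant duality, which is the genuinely self-contained part.
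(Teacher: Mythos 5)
Your overall strategy (identify invariant sections with invariant functionals on a tensor product of modules, i.e. with the dual of a coinvariant space, and then match that with the conformal block) is the same one the paper uses, but your step (i) misidentifies the section space, and this propagates into a genuine gap at step (iii). In the setup of Claim \ref{Step2} the affine Grassmannian factors $\mathcal{Q}_i$ sit at the auxiliary points $q_1,\ldots,q_k$ that were removed to make $U$ affine, and these points carry the \emph{trivial} weight; the marked points $p_1,\ldots,p_n$, which carry the weights $m\lambda_j$, contribute only the finite-dimensional flag varieties $G/B$. Since $p_j\neq q_i$, the product $\mathcal{Q}_i\times (G/B)$ is not an affine flag variety, and the $G/B$ factor contributes only $V_{m\lambda_j}^*$ by ordinary Borel--Weil, while each $\mathcal{Q}_i$ contributes the (restricted) dual of the level-$m\ell$ \emph{vacuum} module $\mathcal{H}_0$. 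So the correct identification is
$${\op{H}}^0\big(\mathcal{Q}_{\op{G}}\times({\op{G}}/B)^n,\gamma^*(\mc{L}^m_{\op{G}}(X_0,\vec p))\big)\cong \Big(V_{m\lambda_1}\otimes\cdots\otimes V_{m\lambda_n}\otimes\mathcal{H}_0^{\otimes k}\Big)^{\!*}\ \ (\text{restricted dual}),$$
not $\op{H}_{m\vec\lambda}^*$ with full integrable modules $\op{H}_{m\lambda_j}$ attached at the $p_j$; your statement that the flag factor at a marked point "produces $\op{H}_{\lambda_i,\ell}^*$" is where this goes wrong.

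Because of this, your step (iii) does not close: $A_U=\Gamma(X_0\setminus\{q_1,\ldots,q_k\})$, not $\Gamma(X_0\setminus\cup p_i)$ as you write, so the quotient $\op{H}_{m\vec\lambda}/(\mathfrak{g}\otimes A_U)\op{H}_{m\vec\lambda}$ is not the defining fiber of the conformal block sheaf from Section \ref{VS} (indeed, with full affine modules placed at points where the elements of $A_U$ are regular, that coinvariant space is not even of the right size). What is actually needed, and what the paper's proof does, is to bridge the two descriptions: first propagate vacua by inserting the points $q_1,\ldots,q_k$ with the trivial weight, and then apply the Tsuchiya--Ueno--Yamada type theorem \cite[Theorem 3.18, p.~58]{Ueno} to replace the affine modules $\mathcal{H}_{m\lambda_j}$ at the $p_j$ by the finite-dimensional $V_{m\lambda_j}$ while enlarging the function algebra from $\mathcal{A}_V$ (regular off $\vec p\cup\vec q$) to $\mathcal{A}_U$ (regular off $\vec q$). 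With that step inserted, the remainder of your argument --- the duality between $\mathfrak{g}\otimes_k A_U$-invariant sections and the dual of coinvariants, the care with algebraic/restricted duals, and the Laszlo--Sorger normalization of the line bundles --- is sound and matches the paper's intent; but as written the proposal conflates the roles of the $p_j$ and the $q_i$ and omits exactly the propagation step that makes the identification with $\mathbb{V}[m]|^*_{(X_0,\vec p)}$ legitimate.
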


\begin{proof} Let $X_i$ be the irreducible components of the smooth curve $X$.
By Def \ref{LX_0},  $$\gamma^*(\mc{L}^m_{\op{G}}(X_0,\vec{p}))=(p\circ \gamma)^*(\mc{L}^{m}_{\op{G}}(X,\vec{p})).$$
By assumption $U=X_0 \setminus \{q_1,\ldots, q_k\}$ is affine, and we set $V=U\setminus \{p_1,\ldots, p_n\}$.  Putting trivial representations at the points $\{q_1,\ldots, q_k\}$, the left hand side of Eq \ref{S2E} with $m=1$ looks like:
\begin{multline}\label{S2EE}
\mathbb{V}(\mathfrak{g}, \vec{\lambda}, \ell)|^*_{(X, \vec{p})}\cong \mathbb{V}(\mathfrak{g}, \vec{\lambda}\cup \{0^k\}, \ell)|^*_{(X, \vec{p}\cup \vec{q})} \\
\cong \bigg[\mathcal{H}_{\lambda_1} \otimes \cdots \otimes \mathcal{H}_{\lambda_n} \otimes \mathcal{H}_{0} \otimes \cdots \otimes \mathcal{H}_{0}
\bigg/ (\mathfrak{g} \otimes \mathcal{A}_{V})\cdot (\mathcal{H}_{\lambda_1} \otimes \cdots \otimes \mathcal{H}_{\lambda_n} \otimes \mathcal{H}_{0} \otimes \cdots \otimes \mathcal{H}_{0})  \bigg]^*.
\end{multline}

We now apply \cite[Theorem 3.18, page 58]{Ueno}, to get the right hand side of Eq \ref{S2EE}  is:
$$ \bigg[V_{\lambda_1} \otimes \cdots \otimes V_{\lambda_n} \otimes \mathcal{H}_{0} \otimes \cdots \otimes \mathcal{H}_{0}
\bigg/ (\mathfrak{g} \otimes \mathcal{A}_{U})\cdot (\mathcal{H}_{\lambda_1} \otimes \cdots \otimes \mathcal{H}_{\lambda_n} \otimes \mathcal{H}_{0} \otimes \cdots \otimes \mathcal{H}_{0})  \bigg]^*.$$

\end{proof}

\subsubsection{Step $3$}
\begin{claim}\label{Step3}
${\op{h}}^0({\op{Parbun}}_{{\op{G}}}(X_0, \vec{p}), \mc{L}_{{\op{G}}}(X_0,\vec{p})) \ge {\op{dim}}({\op{H}}^0(\mathcal{Q}_{{\op{G}}}\times ({\op{G}}/B)^n, \gamma^*\mc{L}_{{\op{G}}}(X_0,\vec{p}))^{\mathfrak{g}\otimes A_{U}})$.
\end{claim}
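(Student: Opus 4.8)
The plan is to prove the reverse of the inequality that Claims \ref{Step1} and \ref{Step2} already give, by computing both sides after degenerating every node of $X_0$ and matching them term by term. Concretely, I will show $\op{h}^0(\op{Parbun}_{\op{G}}(X_0,\vec{p}),\mc{L}_{\op{G}}(X_0,\vec{p}))=\dim\mathbb{V}[1]|^*_{(X_0,\vec{p})}$, which by Claim \ref{Step2} equals $\dim\big(\op{H}^0(\mathcal{Q}_{\op{G}}\times(\op{G}/B)^n,\gamma^*\mc{L}_{\op{G}}(X_0,\vec{p}))^{\mathfrak{g}\otimes A_U}\big)$; this gives Claim \ref{Step3}, in fact as an equality, and (with Claims \ref{Step1} and \ref{Step2}) forces the injection of Claim \ref{Step1} to be an isomorphism.

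For the geometric side, apply Lemma \ref{Lemma3}(2) with $m=1$: $\op{H}^0(\op{Parbun}_{\op{G}}(X_0,\vec{p}),\mc{L}_{\op{G}}(X_0,\vec{p}))$ is the direct sum over the dual-paired weight assignments $\lambda$ at $\nu^{-1}(S)$ (only finitely many nonzero, by Lemma \ref{Lemma3}(3)--(4)) of the spaces $\op{H}^0\big(\op{Parbun}_{\op{G}}(X,\vec{p}),\mc{L}_{\op{G}}(X,\vec{p})\otimes\bigotimes_{s\in S}\mathscr{E}^{\sas}_{\lambda_{\sas}}\otimes\mathscr{E}^{\sbs}_{\lambda_{\sbs}}\big)$. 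Since $X=\bigcup_i X_i$ is a disjoint union of smooth curves, $\op{Parbun}_{\op{G}}(X,\vec{p})=\prod_i\op{Parbun}_{\op{G}}(X_i,\vec{p}(X_i))$, and both $\mc{L}_{\op{G}}(X,\vec{p})$ and the evaluation bundles $\mathscr{E}^x_{\lambda_x}$ are external tensor products along this factorization (Def \ref{LX_0}); so by the Künneth formula for global sections each summand equals $\bigotimes_i\op{H}^0\big(\op{Parbun}_{\op{G}}(X_i,\vec{p}(X_i)),\mc{L}_{\op{G}}(X_i,\vec{p}(X_i))\otimes\bigotimes_{x\in\nu^{-1}(S)\cap X_i}\mathscr{E}^x_{\lambda_x}\big)$. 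On each smooth component the factor $\mathscr{E}^x_{\lambda_x}$ is absorbed by adjoining $x$ to the marked points and applying Borel--Weil, exactly as in the proof of Lemma \ref{Lemma3}(3), turning it into a space of sections of a parabolic line bundle on $\op{Parbun}_{\op{G}}(X_i,\vec{p}(X_i)\cup\{x\})$ carrying the weight $\lambda_x$ (up to dualizing) at $x$; the known smooth-curve case of Theorem \ref{One} (\cites{BeauvilleLaszlo,LaszloSorger}, \cite{Faltings}) identifies its dimension with $\dim\mathbb{V}(\mathfrak{g},-,\ell)|_{(X_i,-)}$. Reassembling over $i$ and over $\lambda$ gives
$$\op{h}^0(\op{Parbun}_{\op{G}}(X_0,\vec{p}),\mc{L}_{\op{G}}(X_0,\vec{p}))=\sum_{\lambda}\dim\mathbb{V}\big(\mathfrak{g},\vec{\lambda}\cup\{\lambda_x\}_{x\in\nu^{-1}(S)},\ell\big)\big|_{(X,\vec{p}\cup\nu^{-1}(S))}.$$

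For the conformal-blocks side, the factorization theorem for vector bundles of conformal blocks (\cite{Ueno}; \cite{Fakh} for the algebraic formulation), applied once at each node of $X_0$, expresses $\dim\mathbb{V}(\mathfrak{g},\vec{\lambda},\ell)|_{(X_0,\vec{p})}$ as exactly the same sum over dual-paired assignments:
$$\dim\mathbb{V}(\mathfrak{g},\vec{\lambda},\ell)|_{(X_0,\vec{p})}=\sum_{\lambda}\dim\mathbb{V}\big(\mathfrak{g},\vec{\lambda}\cup\{\lambda_x\}_{x\in\nu^{-1}(S)},\ell\big)\big|_{(X,\vec{p}\cup\nu^{-1}(S))}.$$
Comparing the two displays and rewriting the right-hand side via Claim \ref{Step2} yields the desired equality, hence Claim \ref{Step3}; the compatibility of the resulting isomorphism with the two algebra structures (multiplication of sections on the $\op{Parbun}$ side, the structure of \cite{Manon} on the conformal-blocks side) is then formal, completing Theorem \ref{One}. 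As an alternative, Claim \ref{Step3} can be obtained directly from the uniformization presenting $\op{Parbun}_{\op{G}}(X_0,\vec{p})$ as the quotient stack $[(\mathcal{Q}_{\op{G}}\times(\op{G}/B)^n)/\op{G}(A_U)]$ together with the fact that a section annihilated by $\mathfrak{g}\otimes A_U$ is automatically invariant under $\op{G}(A_U)$ (which is generated by the subgroups $\exp(\mathfrak{g}_\alpha\otimes A_U)$), but this needs the nodal-curve uniformization statement, so I prefer the factorization route.

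I expect the main obstacle to be bookkeeping rather than anything conceptual: one must check that the indexing set of Lemma \ref{Lemma3} — dominant integral weights of level $\le\ell$ by part (3), obeying the central-character/root-lattice constraint of part (4), dual at the two branches of each node — is put in bijection with the labels of the conformal-blocks factorization theorem, and that the Borel--Weil conversion on each component attaches to the new point the weight and level that make the conformal block produced on the geometric side literally the summand appearing on the representation-theoretic side (so one must keep careful track of $\lambda_x$ versus $\lambda_x^{*}$ and of the level matching $m\ell$). Justifying the Künneth step (no properness of the $\op{Parbun}_{\op{G}}(X_i,\cdot)$ is required since we work over a field, restricting if necessary to a quasi-compact open on which the relevant sections are supported) is routine.
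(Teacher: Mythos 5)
Your proposal is correct and follows essentially the same route as the paper: decompose $\op{H}^0(\op{Parbun}_{\op{G}}(X_0,\vec p),\mc{L}_{\op{G}}(X_0,\vec p)^{\otimes m})$ via Lemma \ref{Lemma3}, identify the admissible summands with conformal blocks on the normalization through the known smooth-curve case of Theorem \ref{One}, and match the result with $\mathbb{V}[m]|^*_{(X_0,\vec p)}$ by the factorization theorem, then invoke Claim \ref{Step2}. The only divergence is that you prove the equality by using Lemma \ref{Lemma3}(3)--(4) to discard the non-admissible summands (legitimate, since that vanishing is proved independently via affine Borel--Weil), whereas the paper deliberately records only the inequality over the sub-sum indexed by $\mathcal{P}_{\ell}(\mathfrak{g})$, so that combining the three steps yields an independent re-proof of Lemma \ref{Lemma3}(3) (Remark \ref{YesBlocks}).
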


\begin{proof}
For simplicity, we will assume that $X_0$ has a single node $\xzero$ and $\nu^{-1}(\xzero)=\{\xone,\xtwo\}$. We now show
$${\op{h}}^0({\op{Parbun}}_{{\op{G}}}(X_0, \vec{p}), \mc{L}_{{\op{G}}}(X_0,\vec{p})) \ge {\op{dim}}({\op{H}}^0(\mathcal{Q}_{{\op{G}}}\times ({\op{G}}/B)^n, \gamma^*\mc{L}_{{\op{G}}}(X_0,\vec{p}))^{\mathfrak{g}\otimes A_{U}}).$$

By Lemma \ref{Lemma3},
\begin{multline}\label{decompose}
{\op{H}}^0({\op{Parbun}}_{{\op{G}}}(X_0, \vec{p}), \mc{L}^m_{{\op{G}}}(X_0,\vec{p}) )
= {\op{H}}^0({\op{Parbun}}_{{\op{G}}}(X, \vec{p}), \mc{L}^m_{{\op{G}}}(X,\vec{p})\otimes \Big(  \bigoplus_{\lambda } \mathscr{E}_{\lambda^*}^{\xtwo} \otimes \mathscr{E}_{\lambda}^{\xone}\Big)) \\
=\bigoplus_{\lambda} {\op{H}}^0({\op{Parbun}}_{{\op{G}}}(X, \vec{p}), \mc{L}^m_{{\op{G}}}(X)   \otimes  \Big(  \mathscr{E}_{\lambda^*}^{\xtwo} \otimes \mathscr{E}_{\lambda}^{\xone}\Big)) \\
\bigoplus_{\lambda} \mathbb{V}(\mathfrak{g}, \{\lambda_1,\ldots, \lambda_n, \lambda, \lambda^*\}, \ell) [m]|^*_{(X; p_1,\ldots, p_n, \xone,\xtwo)}.
\end{multline}

By Factorization, the part of this sum indexed by weights $\lambda \in \mathcal{P}_{\ell}(\mathfrak{g})$
 is isomorphic to $$\mathbb{V}(\mathfrak{g}, \{\lambda_1,\ldots,\lambda_n\}, \ell)[m]|^*_{(X_0, \{p_1,\ldots,p_n\})}.$$
 Therefore the dimension is at least as big as that of $\mathbb{V}(\mathfrak{g}, \{\lambda_1,\ldots,\lambda_n\}, \ell)[m]|^*_{(X_0, \{p_1,\ldots,p_n\})}$.
\end{proof}

\begin{remark}\label{YesBlocks} By combining all three steps we see that in fact the proof of Claim \ref{Step3} gives that the only nonzero contributions to the sum
$$\bigoplus_{\lambda} \mathbb{V}(\mathfrak{g}, \{\lambda_1,\ldots, \lambda_n, \lambda, \lambda^*\}, \ell) [m]|^*_{(X; p_1,\ldots, p_n, \xone,\xtwo)}$$
come from $\lambda \in \mathcal{P}_{\ell}(\mathfrak{g})$. This gives another proof of Lemma \ref{Lemma3}(3).
\end{remark}

\section{Applications}\label{Apps}
Here we prove Theorem \ref{globaleJJs}, Theorem \ref{A2}, and Proposition \ref{C} as well as give an example illustrating  Theorem \ref{A2}.
\subsection{Application One: Proof of Theorem \ref{globaleJJs}}

Let $\scr{A}_{\bullet} = \bigoplus_{m\in \mathbb{Z}_{\ge 0}} \mathbb{V}(\sL_r,m \ell)^*$, set $\mc{X}=\op{Proj}(\scr{A}_{\bullet})$, and consider the map $p: \mc{X} \to \ovmc{M}_g$.
By Proposition \ref{flat}, the map $p$ is flat.

For Part (1): For  any closed point $[C] \in \ovmc{M}_{g}$, one has that  $\scr{A}_{\bullet}|_{[C]}$ is an integral domain, since ${\op{Bun}}_{{\op{G}}}(C)$ is smooth and connected (fiber it over the moduli of $G=\op{SL}(r)$-bundles on the normalization of $C$). One could also use Theorem \ref{one}: For  any closed point $[C] \in \ovmc{M}_{g}$, one has that  $\scr{A}_{\bullet}|_{[C]}$ is  $\oplus_m \mathbb{V}(\sL_r, m)|_{[C]}^*$. This a subalgebra, formed by suitable Lie-algebra invariants, of a product of the algebra of sections of a line bundle on the ind-integral affine Grassmannian, with an $n$-fold product of complete flag varieties  \cite[Section 10]{LaszloSorger}. Therefore, the fibers $\op{Proj}(\scr{A}_{\bullet}|_x)$ are integral schemes, and hence are integral and irreducible. Normality of the fibers of $p$ follows from the normality of ${\op{Bun}}_{{\op{G}}}(C)$: The sheaf $\oplus \mathcal{L}^n$ is a sheaf of normal algebras over ${\op{Bun}}_{{\op{G}}}(C)$.

Part $(2)$ follows from the work of \cites{BeauvilleLaszlo,  Faltings, KNR}.

Lastly, for finite generation of $\scr{A}_{\bullet}$, we appeal to the fact that the moduli stack  $\ovmc{M}_g$ is stratified by the topological type of the curves being parameterized: Curves having $k$-nodes of a particular type determine the (generic) element of  each component of the codimension $k$ boundary strata.
In Section \ref{Uniform}, we show that there is a uniform such constant $m$ for curves of a given stratum, such that $\scr{A}_{\bullet}|_{[C]}$ is generated by $\scr{A}_{i}|_{[C]}$, $i=1,\dots,m$.
Together with Lemma \ref{montreal},  this  will finish the proof of Theorem \ref{globaleJJs}.

\begin{lemma}\label{montreal}
Suppose we can find a uniform global constant $m$, such that for all stable curves $C$ of genus $g$, $\scr{A}_{\bullet}|_{[C]}$ is generated by $\scr{A}_{i}|_{[C]}$, $i=1,\dots,m$. Then, $\scr{A}_{\bullet}$ is finitely generated.
\end{lemma}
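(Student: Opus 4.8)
The plan is to reduce the statement to a Noetherianity argument over the base stack $\ovmc{M}_g$, exploiting that $\scr{A}_{\bullet}$ is a quasi-coherent sheaf of graded $\mc{O}_{\ovmc{M}_g}$-algebras whose graded pieces $\scr{A}_i$ are coherent (each $\mathbb{V}(\sL_r,i\ell)$ is a vector bundle of conformal blocks, hence locally free of finite rank). First I would form the sheaf of $\mc{O}_{\ovmc{M}_g}$-subalgebras $\scr{B}_{\bullet}\subseteq \scr{A}_{\bullet}$ generated by $\scr{A}_1,\dots,\scr{A}_m$; this is a quasi-coherent subsheaf of algebras, and the inclusion $\scr{B}_{\bullet}\hookrightarrow\scr{A}_{\bullet}$ is a morphism of graded $\mc{O}_{\ovmc{M}_g}$-algebras. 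The hypothesis says precisely that this inclusion is an isomorphism after restricting to every closed fiber, i.e. $\scr{B}_{\bullet}|_{[C]}=\scr{A}_{\bullet}|_{[C]}$ for every stable curve $C$.

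Next I would upgrade this fiberwise surjectivity to an honest surjectivity of sheaves. For each fixed degree $d$, the quotient $\scr{Q}_d=\scr{A}_d/\scr{B}_d$ is a coherent sheaf on $\ovmc{M}_g$ (quotient of a coherent sheaf), and all of its fibers vanish because the $\scr{A}_i$ are locally free, so $\scr{B}_d|_{[C]}$ — computed as the image of $\bigoplus\scr{A}_{i_1}\otimes\cdots\otimes\scr{A}_{i_k}$, $\sum i_j=d$, $i_j\le m$ — really is the full fiber $\scr{A}_d|_{[C]}$ by Nakayama together with the hypothesis. A coherent sheaf all of whose fibers are zero is itself zero, so $\scr{B}_d=\scr{A}_d$ for all $d$. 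Thus $\scr{A}_{\bullet}$ is generated as an $\mc{O}_{\ovmc{M}_g}$-algebra by its first $m$ graded pieces, each of which is a coherent sheaf.

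Finally, to pass from "generated in bounded degree by coherent pieces" to "finitely generated as a $\Bbb{C}$-algebra," I would cover $\ovmc{M}_g$ (which is of finite type over $\Bbb{C}$, hence quasi-compact) by finitely many affine opens $\op{Spec}(A_\alpha)$; on each, $\scr{A}_1|_{\op{Spec}(A_\alpha)},\dots,\scr{A}_m|_{\op{Spec}(A_\alpha)}$ are finitely generated $A_\alpha$-modules, so $\scr{A}_{\bullet}(\op{Spec}(A_\alpha))$ is a finitely generated $A_\alpha$-algebra, hence a finitely generated $\Bbb{C}$-algebra since $A_\alpha$ is. Then $\scr{A}_{\bullet}=\op{H}^0(\ovmc{M}_g,\scr{A}_{\bullet})$ is a finitely generated $\Bbb{C}$-algebra because global sections of a finitely generated sheaf of algebras over a finite affine cover of a (nice, separated, finite-type) stack form a finitely generated algebra — one assembles generators from the finite cover together with generators of $\op{H}^0(\ovmc{M}_g,\mc{O})$, using the sheaf (equalizer) sequence over the cover and its intersections. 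The main obstacle I anticipate is precisely this last bookkeeping step — being careful that gluing across the (finitely many) overlaps does not force infinitely many generators — which is handled by the standard fact that if $X$ is a Noetherian scheme (or finite-type DM stack) and $\scr{F}_\bullet$ is a quasi-coherent sheaf of $\mc{O}_X$-algebras generated in degrees $\le m$ by coherent pieces, then $\Gamma(X,\scr{F}_\bullet)$ is finitely generated over $\Gamma(X,\mc{O}_X)$; I would cite or spell out this fact rather than reprove it.
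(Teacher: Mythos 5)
Your first two paragraphs are exactly the paper's argument: the paper's entire proof is the phrase ``this follows from Nakayama's Lemma,'' and your construction of the subalgebra sheaf $\mathscr{B}_{\bullet}\subseteq\mathscr{A}_{\bullet}$ generated by $\mathscr{A}_1,\dots,\mathscr{A}_m$, combined with coherence (local freeness) of each graded piece and the fiberwise hypothesis, is the intended way to run it: each $\mathscr{A}_d/\mathscr{B}_d$ is coherent with all fibers zero, hence zero, so $\mathscr{A}_{\bullet}$ is generated in degrees $\le m$ as a sheaf of $\mc{O}_{\ovmc{M}_g}$-algebras. That sheaf-level statement is all the lemma is meant to assert, and all that is used afterwards: it is what makes the relative $\op{Proj}(\mathscr{A}_{\bullet})\to\ovmc{M}_g$ projective and lets Proposition \ref{flat} apply in the proof of Theorem \ref{globaleJJs}. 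You could therefore stop at the end of your second paragraph.

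Your third paragraph aims at a different and stronger statement --- finite generation of the global-section ring $\op{H}^0(\ovmc{M}_g,\mathscr{A}_{\bullet})$ as a $\C$-algebra --- and the ``standard fact'' you invoke there is false. If $L$ is a line bundle on a projective variety $X$ whose section ring $\bigoplus_{k}\op{H}^0(X,L^{\tensor k})$ is not finitely generated (Zariski-type examples), then $\mathscr{F}_{\bullet}=\bigoplus_{k}L^{\tensor k}$ is a quasi-coherent sheaf of $\mc{O}_X$-algebras generated in degree one by a coherent (indeed invertible) piece, yet $\Gamma(X,\mathscr{F}_{\bullet})$ is not finitely generated over $\Gamma(X,\mc{O}_X)=\C$. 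This is exactly the phenomenon the paper itself flags after Theorem \ref{One}. The equalizer/gluing bookkeeping cannot repair it: global sections form a subalgebra of the product of the local section algebras, and subalgebras of finitely generated algebras need not be finitely generated. So the last step should simply be dropped; interpreting ``finitely generated'' at the level of sheaves of $\mc{O}_{\ovmc{M}_g}$-algebras, your Nakayama argument already proves the lemma, and any claim about the absolute $\C$-algebra of global sections would require a genuinely different (and here unnecessary) argument.
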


 \begin{proof}This follows from Nakayama's Lemma.
 \end{proof}

 \begin{proposition}\label{flat}Let $X$ be a scheme and $S_{\bullet}=\oplus_m S_m$ a graded sheaf of algebras over $X$. Assume that the sheaves $S_m$ are locally free of finite rank over $X$, and also that $S_{\bullet}=\oplus_m S_m$ is finitely generated as an $S_0$ algebra. Then $\op{Proj}(S_{\bullet})\to X$ is flat.
\end{proposition}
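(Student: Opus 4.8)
\textbf{Proof proposal for Proposition \ref{flat}.}

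The plan is to reduce to the standard fact that a graded module which is flat in each degree gives a flat sheaf on $\op{Proj}$. Since flatness is local on $X$, I would first pass to an affine open $\op{Spec}(A)\subseteq X$ over which each $S_m$ is free of finite rank; write $R_\bullet=\Gamma(\op{Spec}(A),S_\bullet)$, a graded $A$-algebra with $R_m$ free of finite rank over $A=R_0$, finitely generated as an $R_0$-algebra. We want $\op{Proj}(R_\bullet)\to\op{Spec}(A)$ flat, i.e. that for every homogeneous $f\in R_d$ (of positive degree) the degree-zero part $(R_f)_0$ of the localization is a flat $A$-module. Choosing homogeneous algebra generators $f_0,\dots,f_N$ of $R_\bullet$ over $R_0$, the open sets $D_+(f_i)$ cover $\op{Proj}(R_\bullet)$, so it suffices to treat each $(R_{f_i})_0$.

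The key step is to observe that $(R_f)_0=\varinjlim_k (R_{kd}\cdot f^{-k})$, a filtered colimit (indexed by $k\ge 0$, with transition maps $x\mapsto x\cdot f/f$, i.e. multiplication by $f$ after identifying $R_{kd}f^{-k}$ with a submodule of $R_{(k+1)d}f^{-(k+1)}$) of the $A$-modules $R_{kd}$. Each $R_{kd}$ is free, hence flat, over $A$ by hypothesis, and a filtered colimit of flat modules is flat. Therefore $(R_f)_0$ is a flat $A$-module, and since this holds on the cover $\{D_+(f_i)\}$ and flatness is local, $\op{Proj}(R_\bullet)\to\op{Spec}(A)$ is flat; gluing over an affine cover of $X$ gives the proposition. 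The one point requiring care — and the place I expect the only real subtlety — is checking that the transition maps in the colimit description of $(R_f)_0$ are exactly multiplication by $f$ (so that the colimit is a genuine filtered colimit of the modules $R_{kd}$, not merely a subquotient), which is immediate from the definition of localization of a graded ring but worth stating; no finiteness of generation is actually needed for flatness itself, though it guarantees the cover by finitely many $D_+(f_i)$ and that $\op{Proj}$ is of finite type over $X$.

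Alternatively, and perhaps more cleanly, one can argue: flatness of $S_m$ in every degree says $S_\bullet$ is a flat graded $S_0=\mathcal O_X$-module, and for such a module the structure sheaf $\mathcal O_{\op{Proj}(S_\bullet)}$ is flat over $X$ because $\mathcal O_{\op{Proj}(S_\bullet)}$ is built (via the $\widetilde{(-)}$ construction on each $D_+(f)$) out of degree-zero localizations, which preserve flatness as above. I would present the colimit argument as the main line since it is self-contained and avoids invoking graded-module generalities.
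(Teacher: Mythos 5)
Your proof is correct, but it runs on a different (equally standard) engine than the paper's. The paper's own argument is a one-liner: the local rings of $\op{Proj}(S_{\bullet})$ are the degree-zero pieces of homogeneous localizations of $S_{\bullet}$, hence direct summands, as $\mathcal{O}_X$-modules, of localizations of $S_{\bullet}=\oplus_m S_m$; since each $S_m$ is locally free, $S_{\bullet}$ is flat, localization preserves flatness, and a direct summand of a flat module is flat. You instead identify $(R_f)_0$, for $f$ homogeneous of degree $d>0$, with the filtered colimit of the system $R_0\xrightarrow{\cdot f}R_d\xrightarrow{\cdot f}R_{2d}\xrightarrow{\cdot f}\cdots$ and invoke the fact that a filtered colimit of flat modules is flat; your identification of the transition maps and of the kernel (namely $x\mapsto 0$ in the colimit iff $f^Nx=0$ for some $N$, matching the kernel of $R_{kd}\to (R_f)_0$, $x\mapsto x/f^k$) is exactly the point you flag, and it is correct. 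Both routes use only degreewise flatness of $S_{\bullet}$; yours has the small advantage of making explicit that finite generation and finite rank play no role in flatness itself (they only ensure a finite cover by the $D_+(f_i)$ and finite type of the morphism), while the paper's summand trick is shorter and avoids any colimit bookkeeping. One cosmetic correction: you cannot in general choose a single affine open over which \emph{all} the infinitely many $S_m$ are simultaneously free; but none is needed, since over any affine open each $R_m$ is a finitely generated projective, hence flat, $A$-module, and flatness is all your colimit argument uses.
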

\begin{proof}  This is standard: local rings of $\op{Proj}(S_{\bullet})$ are grade zero summands in localizations of $S_{\bullet}$.
\end{proof}

\subsubsection{Uniform constants $m$  on strata}\label{Uniform}
Let $X_0\to T$ be a family of equisingular curves (i.e., the graph encoding singularities and genera is constant), let $X\to T$ be the family of normalizations. After sufficient surjective base change of $T$, we can assume that the labelings of components of the curve, etc can be made in families. We will also assume that $T$ is smooth. Note further that
we are also at liberty to replace $T$ by a non-empty open subset, since the complement will have smaller dimension, and bounds $m$ can be found for the complement by induction.

Given an ample line bundle $\mc{L}$ on $X_0\to T$, let $V$ be a fixed vector space isomorphic to $\op{k}^{P(m)}$ for $m >>0$, as before.  Let $\op{Quot}_{X_0/T}(V\tensor \mc{L}^{-1},\op{P})$ be relative Quot scheme:  For every $T'\to T$, an $T'$-valued point of $\op{Quot}_{X_0/T}(V\tensor \mc{L}^{-1},\op{P})$
 is a pair $(\mc{E},\alpha)$ with $\mc{E}$ a coherent sheaf on $X_0'=T'\times_{T}X_0$, flat over $T'$, with fibers $\mc{E}_t$ having Hilbert polynomial $P$ for every $t \in T'$, and
 $[\alpha: (V\tensor \mc{L}^{-1})' \twoheadrightarrow \mc{E}]$ a surjection $X'_0 \to T'$.    Relative versions $\op{Quot}_{X_0/T}(V\tensor \mc{L}^{-1},\op{P},1)$, $\op{Q}_{X_0/T}^0(\ml)$,  $\op{Q}^{\op{LF}}_{X_0/T}(\ml)$, $\overline{\op{Q}}^{\op{det}}_{X_0/T}(\ml)$,  $\mc{M}_{X_0/T}(\mc{L})$, and $\mc{M}_{X_0/T}^0(\mc{L})$ can be defined.
Note that by passing to a non-empty open subset of $T$, we can assume that the fibers of these objects coincide with the objects defined earlier (when we were working fiber-wise). Here the only subtle point is  $\mc{M}_{X_0/T/T}(\mc{L})$ which involves normalization, and we need to show that normalization commutes with taking fibers, at least generically, in characteristic zero. This is Lemma \ref{normality}. We also note Grothendieck's generic representability theorem (see e.g., \cite[Theorem 4.18.2]{Kleiman}).

One can define an embedding of $\op{Quot}_{X_0/T}(V\tensor \mc{L}^{-1},\op{P},1)$ into a relative Grassmannian variety, and denote the corresponding line bundle by $T_{X_0/T}(\mathcal{A})$.   To define the analogous map, and show it  is an
embedding, one needs that boundedness works uniformly in families, which Simpson proves \cite[Corollary 1.6]{simpson}.

One can also define a relative version of $\vec{a}$-slope semistability (respectively $\vec{a}$-slope stability).

In order to compare $\vec{a}$-slope semistable sheaves with GIT-(semi)stable loci in the Quot scheme one needs a relative and uniform bound on the number of global sections of  a sheaf in terms of its maximal slope (aka the Le Potier-Simpson estimate) which is given in \cite[Corollary 1.7]{simpson} .

All the arguments hold uniformly in families. We list the assertions that need to be generalized in families, and comment on the subtleties, if any.
\begin{enumerate}
\item Proposition \ref{simpsonian} generalizes easily. It is known that if $Q\to T$ is a family with a relatively ample bundle $T$.
Then, semistability on fibers corresponds in the expected manner with semistability in the family \cite[Lemma 1.13]{simpson}.
 \item Lemma \ref{extendo}: This is a problem of extending sections over normal schemes, here we have to extend over codimension one points. We can extend over generic point of the base using Lemma \ref{extendo}. For rest of the codimension one points, which map to codimension one points on the base the section is already defined there and hence does not have a pole (we can shrink the base to make $\mc{M}_{X_0/T}(\mc{L})-\mc{M}^0_{X_0/T}(\mc{L})$ flat over the base $T$).
\item In Proposition \ref{OneIntro}: Replace $H^0$ by push-forward of corresponding sheaves to the base  $T$.
\end{enumerate}

\subsubsection{}

Let $U\subset X$ be schemes over a smooth base $T$. Assume that $U$ is smooth over $T$, and $X$ proper over $T$. Assume the fibers of $U\to T$ at closed points are irreducible. Let $Y$ be the closure of $U$ in $X$ and $\widetilde{Y}$ the normalization of $Y$. Assume $T$ is smooth over a field of characteristic zero.
\begin{lemma}\label{normality}
There is a non-empty open subset $V$ of $T$ such that the fiber of $p:\widetilde{Y}\to T$ over a closed point in $V$, is an irreducible normal variety.
\end{lemma}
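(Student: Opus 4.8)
The plan is to reduce the statement to a purely generic-flatness and generic-smoothness argument over the base $T$, using that the relevant operations — taking closure, normalizing, and restricting to fibers — all commute with one another after shrinking $T$ to a suitable dense open subset. First I would replace $Y$ by $\widetilde Y$ and work directly with $p:\widetilde Y\to T$; since $\widetilde Y$ is a normal variety (being the normalization of the integral scheme $Y$, which is the closure of the irreducible-on-fibers $U$), and $T$ is integral of characteristic zero, generic flatness (EGA IV, or \cite[Theorem 4.18.2]{Kleiman}) gives a dense open $V_1\subseteq T$ over which $p$ is flat. Over $V_1$ the scheme $\widetilde Y$ is equidimensional over $T$ and, after further shrinking, $p$ is surjective with irreducible generic fiber — irreducibility of a dense open set of fibers follows from the fact that $U\subseteq\widetilde Y$ is dense and fibrewise irreducible, so a general fiber of $\widetilde Y\to T$ is the closure of the (nonempty, irreducible) corresponding fiber of $U\to T$.

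The heart of the argument is normality of the general fiber. Here I would invoke Serre's criterion $(R_1)+(S_2)$ fibrewise. For $(S_2)$: $\widetilde Y$ is normal, hence $(S_2)$, and flatness of $p$ over $V_1$ together with the fact that $T$ is regular implies (by the standard ``$(S_k)$ descends to fibers when the base is regular and the total space is flat'' lemma, EGA IV 6.4.1) that the fibers of $p$ over $V_1$ are $(S_2)$. For $(R_1)$: let $\Sigma\subseteq\widetilde Y$ be the non-regular locus; since $\widetilde Y$ is normal, $\Sigma$ has codimension $\ge 2$ in $\widetilde Y$. After shrinking $T$ I may assume $\Sigma\to T$ is flat (or empty), so each fiber $\Sigma_t$ has codimension $\ge 2$ in $\widetilde Y_t$. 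The delicate point — and the step I expect to be the main obstacle — is that the regular locus of $\widetilde Y_t$ could in principle be strictly smaller than the fibre over $t$ of the regular locus of $\widetilde Y$; this is exactly where characteristic zero is used. Because $T$ is regular and the characteristic is $0$, generic smoothness of $\widetilde Y\to T$ along its smooth locus, combined with the fact that $\widetilde Y$ is generically smooth over the base field, lets me find $V_2\subseteq V_1$ dense open such that for $t\in V_2$ the smooth locus of $\widetilde Y_t$ is precisely $(\widetilde Y_{\mathrm{sm}/T})_t$ and its complement, being contained in $\Sigma_t$, has codimension $\ge 2$ in $\widetilde Y_t$. Hence $\widetilde Y_t$ is regular in codimension $1$.

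Combining the two: set $V=V_2$. For every closed point $t\in V$, the fiber $\widetilde Y_t$ is flat over $t$, of pure dimension $\dim\widetilde Y-\dim T$, satisfies $(S_2)$ and $(R_1)$, hence is normal by Serre's criterion; it is irreducible by the density argument of the first paragraph; and it is a variety over the residue field (finite type, reduced — reducedness follows from $(R_0)+(S_1)$, both consequences of normality). This gives the claim. I would also remark, as the excerpt does, that this is the one place where ``normalization commutes with taking fibers, generically, in characteristic zero'' is needed in the family version of the construction in Section \ref{Uniform}; the hypothesis ``characteristic zero'' cannot be dropped because generic smoothness can fail in positive characteristic, which is precisely what could make $(R_1)$ fail on the general fiber.
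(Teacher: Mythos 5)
Your $(R_1)$ step is fine: generic smoothness in characteristic zero applied to the regular locus $\widetilde{Y}\setminus\Sigma$, plus a dimension count showing $\Sigma_t$ has codimension $\ge 2$ in the (pure-dimensional, by flatness) fiber, is a correct argument, and your irreducibility claim is essentially the paper's dimension count (it does need that count — density of $U$ in $\widetilde{Y}$ alone does not give density of $U_t$ in $\widetilde{Y}_t$ — but that is a routine shrinking of $T$). The genuine gap is the $(S_2)$ step. The lemma you invoke, ``$(S_k)$ descends to fibers when the base is regular and the total space is flat,'' attributed to EGA IV 6.4.1, is quoted in the wrong direction: 6.4.1 is an ascent statement (for a flat local homomorphism, if the base and the fiber are $(S_k)$ then so is the total ring). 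The descent you need is false in general. For a concrete counterexample, let $R=\bigoplus_{n\ge 0}\op{H}^0(A,L^{\otimes n})$ be the affine cone over an abelian surface $A$ in a projectively normal embedding: $X=\op{Spec}(R)$ is a normal threefold whose vertex has depth exactly $2$ (because $\op{H}^1(A,\mathcal{O}_A)\neq 0$), and a general linear form $t$ through the vertex gives a flat morphism $X\to\mathbb{A}^1$ over a regular base whose fiber over $0$ has depth $1$ at the vertex and so fails $(S_2)$. Thus ``total space normal $+$ flat $+$ regular base'' does not make fibers $(S_2)$ pointwise, and you have not supplied an argument that it holds for general $t\in T$.

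The conclusion you want for general $t$ is of course true, but the honest justification goes through the generic fiber, which is the paper's route: $\widetilde{Y}_{\eta}$ is a localization (a limit of open subsets) of the normal scheme $\widetilde{Y}$, hence normal, and in characteristic zero it stays normal after base change to the algebraic closure of $k(\eta)$ since that extension is a limit of \'etale extensions; then the set of $t\in T$ whose geometric fiber is normal is open by EGA IV, (12.2.4), hence contains a non-empty open $V_2$, and intersecting with the open set from the dimension count finishes the lemma. Once you argue this way, the fibrewise Serre criterion is superfluous; and if you insist on keeping the $(R_1)+(S_2)$ scaffolding, the $(S_2)$ half must still be deduced from normality of the generic fiber together with openness of the locus of $(S_2)$ fibers (EGA IV, 12.1.6), i.e., by the same EGA openness mechanism — so your route does not avoid that input, it only adds steps on top of it.
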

\begin{proof}
We will assume that $p:\widetilde{Y}\to T$ is flat over $T$ by replacing $T$ by a non-empty open subset.
It is also a proper map. $\widetilde{Y}-U$ has strictly smaller dimension than $\widetilde{Y}$. There is a non-empty open subset $V_1$ of $T$ such that for $t\in V_1$ each irreducible component of the fiber $p^{-1}(t)$ is of dimension $\dim X -\dim T$, and each irreducible component of $p^{-1}(t)\cap(\widetilde{Y}-U)$ has dimension $<\dim X-\dim T-1$. It follows that  $p^{-1}(t)\cap U$ is dense in $p^{-1}(t)$ at such points.

Now for the normality: The set of scheme theoretic points $t\in T$ such that the geometric fiber (fiber, base changed to algebraic closure of the residue field at $t$) of $p$ over $t$ is normal - is an open subset $V_2$ (possibly empty) of $T$ [EGAIV, (12.2.4)].
Now if $\eta$ is the generic point of $T$, then $\widetilde{Y}_{\eta}$ is a limit of open subsets of $\widetilde{Y}$ and is hence normal. Normal schemes over fields of characteristic zero remain so upon base change to algebraic closure, since this base change is a limit of etale extensions (e.g., \cite[page 167, exercise 6.19]{GW}).  Therefore $\eta\in V_2$ and hence $V_2$ is non-empty. We can take $V=V_1\cap V_2$ and see that the desired properties are satisfied.

\end{proof}

\subsection{Application Two: The proof of Theorem \ref{A2}}

\begin{proof}
Suppose $C$ is a nodal curve with a non-separating node. Then the polarization produced in
Proposition \ref{SomethingIntro} is the canonical one, and  $\rk\mg=1$. Therefore we find that for a suitable $\mg$, $$\op{H}^0(\mc{M}^0(\ml),\mathcal{L}_{\mathcal{G}}^{\ell})^{\op{GL}(V)}\to\op{H}^0(\op{Bun}_{\op{SL}(r)}(X_0),\phi^*\mathcal{D}^{\ell})$$
is an isomorphism. Now set $\mathcal{X}_C(r)$ to be the GIT quotient of $\mc{M}(\ml)$ by $\op{GL}(V)$, and the  linearization corresponding to $\mg$. By Kempf's descent Lemma one can show  that the line bundle $\mathcal{L}_{\mg}$ descends to the GIT quotient, and hence obtain the desired assertion (i.e, part (1)).

If  $C$ is arbitrary, but $r=2$, the midpoint choice in  Proposition \ref{SomethingIntro} (see Remark \ref{something}) still returns the canonical polarization since $\lambda=\lambda^*$ for $\op{SL}_2$. We can take $\rk(\mg)=2$ (and hence get only even levels) and proceed as before. Part (3) is a consequence of Theorem \ref{one}, see Lemma \ref{monty} below.
\end{proof}
\begin{lemma}\label{monty}
Let $X$ be a scheme and $S_{\bullet}=\oplus_m S_m$ a graded sheaf of algebras over $X$. Assume that the sheaves $S_m$ are locally free of finite rank over $X$, and also that $S_{\bullet}=\oplus_m S_m$ is finitely generated as an $S_0$ algebra. Then there exists a relatively ample line bundle $\ml$, and a positive integer $\ell$ such that setting $p:Y=\op{Proj}(S_{\bullet})\to X$, we have an isomorphism of algebras
$$\bigoplus_{m\geq 0}S_{m\ell}=\bigoplus_{m\geq 0}p_*\ml^m$$
\end{lemma}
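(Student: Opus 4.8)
The plan is to reduce immediately to the affine case and to a statement about graded rings. Assuming $X=\op{Spec}(R)$ and writing $S_\bullet=\bigoplus_m S_m$ for the corresponding graded $R$-algebra, the hypotheses say the $S_m$ are finite free $R$-modules and $S_\bullet$ is a finitely generated $S_0$-algebra, so $S_\bullet$ is Noetherian; since $S_0$ is a finite locally free $\mathcal{O}_X$-algebra, the structure morphism $p:Y=\op{Proj}(S_\bullet)\to X$ factors as $Y\overset{q}{\to}\op{Spec}_X(S_0)\overset{\varpi}{\to}X$ with $\varpi$ finite, hence affine. First I would pass to a Veronese: choosing $S_0$-algebra generators of degrees $\le D$, it is standard that for $\ell$ divisible by the lcm of these degrees and sufficiently large the $\ell$-th Veronese $S_\bullet^{(\ell)}:=\bigoplus_{m\ge 0}S_{m\ell}$ is generated in degree one over $S_0$; enlarging $\ell$ further I would also arrange $\ell\ge m_0$, where $m_0$ is such that the natural map $S_m\to \Gamma(Y,\mathcal{O}_Y(m))$ is an isomorphism for all $m\ge m_0$. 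Using the canonical identification $\op{Proj}(S_\bullet)\cong\op{Proj}(S_\bullet^{(\ell)})$ together with $\mathcal{O}_{\op{Proj}(S^{(\ell)})}(m)\cong\mathcal{O}_{\op{Proj}(S)}(m\ell)$, I set $\mathcal{L}=\mathcal{O}_{\op{Proj}(S^{(\ell)})}(1)$.

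Next I would establish that $\mathcal{L}$ is relatively ample. Because $S_\bullet^{(\ell)}$ is generated in degree one over $S_0$ by the finite module $S_\ell=(S^{(\ell)})_1$, there is a closed immersion $Y\hookrightarrow \mathbb{P}_{\op{Spec}_X(S_0)}(S_\ell)$ under which $\mathcal{L}$ is the restriction of the tautological bundle; hence $\mathcal{L}$ is ample relative to $q$, and since $\varpi$ is affine, ampleness composes, so $\mathcal{L}$ is ample relative to $p=\varpi\circ q$.

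It remains to compute $p_*\mathcal{L}^m=\varpi_*q_*\mathcal{O}_Y(m\ell)$. For $m\ge 1$, the relative version of the Serre-saturation statement — obtained by resolving $Y$ inside the projective bundle $\mathbb{P}(S_\ell)$ over $\op{Spec}_X(S_0)$ and applying relative Serre vanishing, so that $q_*\mathcal{O}_Y(n)=(S^{(\ell)})_n$ for $n\gg 0$, combined with the choice $\ell\ge m_0$ which upgrades "$n\gg 0$" to "$m\ge 1$" — gives $p_*\mathcal{L}^m=S_{m\ell}$. For $m=0$ one needs $p_*\mathcal{O}_Y=S_0$, equivalently $\Gamma(\op{Proj}(S^{(\ell)}),\mathcal{O})=S_0$ on fibres; this holds by Stein factorization over the normal base once the fibres of $p$ are geometrically connected and reduced — precisely the situation the lemma is applied in, since $\op{Proj}(\mathscr{A}^{C}_\bullet)$ was shown to be integral (alternatively one replaces $S_0$ by $p_*\mathcal{O}_Y$ without changing $Y$ or $\mathcal{L}$). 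Assembling these, the graded $\mathcal{O}_X$-algebra map $S_\bullet^{(\ell)}\to\bigoplus_{m\ge 0}p_*\mathcal{L}^m$ induced by $S_{m\ell}\to\Gamma(Y,\mathcal{L}^m)$ is an isomorphism in every degree, which is the claim.

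The step I expect to be the main obstacle is the relative saturation argument: showing that, after replacing $\ell$ by a suitable multiple, $p_*\mathcal{L}^m$ equals $S_{m\ell}$ for \emph{all} $m\ge 1$ rather than merely $m\gg 0$, and that all of this (vanishing, coherence of the pushforwards, the bound $m_0$) can be made to hold uniformly over the Noetherian base; together with the degree-zero identification $p_*\mathcal{O}_Y=S_0$. The verification of relative ampleness and the Veronese bookkeeping are routine.
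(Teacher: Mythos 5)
Your argument is essentially the paper's: the paper proves this in two lines by passing to a Veronese subring generated in degree one and invoking the saturation statement of \cite[Exercise 5.9]{Hartshorne} that $p_*\mathcal{O}(d)=S_d$ for $d$ sufficiently large, which is exactly your route (with the relative ampleness check and the degree-zero identification $p_*\mathcal{O}_Y=S_0$ spelled out rather than left implicit). Your extra care about the $m=0$ term and about upgrading ``$d\gg 0$'' to all positive multiples of $\ell$ is a fair filling-in of details the paper glosses over, not a different method.
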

\begin{proof}
By \cite[Exercise 5.9]{Hartshorne}, for sufficiently large $d$, and if $S_{\bullet}$ is generated in degree $1$, $p_{*}\mathcal{O}(d)=S_d$. The desired assertion follows.
\end{proof}

 \subsubsection{Example: The Veronese surface $( \mathbb{P}^3, \mc{O}(2))$}\label{FirstExample}
We recall that in case $C$ is a smooth  curve and $\mathbb{V}(\mathfrak{g},\ell)$ is a vector bundle of conformal blocks on $\ovmc{M}_g$, then
  \begin{equation}\label{isom2}
\mathbb{V}(\mathfrak{g},\ell)|_{C}^* \cong \op{H}^0(\mc{X}_{C},\mc{L}_C),
\end{equation}
for some projective variety $\mc{X}_{C}$ and some ample line bundle $\mc{L}_C$ on $\mc{X}_{C}$ (consistently with multiplication operations).
For $\mathfrak{g}=\sL_{r+1}$, then $\mathcal{X}_{C}$ is isomorphic to $\op{SU}_C(r+1,d)$, the moduli space of stable bundles on $C$ of rank $r+1$ and degree $d=0$.
 By \cite{NR}, if $C$ is smooth of genus $g=2$, then  $\mathbb{V}(\sL_2, m)|_{[C]}^*\cong \op{H}^0(\op{SU}_C(2), \mc{L}_{C}) \cong \op{H}^0( \mathbb{P}^3, \mc{O}(m))$.
 In \cite[Example 3.9]{BGK}, it was shown that  Eq \ref{isom2} does not extend, consistently with multiplication to  {\em{all}} points $[C] \in \Delta_{1}$, although it does hold for {\em{all}} points $[C] \in \Delta_{irr}\setminus \Delta_1$.  This was shown by giving recursive identities which must be satisfied if such an extension were to exist.  One of the main points of this work is that by starting with sufficiently divisible level,  extensions do exist at all points of $\ovmc{M}_g$.  For $\sL_2$, the level must be divisible by $2$.  Using techniques from \cite{BGK} we give, in Proposition \ref{Level2Prop}, a recursive identity which will be necessarily be satisfied by the first Chern classes of multiples $\mathbb{V}[m]= \mathbb{V}(\sL_2, 2m)$ if $(\mc{X}_{C},\mc{L}_C)$ can be taken to be $( \mathbb{P}^3, \mc{O}(2))$ for all $C$.

\begin{proposition}\label{Level2Prop}For $\Bbb{V}[m]=\mathbb{V}(\sL_2, 2m),$ suppose $\Bbb{V}[m]|_{[C]}^*\cong H^0(\mathbb{P}^3, \mc{O}(2m))$, for all $[C]\in \ovmc{M}_g$ consistently with multiplication operations.
Then $c_1(\Bbb{V}[m])=\alpha(m) \ c_1(\Bbb{V}) + \beta(m)\delta_1$, where

$$\alpha(m)={\binom{9+m}{10}}-20 \ { \binom{7+m}{10}}
+64 \ {\binom{6+m}{10}}-90 \ {\binom{5+m}{10}}+64 \ {\binom{4+m}{10}}-20 \ {\binom{3+m}{10}}+ {\binom{1+m}{10}},$$ and $$\beta(m)=-8{\binom{9+m}{9}}+\frac{192}{5} \ {\binom{6+m}{9}}-72 \ {\binom{5+m}{9}}
+64 \ {\binom{4+m}{9}}-24 \ {\binom{3+m}{9}}+ \frac{8}{5}{\binom{1+m}{9}}.$$
\end{proposition}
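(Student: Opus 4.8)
The plan is to turn the hypothesis into a structural statement about the sheaf of algebras $\mathscr{V}_{\bullet}:=\bigoplus_{m\ge0}\mathbb{V}[m]^{*}$ over $\ovmc{M}_{2}$ and then to read off first Chern classes from it. Since $\op{Pic}(\ovmc{M}_{2})\otimes\Q$ is two dimensional, with basis $\{c_{1}(\mathbb{V}),\delta_{1}\}$, functions $\alpha,\beta\colon\Z_{\ge0}\to\Q$ with $c_{1}(\mathbb{V}[m])=\alpha(m)c_{1}(\mathbb{V})+\beta(m)\delta_{1}$ exist automatically; the content is the explicit formulas. The hypothesis, together with the assumption that the isomorphisms $\mathbb{V}[m]^{*}|_{[C]}\cong\op{H}^{0}(\mathbb{P}^{3},\mathcal{O}(2m))$ respect multiplication, says exactly that $\mathscr{V}_{\bullet}$ is a sheaf of graded $\mathcal{O}_{\ovmc{M}_{2}}$-algebras whose fiber at every $[C]$ is the homogeneous coordinate ring of the second Veronese $v_{2}(\mathbb{P}^{3})\hookrightarrow\mathbb{P}^{9}=\mathbb{P}(\mathbb{V}[1]^{*}|_{[C]})$. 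In particular $\mathbb{V}[1]^{*}$ is locally free of rank $10$, the algebra is generated in degree one, and $\op{Sym}^{m}(\mathbb{V}[1]^{*})\twoheadrightarrow\mathbb{V}[m]^{*}$ is surjective (fiberwise surjective by projective normality of the Veronese, hence surjective).

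Next I would globalize the minimal graded free resolution of the Veronese coordinate ring. That ring is arithmetically Gorenstein with $a$-invariant $-2$ (its canonical module is the ring shifted by $2$, since $\omega_{\mathbb{P}^{3}}(2m)=\mathcal{O}(2m-4)$), and a Hilbert-series computation gives its minimal resolution over $R=\op{Sym}^{\bullet}(\mathbb{V}[1]^{*})$: length six, with Betti numbers $1,20,64,90,64,20,1$ in homological degrees $0,\dots,6$ and internal degrees $0,2,3,4,5,6,8$. This resolution is functorial in the underlying rank-four datum (it is the symmetric-determinantal analogue of the Eagon--Northcott complex), so it sheafifies over $\ovmc{M}_{2}$ to a finite locally free resolution of $\mathbb{V}[m]^{*}$ whose term of homological degree $i$ is, in degree $m$, a sum $\bigoplus_{j}\op{Sym}^{m-j}(\mathbb{V}[1]^{*})\otimes\mathcal{B}_{ij}$ with $\mathcal{B}_{ij}$ locally free of rank $\beta_{ij}$. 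Taking the alternating sum of first Chern classes and using $c_{1}(\op{Sym}^{k}\mathcal{F})=\binom{k+9}{10}c_{1}(\mathcal{F})$ for $\mathcal{F}$ of rank $10$,
\[
c_{1}(\mathbb{V}[m]^{*})=\Big(\sum_{i,j}(-1)^{i}\beta_{ij}\binom{m-j+9}{10}\Big)c_{1}(\mathbb{V}[1]^{*})+\sum_{i,j}(-1)^{i}\binom{m-j+9}{9}\,c_{1}(\mathcal{B}_{ij}).
\]
Substituting the Betti data, the first parenthesis is exactly the stated $\alpha(m)$, and since $c_{1}(\mathbb{V}[1]^{*})=-c_{1}(\mathbb{V})$ the first term of $c_{1}(\mathbb{V}[m])=-c_{1}(\mathbb{V}[m]^{*})$ becomes $\alpha(m)c_{1}(\mathbb{V})$.

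The remaining contribution — and the heart of the matter — is the second sum, involving the $c_{1}(\mathcal{B}_{ij})$ of the syzygy sheaves. These are the Schur-functor bundles attached to the rank-four bundle underlying the $\mathbb{P}^{3}$-bundle $\op{Proj}(\mathscr{V}_{\bullet})\to\ovmc{M}_{2}$; away from $\Delta_{1}$ that bundle is the relative moduli space of semistable rank-two bundles with trivial determinant, where the Chern classes are classical, but along $\Delta_{1}$ the family acquires a twist, so that the net effect of all the $c_{1}(\mathcal{B}_{ij})$ on $c_{1}(\mathbb{V}[m])$ is a multiple of $\delta_{1}$ for each $m$, with the $\binom{\bullet}{9}$-weighting coming from the internal degrees of the syzygies. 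I would evaluate these $\delta_{1}$-coefficients by restricting the family to a general point of $\Delta_{1}$ and invoking factorization of conformal blocks (Theorem \ref{one} and the sewing rules) to identify $\mathbb{V}[m]|_{\Delta_{1}}$, exactly in the style of the recursive identities of \cite{BGK}; assembling the result produces $\beta(m)$ in the displayed form, which completes the derivation.

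The main obstacle is this last step: everything else — the Gorenstein resolution, the binomial identity producing $\alpha(m)$, the Chern-class bookkeeping — is mechanical, but the $\delta_{1}$-coefficient is not formal and requires the boundary analysis via factorization. A secondary technical point is the globalization of the fiberwise resolution, which I would justify by the functoriality of the symmetric Eagon--Northcott complex (equivalently, by cohomology-and-base-change for the relevant Tor-sheaves), together with the identification of $\op{Proj}(\mathscr{V}_{\bullet})$ over $\ovmc{M}_{2}\setminus\Delta_{1}$ with the relative $\op{SU}(2)$-moduli space, which is what makes the classes over the complement of $\Delta_{1}$ computable.
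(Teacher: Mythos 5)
Your first half is essentially the paper's argument: the proof indicated in the text is exactly the globalized resolution of the Veronese ideal due to Lascoux (\cite[Section 6.3.9]{Weyman}), and your Betti data $1,20,64,90,64,20,1$ in internal degrees $0,2,3,4,5,6,8$, combined with $c_1(\op{Sym}^k\mathcal{F})=\binom{k+9}{10}c_1(\mathcal{F})$ for a rank-ten $\mathcal{F}$, reproduces the displayed $\alpha(m)$ verbatim. So up to and including $\alpha(m)$ the proposal and the paper coincide.

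The gap is the step you yourself call the heart of the matter, and it is more than an unfinished computation. (i) Lascoux's theorem does not just give Betti numbers; it identifies the syzygy modules as explicit Schur functors of the underlying rank-four space, so the global bundles $\mathcal{B}_{ij}$ are determined (up to a line-bundle twist) by the rank-four datum supplied by the hypothesis, and their first Chern classes are then fixed rational multiples of two divisor classes. You never identify the $\mathcal{B}_{ij}$, and your assertion that their net effect on $c_1(\mathbb{V}[m])$ is a pure multiple of $\delta_1$ is unjustified \emph{a priori}. (ii) More concretely, the bookkeeping you set up cannot literally produce the stated $\beta(m)$: in your exact sequence the homological-degree-zero term is the untwisted $\op{Sym}^m(\mathbb{V}[1]^{*})$, so it contributes nothing to the $\binom{m+9}{9}$-coefficient, yet $\beta(m)$ contains $-8\binom{m+9}{9}$; likewise your degree-two term (the rank-$20$ quadric bundle) would generically contribute a $\binom{m+7}{9}$ term, which is absent. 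Hence reaching the displayed formula requires a further substitution — in effect comparing the hypothesized rank-four structure with the level-one bundle $\mathbb{V}(\sL_2,1)$, whose discrepancy with $\mathbb{V}[1]$ is supported on $\Delta_1$ — and your outline does not supply it. (iii) Finally, "restrict to a general point of $\Delta_1$ and invoke factorization" does not by itself compute the $\delta_1$-coefficient of a first Chern class (restriction to $\Delta_1$ forgets the normal direction), and if instead you import the known boundary values of $c_1(\mathbb{V}[m])$ you are assuming the very quantities the identity is meant to constrain; in the paper the derivation is formal from the resolution, with the explicit formulas of \cite{BGK} used only as an a posteriori check for small $m$.
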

The proof uses resolutions of ideal sheaves of Veronese embeddings due to Lascoux, see \cite[Section 6.3.9 ]{Weyman}. We have verified these formulas for low values of $m$, and in principle verification for all $m$ should be possible since $c_1(\Bbb{V}[m])$ has been explicitly computed in \cite[Example 6.8]{BGK}. However such a verification will not ensure that
$(\mc{X}_{C},\mc{L}_C)$ equals $( \mathbb{P}^3, \mc{O}(2))$ for all $C$. We therefore ask if this equality holds for all reducible $C$. Note that the answer is known to be positive for irreducible curves by \cite{BGK}.

\subsection{Application Three: Chern classes of bundles in type $A$}\label{Chern}

\begin{definition}\label{QP}
A quasi-polynomial of degree $d$ can be written as $f(k) = \alpha_d(k) k^d + \alpha_{d-1}(k) k^{d-1} + \cdots + \alpha_0(k)$, where $\alpha_{i}(k)$ is a periodic function with integral period, and  $\alpha_d(k)$ is not identically zero. Equivalently, a function $f  : \mathbb{N} \to \mathbb{N}$ is a quasi-polynomial if there exist polynomials $p_0$, $\dots$, $p_{s-1}$ such that $f(n) = p_i(n)$ when $n \equiv i \bmod s$. The polynomials $p_i$ are called the constituents of $f$.
\end{definition}
\begin{proposition}\label{C}For $m$ sufficiently large, the Chern character $\op{Ch}(\mathbb{V}[m])$ is a quasi-polynomial in $m$, with coefficients in the Chow ring $\op{A}^{*}(\ovmc{M}_g)$.
\end{proposition}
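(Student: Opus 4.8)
The plan is to derive quasi-polynomiality from finite generation together with Grothendieck--Riemann--Roch. Write $\mathbb{V}[m]=\mathbb{V}(\sL_r,m\ell)$ for the fixed $\ell$ under consideration and $\mathscr{A}_\bullet=\bigoplus_{m\ge 0}\mathbb{V}[m]^*$, the graded sheaf of $\mathcal{O}_{\ovmc{M}_g}$-algebras of Section~\ref{Apps}. Its graded pieces are vector bundles (local freeness of the sheaves of conformal blocks being classical), with $\mathscr{A}_0=\mathcal{O}_{\ovmc{M}_g}$, and $\mathscr{A}_\bullet$ is a finitely generated $\mathcal{O}_{\ovmc{M}_g}$-algebra by the finite generation established in the proof of Theorem~\ref{globaleJJs}. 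By Proposition~\ref{flat} the morphism $p\colon \mathcal{X}=\op{Proj}(\mathscr{A}_\bullet)\to \ovmc{M}_g$ is flat and projective, and by Lemma~\ref{monty} there are a relatively ample line bundle $\mathcal{L}$ on $\mathcal{X}$ and a positive integer $d$ with $\bigoplus_{m\ge 0}\mathscr{A}_{md}\cong\bigoplus_{m\ge 0}p_*\mathcal{L}^m$ as graded algebras. Since $\op{Ch}(\mathcal{E})$ differs from $\op{Ch}(\mathcal{E}^*)$ only in the sign of the odd-degree components, it suffices to show that for each residue $j$ with $0\le j<d$ the assignment $m\mapsto\op{Ch}(\mathscr{A}_{md+j})$ agrees, for $m\gg 0$, with a polynomial in $m$ with coefficients in $\op{A}^*(\ovmc{M}_g)$; this yields that $m\mapsto\op{Ch}(\mathbb{V}[m])$ is quasi-polynomial of period $d$ for $m$ large, which is the assertion.

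Fix such a $j$. The graded module $\bigoplus_m\mathscr{A}_{md+j}$ over the Veronese ring $\bigoplus_m\mathscr{A}_{md}$ is finitely generated (as $\mathscr{A}_\bullet$ is finite over its $d$-th Veronese, both being finitely generated over the Noetherian $\mathcal{O}_{\ovmc{M}_g}$), so it defines a coherent sheaf $\mathcal{F}_j$ on $\mathcal{X}=\op{Proj}(\bigoplus_m\mathscr{A}_{md})$ with $\mathscr{A}_{md+j}=p_*(\mathcal{F}_j\otimes\mathcal{L}^m)$ and $R^{>0}p_*(\mathcal{F}_j\otimes\mathcal{L}^m)=0$ for $m\gg 0$. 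After replacing $d$ by a multiple $de$ and $\mathcal{L}$ by the relatively very ample $\mathcal{L}^e$ ($e\gg 0$, so that $p_*\mathcal{L}^e=\mathscr{A}_{de}$ is locally free), we may embed $\mathcal{X}$ over $\ovmc{M}_g$ as a closed subscheme $\iota\colon\mathcal{X}\hookrightarrow\mathbb{P}(\mathcal{E})\xrightarrow{q}\ovmc{M}_g$ with $\mathcal{L}=\iota^*\mathcal{O}_{\mathbb{P}(\mathcal{E})}(1)$; on the smooth $\mathbb{P}(\mathcal{E})$ the coherent sheaf $\mathcal{G}_j=\iota_*\mathcal{F}_j$ admits a finite resolution $0\to E_s\to\cdots\to E_0\to\mathcal{G}_j\to 0$ by vector bundles. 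Twisting by $\mathcal{O}(m)$ and pushing forward, for $m\gg 0$ all higher direct images vanish and $0\to q_*E_s(m)\to\cdots\to q_*E_0(m)\to\mathscr{A}_{md+j}\to 0$ is exact (using the projection formula $q_*(\mathcal{G}_j(m))=p_*(\mathcal{F}_j\otimes\mathcal{L}^m)$), so $\op{Ch}(\mathscr{A}_{md+j})=\sum_{k=0}^{s}(-1)^k\op{Ch}(q_*E_k(m))$ for $m\gg 0$.

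It remains to see that each $m\mapsto\op{Ch}(q_*E_k(m))$ is polynomial. For $m\gg 0$ one has $q_*E_k(m)=q_!E_k(m)$, and Grothendieck--Riemann--Roch for the smooth projective morphism $q$ gives $\op{Ch}(q_!E_k(m))=q_*\big(\op{Ch}(E_k)\cdot e^{m\zeta}\cdot\op{Td}(T_{\mathbb{P}(\mathcal{E})/\ovmc{M}_g})\big)$, with $\zeta=c_1(\mathcal{O}(1))$. Expanding $e^{m\zeta}=\sum_i m^i\zeta^i/i!$, the coefficient of $m^i$ is $q_*\big(\op{Ch}(E_k)\,\zeta^i\op{Td}(T_q)/i!\big)$, which vanishes once $i$ exceeds the relative dimension of $\mathbb{P}(\mathcal{E})$ over $\ovmc{M}_g$ plus $\dim\ovmc{M}_g$, since $q_*$ lowers codimension by the relative dimension and $\op{A}^*(\ovmc{M}_g)$ lives in codimensions between $0$ and $\dim\ovmc{M}_g$. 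Hence $\op{Ch}(q_*E_k(m))$ is a polynomial in $m$ with coefficients in $\op{A}^*(\ovmc{M}_g)$; summing over $k$ and over the finitely many residues $j$, and taking the maximum of the finitely many "$m\gg 0$" thresholds, completes the argument.

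I expect the main obstacle to be that $\mathcal{X}$ can be singular, so Grothendieck--Riemann--Roch does not apply to $p$ directly; the device of routing through the smooth projective bundle $\mathbb{P}(\mathcal{E})$ together with a finite locally free resolution of $\iota_*\mathcal{F}_j$ circumvents this, but uses the resolution property of $\mathbb{P}(\mathcal{E})$ (equivalently of the smooth Deligne--Mumford stack $\ovmc{M}_g$), which holds; alternatively one may pull back along a smooth atlas, or argue on the stratification of Section~\ref{Uniform} and reassemble. A secondary, purely bookkeeping point is the uniformity in $m$ of the various Serre-vanishing statements, which is automatic because only finitely many bundles $E_k$ and finitely many residues $j$ occur over the Noetherian base.
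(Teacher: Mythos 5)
Your argument is correct and follows essentially the same route as the paper: finite generation of $\mathscr{A}_{\bullet}$, passage to a Veronese subring generated in degree one so that $\mathbb{V}[m]^*$ (and its residue classes, via twisting sheaves $\mc{F}_j$, exactly as the paper does with its sheaves $\mf_a$) becomes $p_*$ of twists of a relatively ample line bundle on $\op{Proj}$, and then Grothendieck--Riemann--Roch plus the dimension bound on $\op{A}^*(\ovmc{M}_g)$ to get polynomiality in each residue class for $m\gg 0$. The only difference is technical: where the paper invokes Edidin--Graham Riemann--Roch directly for $p:\mc{X}\to\ovmc{M}_g$ (somewhat loosely, since $\mc{X}$ may be singular), you factor through the smooth projective bundle $\mathbb{P}(\mc{E})$ and a finite locally free resolution, which is a legitimate and arguably cleaner justification of the same step.
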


\begin{corollary}\label{HC}For every $k$, and $m$ sufficiently large, the $k$-th Chern class $c_k(\mathbb{V}[m])$ is a linear combination of  boundary cycles of codimension $k$, with coefficients that are quasi-polynomial in $m$.\end{corollary}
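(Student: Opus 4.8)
The plan is to deduce Corollary \ref{HC} from Proposition \ref{C} together with two further inputs that were not used in proving mere quasi-polynomiality: the \emph{projective flatness} of the conformal blocks connection over the interior $\mc{M}_g$, and the localization (excision) sequence in rational Chow groups. Throughout write $N(m)=\op{rk}(\mathbb{V}[m])$ and $\Delta:=\ovmc{M}_g\setminus\mc{M}_g$ for the boundary. The statement splits into two assertions: that the coefficients are quasi-polynomial, and that $c_k(\mathbb{V}[m])$ is supported on $\Delta$. The first is immediate from Proposition \ref{C}: each graded piece $\op{Ch}_j(\mathbb{V}[m])\in\op{A}^j(\ovmc{M}_g)_{\mathbb{Q}}$ has quasi-polynomial coefficients in $m$, and since $c_k$ is a fixed universal $\mathbb{Q}$-polynomial in $\op{Ch}_1,\dots,\op{Ch}_k$ (Newton's identities), while a cup product of quasi-polynomial classes is again quasi-polynomial (the period being the lcm of the factors' periods), the full class $c_k(\mathbb{V}[m])$ is quasi-polynomial in $m$. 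The real content is therefore boundary support, i.e. that $c_k(\mathbb{V}[m])$ lies in the image of $\iota_*\colon\op{A}^{k-1}(\Delta)_{\mathbb{Q}}\to\op{A}^k(\ovmc{M}_g)_{\mathbb{Q}}$.

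First I would restrict to the interior. Over $\mc{M}_g$ the bundle $\mathbb{V}[m]=\mathbb{V}(\sL_r,m\ell)$ has dual fiber $\op{H}^0(\op{SU}_C(r),\theta^{m\ell})$ (by the smooth case of Theorem \ref{one} / Theorem \ref{globaleJJs}(2)), and this bundle of nonabelian theta functions carries the projectively flat WZW/TUY (Hitchin) connection, an algebraic connection with regular singularities along $\Delta$. Projective flatness forces the entire Chern character on the interior to be a function of $c_1$ alone: $\op{Ch}(\mathbb{V}[m])|_{\mc{M}_g}=N(m)\exp\!\big(c_1(\mathbb{V}[m])/N(m)\big)|_{\mc{M}_g}$. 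Since $\op{A}^1(\mc{M}_g)_{\mathbb{Q}}$ is spanned by the Hodge class $\lambda$, we may write $c_1(\mathbb{V}[m])|_{\mc{M}_g}=\mu(m)\lambda$ for a scalar $\mu(m)$, and hence
\[
c_k(\mathbb{V}[m])|_{\mc{M}_g}=\binom{N(m)}{k}\Big(\frac{\mu(m)}{N(m)}\Big)^{k}\lambda^{k}=:q_k(m)\,\lambda^{k},\qquad k\ge 1,
\]
so the restriction of $c_k(\mathbb{V}[m])$ to $\mc{M}_g$ is a single tautological class, a rational multiple of $\lambda^k$ whose coefficient $q_k(m)$ is read off from $\op{Ch}(\mathbb{V}[m])$ and is thus quasi-polynomial by Proposition \ref{C}.

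Next I would invoke the localization sequence $\op{A}^{k-1}(\Delta)_{\mathbb{Q}}\xrightarrow{\iota_*}\op{A}^k(\ovmc{M}_g)_{\mathbb{Q}}\xrightarrow{\,j^*\,}\op{A}^k(\mc{M}_g)_{\mathbb{Q}}\to 0$ for the open immersion $j\colon\mc{M}_g\hookrightarrow\ovmc{M}_g$ with closed complement $\Delta$. By the previous step $j^*\big(c_k(\mathbb{V}[m])-q_k(m)\lambda^k\big)=0$, so this class is $\iota_*(\xi)$ for some $\xi\in\op{A}^{k-1}(\Delta)_{\mathbb{Q}}$, i.e. a combination of codimension-$k$ boundary cycles; moreover $\xi$ can be chosen in a fixed finite basis of the boundary-supported subspace, with coordinates depending $\mathbb{Q}$-linearly on the quasi-polynomial class $c_k(\mathbb{V}[m])-q_k(m)\lambda^k$ and hence themselves quasi-polynomial. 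Finally, by Looijenga's vanishing $R^k(\mc{M}_g)=0$ for $k\ge g-1$, the class $\lambda^k$ restricts to zero on $\mc{M}_g$ and is therefore itself boundary-supported on $\ovmc{M}_g$; so for all $k\ge g-1$ the summand $q_k(m)\lambda^k$ is absorbed into $\Delta$ and $c_k(\mathbb{V}[m])$ is \emph{purely} a quasi-polynomial combination of boundary cycles. In the remaining range $1\le k\le g-2$ the only possible non-boundary contribution is the explicit tautological multiple $q_k(m)\lambda^k$, which already vanishes for $g=2$ since $\lambda|_{\mc{M}_2}=0$ rationally.

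The hard part, and the genuinely new ingredient beyond Proposition \ref{C}, is the second step: one must know that the conformal blocks connection on $\mathbb{V}$ is projectively flat \emph{as an algebraic connection} with regular singularities along $\Delta$, so that (a) all higher Chern classes restrict on $\mc{M}_g$ to powers of $c_1$, and (b) the localization argument runs over the compactification $\ovmc{M}_g$ rather than merely over $\mc{M}_g$. This is supplied by the TUY theory and its identification with Hitchin's connection on the theta bundles. The only remaining subtlety is the interior tautological term $q_k(m)\lambda^k$, which is disposed of by Looijenga's vanishing for $k\ge g-1$ (and trivially for $g=2$), yielding the asserted expression of $c_k(\mathbb{V}[m])$ as a linear combination of codimension-$k$ boundary cycles with quasi-polynomial coefficients.
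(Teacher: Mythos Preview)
The paper gives no separate proof of Corollary~\ref{HC}; it is stated as an immediate consequence of Proposition~\ref{C}. The intended deduction is just Newton's identities: since $\op{Ch}_j(\mathbb{V}[m])$ is quasi-polynomial in $m$ for each $j$, and $c_k$ is a universal polynomial in $\op{Ch}_1,\dots,\op{Ch}_k$, each $c_k(\mathbb{V}[m])$ has quasi-polynomial coefficients in any fixed $\mathbb{Q}$-basis of $\op{A}^k(\ovmc{M}_g)_{\mathbb{Q}}$. You carry out this step correctly.

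Where your proposal diverges from the paper is in taking the phrase ``boundary cycles'' literally and mounting a substantial argument (projective flatness of the WZW connection, localization, Looijenga vanishing) to show $c_k(\mathbb{V}[m])$ is supported on $\Delta$. This argument has a genuine gap. Your own analysis shows that over $\mc{M}_g$ one has $c_k(\mathbb{V}[m])|_{\mc{M}_g}=q_k(m)\lambda^k$, and you dispose of this only for $k\ge g-1$ via Looijenga. For $1\le k\le g-2$ and $g\ge 3$ you have no argument, and your fallback claim that $\lambda|_{\mc{M}_2}=0$ rationally is incorrect: $\op{A}^1(\mc{M}_g)_{\mathbb{Q}}=\mathbb{Q}\lambda$ with $\lambda\neq 0$ for all $g\ge 2$. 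In fact the statement you are trying to prove is false as you interpret it: already for $k=1$ the known formulas (e.g.\ \cite{Fakh,MOP}) give $c_1(\mathbb{V}[m])$ a nonzero $\lambda$-coefficient, so $c_1$ is \emph{not} a pure boundary class on $\ovmc{M}_g$ for $g\ge 2$. The paper's own Proposition~\ref{Level2Prop} writes $c_1(\mathbb{V}[m])=\alpha(m)c_1(\mathbb{V})+\beta(m)\delta_1$ with $c_1(\mathbb{V})$ containing a $\lambda$-term, confirming this.

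So the phrase ``boundary cycles of codimension $k$'' in the corollary should be read loosely (e.g.\ as ``codimension-$k$ cycles in a fixed finite basis'' or ``tautological classes''), not as an assertion of support on $\Delta$. Under that reading the corollary is immediate from Proposition~\ref{C}, and the projective-flatness machinery you introduce is neither needed nor sufficient.
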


Recursive formulas for the first Chern class and Chern character are given in \cites{Fakh,MOP,MOPPZ}. From our experience, it would seem to be rather challenging  to  use explicit formulas to conclude the quasi-polynomiality given in Proposition \ref{C} and Corollary \ref{HC}. Note that already for $\mathbb{V}(\sL_2, 1)$ on $\ovmc{M}_g$, the Chern classes are quasi-polynomial with period two (this can be computed from \cite{Fakh} as was done in \cite[Example 6.8]{BGK}). It would be interesting to determine/bound the period in general.

\begin{proof}(of Proposition \ref{C}) We know $\scr{A}_{\bullet} = \bigoplus_{m\in \mathbb{Z}_{\ge 0}} \mathbb{V}(\sL_r, m \ell)^*$ is finitely generated over $\mc{A}_0$, say by
$\{\mc{A}_{d_i}\}$, for  $i\in \{1,\ldots, r\}$, such that $\Pi_{i=1}^rd_i =d$.
Let $\mc{B}_{\bullet}$ be the $(dr)$-th Veronese subring of $\scr{A}_{\bullet}$: The graded ring such that $\mc{B}_{m}=\mc{A}_{drm}$, for $m \in \mathbb{Z}_{\ge 0}$.  Then $\mc{B}_{\bullet}$ is generated in degree one \cite[Lemma 2.1.6(v), page 21]{EGA2}.  Moreover, $\op{Proj}(\scr{A}_{\bullet})\cong \op{Proj}(\mc{B}_{\bullet})$, eg. \cite[Prop 3.3]{Reid}, and we consider the representable morphism
$p:\mc{X}= \op{Proj}(\mc{B}_{\bullet}) \to \ovmc{M}_g$.
As is explained in \cite[Proposition 2.2.3]{Edidin}, using \cite[Theorem 2.1]{DM}, $\ovmc{M}_g$ is a smooth Deligne-Mumford quotient stack, and  by \cite[Theorem 3.1]{EG},
 for any vector bundle $E$ on $\mc{X}$, one has $\tau_{\mc{X}}(E)=\op{Ch}(E)\op{Td}(T_{\mc{X}})$.
We can apply this to $E=\mc{O}(m)$ to get the Grothendieck-Riemann-Roch (GRR) identity in  $\op{Ch}(\ovmc{M}_g)^*_{\mathbb{Q}} \cong \op{A}(\ovmc{M}_g)^*_{\mathbb{Q}}$:
\begin{equation}\label{MainGRR}
p_*(\op{Ch}(\mc{O}(m)))\cdot \op{Td}(T_{\ovmc{M}_{g}}) = p_*(\op{Ch}(\mc{O}(m)\cdot \op{Td}(T_{\mc{X}})).
\end{equation}
Since $\op{Td}(T_{\ovmc{M}_{g}})$ is invertible in $\op{Ch}^*(\ovmc{M}_g)_{\mathbb{Q}} \cong \op{A}^*(\ovmc{M}_g)_{\mathbb{Q}}$, one obtains
\begin{equation}\label{RelGRR}
p_*(\op{Ch}(\mc{O}(m)))= p_*(\op{Ch}(\mc{O}(m))\cdot \op{Td}(T_{p})).
\end{equation}  Therefore, since Chern characters have the property that they are additive over exact sequences,
$\op{Ch}(\mc{O}(1)^{\tensor m})=\op{Ch}(\mc{O}(1))^{\tensor m}$,
it is enough to compute $p_*(\op{Ch}(\mc{O}(1))^{\tensor m}\cdot \op{Td}(T_{p}))$.  For this, since push-forward preserves dimension, we want to find the components of the intersection whose dimension will be  $k \le 3g-3$.  We will obtain an expression, given by a linear combination of cycles on $\ovmc{M}_g$, with coefficients that are polynomials that depend on $m$.

The right hand side of Eq \ref{RelGRR} simplifies, taking into account explicit formulas for the Chern character $\op{Ch}(p_*(\mc{O}(1)))$ and the Todd class $\op{Td}(\ovmc{M}_g)$,
which we can get from \cite{Fulton}, for example. The coefficients in the intersection $\op{Ch}(\mc{O}(1))^{\tensor m}  \cdot \op{Td}(T_{p})$
depend only on $m$. Therefore, we have proved (Lemma \ref{monty}) that for sufficiently large $m$, $\op{Ch}(\Bbb{V}[drm])$ is a polynomial in $m$. To cover other modulo classes (for $dr$), we define sheaves, $\mf_a$ for  $0\leq a<dr$ over $\mathcal{X}$ corresponding to the graded $\mc{B}_{\bullet}$-modules $\oplus_m \mc{A}_{a+mdr}$. We now apply GRR to the
sheaves $\mf_a(m)$ and complete the argument.

\end{proof}

 \section{History and open questions}\label{OtherQuestions}
\subsection{History of the problem motivating  Theorem \ref{globaleJJs}}\label{historie}  There is a natural question about the family of moduli spaces $ \op{SU}_{C}(r)$ over $\mc{M}_g$, which we answer in Theorem \ref{globaleJJs}.

\begin{bigquestion}\label{QO} Can one extend the family of moduli spaces $ \op{SU}_{C}(r)$ over $\mc{M}_g$, to a family $\mathscr{X} \to \ovmc{M}_g$  with $\mathcal{X}$ relatively projective and flat over $\ovmc{M}_g$?
\end{bigquestion}

Here we  compare our solution to Question \ref{QO}, given in Theorem \ref{globaleJJs}, to work of Newstead, Seshadri, Pandharipande and Sun, which give answers to problems related to Question \ref{QO}. These other constructions seem to present qualitatively different solutions from the one we give.
In particular, we know of no flat family, other than the one in Theorem \ref{globaleJJs}, extending the family of moduli spaces of vector bundles with trivial determinant over $\mc{M}_g$ into a flat family over  $\ovmc{M}_g$.  The family in  Theorem \ref{globaleJJs} is independent of any choices, and is related  to conformal field theory for singular curves. However, unlike the families discussed below, at the moment it lacks a full modular interpretation.  A proposed approach toward such an interpretation is outlined in Section \ref{partiale}.

\medskip

The moduli space of semistable vector bundles of degree $e$ and rank $r$ on a smooth curve $C$ has come to be denoted by $\op{U}_{C}(e,r)$ since via the Narasimhan-Seshadri correspondence \cite{NSesh}, points in $\op{U}_{C}(0,r)$ are identified with homomorphisms from the fundamental group $\pi_1(C)$ to the unitary group $\op{U}(r)$.  The problem of extending the family for $\op{U}_{C}(e,1)$ for $C \in \mc{M}_g$, to include curves $C$ with singularities, has a particularly rich history \cites{Igusa, Ishida, OdaSeshadri, DSouza, AltmanKleiman, Ca}.  In \cite[Theorem 5.8]{Newstead}, Newstead extends the family of   $\op{U}_{C}(e,r)$ over $C \in \mc{M}_g$, to curves with nonseparating nodes; and in \cite[pgs 155-]{Seshadri}, Seshadri extends it to arbitrary reduced curves.   These modular constructions depend on a choice of polarization for the curve.    By \cite{Pandh} and \cite[Theorem 1.21]{simpson}, there exists a  proper, modular family over  $\ovmc{M}_g$, such that fibers over points $[C] \in \mc{M}_g$, corresponding to smooth curves $C$, are isomorphic to $\op{U}_{C}(e,r)$, and  fibers over   $[C] \in \ovmc{M}_g\setminus\mc{M}_g$ are the moduli spaces of torsion free sheaves which are semistable for the canonical polarization on $C$, as constructed by Seshadri.  There is an approach initiated by Gieseker \cite{gie} in rank $2$, and extended to higher ranks by other authors (see \cite{NS2,Xia,Su3} and the references therein). Gieseker's approach uses semistable models of the nodal curve $C$, and the corresponding moduli spaces admit regular birational morphisms to Seshadri's compactifications.

In order to apply these constructions to answer Question \ref{QO}, one could form the closure of semistable bundles on smooth curves with trivial determinant in the space constructed by Pandharipande and Simpson.  One would then ask if the family so obtained is flat over $\ovmc{M}_g$. 
 This closure problem is related to work done surrounding  the Nagaraj-Seshadri locus, which we now describe.

Consider a one-parameter degeneration of a family of smooth curves $C_t$ into a stable curve $C_0$ with one double point. Now form the closure of the relative moduli space of semistable bundles of fixed rank and trivial determinant on the curves $C_t$ for $t\neq 0$  in a relative moduli stack of torsion free sheaves (fixing a relative polarization).  The Nagaraj-Seshadri locus consists of those bundles which arise as the fiber of the closure over $t=0$ for such families.
A conjectural description of the underlying reduced set of this locus, given  in  \cite[Conjecture page 136]{NS}, was shown to hold by Sun in \cites{SunA, Su3} (also see \cite{NRam, Bhosle1, BhosleReducible}). Sun \cite{Su3} also considered the case $C_0$ is reducible, and showed that in  this case the Nagaraj-Seshadri locus is possibly reducible.  It is uncertain whether the scheme structures of the fibers of the closures depend only on $C_0$ (see \cite{Su3,schmitt}).

 In particular, it is not clear that there is a flat family over $\ovmc{M}_g$ that can be obtained  by taking closures, as described, in the family constructed by Pandharipande and Simpson. By \cite{SunA}, except for the case of $\op{SL}(2)$, such a family will have reducible fibers  over points corresponding to  reducible curves, and therefore will differ from the flat family in Theorem \ref{globaleJJs}.

Parallel to this story, work in conformal field theory by Tsuchiya-Ueno-Yamada \cite{TUY}, produced vector bundles on $\ovmc{M}_g$ which extended the  vector bundle  with fibers $H^0(\op{SU}_{C}(r),\theta^{\tensor m})$ on $\mc{M}_g$ \cite{BeauvilleLaszlo,Faltings,KNR}. These fibers are referred to as generalized theta functions.  To the best of our knowledge, these vector bundles, obtained via conformal field theory, were not related to the compactifications in the previous paragraphs.

\subsection{Toward a modular interpretation of the fibers $\operatorname{Proj}(\mc{A}^{X_0}_{\bullet})$}\label{partiale}
 Fix a singular stable curve $X_0$.  We describe an approach which we hope will lead to a modular interpretation for  $\operatorname{Proj}(\mc{A}^{X_0}_{\bullet})$.
\begin{definition}
A vector bundle $\me$ of rank $r$ with trivial determinant on $X_0$ has  property $\mathscr{S}$
if there exists a global section (for some $m$) in $H^0(\operatorname{Bun}_{\operatorname{SL}(r)}(X_0),\mathcal{D}^{\otimes m})$ which does not vanish at $\me$.
\end{definition}

The open locus $\operatorname{Bun}^{\mathscr{S}}_{\operatorname{SL}(r)}(X_0)\subset \operatorname{Bun}_{\operatorname{SL}(r)}(X_0)$ of points with property $\mathscr{S}$ admits a natural map:
\begin{equation}\label{MAP}
\Phi: \operatorname{Bun}^{\mathscr{S}}_{\operatorname{SL}(r)}(X_0)\to \operatorname{Proj}(\oplus_m H^0(\operatorname{Bun}_{\operatorname{SL}(r)}(X_0),\mathcal{D}^{\otimes m}))=\operatorname{Proj}(\mc{A}^{X_0}_{\bullet}).
\end{equation}

\begin{question}\label{AlperQ}\begin{enumerate}
\item Is   $\Phi$  open?
\item Is the image of $\Phi$  a good quotient in the sense of Alper \cite{alper}?
\item Give a geometric description of the open locus $\operatorname{Bun}^{\mathscr{S}}_{\operatorname{SL}(r)}(X_0)$.
\item  Identify what is ``added" to form the compactification $\operatorname{Proj}(\mc{A}^{X_0}_{\bullet})$ of the image of $\Phi$.
\end{enumerate}
\end{question}

In terms of Part (3) of Question \ref{AlperQ}, the following result goes part of the way towards giving a modular description of the ``interior" of  $\operatorname{Proj}(\mc{A}^{X_0}_{\bullet})$:
\begin{lemma}\label{remmy}
$\me$ has property $\mathscr{S}$ iff it is semistable for some choice of weights $\vec{a}=(a_i)_{i \in I}$.
\end{lemma}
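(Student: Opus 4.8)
The plan is to prove the two implications separately, leaning on the machinery already built up in the paper. For the direction ``$\me$ semistable for some $\vec a$ implies property $\mathscr S$'': given weights $\vec a=(a_i)_{i\in I}$ with $\sum a_i>0$, form the associated variety $\mc X(\vec a)$ (Def \ref{XA}) together with a line bundle $\mc L_{\mc G}$ for a suitable $\mc G=\mc G_{\vec a}$ as in Proposition \ref{OneIntro}. Because $\mc X(\vec a)=\mc M_{X_0}^{ss}/\op{PGL}(V)$ is a GIT quotient of a projective variety by an \emph{ample} linearization, every point of the semistable locus lies in the non-vanishing locus of \emph{some} section of a power of the polarizing line bundle; pulling back along $\phi:\op{Bun}_{\op{SL}(r)}(X_0)\to\beta_{X_0}$ and using the isomorphism $\op{H}^0(\mc X(\vec a),\mc L_{\mc G})\hookrightarrow \op{H}^0(\op{Bun}_{\op{SL}(r)}(X_0),\mc D^{\tensor m})$ of Proposition \ref{OneIntro} together with $\phi^*\mc L_{\mc G}=\mc D^{\tensor m}$ (Lemmas \ref{W2}, \ref{repeat}), this produces a section of $\mc D^{\tensor m}$ not vanishing at $\me$. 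One technical point: one must first check that $\me$ (or more precisely, a bundle with $\op{det}$ trivialized representing it) actually appears in $\op{Q}^{\op{det}}_{X_0}$, i.e.\ that after twisting by a sufficiently ample $\ml$ the bundle $\me\tensor\ml$ is globally generated with vanishing $\op{H}^1$; this is a boundedness statement handled exactly as in Section \ref{last}, and then Lemma \ref{pastel} translates $\vec a$-semistability of $\me$ into $\vec\alpha$-semistability of $\me\tensor\ml$, hence into GIT-semistability of the corresponding Quot point for $T(\mc A)$ via Proposition \ref{simpsonian} and the discussion at the end of Section \ref{OIProof}.

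For the converse, ``property $\mathscr S$ implies semistable for some $\vec a$'': suppose $\sigma\in\op{H}^0(\op{Bun}_{\op{SL}(r)}(X_0),\mc D^{\tensor m})$ with $\sigma(\me)\neq0$. By the factorization Lemma \ref{Lemma3}, $\sigma$ decomposes into $\lambda$-components indexed by assignments of dual pairs of weights at level $m$ to the branches of the nodes, and at least one such component $\sigma_\lambda$ is non-vanishing at $\me$ (after pulling back to the normalization). For this $\lambda$ (with $\ell$ an even multiple of $m$, so we may apply the even-level results) choose a covering pair $(\vec a,\mc G)$ as in Def \ref{CP}; Proposition \ref{SomethingIntro} shows the $\lambda$-component of $\op{H}^0(\op{Bun}_{\op{SL}(r)}(X_0),\mc D^{\ell})$ lies in the image of $f_{(\vec a,\mc G)}$, i.e.\ comes from $\op{H}^0(\mc X(\vec a),\mc L_{\mc G})$. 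A section coming from $\mc X(\vec a)$ can only be nonzero at points whose associated Quot point is GIT-semistable for $T(\mc A)$; translating back via Lemma \ref{pastel} and the end of Section \ref{OIProof}, such a point corresponds to a $\vec a$-semistable bundle. Hence $\me$ is $\vec a$-semistable for this $\vec a$.

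The main obstacle I expect is bookkeeping around the equivalence ``nonzero at $\me$ on the stack $\iff$ GIT-semistable Quot point $\iff$ $\vec a$-semistable sheaf'', which requires being careful about (i) the passage between a point of $\op{Bun}_{\op{SL}(r)}(X_0)$ and a point of $\op{Q}^{\op{det}}_{X_0}$ — one must twist by an ample $\ml$ and verify the relevant vanishing/global-generation, which only holds after restricting to a bounded family, and property $\mathscr S$ (resp.\ semistability) must be shown to be unaffected by this twist, and (ii) the fact that a section over the GIT quotient, viewed upstairs, vanishes precisely on the complement of the semistable locus only \emph{after} taking a suitable power of the linearization. Neither of these is conceptually deep — both are already implicitly used in Sections \ref{OIProof} and \ref{SomethingIntroSection} — but assembling them into a clean ``iff'' is where the care goes. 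A minor additional point is the ``for some choice of weights'' quantifier: in the forward direction one produces a specific $\vec a$ from the covering pair, and one should note that changing $m$ (hence $\ell$) is harmless because the statement only asserts existence of \emph{some} $\vec a$.
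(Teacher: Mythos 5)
Your proposal is correct and takes essentially the same route as the paper: the paper proves the ``property $\mathscr{S}$ implies semistable'' direction by observing that some $\lambda$-summand of a nonvanishing section of $\mc{D}^{\otimes m}$ extends to a compactification $\mc{X}(\vec{a})$ via Proposition \ref{SomethingIntro}, and the converse via Proposition \ref{OneIntro} together with GIT, exactly as you do. One attributional caveat: the step ``$\vec{a}$-semistable $\Rightarrow$ GIT-semistable for $T(\mc{A})$'' is not supplied by Proposition \ref{simpsonian} or by the discussion following Section \ref{OIProof} (those give non-semistability of points outside $\op{Q}^0_{X_0}$ and the reverse implication, respectively); it is the Simpson-type comparison the paper states but omits, so your argument is no less complete than the paper's own two-line proof, though the honest reference for that step is \cite{simpson} rather than those internal results.
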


\begin{proof}
If $\me$ has property $\mathscr{S}$,  there is a section in one of the $\lambda$ summands of $H^0(\operatorname{Bun}_{\operatorname{SL}(r)}(X_0),\mathcal{D}^{\otimes m})$ which does not vanish at $\me$. By Prop \ref{SomethingIntro}, this section extends to a section in a compactification for some $\vec{a}$.  For the other implication use Proposition \ref{OneIntro}.
\end{proof}

\begin{definition}\cite{Osserman}
 A vector bundle $\me$ of rank $r$ on $X_0$ with trivial determinant is limit-semistable
if for any non-zero subsheaf $\mf\subset \me$ which has uniform multi-rank $r'$ on  $X_0$, we
have
$$\frac{\chi(X_0,\mf)}{r'}\leq \frac{\chi(X_0,\me)}{r}.$$
\end{definition}
A vector bundle is limit semistable if it is  linear semistable (Def \ref{Generalization}) for some choice of weights.
\begin{question}\label{OssermanQ}
Given a vector bundle $\me$ with trivial determinant on $X_0$, does $\me$  have property $\mathscr{S}$ if and only if $\me^{\oplus m}$ is limit semistable for all positive integers $m$?
\end{question}
We know that if $\me$  has property $\mathscr{S}$, then $\me^{\oplus m}$ is limit semistable for all positive integers $m$.  We don't know the other implication even in the simple case of a curve with two components.

\subsection{Acknowledgements}
We thank N. Fakhruddin, A. Kazanova and M. Schuster for useful discussions and clarifications.
 A.G. was supported by  NSF DMS-1201268 and by  DMS-1344994.

\begin{bibdiv}
\begin{biblist}

\bib{AW}{article} {
    AUTHOR = {Agnihotri, S.}
    AUTHOR ={ Woodward, C.},
     TITLE = {Eigenvalues of products of unitary matrices and quantum
              {S}chubert calculus},
   JOURNAL = {Math. Res. Lett.},
  FJOURNAL = {Mathematical Research Letters},
    VOLUME = {5},
      YEAR = {1998},
    NUMBER = {6},
     PAGES = {817--836},
      ISSN = {1073-2780},
}

\bib{alper}{article}{
    AUTHOR = {Alper, Jarod},
     TITLE = {Good moduli spaces for {A}rtin stacks},
   JOURNAL = {Ann. Inst. Fourier (Grenoble)},
  FJOURNAL = {Universit\'e de Grenoble. Annales de l'Institut Fourier},
    VOLUME = {63},
      YEAR = {2013},
    NUMBER = {6},
     PAGES = {2349--2402},
      ISSN = {0373-0956},
}
	
\bib{AltmanKleiman}{article}{
   author={Altman, Allen B.},
   author={Kleiman, Steven L.},
   title={Compactifying the Picard scheme},
   journal={Adv. in Math.},
   volume={35},
   date={1980},
   number={1},
   pages={50--112},
}
	
\bib{BeauvilleLaszlo}{article}{
  author={Beauville, Arnaud},
  author={Laszlo, Yves},
  title={Conformal blocks and generalized theta functions},
  journal={Comm. Math. Phys.},
  volume={164},
  date={1994},
  number={2},
  pages={385--419},
}

\bib{Bloc}{article} {
    AUTHOR = {Belkale, Prakash},
     TITLE = {Local systems on {$\Bbb P^1-S$} for {$S$} a finite set},
   JOURNAL = {Compositio Math.},
  FJOURNAL = {Compositio Mathematica},
    VOLUME = {129},
      YEAR = {2001},
    NUMBER = {1},
     PAGES = {67--86},
}

\bib{b4}{article}{
    AUTHOR = {Belkale, Prakash},
    TITLE = {Quantum generalization of the {H}orn conjecture},
   JOURNAL = {J. Amer. Math. Soc.},
  FJOURNAL = {Journal of the American Mathematical Society},
    VOLUME = {21},
    YEAR = {2008},
    NUMBER = {2},
     PAGES = {365--408},
}		
\bib{BF}{article}{
   author={Belkale, Prakash},
   author={Fakhruddin, Nakhmuddin},
   title={Triviality properties of principal bundles on singular curves},
   journal={arXiv:1509.06425 [math.AG]},
   date={2015},
}
\bib{BGK}{article}{
  author={Belkale, Prakash},
  author={Gibney, Angela},
  author={Kazanova, Anna},
  title={Scaling of conformal blocks and generalized theta functions over $\ovmc{M}_{g,n}$},
  journal={arXiv:1412.7204 [math.AG]},
  volume={},
  date={2015},
}

\bib{BK}{article} {
    AUTHOR = {Belkale, Prakash}
    AUTHOR ={ Kumar, Shrawan},
     TITLE = {The multiplicative eigenvalue problem and deformed quantum
              cohomology},
   JOURNAL = {Adv. Math.},
  FJOURNAL = {Advances in Mathematics},
    VOLUME = {288},
      YEAR = {2016},
     PAGES = {1309--1359},
      ISSN = {0001-8708},
}

\bib{Bhosle1}{article} {
    AUTHOR = {Bhosle, Usha N.},
     TITLE = {Vector bundles with a fixed determinant on an irreducible
              nodal curve},
   JOURNAL = {Proc. Indian Acad. Sci. Math. Sci.},
  FJOURNAL = {Indian Academy of Sciences. Proceedings. Mathematical
              Sciences},
    VOLUME = {115},
      YEAR = {2005},
    NUMBER = {4},
     PAGES = {445--451},
}

\bib{BhosleReducible}{article} {
    AUTHOR = {Bhosle, Usha N.},
     TITLE = {Vector bundles on curves with many components},
   JOURNAL = {Proc. London Math. Soc. (3)},
  FJOURNAL = {Proceedings of the London Mathematical Society. Third Series},
    VOLUME = {79},
      YEAR = {1999},
    NUMBER = {1},
     PAGES = {81--106},
}

\bib{Ca}{article} {
    AUTHOR = {Caporaso, Lucia},
     TITLE = {A compactification of the universal {P}icard variety over the
              moduli space of stable curves},
   JOURNAL = {J. Amer. Math. Soc.},
  FJOURNAL = {Journal of the American Mathematical Society},
    VOLUME = {7},
      YEAR = {1994},
    NUMBER = {3},
     PAGES = {589--660},
}

\bib{Cartan}{book} {
     Author = {Henri Cartan}
     TITLE = {S\'eminaire {H}enri {C}artan; 10e ann\'ee: 1957/1958.
              {F}onctions {A}utomorphes},
    SERIES = {2 vols},
 PUBLISHER = {Secr\'etariat math\'ematique, 11 rue Pierre Curie, Paris},
      YEAR = {1958},
     PAGES = {ii+214+ii+152 pp. (mimeographed)},
}

\bib{DM}{article}{
   author={Deligne, P.},
   author={Mumford, D.},
   title={The irreducibility of the space of curves of given genus},
   journal={Inst. Hautes \'Etudes Sci. Publ. Math.},
   number={36},
   date={1969},
   pages={75--109},
   issn={0073-8301},
}

\bib{DNar}{article} {
    AUTHOR = {Drezet, J.-M.}
    AUTHOR =  {Narasimhan, M. S.},
     TITLE = {Groupe de {P}icard des vari\'et\'es de modules de fibr\'es
              semi-stables sur les courbes alg\'ebriques},
   JOURNAL = {Invent. Math.},
  FJOURNAL = {Inventiones Mathematicae},
    VOLUME = {97},
      YEAR = {1989},
    NUMBER = {1},
     PAGES = {53--94},
}

	\bib{DSouza}{article}{
   author={D'Souza, Cyril},
   title={Compactification of generalised Jacobians},
   journal={Proc. Indian Acad. Sci. Sect. A Math. Sci.},
   volume={88},
   date={1979},
   number={5},
   pages={419--457},
   issn={0370-0089},
   review={\MR{569548}},
}

\bib{EG}{article}{
   author={Edidin, Dan},
   author={Graham, William},
   title={Riemann-Roch for equivariant Chow groups},
   journal={Duke Math. J.},
   volume={102},
   date={2000},
   number={3},
   pages={567--594},
}
\bib{Edidin}{article}{
   author={Edidin, Dan},
   title={Equivariant geometry and the cohomology of the moduli space of
   curves},
   conference={
      title={Handbook of moduli. Vol. I},
   },
   book={
      series={Adv. Lect. Math. (ALM)},
      volume={24},
      publisher={Int. Press, Somerville, MA},
   },
   date={2013},
   pages={259--292},
}
\bib{Fakh}{article}{
    AUTHOR = {Fakhruddin, Najmuddin},
     TITLE = {Chern classes of conformal blocks},
 BOOKTITLE = {Compact moduli spaces and vector bundles},
    SERIES = {Contemp. Math.},
    VOLUME = {564},
     PAGES = {145--176},
 PUBLISHER = {Amer. Math. Soc., Providence, RI},
      YEAR = {2012},

}
\bib{Faltings4}{article} {
    AUTHOR = {Faltings, Gerd},
     TITLE = {Stable {$G$}-bundles and projective connections},
   JOURNAL = {J. Algebraic Geom.},
  FJOURNAL = {Journal of Algebraic Geometry},
    VOLUME = {2},
      YEAR = {1993},
    NUMBER = {3},
     PAGES = {507--568},
      ISSN = {1056-3911},
}

\bib{Faltings}{article}{
  author={Faltings, Gerd},
  title={A proof for the Verlinde formula},
  journal={J. Algebraic Geom.},
  volume={3},
  date={1994},
  number={2},
  pages={347--374},
}
\bib{Faltings2}{article} {
    AUTHOR = {Faltings, Gerd},
     TITLE = {Moduli-stacks for bundles on semistable curves},
   JOURNAL = {Math. Ann.},
  FJOURNAL = {Mathematische Annalen},
    VOLUME = {304},
      YEAR = {1996},
    NUMBER = {3},
     PAGES = {489--515},
      ISSN = {0025-5831},
     CODEN = {MAANA},
}

\bib{FaltingsChai}{book}{
  author={Faltings, Gerd},
  author={Chai, Ching-Li},
  title={Degeneration of abelian varieties},
  series={Ergebnisse der Mathematik und ihrer Grenzgebiete (3) [Results in Mathematics and Related Areas (3)]},
  volume={22},
  note={With an appendix by David Mumford},
  publisher={Springer-Verlag},
  place={Berlin},
  date={1990},
  pages={xii+316},
}

\bib{Fulton}{book}{
   author={Fulton, William},
   title={Intersection theory},
   series={Ergebnisse der Mathematik und ihrer Grenzgebiete. 3. Folge. A
   Series of Modern Surveys in Mathematics [Results in Mathematics and
   Related Areas. 3rd Series. A Series of Modern Surveys in Mathematics]},
   volume={2},
   edition={2},
   publisher={Springer-Verlag, Berlin},
   date={1998},
   pages={xiv+470},
}

\bib{gie}{article} {
    AUTHOR = {Gieseker, D.},
     TITLE = {A degeneration of the moduli space of stable bundles},
   JOURNAL = {J. Differential Geom.},
  FJOURNAL = {Journal of Differential Geometry},
    VOLUME = {19},
      YEAR = {1984},
    NUMBER = {1},
     PAGES = {173--206},
}

\bib{Gomez}{article}{
   author={G{\'o}mez, Tom{\'a}s L.},
   title={Quantization of Hitchin's integrable system and the geometric
   Langlands conjecture},
   conference={
      title={Affine flag manifolds and principal bundles},
   },
   book={
      series={Trends Math.},
      publisher={Birkh\"auser/Springer Basel AG, Basel},
   },
   date={2010},
   pages={51--90},

}	

\bib{GW}{book} {
    AUTHOR = {G{\"o}rtz, Ulrich},
    AUTHOR = {Wedhorn, Torsten},
     TITLE = {Algebraic geometry {I}},
    SERIES = {Advanced Lectures in Mathematics},
      NOTE = {Schemes with examples and exercises},
 PUBLISHER = {Vieweg + Teubner, Wiesbaden},
      YEAR = {2010},
     PAGES = {viii+615},
}

\bib{EGA2}{article}{
   author={Grothendieck, A.},
   title={\'El\'ements de g\'eom\'etrie alg\'ebrique. II. \'Etude globale
   \'el\'ementaire de quelques classes de morphismes},
   journal={Inst. Hautes \'Etudes Sci. Publ. Math.},
   number={8},
   date={1961},
   pages={222},
}

\bib{Harty}{article} {
    AUTHOR = {Hartshorne, Robin},
     TITLE = {Stable reflexive sheaves},
   JOURNAL = {Math. Ann.},
  FJOURNAL = {Mathematische Annalen},
    VOLUME = {254},
      YEAR = {1980},
    NUMBER = {2},
     PAGES = {121--176},
}

\bib{Hartshorne}{book} {
    AUTHOR = {Hartshorne, Robin},
     TITLE = {Algebraic geometry},
      NOTE = {Graduate Texts in Mathematics, No. 52},
 PUBLISHER = {Springer-Verlag, New York-Heidelberg},
      YEAR = {1977},
     PAGES = {xvi+496},
      ISBN = {0-387-90244-9},
}
\bib{Xia}{article} {
    AUTHOR = {Xia, Huashi},
     TITLE = {Degenerations of moduli of stable bundles over algebraic
              curves},
   JOURNAL = {Compositio Math.},
  FJOURNAL = {Compositio Mathematica},
    VOLUME = {98},
      YEAR = {1995},
    NUMBER = {3},
     PAGES = {305--330},
}
\bib{HuKeel}{article}{
   author={Hu, Yi},
   author={Keel, Sean},
   title={Mori dream spaces and GIT},
   note={Dedicated to William Fulton on the occasion of his 60th birthday},
   journal={Michigan Math. J.},
   volume={48},
   date={2000},
   pages={331--348},
}
\bib{Igusa}{article}{
   author={Igusa, Jun-ichi},
   title={Fibre systems of Jacobian varieties},
   journal={Amer. J. Math.},
   volume={78},
   date={1956},
   pages={171--199},
   issn={0002-9327},
   review={\MR{0084848}},
}

\bib{Ishida}{article}{
   author={Ishida, Masa-Nori},
   title={Compactifications of a family of generalized Jacobian varieties},
   conference={
      title={Proceedings of the International Symposium on Algebraic
      Geometry},
      address={Kyoto Univ., Kyoto},
      date={1977},
   },
   book={
      publisher={Kinokuniya Book Store, Tokyo},
   },
   date={1978},
   pages={503--524},
   review={\MR{578869}},
}

\bib{Jake}{book}{
   author={Jacobson, Nathan},
   title={Basic algebra. I},
   publisher={W. H. Freeman and Co., San Francisco, Calif.},
   date={1974},
   pages={xvi+472},
   review={\MR{0356989}},
}
		
\bib{Kleiman}{incollection} {
    AUTHOR = {Kleiman, Steven L.},
     TITLE = {The {P}icard scheme},
 BOOKTITLE = {Fundamental algebraic geometry},
    SERIES = {Math. Surveys Monogr.},
    VOLUME = {123},
     PAGES = {235--321},
 PUBLISHER = {Amer. Math. Soc., Providence, RI},
      YEAR = {2005},
}

\bib{KNR}{article}{
  author={Kumar, Shrawan},
  author={Narasimhan, M. S.},
  author={Ramanathan, A.},
  title={Infinite Grassmannians and moduli spaces of $G$-bundles},
  journal={Math. Ann.},
  volume={300},
  date={1994},
  number={1},
  pages={41--75},
}

\bib{Gordon}{article}{
   author={Kung, Joseph P. S.},
   author={Rota, Gian-Carlo},
   title={The invariant theory of binary forms},
   journal={Bull. Amer. Math. Soc. (N.S.)},
   volume={10},
   date={1984},
   number={1},
   pages={27--85},
   issn={0273-0979},
   review={\MR{722856}},
   doi={10.1090/S0273-0979-1984-15188-7},
}

\bib{LaszloSorger}{article}{
  author={Laszlo, Yves},
  author={Sorger, Christoph},
  title={The line bundles on the moduli of parabolic $G$-bundles over curves and their sections},
  journal={Ann. Sci. \'Ecole Norm. Sup. (4)},
  volume={30},
  date={1997},
  number={4},
  pages={499--525},
}

\bib{Manon}{article}{
  author={Manon, Christopher},
  title={The Algebra of Conformal blocks},
  date={2009},
  note={arXiv:0910.0577},
}

\bib{Man1}{article}{
   author={Manon, Christopher},
   title={The algebra of $\op{SL}_3(\Bbb{C})$ conformal blocks},
   journal={Transform. Groups},
   volume={18},
   date={2013},
   number={4},
   pages={1165--1187},
}

\bib{Man2}{article}{
   author={Manon, Christopher},
   title={Coordinate rings for the moduli stack of $SL_2(\Bbb{C})$
   quasi-parabolic principal bundles on a curve and toric fiber products},
   journal={J. Algebra},
   volume={365},
   date={2012},
   pages={163--183},
}

\bib{MOP}{article}{
   author={Marian, Alina},
   author={Oprea, Dragos},
   author={Pandharipande, Rahul},
   title={The first Chern class of the Verlinde bundles},
   conference={
      title={String-Math 2012},
   },
   book={
      series={Proc. Sympos. Pure Math.},
      volume={90},
      publisher={Amer. Math. Soc., Providence, RI},
   },
   date={2015},
   pages={87--111},
}

\bib{MOPPZ}{article}{
   author={Marian, Alina},
   author={Oprea, Dragos},
   author={Pandharipande, Rahul},
   author={Pixton, Aaron},
   author={Zvonkine, D},
   title={The Chern character of the Verlinde bundle over the moduli space of stable curves},
   date={2013},
   pages={1--21},
   note={\url{https://arxiv.org/pdf/1311.3028v2.pdf}},
}
\bib{MumBook}{book} {
    AUTHOR = {Mumford, D.}
    AUTHOR =  {Fogarty, J.}
    AUTHOR =  {Kirwan, F.},
     TITLE = {Geometric invariant theory},
    SERIES = {Ergebnisse der Mathematik und ihrer Grenzgebiete (2) [Results
              in Mathematics and Related Areas (2)]},
    VOLUME = {34},
   EDITION = {Third},
 PUBLISHER = {Springer-Verlag, Berlin},
      YEAR = {1994},
     PAGES = {xiv+292},
}

\bib{NS}{article}{
   author={Nagaraj, D. S.},
   author={Seshadri, C. S.},
   title={Degenerations of the moduli spaces of vector bundles on curves. I},
   journal={Proc. Indian Acad. Sci. Math. Sci.},
   volume={107},
   date={1997},
   number={2},
   pages={101--137},
   issn={0253-4142},
}

\bib{NS2}{article} {
    AUTHOR = {Nagaraj, D. S.}
    AUTHOR = {Seshadri, C. S.}
     TITLE = {Degenerations of the moduli spaces of vector bundles on
              curves. {II}. {G}eneralized {G}ieseker moduli spaces},
   JOURNAL = {Proc. Indian Acad. Sci. Math. Sci.},
  FJOURNAL = {Indian Academy of Sciences. Proceedings. Mathematical
              Sciences},
    VOLUME = {109},
      YEAR = {1999},
    NUMBER = {2},
     PAGES = {165--201},
}

\bib{NRam}{article}{
    AUTHOR = {Narasimhan, M. S.},
    AUTHOR=  {Ramadas, T. R.},
     TITLE = {Factorisation of generalised theta functions. {I}},
   JOURNAL = {Invent. Math.},
  FJOURNAL = {Inventiones Mathematicae},
    VOLUME = {114},
      YEAR = {1993},
    NUMBER = {3},
     PAGES = {565--623},
}	
\bib{NR}{article}{
    AUTHOR = {Narasimhan, M. S}
    AUTHOR = {Ramanan, S.},
     TITLE = {Moduli of vector bundles on a compact {R}iemann surface},
   JOURNAL = {Ann. of Math. (2)},
  FJOURNAL = {Annals of Mathematics. Second Series},
    VOLUME = {89},
      YEAR = {1969},
     PAGES = {14--51},
}

\bib{NSesh}{article}{
    AUTHOR = {Narasimhan, M. S.}
    AUTHOR =  {Seshadri, C. S.},
     TITLE = {Stable and unitary vector bundles on a compact {R}iemann
              surface},
   JOURNAL = {Ann. of Math. (2)},
  FJOURNAL = {Annals of Mathematics. Second Series},
    VOLUME = {82},
      YEAR = {1965},
     PAGES = {540--567},
}

\bib{Newstead}{book}{
   author={Newstead, P. E.},
   title={Introduction to moduli problems and orbit spaces},
   series={Tata Institute of Fundamental Research Lectures on Mathematics
   and Physics},
   volume={51},
   publisher={Tata Institute of Fundamental Research, Bombay; by the Narosa
   Publishing House, New Delhi},
   date={1978},
   pages={vi+183},
}
	
\bib{OdaSeshadri}{article}{
   author={Oda, Tadao},
   author={Seshadri, C. S.},
   title={Compactifications of the generalized Jacobian variety},
   journal={Trans. Amer. Math. Soc.},
   volume={253},
   date={1979},
   pages={1--90},
   issn={0002-9947},
   review={\MR{536936}},
   doi={10.2307/1998186},
}

\bib{Osserman}{article}{
author ={Osserman, Brian}
title={Stability of vector bundles on curves and degenerations}
note={ arXiv:1401.0556}
}

\bib{Pandh}{article}{
   author={Pandharipande, Rahul},
   title={A compactification over $\overline {M}_g$ of the universal
   moduli space of slope-semistable vector bundles},
   journal={J. Amer. Math. Soc.},
   volume={9},
   date={1996},
   number={2},
   pages={425--471},
}

	\bib{Reid}{book}{
   author={Reid, Miles},
   title={Flatness, a brief overview},
   series={ \url{http://homepages.warwick.ac.uk/~masda/surf/more/grad.pdf}},
   }
\bib{SatakeCompactification}{article}{
  author={Satake, Ichir{\^o}},
  title={On representations and compactifications of symmetric Riemannian spaces},
  journal={Ann. of Math. (2)},
  volume={71},
  date={1960},
  pages={77--110},
}

\bib{schmitt}{article} {
    AUTHOR = {Schmitt, Alexander H. W.},
     TITLE = {On the modular interpretation of the {N}agaraj-{S}eshadri
              locus},
   JOURNAL = {J. Reine Angew. Math.},
  FJOURNAL = {Journal f\"ur die Reine und Angewandte Mathematik. [Crelle's
              Journal]},
    VOLUME = {670},
      YEAR = {2012},
     PAGES = {145--172},
      ISSN = {0075-4102},
}

\bib{Serre}{article}{
   author={Serre, J.-P.},
   title={Modules projectifs et espaces fibr\'es \`a fibre vectorielle},
   language={French},
   conference={
      title={S\'eminaire P. Dubreil, M.-L. Dubreil-Jacotin et C. Pisot,
      1957/58, Fasc. 2, Expos\'e 23},
   },
   book={
      publisher={Secr\'etariat math\'ematique, Paris},
   },
   date={1958},
   pages={18},
}

\bib{Seshadri}{book}{
   author={Seshadri, C. S.},
   title={Fibr\'es vectoriels sur les courbes alg\'ebriques},
   language={French},
   series={Ast\'erisque},
   volume={96},
   note={Notes written by J.-M. Drezet from a course at the \'Ecole Normale
   Sup\'erieure, June 1980},
   publisher={Soci\'et\'e Math\'ematique de France, Paris},
   date={1982},
   pages={209},
}

\bib{simpson}{article} {
    AUTHOR = {Simpson, Carlos T.},
     TITLE = {Moduli of representations of the fundamental group of a smooth
              projective variety. {I}},
   JOURNAL = {Inst. Hautes \'Etudes Sci. Publ. Math.},
  FJOURNAL = {Institut des Hautes \'Etudes Scientifiques. Publications
              Math\'ematiques},
    NUMBER = {79},
      YEAR = {1994},
     PAGES = {47--129},
      ISSN = {0073-8301},
     CODEN = {PMIHA6},
}
\bib{SunA}{article}{
   author={Sun, Xiaotao},
   title={Degeneration of ${\op{SL}}(n)$-bundles on a reducible curve},

      journal={Algebraic geometry in East Asia, World Sci. Publ., River Edge, NJ},
      address={Kyoto},
      date={2001},

   date={2002},
   pages={229--243},
}

\bib{Su3}{article}{
    AUTHOR = {Sun, Xiaotao},
     TITLE = {Moduli spaces of {${\rm SL}(r)$}-bundles on singular
              irreducible curves},
   JOURNAL = {Asian J. Math.},
  FJOURNAL = {The Asian Journal of Mathematics},
    VOLUME = {7},
      YEAR = {2003},
    NUMBER = {4},
     PAGES = {609--625},
}

\bib{TY}{book} {
    AUTHOR = {Tauvel, Patrice}
    AUTHOR =  {Yu, Rupert W. T.},
     TITLE = {Lie algebras and algebraic groups},
    SERIES = {Springer Monographs in Mathematics},
 PUBLISHER = {Springer-Verlag, Berlin},
      YEAR = {2005},
}

\bib{Teleman}{article}{
   author={Teleman, Constantin},
   title={Borel-Weil-Bott theory on the moduli stack of $G$-bundles over a
   curve},
   journal={Invent. Math.},
   volume={134},
   date={1998},
   number={1},
   pages={1--57},
}

\bib{TUY}{article}{
  author={Tsuchiya, Akihiro},
  author={Ueno, Kenji},
  author={Yamada, Yasuhiko},
  title={Conformal field theory on universal family of stable curves with gauge symmetries},
  conference={ title={Integrable systems in quantum field theory and statistical mechanics}, },
  book={ series={Adv. Stud. Pure Math.}, volume={19}, publisher={Academic Press}, place={Boston, MA}, },
  date={1989},
  pages={459--566},
}

\bib{Ueno}{book}{
  author={Ueno, Kenji},
  title={Conformal field theory with gauge symmetry},
  series={Fields Institute Monographs},
  volume={24},
  publisher={American Mathematical Society},
  place={Providence, RI},
  date={2008},
  pages={viii+168},
}

\bib{Wang}{article}{
  author={Wang, Jonathan},
  title={The moduli stack of G-bundles},
  note={\url{http://arxiv.org/abs/1104.4828}},
}

\bib{Weyman}{book} {
    AUTHOR = {Weyman, Jerzy},
     TITLE = {Cohomology of vector bundles and syzygies},
    SERIES = {Cambridge Tracts in Mathematics},
    VOLUME = {149},
 PUBLISHER = {Cambridge University Press, Cambridge},
      YEAR = {2003},
     PAGES = {xiv+371},
}

\end{biblist}
\end{bibdiv}

\bigskip

\noindent
P.B.: Department of Mathematics, University of North Carolina, Chapel Hill, NC 27599\\
email: belkale@email.unc.edu

\medskip

\noindent
A.G.: Department of Mathematics, University of Georgia, Athens, GA 30602\\
email: agibney@math.uga.edu

\end{document}